\def\dim{\mathrm{dim}}
\def\N{\mathbb{N}}
\newcommand\Hom{\mathrm{Hom}}
\def\Z{\mathbb{Z}}
\def\a{\alpha}
\def\qlb{\overline{\mathbb{Q}_l}}
\def\C{\mathbb{C}}
\def\uz{\underline{z}}
\def\A{\mathcal{A}}
\def\X{\mathcal{X}}
\def\Y{\mathcal{Y}}
\def\cZ{\mathcal{Z}}
\def\W{\mathcal{W}}
\def\qlb{\overline{\mathbb{Q}_l}}
\def\Exp{\text{Exp}}
\def\Ext{\text{Ext}}
\def\Hom{\text{Hom}}
\def\Ker{\text{Ker}}
\def\dim{\text{dim}}
\def\End{\text{End}}
\def\quot{/ \hspace{-.03in} /}
\def\Lc{\mathcal{L}}
\newtheorem{theo}{\bf{Theorem}}[section]
\newtheorem{lem}[theo]{Lemma}
\newtheorem{cor}[theo]{Corollary}
\newtheorem{prop}[theo]{Proposition}
\newtheorem{conj}[theo]{Conjecture}
\numberwithin{equation}{section}
\title[Kac polynomials for curves]{Indecomposable vector bundles and stable Higgs bundles over smooth projective curves}
\author{Olivier Schiffmann\;\dag}
\thanks{\dag\; partially supported by ANR grant 13-BS01-0001-01}
\date{}
\begin{document}

\begin{abstract}
We prove that the number of indecomposable vector bundles of fixed rank $r$ and degree $d$ over a smooth projective curve $X$ defined over a finite field is given by a polynomial (depending only on $r,d$ and the genus $g$ of $X$) in the Weil numbers of $X$. We provide a closed formula -expressed in terms of generating series- for this polynomial.
We also show that the same polynomial computes the number of points of the moduli space of stable Higgs bundles of
rank $r$ and degree $d$ over $X$ (at least for large characteristics). This entails a closed formula for the Poincarr\'e polynomial of the moduli spaces of stable Higgs bundles over a compact Riemann surface, and hence also for the Poincarr\'e polynomials of the character varieties for the groups $GL(r)$.
\end{abstract}

\maketitle

\vspace{.1in}

\section{Introduction}

\vspace{.2in}

\paragraph{\textbf{1.1}}Let $g \geq 0$ and let $X$ be a smooth projective geometrically connected curve of genus $g$ defined over a finite field
$\mathbb{F}_q$. Let $l$ be a prime number not dividing $q$ and let $\sigma_1, \ldots, \sigma_{2g}$ stand for the associated Weil numbers of $X$ (i.e the eigenvalues of the Frobenius acting on $H^1(X \otimes_{\mathbb{F}_q} \overline{\mathbb{F}_q}, \qlb)$ ), so that the zeta function of
$X$ is 
$$\zeta_X(z)=\frac{\prod_{i=1}^{2g}(1-\sigma_iz)}{(1-z)(1-qz)}.$$
Fixing an embedding $\iota : \overline{\mathbb{Q}_l} \to \mathbb{C}$ we may view the $\sigma_i$ as complex numbers satisfying $\overline{\sigma_{2i-1}}= \sigma_{2i}$ and $\sigma_{2i-1} \sigma_{2i}=q$ for $i=1, \ldots, g$.
Consider the torus
$$T_g=\{(\a_1, \ldots, \a_{2g}) \in \mathbb{G}_m^{2g}\;|\; \a_{2i-1} \a_{2i}=\a_{2j-1}\a_{2j}\;\forall\; i, j\}.$$
The group
$$W_g =\mathfrak{S}_g \ltimes (\mathfrak{S}_2)^g$$
naturally acts on $T_g$ and the 
collection $\{\sigma_1, \ldots, \sigma_{2g}\}$ defines a canonical element $\sigma_X$ in the quotient $T_g(\mathbb{C})/W_g$. We denote by the same letter $q$ the size of the finite field $\mathbb{F}_q$ and the element $q=\a_{2i-1}\a_{2i} \in \mathbb{Q}[T_g]^{W_g}$, hoping that this will not create any confusion. Let $K_g$ be the localization of $\mathbb{Q}[T_g]^{W_g}$ at the multiplicative set generated by $\{q^l-1\;|\; l \geq 1\}$.

\vspace{.2in}

\paragraph{\textbf{1.2.}}For $r >0$ and $d \in \mathbb{Z}$ let $\A_{r,d}(X)$ stand for the number of geometrically indecomposable
vector bundles on $X$ (i.e vector bundles $\mathcal{V}$ over $X$ such that $\mathcal{V} \otimes_{\mathbb{F}_q}
\overline{\mathbb{F}_q}$ is indecomposable ) of rank $r$ and degree $d$. The finiteness of $\A_{r,d}(X)$ results from standard arguments based on the Harder-Narasimhan filtration, see e.g. Section~\textbf{2.1.}

\vspace{.1in}

The first main result of this paper is the following~:

\vspace{.1in}

\begin{theo}\label{T:1} For any fixed genus $g$ and any pair $(r,d)\in \mathbb{N} \times \mathbb{Z}$ there exists a unique element $A_{g,r,d} \in K_g$
such that for any smooth projective geometrically connected curve $X$ of genus $g$ defined over a finite field we have
$$\A_{r,d}(X)=A_{g,r,d}(\sigma_X).$$
\end{theo}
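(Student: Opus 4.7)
I would follow a Kac-style approach: first express the orbifold count of all vector bundles on $X$ as a rational function in the Weil numbers; second, use Krull--Schmidt together with the plethystic logarithm to extract the weighted count of indecomposable bundles; third, use Galois descent to convert this to the unweighted count of geometrically indecomposable ones.

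Set $v_{r,d}(X):=\sum_{[V]}|\mathrm{Aut}(V)|^{-1}$ summed over isomorphism classes of rank $r$, degree $d$ bundles on $X$, and $i_{r,d}(X)$ for the analogous sum restricted to indecomposable $V$. The Harder--Narasimhan stratification combined with the Harder--Atiyah--Bott--Zagier formula for the volume of the stack of semistable bundles expresses each $v_{r,d}(X)$ as an explicit rational function in $\sigma_1,\ldots,\sigma_{2g}$ with denominators only of the form $q^l-1$, so that $v_{r,d}(X)=V_{g,r,d}(\sigma_X)$ for some $V_{g,r,d}\in K_g$. Since the category of vector bundles on $X/\mathbb{F}_q$ is Krull--Schmidt, the generating series satisfy
\[
1+\sum_{(r,d)\neq(0,0)}v_{r,d}(X)\,z^rw^d \;=\; \Exp\Bigl(\sum_{(r,d)\neq(0,0)}i_{r,d}(X)\,z^rw^d\Bigr),
\]
where $\Exp$ is the plethystic exponential built from the Adams operations $v_{r,d}(X)\mapsto v_{r,d}(X^{(n)})$ with $X^{(n)}=X\otimes\mathbb{F}_{q^n}$ (whose Weil numbers are $\sigma_i^n$). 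Applying the plethystic logarithm yields each $i_{r,d}(X)$ as a universal $\Q$-polynomial in the $v_{r',d'}(X^{(n)})$'s, hence as an element of $K_g$ evaluated at $\sigma_X$.

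To move from $i_{r,d}$ to $\A_{r,d}$, observe that an indecomposable bundle $V/\mathbb{F}_q$ has $\End(V)$ local with residue field some $\mathbb{F}_{q^s}$, and splits over $\overline{\mathbb{F}_q}$ as a direct sum of $s$ Galois-conjugate absolutely indecomposable bundles of rank $r/s$ and degree $d/s$ defined over $\mathbb{F}_{q^s}$. Tracking the orders of automorphism groups in this Weil-restriction correspondence produces a Möbius-type identity roughly of the shape
\[
i_{r,d}(X)\;=\;\sum_{s\mid\gcd(r,d)}\frac{1}{s\,(q^s-1)}\,\A_{r/s,d/s}(X^{(s)})\;+\;\text{corrections},
\]
which inverts to express $\A_{r,d}(X)$ as a finite $\Q$-linear combination of the $i_{r',d'}(X^{(n)})$'s. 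Composing the three steps gives $\A_{r,d}(X)=A_{g,r,d}(\sigma_X)$ for some $A_{g,r,d}\in K_g$. Uniqueness is immediate: as $X$ ranges over smooth projective geometrically connected curves of genus $g$ over all finite fields, the images $\sigma_X$ form a Zariski-dense subset of $T_g/W_g$ (a standard consequence of e.g.\ Deligne's equidistribution), so an element of $K_g$ is determined by its values at such points.

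The principal obstacle is the final step: the bookkeeping for the Galois-descent identity above is intricate, and one must verify both that the ``corrections'' fit the Möbius-inversion pattern exactly and that the combined effect of the denominators $(q^s-1)$ and of the plethystic logarithm produces no poles outside $\{q^l=1\}_{l\geq1}$, so that the answer genuinely lies in $K_g$. A secondary concern is extracting the individual coefficients $v_{r,d}$ from the Atiyah--Bott--Zagier formula --- which is naturally an identity of formal generating series in $z$ --- as bona fide elements of $K_g$; this can be handled by twisting with line bundles to reduce to finitely many degrees modulo $r$.
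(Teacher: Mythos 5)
There is a genuine gap at the heart of your Step 2: the identity
$1+\sum_{(r,d)}v_{r,d}(X)z^rw^d=\Exp\bigl(\sum_{(r,d)}i_{r,d}(X)z^rw^d\bigr)$
is false. The problem is that for $M=\bigoplus_i M_i^{\oplus n_i}$ the order of $\mathrm{Aut}(M)$ involves the radical of $\End(M)$ — both the cross-terms $\Hom(M_i^{\oplus n_i},M_j^{\oplus n_j})$ between distinct indecomposables and the radicals of the local rings $\End(M_i)$ — and these contributions do not factor in a way compatible with a plethystic exponential in the classes $(r,d)$. The failure is visible already for a single indecomposable $M_0$ with $\End(M_0)=\mathbb{F}_q$ (e.g.\ a line bundle): the isotypic contribution is $\sum_n u^n/|GL_n(\mathbb{F}_q)|$, whereas $\Exp\bigl(\tfrac{u}{q-1}\bigr)=\exp\bigl(\sum_l \tfrac{u^l}{l(q^l-1)}\bigr)$ has $u^2$-coefficient $\tfrac{q}{(q-1)(q^2-1)}$, off from $\tfrac{1}{|GL_2(\mathbb{F}_q)|}=\tfrac{1}{q(q-1)(q^2-1)}$ by a factor $q^2$. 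The same defect sinks Step 3: for an indecomposable $V$ one has $|\mathrm{Aut}(V)|=(q^{l_V}-1)\,q^{\dim \mathrm{rad}\,\End(V)}$ with a radical dimension that varies from bundle to bundle, so the weighted sum $i_{r,d}(X)$ simply does not determine the plain count $\A_{r,d}(X)$ by any Möbius-type inversion; the "corrections" you defer are not bookkeeping but missing information.

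The paper's essential idea, which your proposal lacks, is to replace the weight $1/|\mathrm{Aut}(\mathcal{F})|$ by $|\End^{nil}(\mathcal{F})|/|\mathrm{Aut}(\mathcal{F})|$, i.e.\ to compute the volume of the groupoid of pairs $(\mathcal{F},\theta)$ with $\theta$ nilpotent. Since $\End^{nil}(M)$ and $\mathrm{Aut}(M)$ are both preimages under $\End(M)\to\End(M)/\mathrm{rad}$ of $\prod_i\mathcal{N}_{n_i}$ and $\prod_i GL_{n_i}$ respectively, the radical cancels and the ratio equals $\prod_i |k_{M_i}|^{n_i(n_i-1)}/|GL(n_i,k_{M_i})|$ (Corollary~\ref{C:1}); Heine's formula then gives exactly the exponential identity of Proposition~\ref{P:1}, with the Galois-orbit analysis producing $\A^{\geq 0}_{\alpha}(X\otimes\mathbb{F}_{q^l})/(q^l-1)$ on the nose. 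This choice has a price your outline does not anticipate: the nilpotent-pairs groupoid has infinite volume, forcing the truncation to $\mathcal{C}_{\geq 0}$ and Proposition~\ref{P:2} to recover $\A_{r,d}$, and its volume is no longer given by the Siegel/HN formula alone but requires the Jordan stratification of Section~3 and the Hall-algebra/Eisenstein-series machinery of Section~4 (together with the integral form of Theorem~\ref{T:sv}) to land in $K_g$. Your uniqueness argument via Zariski density of Weil numbers does coincide with the paper's (Proposition~\ref{P:Zariski}), but the existence part as proposed does not go through.
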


\vspace{.1in}

Conjecturally, $A_{g,r,d}$ belongs to $\mathbb{Q}[T_g]^{W_g}$ and there is no need to consider the localization $K_g$ (see Conjecture~\ref{C:conj1} and Corollary~\ref{Cor:2} below).

\vspace{.1in}
 
\noindent
\textit{Remark.} The existence of polynomials counting indecomposable representations of quivers over finite fields
is a well-known result of Kac, \cite{Kac}. The above theorem may be viewed as a global analog of Kac's theorem. 

\vspace{.1in}

\paragraph{\textbf{1.3.}} The proof of Theorem~\ref{T:1} is effective, i.e. we can explicitly compute the
polynomial $A_{g,r,d}$. In order to write down the (rather involved) formula for $A_{g,r,d}$, we first need to introduce a few notations.

\vspace{.1in}

\noindent
\textit{Notations for partitions.} Let $\lambda=(\lambda_1 \geq \lambda_2 \geq \cdots \geq \lambda_l>0)$ be a partition. The Young diagram associated to $\lambda$ is the set of boxes with integer coordinates $(i,j)$ with $1 \leq i \leq \lambda_j$. For a box $s \in \lambda$ we denote by $a(s)$ (\textit{armlength}), resp. $l(s)$ (\textit{leglength}) the number of boxes in $\lambda$ lying strictly to
the right (resp. strictly above) $s$. Here is an example for the partition $(10,9^3,6,3^2)$~:

\centerline{
\begin{picture}(200,120)
\put(0,20){\line(0,1){70}}
\put(10,90){\line(0,-1){70}}
\put(0,90){\line(1,0){30}}
\put(20,90){\line(0,-1){70}}
\put(30,90){\line(0,-1){70}} \put(0,80){\line(1,0){30}}
\put(40,70){\line(0,-1){50}} \put(0,70){\line(1,0){60}}
\put(50,70){\line(0,-1){50}} \put(60,70){\line(0,-1){50}}
\put(0,60){\line(1,0){90}} \put(70,60){\line(0,-1){40}}
\put(80,60){\line(0,-1){40}} \put(90,60){\line(0,-1){40}}
\put(0,50){\line(1,0){90}} \put(0,40){\line(1,0){90}}
\put(0,30){\line(1,0){100}}
\put(0,20){\line(1,0){100}} \put(100,30){\line(0,-1){10}}
\put(33,43){{$s$}} \put(33,72){{$l$}}
\put(94,40){{$a$}} 
\put(180,34){{$a(s)=5$}}
\put(182,21){{$l(s)=2$}} \put(35,52){\line(0,1){16}}
\put(35,68){\line(1,-1){6}} \put(42,45){\line(1,0){46}}
\end{picture}}
\vspace{.05in}
\centerline{{Figure 1.} Notations for partitions.}

\vspace{.2in}

If $\lambda, \mu$ are partitions then we set $\langle \lambda, \mu \rangle=\sum_i \lambda'_i \mu'_i$ where $\lambda', \mu'$ are the conjugate partitions of $\lambda,\mu$.

\vspace{.1in}

\noindent
\textit{Notations for plethystic operators.} Consider the space $K_g[[z,T]]$ of power series
in the variables $z,T$. For $l \geq 1$ we define the $l$th Adams operator $\psi_l$ as the $\mathbb{Q}$-algebra map
$$\psi_l~: K_g[[z,T]] \to K_g[[z,T]], \qquad \a_i \mapsto \a_i^l, \;z \mapsto z^l, \;T\mapsto T^l.$$
Set $K_g[[z,T]]^+=zK_g[[z,T]] + T K_g[[z,T]]$.
The plethystic exponential and logarithm functions are inverse maps
$$\Exp~:K_g[[z,T]]^+ \longrightarrow 1+ K_g[[z,T]]^+, \qquad \text{Log}~:1+K_g[[z,T]]^+ \longrightarrow K_g[[z,T]]^+$$
respectively defined by
 $$\Exp(f)=exp \left(\sum_k \frac{1}{k}
\psi_k(f)\right), \qquad \text{Log}(f)=\sum_{k\geq 1} \frac{\mu(k)}{k} \psi_k \left(log(f)\right).$$
These operators satisfy the usual properties, i.e. $\Exp(f+g)=\Exp(f)\Exp(g)$ and $\text{Log}(fg)=\text{Log}(f)+\text{Log}(g)$.
We refer to e.g. \cite[Sec. 2]{Mozgovoy_Fermionic} for more on these plethystic operators.

\vspace{.1in}

Observe that $\Exp(z)=1/(1-z)$. We set
$$\zeta(z)=\Exp\left((1-\sum_i \a_i + q)z\right)=\frac{\prod_i (1-\a_iz)}{(1-z)(1-qz)}, \qquad \widetilde{\zeta}(z)=z^{1-g}\zeta(z)$$
 
\vspace{.1in}

\noindent
\textit{Remark.} For any smooth projective curve $X$ as above, we have $\zeta_X(z)=\zeta(z)(\sigma_X)$,
which may loosely be interpreted as an equality $\Exp(|X(\mathbb{F}_q)|z)=\zeta_X(z)$.

\vspace{.1in}

We are now ready to introduce the ingredients entering in the explicit computation of $A_{g,r,d}$.  Let $\lambda=(1^{r_1}2^{r_2}\ldots t^{r_t})$ be a partition. Let us set 
$$J_{\lambda}(z)=\prod_{s \in \lambda} \zeta^*_X(q^{-1-l(s)}z^{a(s)})$$
where
$$\zeta^*_X(q^{-u}z^v)=\begin{cases}\zeta_X(q^{-u}z^v)\qquad & \text{if}\; (u,v) \neq (1,0) \\ \prod_i(1-\a_i^{-1})/(1-q^{-1}) \qquad & \text{if}\; (u,v)=(1,0) \end{cases}.$$ 
Next, write $n=l(\lambda)=\sum_i r_i$, and 
$$r_{<i}=\sum_{k<i} r_k, \qquad r_{>i}=\sum_{k>i}r_k, \qquad r_{[i,j]}=\sum_{k=i}^j r_k$$
and consider the multi-variable rational function
$$L(z_n, \ldots, z_1)=\frac{1}{\prod_{i<j} \widetilde{\zeta}\big(\frac{z_i}{z_j}\big)} \sum_{\sigma \in \mathfrak{S}_n} \sigma \left\{ \prod_{i<j}
\widetilde{\zeta}\big(\frac{z_i}{z_j}\big) \cdot \frac{1}{\prod_{i<n} \big( 1-q\frac{z_{i+1}}{z_i}\big)} \cdot 
\frac{1}{1-z_1}\right\}.$$
Denote by $\text{Res}_{\lambda}$ the operator of taking the iterated residue along
\begin{align*}
&\frac{z_n}{z_{n-1}}=\frac{z_{n-1}}{z_{n-2}}= \cdots = \frac{z_{2+r_{<t}}}{z_{1+r_{<t}}}=q^{-1}\\
&\vdots \qquad \qquad \vdots \qquad  \qquad\qquad \vdots\\
&\frac{z_{r_1}}{z_{r_1-1}}= \frac{z_{r_1-1}}{z_{r_1-2}}= \cdots = \frac{z_{2}}{z_{1}}=q^{-1}.
\end{align*}
Put
$$\widetilde{H}_{\lambda}(z_{1+ r_{<t}}, \ldots, z_{1+r_{<i}}, \ldots, z_1)=\text{Res}_{\lambda}\left[ L(z_n, \ldots, z_1) \prod_{\substack{j =1 \\ j \not\in \{r_{\leq i}\}}}^{n}\frac{dz_j}{z_j} \right]$$
and finally
$$H_{\lambda}(z)=\widetilde{H}_{\lambda}(z^tq^{-r_{<t}}, \ldots, z^iq^{-r_{<i}}, \ldots, z).$$
Note that if $r_i=0$ for some $i$ then the function $\widetilde{H}_{\lambda}$ is independent of its $i$th argument.

\begin{theo}\label{T:2} Define rational functions $A_{g,r}(z) \in K_g(z)$ by the relation
\begin{equation}\label{E:mainT1}
\sum_{r \geq 1} A_{g,r}(z)T^r=(q-1)Log\left( \sum_{\lambda} q^{(g-1)\langle \lambda,\lambda\rangle} J_{\lambda}(z) H_{\lambda}(z)T^{|\lambda|}\right).
\end{equation}
Then for any $d \in \Z$ we have
$$
A_{g,r,d}=-\sum_{\xi \in \mu_r} \xi^{-d} \text{Res}_{z=\xi} \left( A_{g,r}(z)\frac{dz}{z}\right) 
$$
where $\mu_r$ stands for the set of $r$th roots of unity.
\end{theo}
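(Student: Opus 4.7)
\textit{Proof strategy.} The proof naturally splits into two parts: first, express the indecomposable count as a plethystic logarithm of the stacky count of all bundles; second, identify that stacky count with the partition-sum on the right of (\ref{E:mainT1}). For the first part, Krull--Schmidt and the fact that an absolutely indecomposable bundle over $\mathbb{F}_q$ has local endomorphism ring with residue field $\mathbb{F}_q$ yield the plethystic identity
$$\sum_{r,d}\bigl|\mathrm{Bun}_{r,d}(X)(\mathbb{F}_q)\bigr|\, T^r z^d \;=\; \Exp\!\left(\frac{1}{q-1}\sum_{r,d} \A_{r,d}(X)\, T^r z^d\right),$$
exactly analogous to Hua's formula for quiver representations (the left-hand side is the groupoid count $\sum_V 1/|\mathrm{Aut}(V)|$). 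Inverting by $(q-1)\,\text{Log}$ yields the generating function of $\A_{r,d}(X)$. Since tensoring with a degree-one line bundle preserves (absolute) indecomposability and shifts the degree by $r$, the sequence $d \mapsto A_{g,r,d}$ is periodic of period $r$, so $\sum_d A_{g,r,d} z^d$ is a rational function in $z$ with poles only at $\xi \in \mu_r$, and the claimed formula $A_{g,r,d} = -\sum_{\xi \in \mu_r}\xi^{-d}\,\text{Res}_{z=\xi} A_{g,r}(z)\frac{dz}{z}$ is its discrete-Fourier inversion.

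The heart of the proof is then the identity
$$\sum_{r,d}\bigl|\mathrm{Bun}_{r,d}(X)(\mathbb{F}_q)\bigr|\, T^r z^d \;=\; \sum_\lambda q^{(g-1)\langle\lambda,\lambda\rangle} J_\lambda(z) H_\lambda(z) T^{|\lambda|}.$$
The natural tool is the Hall algebra $\mathbf{H}_X$ of $\mathrm{Coh}(X)$. I would stratify the stack of bundles by Harder--Narasimhan polygon, writing each bundle as a successive extension of semistable pieces of strictly decreasing slope. The partition $\lambda = (1^{r_1}2^{r_2}\cdots t^{r_t})$ indexes the combinatorial HN type, with $r_i$ recording the rank of the semistable subquotient in the $i$-th slope class. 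The factor $q^{(g-1)\langle\lambda,\lambda\rangle}$ arises from the Euler pairing on $K_0(\mathrm{Coh}(X))$ between HN strata, via Riemann--Roch, $\chi(\mathcal{V},\mathcal{W}) = (1-g)\,\mathrm{rk}\,\mathcal{V}\cdot\mathrm{rk}\,\mathcal{W} + \ldots$. The box-product $J_\lambda(z)$ encodes stabilizer orders of exactly the shape produced by the Hall algebra of a complete DVR (Hall--Littlewood combinatorics), globalised over $X$ so that local $q$-factors are promoted to $\zeta_X$-values. Finally, $H_\lambda$ captures the semistable contributions: the kernel $L(z_n,\ldots,z_1)$ is recognisable as the constant term of the Eisenstein series on $\mathrm{GL}_n$ over $\mathbb{F}_q(X)$ attached to the unramified character $(z_1,\ldots,z_n)$, and the iterated residue at $z_{i+1}/z_i = q^{-1}$ extracts the parabolic contribution indexed by $\lambda$ -- equivalently, projects onto the spherical part of $\mathbf{H}_X$ spanned by successive Hecke creations on line bundles.

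The main obstacle is precisely the combinatorial matching carried out in this second step. One must set up a shuffle-algebra presentation of the spherical Hall algebra of $X$ (in the spirit of Schiffmann--Vasserot for $\mathbb{P}^1$) and verify that the symmetriser $\sum_{\sigma \in \mathfrak{S}_n}\sigma\{\cdots\}$ together with the residue prescription $\text{Res}_\lambda$ reproduces the HN recursion term by term -- a non-trivial symmetric-function identity. Once this is in place, polynomiality of every ingredient in $\sigma_X$ (each factor is built from $\zeta_X$-values and is therefore manifestly symmetric in the Weil numbers) upgrades the identity to the universal level of $K_g$, simultaneously yielding Theorem~\ref{T:1}.
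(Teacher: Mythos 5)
Your overall architecture (express the count of absolutely indecomposables as $(q-1)\text{Log}$ of a stacky count, compute that stacky count via the Hall algebra and Eisenstein series, then recover $A_{g,r,d}$ from $A_{g,r}(z)$ by discrete Fourier inversion using periodicity of $d\mapsto A_{g,r,d}$ modulo $r$) agrees with the paper, and the final inversion step is exactly Section~\textbf{5.8}. However, your first displayed identity is false, and this is the central gap. The plain groupoid count $\sum_{\mathcal{V}}1/|\text{Aut}(\mathcal{V})|$ is \emph{not} $\Exp\big(\tfrac{1}{q-1}\sum_{r,d}\A_{r,d}(X)T^rz^d\big)$: for the isotypic block $M^{\oplus n}$ of a single indecomposable $M$ with $k_M=\mathbb{F}_q$, Heine's identity gives $\sum_n \frac{q^{n(n-1)}}{|GL_n(\mathbb{F}_q)|}u^n=\exp\big(\sum_l \frac{u^l}{l(q^l-1)}\big)$, so the correct left-hand side must carry the extra weight $q^{n(n-1)}=|\mathcal{N}_{n,\mathbb{F}_q}|$. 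This is precisely why the paper counts pairs $(\mathcal{F},\theta)$ with $\theta$ a \emph{nilpotent endomorphism}, i.e.\ works with $vol(\mathbf{Nil}^{\geq 0}_{\a})=\sum_{\mathcal{F}}|\text{End}^{nil}(\mathcal{F})|/|\text{Aut}(\mathcal{F})|$ (Corollary~\ref{C:1} and Proposition~\ref{P:1}). A related point you skip: this weighted sum diverges over all of $Coh_{\a}$, which forces the truncation to $\mathcal{C}_{\geq 0}$ and the use of Proposition~\ref{P:2} (every indecomposable of degree $d\gg0$ lies in $\mathcal{C}_{\geq 0}$) before the periodicity argument can be run.

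The second error is your reading of the index $\lambda$. It is not a Harder--Narasimhan type: $\lambda=(1^{r_1}2^{r_2}\cdots t^{r_t})$ records the generic \emph{Jordan type} of the auxiliary nilpotent endomorphism $\theta$ (Section~\textbf{3.1}), and the factor $q^{(g-1)\langle\lambda,\lambda\rangle}$ comes from the affine-fibration computations of Proposition~\ref{P:3} for the functors $\pi'_{\underline{\a}},\pi''_{\underline{\a}}$, not from pairing HN strata. The HN filtration enters only through the truncation $\mathcal{C}_{\geq 0}$ and the recursion (\ref{E:><}) used to determine $E^{\geq 0}_r$ inductively (Proposition~\ref{P:31}); and the iterated residues along $z_{i+1}/z_i=q^{-1}$, grouped block by block according to the parts of $\lambda$, implement Lemma~\ref{L:residue} (the constant function on $\textbf{Bun}_{r}$ as a residue of a product of rank-one Eisenstein series), one block per column of the Jordan diagram. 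Indeed, an HN-stratified computation of $vol(\textbf{Bun}_{r,d})$ alone yields the Siegel formula of Theorem~\ref{T:volform}, which involves no sum over partitions; the partition sum in (\ref{E:mainT1}) exists only because of the auxiliary nilpotent endomorphism. So while your second step correctly identifies the analytic tools (constant terms of Eisenstein series, shuffle presentation of the spherical Hall algebra), the object those tools are applied to is the Jordan stratification of $\mathbf{Nil}^{\geq 0}_{\a}$, not the HN stratification of $\textbf{Bun}_{r,d}$.
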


\vspace{.1in}

\paragraph{\textit{Examples.}} We list below the polynomials $A_{g,r,d}$ for $r \leq 2$~:
$$A_{g,1,d}=\prod_{i=1}^{2g}(1-\a_i)$$
$$A_{g,2,d}=\prod_{i=1}^{2g}(1-\a_i) \cdot \left( \frac{\prod_{i} (1-q\a_i)}{(q-1)(q^2-1)} -\frac{\prod_i (1+\a_i)}{4(1+q)} + \frac{\prod_i (1-\a_i)}{2(q-1)} \left[ \frac{1}{2}-\frac{1}{q-1}-\sum_i \frac{1}{1-\a_i}\right]\right)$$
%$$A_{g,3,d}=..$$

\vspace{.1in}

Theorems~\ref{T:1} and \ref{T:2} are proved in Sections 4 and 5 respectively. 

\vspace{.1in}

\begin{conj}\label{C:conj1} The polynomial $A_{g,r,d}$ does not depend on $d$.\end{conj}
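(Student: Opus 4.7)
The plan is to reduce the conjecture to a residue-vanishing statement for the single rational function $A_{g,r}(z)$ of Theorem \ref{T:2}. The formula
$$A_{g,r,d} = -\sum_{\xi \in \mu_r} \xi^{-d}\, \text{Res}_{z=\xi}\!\left( A_{g,r}(z) \frac{dz}{z}\right)$$
is $r$-periodic in $d$ by construction, so the weak statement $A_{g,r,d} = A_{g,r,d+r}$ is automatic. This is consistent with the classical observation that, after passing to a suitable extension $\mathbb{F}_{q^m}$ on which $X$ admits a line bundle of degree one, tensoring gives a bijection between indecomposable bundles of types $(r,d)$ and $(r,d+r)$; the Zariski density of tuples of Weil numbers in $T_g$ as $X$ and $m$ vary then forces the polynomial identity in $K_g$. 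The actual content of Conjecture \ref{C:conj1} is therefore the vanishing
$$\text{Res}_{z=\xi}\!\left(A_{g,r}(z) \frac{dz}{z}\right)=0 \qquad \forall\, \xi \in \mu_r\setminus\{1\},\; \forall\, r\geq 1,$$
equivalently: $A_{g,r}(z)$ has no pole at any nontrivial root of unity.

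I would attack this from two complementary angles. \textbf{(A) Direct pole analysis.} Starting from the generating series $F_g(T,z)=\sum_\lambda q^{(g-1)\langle\lambda,\lambda\rangle} J_\lambda(z) H_\lambda(z) T^{|\lambda|}$, track where poles in $z$ can come from. The $\zeta_X$-factors in $J_\lambda(z)$ contribute candidate poles at $z^{a(s)}=1$ and $z^{a(s)}=q^{-1}$, while the iterated residues defining $H_\lambda$ specialise auxiliary variables to powers of $q$ and thus add poles only at $z=q^{-1/k}$ and at $z=1$. Since the plethystic logarithm together with the Adams operators $\psi_k$ only moves poles to $k$-th roots of the original pole locations, the candidate poles of the coefficient $A_{g,r}(z)$ at roots of unity come exclusively from the $z^a=1$ locus, i.e.\ at $r$-th roots of unity. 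Showing that these cancel is then a combinatorial identity between the contributions of different partitions to the same residue. \textbf{(B) Geometric input via Higgs moduli.} Using the later identification, in the paper, of $A_{g,r,d}$ with the point count of the moduli space of stable Higgs bundles of rank $r$ and degree $d$ (in large characteristic), one can invoke the classical fact that for $\gcd(r,d)=1$ these moduli are smooth, irreducible, and pairwise diffeomorphic over $\mathbb{C}$; purity and polynomiality then give $A_{g,r,d}=A_{g,r,d'}$ whenever $\gcd(r,d)=\gcd(r,d')=1$, which already rules out many potential residues and pins down the shape of $A_{g,r}(z)$.

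The principal obstacle is the non-coprime case. Approach (B) is insufficient there, since the stable moduli either degenerate or become empty and no diffeomorphism argument is available between values of $d$ sharing a common factor with $r$. Approach (A) must then carry the proof, and the hard part is to find a conceptual reason for the residue cancellation rather than to grind it out from the plethystic identity. I would look for a hidden symmetry of $F_g(T,z)$ — perhaps arising from Verdier duality on the stack of coherent sheaves on $X$, or from the action of a centre in the 2-dimensional Hall algebra of $X$ — that manifestly kills residues at nontrivial roots of unity. Translating such a symmetry into a useable identity at the level of the combinatorial formula of Theorem \ref{T:2}, where it is far from apparent, is where I expect the real work to lie.
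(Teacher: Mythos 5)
You should first be aware that the statement you were given is a conjecture: the paper itself proves it only when $r$ is prime (Appendix~C) and otherwise leaves it open, citing its equivalence with the Hausel--Thaddeus conjecture. Your reduction to the vanishing of $\text{Res}_{z=\xi}\big(A_{g,r}(z)\frac{dz}{z}\big)$ at nontrivial $\xi\in\mu_r$, and your remark that $r$-periodicity in $d$ is automatic, coincide exactly with the paper's own reformulation. But your approach (A) stops precisely where the work begins: "showing that these cancel is then a combinatorial identity between the contributions of different partitions" is a restatement of the conjecture, not an argument. The paper's Appendix~C closes this in the prime case by an observation you do not make: when $r$ is prime every nontrivial $r$th root of unity is \emph{primitive}, hence can only be a pole of a summand of (\ref{E:mainT1}) that literally contains a factor $(1-z^r)^{-1}$. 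Inspection shows that exactly two such summands contribute to the coefficient of $T^r$ after applying $\text{Log}$, namely $q^{(g-1)\langle(r),(r)\rangle}J_{(r)}(z)H_{(r)}(z)$ (whose $H_{(r)}$ carries $\prod_{j\leq r}(1-z^j)^{-1}$) and the term $\frac{\mu(r)}{r}\psi_r\big(q^{(g-1)\langle(1),(1)\rangle}J_{(1)}(z)H_{(1)}(z)\big)$ produced by the plethystic logarithm; a direct computation shows their residues at primitive $r$th roots of unity cancel. For composite $r$ the nonprimitive roots receive contributions from many more partitions and no such isolation is available, which is exactly why the general case remains open.

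Your approach (B) has a separate gap even in the coprime range. The diffeomorphisms between the spaces $Higgs^{st}_{r,d}(X_{\mathbb{C}})$ for varying $d$ coprime to $r$ identify only Betti numbers, i.e.\ the one-variable specializations $A_{g,r,d}(t,\ldots,t)$ of Corollary~\ref{Cor:2}; they do not identify $\mathbb{F}_q$-point counts or the full $W_g$-invariant polynomials $A_{g,r,d}\in K_g$, because the varieties are not isomorphic (nor even known to have equal classes in the Grothendieck ring) as $d$ varies. Independence of the full polynomial from $d$ is essentially the Hausel--Thaddeus statement that the paper records as equivalent to Conjecture~\ref{C:conj1}, so invoking it here is circular. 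As a result (B) does not "rule out many potential residues": it constrains only the diagonal specialization, which could vanish at a nontrivial root of unity while the multivariate residue does not.
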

In view of Theorem~\ref{T:2}, this conjecture may be recast in purely combinatorial terms as follows :

\vspace{.1in}

\begin{conj} The rational function $A_{g,r}(z)$ is regular at nontrivial $r$th roots of unity.
\end{conj}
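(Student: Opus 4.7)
The two conjectures are equivalent: by the residue formula of Theorem~\ref{T:2}, the values $A_{g,r,d}$ form the inverse discrete Fourier transform along $\mu_r$ of the residues $\text{Res}_{z=\xi}(A_{g,r}(z)\,dz/z)$, with kernel $\xi^{-d}$. Hence $d$-independence is equivalent to the vanishing of these residues at $\xi \in \mu_r \setminus \{1\}$, i.e. to regularity there. We already know --- either from this Fourier formula, or geometrically via tensoring with a line bundle of degree $1$ --- that $A_{g,r,d}$ depends on $d$ only modulo $r$; the conjecture is the stronger statement of full $d$-independence.

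My plan is to establish this via the stable-Higgs-bundle interpretation promised in the abstract: $A_{g,r,d}(\sigma_X) = |\mathcal{M}^{\text{Higgs},\text{st}}(r,d)(\mathbb{F}_q)|$. When $\gcd(r,d)=1$ the moduli space $\mathcal{M}^{\text{Higgs},\text{st}}(r,d)$ is smooth and projective over its Hitchin base $B$, and Ng\^o's support theorem together with variation-of-stability arguments (Hausel--Thaddeus, Maulik--Shen) yields an isomorphism of the direct-image complexes $Rh_{*}\qlb$ for different coprime degrees; combined with Deligne's purity this gives $d$-independence of the point count within each coprimality class. The novelty required is to bridge different coprimality classes --- equivalently, to accommodate the non-coprime case, where $\mathcal{M}^{\text{Higgs},\text{st}}(r,d)$ is no longer projective and strictly semistable Higgs bundles intrude. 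Here I would stratify the semistable stack by Jordan--H\"older type: each stratum fibers over a product of stable Higgs moduli of smaller rank, with coprimality of the partial degrees being automatic for each factor. Reassembling these strata via the plethystic $\text{Log}$ should match the formula of Theorem~\ref{T:2}, and one expects the cohomological integrality theorem of Davison--Meinhardt, applied to the category of Higgs sheaves on $X$, to deliver $A_{g,r}(z)$ as a manifestly $d$-independent BPS-type generating function.

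The principal obstacle will be extending cohomological integrality, which is well-developed for quivers and local surfaces, to the global curve setting in a manner compatible with the Hitchin fibration and the nilpotent cone. A purely combinatorial attack on Theorem~\ref{T:2} --- showing by hand that the residues of $A_{g,r}(z)$ at $\xi \neq 1$ cancel --- is feasible for small $r$ (the explicit formula for $A_{g,2,d}$ displayed above already makes $d$-independence manifest, with no nontrivial second root of unity contribution; $r=3$ should be checkable directly), but no uniform combinatorial identity is apparent to me for general $r$. A useful intermediate target would be the weaker claim that $A_{g,r,d}$ depends on $d$ only through $\gcd(r,d)$: this is geometrically transparent from the stratification, and its proof might clarify exactly what additional input is required for the full conjecture.
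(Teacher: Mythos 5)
The statement you are addressing is stated in the paper as a \emph{conjecture}: the author does not prove it in general, and the only proof content supplied is Appendix~C, which treats the case $r$ prime. Your opening reduction is correct and is exactly the paper's own: since $A_{g,r}(z)$ has at most simple poles, all supported on $\mu_r$, and since $A_{g,r,d}=-\sum_{\xi\in\mu_r}\xi^{-d}\mathrm{Res}_{z=\xi}\bigl(A_{g,r}(z)\tfrac{dz}{z}\bigr)$ with $d\mapsto\xi^{-d}$ linearly independent on $\Z/r\Z$, independence of $d$ is equivalent to the vanishing of the residues at every $\xi\neq 1$, i.e.\ to regularity there. So far you have only restated the equivalence of the two conjectures, which the paper already asserts.

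Beyond that, your text is a research program rather than a proof, and the gap is the one you yourself flag: everything hinges on transporting support-theorem and cohomological-integrality technology (Ng\^o, Maulik--Shen, Davison--Meinhardt) to the moduli of semistable Higgs bundles on a curve in the \emph{non-coprime} case, where strictly semistables appear, the moduli space is singular and non-proper over the Hitchin base, and Theorem~\ref{T:3} no longer identifies $A_{g,r,d}$ with a point count of $Higgs^{st}_{r,d}$ (that identification is proved only for $\gcd(r,d)=1$ and $\mathrm{char}>C(r,d)$, so even within a coprimality class one must still argue that the polynomial identity in $K_g$ propagates from large characteristic to all curves). None of this is carried out, so the conjecture remains open in your write-up exactly where it is open in the paper. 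For the case the paper does prove, the argument is more specific than your "checkable for small $r$" remark and uses no geometry: when $r$ is prime every nontrivial $r$th root of unity is primitive, hence can be a pole only of those contributions to the coefficient of $T^r$ in (\ref{E:mainT1}) that carry a factor $(1-z^r)^{-1}$; there are exactly two such terms, the $\lambda=(r)$ summand and the image under $\tfrac{\mu(r)}{r}\psi_r$ of the $\lambda=(1)$ summand arising from the plethystic $\mathrm{Log}$, and their residues at each primitive root cancel by direct computation. If you want to make partial progress in the spirit of the paper, that residue cancellation is the concrete combinatorial identity to generalize; your proposed intermediate goal (dependence only through $\gcd(r,d)$) is reasonable but is likewise not established by anything in the paper or in your sketch.
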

It follows from the proof of Theorem~\ref{T:2} that $A_{g,r}(z)$ is regular outside of $\mu_r$ and has at most simple poles. Thus the above conjecture says that $(1-z)A_{g,r}(z)$ belongs to $K_g[z]$, in which case we would simply have $$A_{g,r,d}=\left[(1-z)A_{g,r}(z)\right]_{|z=1}, \qquad \forall d.$$

\vspace{.1in}

 As supporting evidence, we prove Conjecture~\ref{C:conj1}) by direct computation when $r$ is prime, see Appendix~C.

\vspace{.2in}

\paragraph{\textbf{1.4.}} Let us now assume that $r$ and $d$ are relatively prime. Let $Higgs^{st}_{r,d}({X})$ stand for the moduli space of stable Higgs bundles over $X$ (see Section~\textbf{6.2}). This is a (smooth, quasi-projective) variety defined over $\mathbb{F}_q$ and we may consider its set of $\mathbb{F}_q$-rational points $Higgs^{st}_{r,d}(X)(\mathbb{F}_q)$. The second main result of this paper, whose proof is very much inspired by the work of
Crawley-Boevey and Van den Bergh (see \cite{CBV}) is the following~:

\vspace{.1in}

\begin{theo}\label{T:3} Let $(r,d)$ be relatively prime. There exists an (explicit) constant $C=C(r,d)$ such that for any smooth projective geometrically connected curve $X$ of genus $g$ defined over $\mathbb{F}_{q}$ with $char(\mathbb{F}_q) >C$, we have
$$|Higgs^{st}_{r,d}(X)(\mathbb{F}_q)|=q^{1+(g-1)r^2 }\mathcal{A}_{r,d}(X).$$
\end{theo}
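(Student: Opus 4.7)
The plan is to mimic, in the setting of Higgs bundles on a curve, the Crawley-Boevey--Van den Bergh strategy for Nakajima varieties attached to quivers, with Serre duality on $X$ playing the role of the moment-map equation for the double of a quiver. Since $\gcd(r,d)=1$ every semistable Higgs bundle is stable, with automorphism group $\mathbb{F}_q^\times$; so writing $\mathcal{H}^{st}_{r,d}$ for the stack of stable Higgs bundles and $\|\cdot\|$ for the groupoid mass of $\mathbb{F}_q$-points, we have $|Higgs^{st}_{r,d}(X)(\mathbb{F}_q)|=(q-1)\|\mathcal{H}^{st}_{r,d}\|$, and it suffices to prove
$$\|\mathcal{H}^{st}_{r,d}\|=\frac{q^{1+(g-1)r^2}}{q-1}\A_{r,d}(X).$$

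The first step is to compute the mass of the entire Higgs stack $\mathcal{H}_{r,d}=T^*\mathrm{Bun}_{r,d}$ using its projection onto $\mathrm{Bun}_{r,d}$: the fiber over $\mathcal{V}$ is $H^0(X,\mathcal{E}nd(\mathcal{V})\otimes\Omega_X)$, of $\mathbb{F}_q$-dimension $h^0(\mathcal{E}nd(\mathcal{V}))+(g-1)r^2$ by Riemann-Roch and Serre duality, so
$$\|\mathcal{H}_{r,d}\|=q^{(g-1)r^2}\sum_{\mathcal{V}\in\mathrm{Bun}_{r,d}(\mathbb{F}_q)/\sim}\frac{|\mathrm{End}(\mathcal{V})|}{|\mathrm{Aut}(\mathcal{V})|}.$$
The right-hand sum is the mass of the groupoid of pairs $(\mathcal{V},\phi)$ with $\phi\in\mathrm{End}(\mathcal{V})$, equivalently of coherent sheaves on the surface $T^*X$ supported set-theoretically on the zero section. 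The second step is to pass from the full Higgs stack to its stable locus. A Harder-Narasimhan stratification of $\mathcal{H}_{r,d}$ by HN type of the Higgs bundle (the open stratum being stable since $\gcd(r,d)=1$), combined with the standard HN volume recursion, expresses $\|\mathcal{H}^{st}_{r,d}\|$ as the coefficient of $T^r z^d$ in the plethystic logarithm of a generating series assembled from the $\|\mathcal{H}_{r',d'}\|$'s. The Crawley-Boevey--Van den Bergh-type identity one must then prove is that this plethystic logarithm reproduces the right-hand side of \eqref{E:mainT1}: stratifying $\sum_\mathcal{V}|\mathrm{End}(\mathcal{V})|/|\mathrm{Aut}(\mathcal{V})|$ by the Jordan / Fitting type $\lambda$ of $\phi$ should recognise the partition-indexed expression $\sum_\lambda q^{(g-1)\langle\lambda,\lambda\rangle}J_\lambda(z)H_\lambda(z)T^{|\lambda|}$ as a Hall-algebra formula for the resulting weighted count, and the residue operation extracting $A_{g,r,d}$ from $A_{g,r}(z)$ in Theorem~\ref{T:2} then absorbs the remaining $z$-variable and contributes the extra factor of $q$ in $q^{1+(g-1)r^2}$.

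The main obstacle is establishing this last Hall-algebra identity. In the quiver setting, \cite{CBV} rely on Fourier inversion applied to the genuine moment-map equation $\mu=0$; for Higgs bundles there is no moment map, only Serre duality, so the identification must be carried out directly inside the Hall algebra of coherent sheaves on $T^*X$. This entails decomposing each endomorphism via its Jordan / Fitting components, computing the local contribution of each indecomposable summand in terms of the Macdonald-type symmetric functions $J_\lambda, H_\lambda$, and matching the result term-by-term against the combinatorial expressions of Theorem~\ref{T:2}. The characteristic hypothesis $\mathrm{char}(\mathbb{F}_q)>C(r,d)$ enters precisely here: it guarantees that Jordan decomposition of endomorphisms is $\mathbb{F}_q$-rational and that the Springer-type stratification of the nilpotent cone in $\mathfrak{gl}_r$ yields the expected polynomial orbit counts, so that the Hall-algebra computation returns a genuine element of $K_g$ rather than a stratified sum of wild characteristic-$p$ phenomena.
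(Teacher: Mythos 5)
Your strategy is genuinely different from the paper's, and as written it has gaps that go beyond missing details. First, your opening computation is not available: the mass $\sum_{\mathcal{V}}|\mathrm{End}(\mathcal{V})|/|\mathrm{Aut}(\mathcal{V})|$ over \emph{all} of $\mathrm{Bun}_{r,d}$ diverges for $r\geq 2$ (each summand is $\geq 1$ and there are infinitely many isomorphism classes, e.g. $\mathcal{O}(n)\oplus\mathcal{O}(-n)$), so ``the mass of the entire Higgs stack'' is infinite. Any version of your plan must first introduce a truncation (by Harder--Narasimhan slopes, or by restricting to nilpotent endomorphisms) and then check compatibility of that truncation with the HN recursion on the Higgs side; this is essentially the content of the companion paper \cite{MS}, which the introduction cites as a \emph{different} proof, and it is not a formality. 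Second, the step you label ``the main obstacle'' --- matching the plethystic logarithm of the Higgs-side count against the right-hand side of (\ref{E:mainT1}) --- is the entire theorem; declaring that the Jordan/Fitting stratification ``should recognise'' the partition sum is not an argument, and the paper's Sections 2--5 establish that identity only for the count of \emph{indecomposable bundles via nilpotent pairs}, not for the Higgs stack. Third, your explanation of the characteristic hypothesis (rationality of Jordan decomposition, Springer strata) does not match what is actually needed.

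The paper's proof takes an entirely different route and, notably, \emph{does} have a genuine moment map, contrary to your remark that for Higgs bundles ``there is no moment map, only Serre duality.'' Following Alvarez-Consul--King, one embeds a suitable open substack of $Coh_{r,d}$ into the representation space $E$ of a Kronecker quiver via a two-line-bundle quot-scheme presentation; $T^*E$ is the representation space of the doubled quiver with its usual moment map $\mu$. One then forms $\mathcal{X}=\mu^{-1}(k\lambda)$ for the character $\lambda(u_1,u_2)=l_2\mathrm{Tr}(u_1)-l_1\mathrm{Tr}(u_2)$ and its GIT quotient $\mathcal{Y}\to\mathbb{A}^1$: the fiber $\mathcal{Y}_0$ is identified with $Higgs^{st}_{r,d}(X)$ (Proposition~\ref{P:Higgs}), while Lemma~\ref{L:indecomp} shows that $\mathcal{Y}_1$ fibers over the set of indecomposable bundles with fibers of computable size, giving $|\mathcal{Y}_1(k)|=q^{1+(g-1)r^2}|I_{r,d}|$ directly. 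The equality $|\mathcal{Y}_0(k)|=|\mathcal{Y}_1(k)|$ then follows from a contracting $\mathbb{G}_m$-action and the Bialynicki-Birula--Hesselink decomposition --- no Hall-algebra identity on the Higgs side is ever needed, and Theorem~\ref{T:2} enters only afterwards, to evaluate $\mathcal{A}_{r,d}(X)$. The constant $C(r,d)$ arises as the bound $p>l_1l_2$ ensuring $\lambda(e_1,e_2)\neq 0$ for every nontrivial pair of projectors, which is exactly what makes $\lambda$ detect indecomposability in Lemma~\ref{L:indecomp}. If you want to pursue your route, you should compare it with \cite{MS}; to recover the paper's proof you would instead need the deformation construction above.
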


In the companion paper \cite{MS} written in collaboration with S. Mozgovoy, we give a different proof of the above theorem, which works in all characteristics, as well as a generalization to the case of the moduli spaces of twisted Higgs bundles.
In particular, by Theorems~\ref{T:1} and \ref{T:2}, the number of $\mathbb{F}_q$-rational points of $Higgs^{st}_{r,d}({X})$ is given by some explicit polynomial in $K_g$. 
 Theorems~\ref{T:1}, \ref{T:2} and \ref{T:3} have the following two corollaries~:

\vspace{.1in}

\begin{cor}\label{Cor:1} Let $(r,d)$ be relatively prime. Then $A_{g,r,d} \in \mathbb{N}[-z_1, \ldots, -z_{2g}]^{W_g}$.
Moreover, for any smooth, geometrically connected projective curve $X$ of genus $g$ defined over a field $\mathbb{F}_q$ of characteristic $p >C(r,d)$ the Frobenius eigenvalues in $H^n_c(Higgs^{st}_{r,d}(X \otimes \overline{\mathbb{F}_q}), \qlb)$ are all of the form
$$\lambda=\prod_j \sigma_j^{n_j}, \qquad \sum n_i=n$$ 
where $\sigma_X=(\sigma_1, \ldots, \sigma_{2g})$. Write $A_{g,r,d}=\sum_{i_1, \ldots, i_{2g}} a_{i_1, \ldots, i_{2g}} (-z_1)^{i_1} \cdots (-z_{2g})^{i_{2g}}$. The multiplicity of the eigenvalue $\sigma_1^{i_1} \cdots \sigma_{2g}^{i_{2g}}$ is equal to $a_{i_1, \ldots, i_{2g}}$. In particular,
$$\sum_n dim\; H_c^n(Higgs^{st}_{r,d}(\overline{X}), \overline{\mathbb{Q}_l}) t^n= t^{2(1+ (g-1)r^2)}A_{g,r,d}(t,t, \ldots, t).$$
\end{cor}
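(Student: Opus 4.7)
The plan is to combine Theorem~\ref{T:3} with purity of the $\ell$-adic cohomology of $Higgs^{st}_{r,d}$, and then read off individual Frobenius weights via the Grothendieck--Lefschetz trace formula. Theorem~\ref{T:3} identifies $|Higgs^{st}_{r,d}(X)(\mathbb{F}_q)|$ with $q^{c}A_{g,r,d}(\sigma_X)$, where $c=1+(g-1)r^2$; the task is to refine this equality of point counts into a weight-by-weight statement on cohomology.

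The first input is purity. The scaling $\mathbb{G}_m$-action on Higgs fields, combined with the properness of the Hitchin map $h$ and the fact that $\mathbb{G}_m$ acts with strictly positive weights on the Hitchin base, shows that the $\mathbb{G}_m$-fixed locus lies in $h^{-1}(0)$ and is therefore proper. A Bia\l{}ynicki--Birula decomposition stratifies $Higgs^{st}_{r,d}$ by affine bundles over these proper fixed components, yielding purity of $H^n_c(Higgs^{st}_{r,d}(\overline{X}),\qlb)$ of weight $n$ (cf.\ Markman, Hausel--Rodriguez-Villegas). The second input identifies the Frobenius eigenvalues as monomials in $\sigma_1,\ldots,\sigma_{2g}$: the fixed components are moduli of stable chains (Hodge bundles) on $X$, iteratively built from Jacobians and symmetric products of $X$, and by Macdonald's formula the Frobenius eigenvalues on the cohomology of these building blocks are monomials in the $\sigma_j$. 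The BB strata introduce further Tate twists, i.e.\ powers of $q=\sigma_{2i-1}\sigma_{2i}$. Hence every Frobenius eigenvalue on $H^n_c$ has the form $\prod_j \sigma_j^{n_j}$, with $\sum_j n_j = n$ by purity.

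Applying the Grothendieck--Lefschetz trace formula and using the identity $(-1)^n \prod_j \sigma_j^{n_j} = \prod_j (-\sigma_j)^{n_j}$,
$$|Higgs^{st}_{r,d}(X)(\mathbb{F}_q)| = \sum_{\vec n} m_{\vec n}\, \prod_j (-\sigma_j)^{n_j},$$
where $m_{\vec n}$ is the multiplicity of $\prod_j \sigma_j^{n_j}$ in $H^{|\vec n|}_c$. Comparing with $q^c A_{g,r,d}(\sigma_X)$, and using that as $X$ varies over smooth projective curves of genus $g$ over finite fields the tuples $\sigma_X$ are Zariski-dense in $T_g(\mathbb{C})/W_g$, one identifies the $m_{\vec n}$ with the coefficients $a_{\vec i}$ of $A_{g,r,d}$ in the $(-z_j)^{i_j}$-basis (after reindexing to absorb the factor $q^c$, which is itself a symmetric monomial in the $\sigma_j$). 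Since $m_{\vec n} \in \mathbb{N}$, this yields $A_{g,r,d} \in \mathbb{N}[-z_1,\ldots,-z_{2g}]^{W_g}$, and summing multiplicities at fixed $n$ yields the Poincar\'e polynomial identity.

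The main obstacle is the claim that the Frobenius eigenvalues are genuine monomials in the $\sigma_j$'s. Purity alone only constrains their absolute values to $q^{n/2}$; the monomial structure requires either an explicit geometric analysis of the BB fixed locus (moduli of chains, built from symmetric products and Jacobians of $X$, whose $\ell$-adic cohomology has a controllable motivic structure via Macdonald's formula), or an abstract motivic rigidity argument exploiting the polynomial dependence of the point count on $\sigma_X$ as $X$ varies in families.
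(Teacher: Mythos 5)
Your overall architecture matches the paper's: purity of $H^n_c(Higgs^{st}_{r,d})$ via the Bia\l{}ynicki--Birula decomposition attached to the scaling action (whose fixed locus sits in the proper zero fiber of the Hitchin map), followed by the Grothendieck--Lefschetz trace formula and a rigidity argument comparing the alternating trace with $q^{1+(g-1)r^2}A_{g,r,d}(\sigma_X)$. Where you diverge is at the step you yourself flag as the main obstacle. The paper does \emph{not} analyze the fixed components geometrically. It argues purely arithmetically: writing $A_{g,r,d}=B_{r,d}/R(q)$ with $B_{r,d}\in\mathbb{Z}[T_g]^{W_g}$ and $R$ monic, the trace formula over every extension $\mathbb{F}_{q^l}$ gives an identity of power sums $\sum_a u_a^l=\sum_b v_b^l$ for all $l\geq 1$, where the $u_a,v_b$ are monomials in the $\sigma_j$ or terms $q^n c_{i,j}$; this forces equality of the two multisets, and then separating by complex absolute value (using purity, $|c_{i,j}|=q^{i/2}$) and inducting downward from the top cohomological degree identifies each $c_{i,j}$ with a monomial $\prod_j\sigma_j^{n_j}$, $\sum n_j=i$. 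This is essentially the "abstract rigidity" alternative you mention but do not execute; note that it needs the "for all $l$" multiset argument rather than Zariski density alone, because distinct monomials in the $\sigma_j$ can coincide numerically for a given curve.

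Your primary route --- reducing to the cohomology of the $\mathbb{G}_m$-fixed components --- is not wrong in principle, but the assertion that these chain moduli are "iteratively built from Jacobians and symmetric products of $X$" so that Macdonald's formula applies is only literally true for chains of type $(1,\ldots,1)$; for general rank the statement that their compactly supported cohomology (or motive) lies in the ring generated by the motive of the curve is the substantial content of the Garcia-Prada--Heinloth--Schmitt computations, not an observation. As written, that step is a genuine gap in your main argument; the paper's power-sum argument is the cheaper way to close it, and it is the route you should carry out in detail (including the bookkeeping that identifies the multiplicity of $\sigma_1^{i_1}\cdots\sigma_{2g}^{i_{2g}}$ with the coefficient $a_{i_1,\ldots,i_{2g}}$, from which the positivity and integrality of the coefficients and the Poincar\'e polynomial identity follow as you indicate).
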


\vspace{.1in}

It would be a consequence of Conjecture~\ref{C:conj1} that Corollary~\ref{Cor:1} holds without the coprimality assumption, thus yielding a global analog of Kac's positivity conjecture for $A$-polynomials of quivers (the latter has recently been proved in \cite{HLRVAnnals}). 

\vspace{.1in}

\begin{cor}\label{Cor:2} Let $X_{\mathbb{C}}$ be a compact Riemann surface of genus $g$. Let $(r,d)$ be relatively prime. Then 
$$\sum_n dim\; H_c^n(Higgs^{st}_{r,d}({X}_{\mathbb{C}}), \mathbb{Q}) t^n= t^{2(1+ (g-1)r^2)}A_{g,r,d}(t,t, \ldots, t),$$
where $H^n_c$ denotes singular cohomology with compact support.
\end{cor}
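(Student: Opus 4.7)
The plan is to deduce Corollary~\ref{Cor:2} from Corollary~\ref{Cor:1} by a standard spreading-out argument together with Artin's comparison theorem.

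First, choose a finitely generated $\mathbb{Z}$-subalgebra $R \subset \mathbb{C}$ and a smooth projective curve $\mathcal{X} \to \mathrm{Spec}(R)$ of genus $g$ with $\mathcal{X} \otimes_R \mathbb{C} \cong X_{\mathbb{C}}$. Upon shrinking $\mathrm{Spec}(R)$, the relative moduli space $\pi : \mathcal{H} := Higgs^{st}_{r,d}(\mathcal{X}/R) \to \mathrm{Spec}(R)$ exists, is smooth of relative dimension $2(1+(g-1)r^2)$, and has closed points of arbitrarily large residue characteristic $p > C(r,d)$. Fix a prime $\ell$ invertible on $\mathrm{Spec}(R)$. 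The key step is to prove that, after further shrinking $R$, the higher direct images $R^n \pi_! \overline{\mathbb{Q}}_\ell$ are lisse on $\mathrm{Spec}(R)$ for every $n$. Granting this, the fiber dimensions are constant along $\mathrm{Spec}(R)$: at a suitable closed point Corollary~\ref{Cor:1} identifies them with the coefficients of $t^{2(1+(g-1)r^2)} A_{g,r,d}(t, \ldots, t)$, while at the geometric generic point Artin's comparison theorem $H^n_{c,\mathrm{et}}(Higgs^{st}_{r,d}(X_{\mathbb{C}}), \overline{\mathbb{Q}}_\ell) \cong H^n_c(Higgs^{st}_{r,d}(X_{\mathbb{C}}), \mathbb{Q}) \otimes_{\mathbb{Q}} \overline{\mathbb{Q}}_\ell$ identifies them with the singular Betti numbers. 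Summing over $n$ yields the statement of the corollary.

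The principal obstacle is the lisseness of $R^n \pi_! \overline{\mathbb{Q}}_\ell$, which is not immediate since $\pi$ is not proper. My approach would be to exploit the relative Hitchin fibration $h : \mathcal{H} \to \mathcal{A}$, where $\mathcal{A} \to \mathrm{Spec}(R)$ is a vector bundle and $h$ is proper. By proper base change $Rh_* \overline{\mathbb{Q}}_\ell$ is a bounded constructible complex on $\mathcal{A}$, compatible with arbitrary base change. Combined with the $\mathbb{G}_m$-scaling action on $\mathcal{A}$ contracting it to the origin (semiprojectivity of $\mathcal{H}$ in the sense of Hausel--Rodriguez-Villegas), this yields a Bialynicki-Birula-type decomposition of $R\pi_! \overline{\mathbb{Q}}_\ell$ in terms of constructible sheaves supported on the proper fixed locus of the $\mathbb{G}_m$-action; after shrinking $R$ so that the fixed locus and its attracting strata are defined relatively, each summand becomes lisse over $\mathrm{Spec}(R)$, whence the desired lisseness of the $R^n \pi_! \overline{\mathbb{Q}}_\ell$.
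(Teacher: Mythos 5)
Your argument is essentially the one in the paper: spread out over an arithmetic base, use local constancy of $R^n\pi_!\overline{\mathbb{Q}}_\ell$ on a dense open subset of the base to transfer the Betti numbers supplied by Corollary~\ref{Cor:1} from a closed fibre of large residue characteristic to the geometric generic fibre, and conclude by Artin's comparison theorem. Two remarks on where you diverge. First, what you single out as the principal obstacle is not actually one: for any separated morphism $\pi$ of finite type the sheaves $R^n\pi_!\overline{\mathbb{Q}}_\ell$ are constructible (Deligne's finiteness theorem, applied via a compactification $\pi=\bar{\pi}\circ j$ with $\bar{\pi}$ proper), hence lisse on a dense open subset of the integral base after shrinking, and the formation of $R\pi_!$ commutes with arbitrary base change by the very construction of $f_!$; so the non-properness of $\pi$ costs nothing, and the Hitchin-fibration/Bialynicki--Birula machinery you propose, while it would work (and is essentially how the paper establishes purity in the proof of Corollary~\ref{Cor:25}), is unnecessary overhead for this step. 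Second, you spread out $X_{\mathbb{C}}$ itself over a finitely generated $\mathbb{Z}$-subalgebra of $\mathbb{C}$, which treats an arbitrary compact Riemann surface directly (after the harmless invariance of \'etale cohomology under the extension of algebraically closed fields from $\overline{\mathrm{Frac}(R)}$ to $\mathbb{C}$); the paper instead starts from a curve defined over $\mathbb{Q}$, spreads out over $\mathbb{Z}[1/N]$, and at the very end invokes the fact that all the complex varieties $Higgs^{st}_{r,d}(X_{\mathbb{C}})$ for genus-$g$ Riemann surfaces are diffeomorphic (to the $GL_r$ character variety) to cover the general case. Your variant avoids that extra input at the cost of a slightly larger base, and both routes are valid.
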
 

\vspace{.1in}

The Poincarr\'e polynomial of the moduli space of stable Higgs bundles on a compact Riemann surface $X_{\mathbb{C}}$ of genus $g$ has been computed in rank $2$ by Hitchin (\cite{Hitchin}), in rank $3$ by Gothen (see \cite{Gothen}) and in rank $4$ by Garcia-Prada, Heinloth and Schmitt (see \cite{Heinloth}). Hausel and Rodriguez-Villegas derived in (\cite{HRVInvent}) a conjectural formula for the mixed Hodge polynomial of the genus $g$ \textit{character variety}
for the group $GL_r$. The latter being homeomorphic $Higgs^{st}_{r,d}(X_{\mathbb{C}})$ (for any $d$)
their formula yields in particular a conjectural formula for the Poincarr\'e polynomials $H_c^n(Higgs^{st}_{r,d}(X_{\mathbb{C}}),\mathbb{Q})$.
This conjecture was extended by Mozgovoy (c.f. \cite{Mozgovoy}) to a conjectural formula for the motive of $Higgs^{st}_{r,d}({X}_{\overline{\mathbb{F}_q}})$, where $X_{\overline{\mathbb{F}_q}}$ is now a smooth projective curve of genus $g$ defined over $\overline{\mathbb{F}_q}$.
Our formula (Theorem~\ref{T:2}) bears a strong similarity to the formula conjectured by Hausel-Rodriguez-Villegas and to its extension by Mozgovoy~: it essentially differs from theirs by the presence of the rational functions $H_{\lambda}(z)$ (although of course if the main conjectures of \cite{HRVInvent} and \cite{Mozgovoy} are true then these formulas and ours compute the same numbers). Note also that Conjecture~\ref{C:conj1} is, by Theorem~\ref{T:2}, essentially equivalent to a conjecture by Hausel and Thaddeus (see \cite[Conj. 3.2]{HT}) claiming that the motive of $Higgs^{st}_{r,d}$ is independent of $d$.

\vspace{.1in}

From the fact that the moduli space $Higgs^{st}_{r,d}(X_{\mathbb{C}})$ is connected and of dimension $1+ (g-1)r^2$, we immediately deduce the following result~:

\vspace{.1in}

\begin{cor}Let $(r,d)$ be coprime. Then $A_{g,r,d}$ is unitary and of degree $2 (1 + (g-1)r^2)$.
\end{cor}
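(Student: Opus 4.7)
The plan is to deduce both assertions from Corollary~\ref{Cor:2} by comparing degrees and leading coefficients on the two sides. Set $d_0 := 1+(g-1)r^2$, so the target is to show $A_{g,r,d}$ is monic of degree $2d_0$.

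I would first collect the classical facts about $Higgs^{st}_{r,d}(X_\mathbb{C})$ in the coprime case: stability coincides with semistability, so the moduli space is smooth; it is irreducible (Hitchin/Nitsure/Simpson), hence connected; and its complex dimension is $2d_0$, twice the dimension $d_0$ of the moduli of stable vector bundles (equivalently, via the Hitchin fibration, twice the dimension of the Hitchin base). Thus $Higgs^{st}_{r,d}(X_\mathbb{C})$ is a smooth orientable connected manifold of real dimension $4d_0$, and Poincar\'e duality for compactly supported cohomology yields
$$H_c^{4d_0}(Higgs^{st}_{r,d}(X_\mathbb{C}),\mathbb{Q}) \;\cong\; H^0(Higgs^{st}_{r,d}(X_\mathbb{C}),\mathbb{Q}) \;\cong\; \mathbb{Q}, \qquad H_c^n = 0 \text{ for } n > 4d_0.$$

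Consequently the Poincar\'e polynomial $P(t) := \sum_n \dim H_c^n(Higgs^{st}_{r,d}(X_\mathbb{C}),\mathbb{Q})\,t^n$ is an honest polynomial in $t$ of degree exactly $4d_0$, with leading coefficient $1$. Corollary~\ref{Cor:2} provides the identity $P(t) = t^{2d_0} A_{g,r,d}(t,t,\ldots,t)$; dividing by $t^{2d_0}$ identifies $A_{g,r,d}(t,\ldots,t)$ as a polynomial in $t$ of degree $2d_0 = 2(1+(g-1)r^2)$ with leading coefficient $1$, which is precisely the statement that $A_{g,r,d}$ is unitary of degree $2(1+(g-1)r^2)$.

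The only substantive ingredient is the classical trio of facts about $Higgs^{st}_{r,d}(X_\mathbb{C})$ for coprime $(r,d)$ — smoothness, connectedness, and complex dimension $2d_0$; once these are granted everything reduces to matching top degrees on the two sides of Corollary~\ref{Cor:2}, with no hard obstacle.
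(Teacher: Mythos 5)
Your argument is exactly the paper's: the paper deduces the corollary "immediately" from the connectedness and dimension of $Higgs^{st}_{r,d}(X_{\mathbb{C}})$ together with Corollary~\ref{Cor:2}, and your Poincar\'e-duality computation of the top compactly supported cohomology is precisely the implicit step being invoked. The only cosmetic remark is that the paper's phrase "of dimension $1+(g-1)r^2$" is evidently a slip for $2(1+(g-1)r^2)$ (consistent with Section~6.9), which is the value you correctly use.
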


\vspace{.1in}

Theorem~\ref{T:2} has the following geometric corollary. Let $\mu~: Higgs^{st}_{r,d} \to \mathbb{A}$ be the Hitchin map (see e.g. \cite{Hitchin}) and let $\Lambda^{st}_{r,d}$ denote the zero fiber of $\mu$ (the stable global nilpotent cone). It is known that $\Lambda^{st}_{r,d}$ is a projective (in general singular) lagrangian subvariety of $Higgs_{r,d}^{st}$.

\vspace{.1in}

\begin{cor}\label{Cor:25} Assume that $gcd(r,d)=1$. The following hold~:
\begin{enumerate}
\item[i)] $(k=\mathbb{F}_q)$.  The variety $\Lambda^{st}_{r,d}$ is cohomologically pure, and the Frobenius eigenvalues in $H^i(\Lambda^{st}_{r,d}, \qlb)$ all of the form $\prod_j \a_j^{l_j}$ with $\sum_i l_j=i$. Moreover,
$$| \Lambda^{st}_{r,d}(\mathbb{F}_q) |=\overline{A}_{g,r,d}(\sigma_X)$$
where $\overline{A}_{g,r,d}(z_1, \ldots, z_{2g})=q^{2(1+(g-1)r^2} A_{g,r,d}(z_1^{-1}, \ldots, z_{2g}^{-1})$ is the Poincarr\'e dual of $A_{g,r,d}$.
\item[ii)] $(k=\mathbb{F}_q, \mathbb{C})$. We have
$$\text{dim}\; H^{1 + (g-1)r^2} (\Lambda^{st}_{r,d}, \mathbb{C})=\text{dim}\; H^{1 + (g-1)r^2}(Higgs_{r,d}^{st}, \mathbb{C})=A_{g,r,d}(0).$$
In particular, the number of irreducible components of $\Lambda_{r,d}^{st}$ is equal to $A_{g,r,d}(0)$.
\end{enumerate}
\end{cor}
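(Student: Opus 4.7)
The plan is to exploit the canonical $\mathbb{G}_m$-action $t\cdot(E,\phi)=(E,t\phi)$ on $Higgs^{st}_{r,d}$ together with the properness of the Hitchin map $\mu\colon Higgs^{st}_{r,d}\to\mathbb{A}=\bigoplus_{i=1}^{r}H^{0}(X,K_{X}^{i})$ to contract $Higgs^{st}_{r,d}$ cohomologically onto its central fibre $\Lambda^{st}_{r,d}$, and then to combine this contraction with Poincar\'e duality and Corollary~\ref{Cor:1}. The weights of the induced $\mathbb{G}_{m}$-action on $\mathbb{A}$ are $(1,2,\dots,r)$, all strictly positive, so by properness of $\mu$ the limit $\lim_{t\to 0}t\cdot x$ exists for every $x\in Higgs^{st}_{r,d}$ and lies in $\mu^{-1}(0)=\Lambda^{st}_{r,d}$; in particular the whole $\mathbb{G}_{m}$-fixed locus is contained in $\Lambda^{st}_{r,d}$. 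Applying Braden's hyperbolic-restriction theorem (or, equivalently, a Bialynicki--Birula argument for a smooth variety equipped with a proper $\mathbb{G}_{m}$-equivariant map to an attracting base) then yields an isomorphism
$$H^{*}(\Lambda^{st}_{r,d},\qlb)\ \xrightarrow{\ \sim\ }\ H^{*}(Higgs^{st}_{r,d},\qlb)$$
of Galois representations. Over $\mathbb{C}$ the same isomorphism is furnished by Hitchin's Morse-theoretic deformation retraction along the flow of the moment map for the induced $S^{1}$-action.

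\textbf{Proof of (i).} Set $N=2(1+(g-1)r^{2})=\dim_{\mathbb{C}}Higgs^{st}_{r,d}$. Poincar\'e duality on this smooth variety gives
$$H^{n}(Higgs^{st}_{r,d},\qlb)\ \cong\ H^{2N-n}_{c}(Higgs^{st}_{r,d},\qlb)^{\vee}(-N).$$
By Corollary~\ref{Cor:1} the right-hand side is pure of weight $n$ and its Frobenius eigenvalues are of the form $\prod_{j}\a_{j}^{n_{j}}$ with $\sum n_{j}=n$; combined with the cohomology isomorphism above, this transfers purity to $\Lambda^{st}_{r,d}$ and establishes the asserted eigenvalue form on $H^{i}(\Lambda^{st}_{r,d},\qlb)$. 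Projectivity of $\Lambda^{st}_{r,d}$ gives $H^{*}_{c}=H^{*}$, and the Grothendieck--Lefschetz trace formula yields
$$|\Lambda^{st}_{r,d}(\mathbb{F}_{q})|=\sum_{n}(-1)^{n}\,\mathrm{tr}\big(\mathrm{Frob}\mid H^{n}(\Lambda^{st}_{r,d},\qlb)\big).$$
Substituting Poincar\'e duality on the right-hand side and reading off the multiplicities $a_{i_{1},\ldots,i_{2g}}$ from Corollary~\ref{Cor:1}, a direct calculation collapses the sum to $q^{N}A_{g,r,d}(\sigma_{1}^{-1},\ldots,\sigma_{2g}^{-1})=\overline{A}_{g,r,d}(\sigma_{X})$.

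\textbf{Proof of (ii).} The equality $\dim H^{\bullet}(\Lambda^{st}_{r,d},\mathbb{C})=\dim H^{\bullet}(Higgs^{st}_{r,d},\mathbb{C})$ in each degree is immediate from the retraction of the first paragraph. To identify the common dimension with $A_{g,r,d}(0)$, extract the constant term of $A_{g,r,d}$ from the Poincar\'e polynomial identity of Corollary~\ref{Cor:1}: it equals $\dim H^{N}_{c}(Higgs^{st}_{r,d})$, the lowest non-vanishing compactly-supported cohomology group of $Higgs^{st}_{r,d}$. Poincar\'e duality then identifies this with the dimension of the corresponding real middle-degree cohomology of $Higgs^{st}_{r,d}$ and, via the retraction, with the top-degree cohomology of $\Lambda^{st}_{r,d}$. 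Since $\Lambda^{st}_{r,d}$ is a projective complex variety of pure complex dimension, this top-degree Betti number counts precisely its top-dimensional irreducible components.

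\textbf{Main obstacle.} The delicate step is the $\qlb$-cohomological retraction in positive characteristic. Over $\mathbb{C}$ the contraction is classical, but the \'etale version requires a careful application of Braden's theorem on hyperbolic restrictions to the proper $\mathbb{G}_{m}$-equivariant map $\mu$, together with the positivity of the weights on the Hitchin base. Once this step is in hand, both parts reduce to essentially formal manipulations of Corollary~\ref{Cor:1} and Poincar\'e duality on the smooth variety $Higgs^{st}_{r,d}$.
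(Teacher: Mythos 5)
Your proposal is correct and rests on the same mechanism as the paper's proof: the $\mathbb{G}_m$-action $(\mathcal{V},\theta)\mapsto(\mathcal{V},z\theta)$ is contracting because the Hitchin map is proper and equivariant for a positive-weight action on the base, and everything then reduces to Corollary~\ref{Cor:1} plus Poincar\'e duality. The one point where you genuinely diverge is the central step, the degree-preserving isomorphism of Galois modules $H^{*}(\Lambda^{st}_{r,d},\qlb)\simeq H^{*}(Higgs^{st}_{r,d},\qlb)$, which you import as a black box (Braden / hyperbolic restriction / Hitchin's retraction). The paper never asserts this isomorphism directly; instead it runs the Bialynicki--Birula--Hesselink decomposition \emph{twice} over the common fixed locus $Z=\bigsqcup_i Z_i$ --- the attracting cells paving $Higgs^{st}_{r,d}$ and the repelling cells paving $\Lambda^{st}_{r,d}$, each an affine fibration over a smooth projective $Z_i$ --- and then matches the two decompositions using Poincar\'e duality on each $Z_i$ together with the relation $n_i^{+}+n_i^{-}+\dim Z_i=\dim Higgs^{st}_{r,d}$. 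This is exactly the content of the step you flag as delicate, and it has two side benefits you should note: purity of both $\Lambda^{st}_{r,d}$ and $Higgs^{st}_{r,d}$ falls out for free (rather than being transferred from $H^{*}_{c}$ of the Higgs space as in your write-up), and the Lagrangian property of $\Lambda^{st}_{r,d}$ enters explicitly by forcing every attracting cell to have fibre dimension $\tfrac12\dim Higgs^{st}_{r,d}=1+(g-1)r^2$; that last fact is what identifies $A_{g,r,d}(0)=\dim H^{2(1+(g-1)r^2)}_{c}(Higgs^{st}_{r,d})$ with the number of components of $Z$, hence with the number of irreducible components of $\Lambda^{st}_{r,d}$, in part (ii) --- your argument uses it implicitly when you call $H^{2(1+(g-1)r^2)}(\Lambda^{st}_{r,d})$ the top-degree cohomology. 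Your trace computation for part (i) agrees with what the paper obtains by taking Frobenius traces in its decompositions.
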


\vspace{.1in}

It turns out that the constant term of $A_{g,r,d}$ can be described by a generating series formula similar to (\ref{E:mainT1})~:

\vspace{.1in}

\begin{cor}\label{Cor:26} The values of $A_{g,r,d}(0)$ are computed from the following generating series. Set
\begin{equation}\label{E:mainT10}
\sum_r A^0_{g,r}(z) T^r= - \text{Log}\left( \sum_{\lambda} z^{(g-1)\langle \lambda, \lambda \rangle-l(\lambda)}
K_{\lambda}(z) T^{|\lambda|}\right),
\end{equation}
where for $\lambda=(1^{r_1}, 2^{r_2}, \ldots)$ we have set
$$K_{\lambda}(z)=\frac{1}{\prod_{i}\prod_{j=1}^{r_i}(1-z^{-j})}.$$
Then
$$
A_{g,r,d}(0)=\sum_{\xi \in \mu_r} \xi^{-d} \text{Res}_{z=\xi} \left( A^0_{g,r}(z)\frac{dz}{z}\right).
$$
\end{cor}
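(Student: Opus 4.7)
The plan is to derive Corollary~\ref{Cor:26} by specializing the formula of Theorem~\ref{T:2} at $\alpha_1 = \cdots = \alpha_{2g} = 0$, which also forces $q = \alpha_{2i-1}\alpha_{2i}$ to $0$. Since $A_{g,r,d}$ can be evaluated at $\alpha_i = 0$ (either by Corollary~\ref{Cor:1} under coprimality, or by applying the residue formula of Theorem~\ref{T:2} directly after specialization), the constant term $A_{g,r,d}(0)$ is obtained by this substitution. Because $(q-1)|_{\alpha=0} = -1$, the generating series (\ref{E:mainT1}) becomes
\[
\sum_r A_{g,r}(z)\big|_{\alpha=0}\, T^r \;=\; -\text{Log}\bigl(\Phi(z,T)\big|_{\alpha=0}\bigr), \quad \Phi(z,T) \;=\; \sum_\lambda q^{(g-1)\langle\lambda,\lambda\rangle} J_\lambda(z) H_\lambda(z) T^{|\lambda|},
\]
and the residue formula of Theorem~\ref{T:2}, combined with the sign flip coming from $(q-1) = -1$, will then produce (\ref{E:mainT10}).

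The heart of the argument is to compute the $q \to 0$ limit of $\Phi(z,T)$ and to identify it, modulo factors whose plethystic logarithm has no residues at $\mu_r$, with $\Psi(z,T) = \sum_\lambda z^{(g-1)\langle\lambda,\lambda\rangle - l(\lambda)} K_\lambda(z) T^{|\lambda|}$. The box-by-box limits in $J_\lambda$ are direct:
\[
\zeta_X^*(q^{-u}z^v)\big|_{\alpha=0} = \begin{cases}\bigl((1-q^{-u}z^v)(1-q^{1-u}z^v)\bigr)^{-1}, & (u,v) \neq (1,0), \\ \bigl(q^{g-1}(q-1)\bigr)^{-1}, & (u,v) = (1,0). \end{cases}
\]
Collecting the resulting powers of $q$ with the prefactor $q^{(g-1)\langle\lambda,\lambda\rangle}$ and the $q$-powers coming from $H_\lambda\big|_{\alpha=0}$, the negative and positive powers of $q$ cancel and leave a finite limit. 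The accumulated $z^{1-g}$ factors packaged in the $\widetilde\zeta(z_i/z_j)$ defining $L(z_n,\ldots,z_1)$ combine, together with the final substitution $z_j \mapsto z^i q^{-r_{<i}}$ prescribed by the definition of $H_\lambda$, to produce exactly $z^{(g-1)\langle\lambda,\lambda\rangle - l(\lambda)}$, while the remaining rational part of $H_\lambda$ collapses to $K_\lambda(z) = 1/\prod_i \prod_{j=1}^{r_i}(1-z^{-j})$.

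The main obstacle is the evaluation of the iterated residue defining $\widetilde{H}_\lambda$ in the limit $q \to 0$: the residues are taken along the locus $z_{i+1}/z_i = q^{-1}$, which escapes to infinity as $q \to 0$, so naive substitution is not permitted. I would handle this by a rescaling $w_j = q^{c_j} z_j$, with exponents $c_j$ chosen so that the residue conditions become $w_{i+1}/w_i = 1$ and the residue contours become finite tori. In these variables the integrand is regular at $q = 0$ (because $1/(1-qz_{i+1}/z_i)\to 1$ and each $\widetilde\zeta(z_i/z_j)$ approaches a finite value), and the iterated residue reduces to a chain of Cauchy residues which evaluate, after the substitution prescribed by $H_\lambda$, to $K_\lambda(z)$. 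Once this explicit $q\to 0$ limit for $H_\lambda$ is in place, assembling the pieces yields the identification of $\Phi(z,T)\big|_{\alpha=0}$ with $\Psi(z,T)$ (up to terms that are residue-free at $\mu_r$) and hence the formula (\ref{E:mainT10}) of Corollary~\ref{Cor:26}.
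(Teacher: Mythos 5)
Your overall strategy coincides with the paper's: specialize (\ref{E:mainT1}) at $\alpha_1=\cdots=\alpha_{2g}=0$ (hence $q=0$), compute the limits of $q^{(g-1)\langle\lambda,\lambda\rangle}J_\lambda(z)$ and of $H_\lambda(z)$ separately, and then invoke the residue formula of Theorem~\ref{T:2} together with the observation that rescalings by powers of $z^r$ do not affect residues at $r$th roots of unity. Your treatment of $J_\lambda$ is essentially the paper's. The problem is the limit of $H_\lambda$, which you correctly identify as the main obstacle but do not actually resolve. The function $L(z_n,\ldots,z_1)$ is a symmetrization over $\mathfrak{S}_n$, and the crucial point — which your sketch never addresses — is that after specialization only the identity permutation survives: for $\sigma\neq\mathrm{Id}$ the term carries, for each inversion, a factor $\widetilde{\zeta}(z_l/z_k)/\widetilde{\zeta}(z_k/z_l)$ whose residue/evaluation is regular and \emph{vanishes} at $\alpha_1=\cdots=\alpha_{2g}=0$. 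Without isolating the identity term you cannot reduce the iterated residue to "a chain of Cauchy residues": the other $n!-1$ summands have different pole structures along the residue locus and must be shown to drop out.

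Moreover, the justifications you do give are incorrect as stated. At the residue locus $z_{i+1}/z_i=q^{-1}$ the factor $1/(1-q z_{i+1}/z_i)$ does not tend to $1$ — it is exactly the simple pole being extracted — and $\widetilde{\zeta}(z_i/z_j)=(z_i/z_j)^{1-g}\zeta(z_i/z_j)$ does not approach a finite nonzero value when the ratio degenerates to $0$ or $\infty$ with $q$, since it behaves like $(z_i/z_j)^{1-g}$ there. Relatedly, your attribution of the power $z^{(g-1)\langle\lambda,\lambda\rangle-l(\lambda)}$ to the accumulated $\widetilde{\zeta}$ factors in $H_\lambda$ is wrong: in the surviving identity term the product $\prod_{i<j}\widetilde{\zeta}(z_i/z_j)$ cancels identically against the prefactor $1/\prod_{i<j}\widetilde{\zeta}(z_i/z_j)$, so $H_\lambda(z)|_{\alpha=0}$ is the $g$-independent product $\prod_j (1-z^{i_j-i_{j-1}})^{-1}$ (with $i_1<\cdots<i_s$ the parts occurring in $\lambda$), and the entire power of $z$ carrying the $(g-1)\langle\lambda,\lambda\rangle$ comes from $q^{(g-1)\langle\lambda,\lambda\rangle}J_\lambda(z)|_{\alpha=0}$, together with a bookkeeping of signs $(-1)^{|\lambda\setminus\lambda^{\diamond}|}$ and a conjugation of partitions that your sketch also omits. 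These are not cosmetic points: they are precisely where the formula (\ref{E:mainT10}) is produced, so as written the proposal has a genuine gap at its central step.
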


\vspace{.1in}

\paragraph{\textit{Examples.}} We list below the values $A_{g,r,d}(0)$ for $r \leq 4$~:
$$A_{g,1,d}(0)=1, \qquad A_{g,2,d}(0)=\begin{pmatrix}g \\ 1\end{pmatrix}, \qquad A_{g,3,d}=4 \begin{pmatrix}g\\2 \end{pmatrix} + \begin{pmatrix} g \\1\end{pmatrix}, \qquad A_{g,4,d}=32 \begin{pmatrix}g\\3 \end{pmatrix} + 20\begin{pmatrix}g\\2 \end{pmatrix} + \begin{pmatrix}g\\1 \end{pmatrix}. $$

It is easy to see that  for any $r \geq 1$, $A_{g,r,d}(0)$ is a polynomial in $g$ of degree $r-1$.

\vspace{.1in}

\noindent
\textit{Remarks}. i) Just like $A_{g,r}(z)$, the rational function $A^0_{g,r}(z)$ has at most simple poles at $r$th roots of unity, and is conjectured to be regular outside of $z=1$.\\
ii) Let $\Sigma_g$ be the quiver with one loop and $g$ vertices, and let $A_{\Sigma_g, r} \in \mathbb{N}[q]$ be the Kac polynomial counting geometrically indecomposable representation of $\Sigma_g$ over $\mathbb{F}_q$ of dimension $r$ (see \cite{Kac}). It is conjectured that $A_{g,r,d}(0)=A_{\Sigma_g,r}(1)$-- this for instance would follow from the main conjecture in \cite{HRVInvent}. However, just as our formula (\ref{E:mainT1}) slightly differs from that conjectured
in \cite{HRVInvent}, so does (\ref{E:mainT10}) slightly differ from Hua's formula computing $A_{\Sigma_g,r}(1)$, see \cite{Hua} (in the latter case, the difference is only in the extra term $-l(\lambda)$ in (\ref{E:mainT10}) !).

\vspace{.1in}

Theorem~\ref{T:3} is proved in Section~6, as are Corollaries~\ref{Cor:1}, \ref{Cor:2}, \ref{Cor:25} and \ref{Cor:26}.

\vspace{.2in}

\paragraph{\textbf{1.5.}} In the last sections ~7 and 8, we point towards two types of possible extensions of the above results~: counting indecomposable vector bundles equipped with quasi-parabolic structures along some (fixed) divisor $D$ of $X$, and counting indecomposables with a prescribed Harder-Narasimhan polygon. We also propose an analog, in our context, of the famous conjecture of Kac (proved by Hausel, see \cite{HauselKac}) relating the constant terms of the Kac polynomials associated to a quiver $Q$ with the dimensions of the root spaces of the corresponding Kac-Moody algebra.

\vspace{.2in}

\section{Orbifolds of pairs}

\vspace{.1in}

\paragraph{\textbf{2.1.}} We fix a smooth projective curve $X$ over a finite field $\mathbb{F}_q$ as in \textbf{1.1.}. Let $Coh(X)$ be the category of coherent sheaves on $X$. By the class of a sheaf $\mathcal{F}$ we will mean the pair 
$$[\mathcal{F}]=(rank(\mathcal{F}), deg(\mathcal{F})) \in (\mathbb{Z}^2)^+=\{(r,d) \in \mathbb{N} \times \mathbb{Z}\;|\; d >0 \;if\; r=0\}.$$
If $\a=(r,d)$ then we put $rk(\a)=r, deg(\a)=d$.
Let $K_0(Coh(X))$ stand for the Grothendieck group of the category $Coh(X)$, and let $\langle\;,\;\rangle : K_0(X) \otimes_{\mathbb{Z}} K_0(X) \to \Z$ be the Euler form, defined by $\langle \mathcal{F},\mathcal{G}\rangle=\text{dim}\;\text{Hom}(\mathcal{F},\mathcal{G})-\text{dim}\;\text{Ext}^1(\mathcal{F},\mathcal{G})$. The form 
$\langle \mathcal{F},\mathcal{G}\rangle$ only depends on the classes $[\mathcal{F}], [\mathcal{G}]$ and is given by
$$\langle (r,d), (r',d') \rangle= (1-g)rr' + (rd'-r'd).$$
We let $(\;,\;)$ stand for the symetrized Euler form, i.e. $(\alpha, \beta)=\langle \alpha, \beta \rangle + \langle \beta,\alpha \rangle$.

\vspace{.1in}

For $\alpha \in (\mathbb{Z}^2)^+$ we denote by $Coh_{\alpha}(X)$ the subcategory of coherent sheaves of class $\alpha$.
We consider the standard slope function $\mu(\mathcal{F})=deg(\mathcal{F})/rank(\mathcal{F})$ and for any
$\nu \in \mathbb{Q} \sqcup \{\infty\}$ we denote by $\mathcal{C}_{\nu}$ the subcategory of $Coh(X)$ consisting of semistable sheaves of slope $\nu$.

\vspace{.1in}

More generally, given a collection $\alpha_1=(r_1, d_1), \ldots, \alpha_t=(r_t,d_t)$ of elements of $(\mathbb{Z}^2)^+$ satisfying $\mu(\alpha_1) > \cdots > \mu(\alpha_t)$
we denote by $\mathcal{C}_{\alpha_1, \ldots, \alpha_t}$ the full subcategory of $Coh_{\alpha_1 + \cdots \alpha_t}(X)$
consisting of objects $\mathcal{F}$ with a Harder-Narasimhan filtration $\mathcal{F}_1 \subset \cdots \subset \mathcal{F}_t=\mathcal{F}$ satisfying $[\mathcal{F}_i/\mathcal{F}_{i-1}]=\alpha_i$ for all $i$. Observe that
the number of isomorphism classes of coherent sheaves in $\mathcal{C}_{\alpha_1, \ldots, \alpha_t}$ is finite, since the number of semistable sheaves of any given class is finite, and $\text{dim\;Ext}^1(\mathcal{H}, \mathcal{G}) < \infty$ for any $(\mathcal{H}, \mathcal{G})$. 

\vspace{.1in}

Finally, let $\mathcal{C}_{\geq \nu}$ be the subcategory of sheaves $\mathcal{F}$ whose
 Harder-Narasimhan filtration
$$\mathcal{F}_1 \subset \mathcal{F}_{2} \subset \cdots \subset \mathcal{F}_t = \mathcal{F}$$
satisfies $$\mu(\mathcal{F}_{1}) > \mu(\mathcal{F}_2/\mathcal{F}_1) > \cdots > \mu(\mathcal{F}/\mathcal{F}_{t-1}) \geq \nu.$$
The categories $\mathcal{C}_{\leq \nu}$ are defined in a similar fashion. We will mostly be interested in the category
$\mathcal{C}_{\geq 0}$. A sheaf belongs to $\mathcal{C}_{\geq 0}$ if and only if it belongs to some $\mathcal{C}_{\alpha_1, \ldots, \alpha_t}$
with $\mu(\alpha_1) > \cdots > \mu(\alpha_t) \geq 0$. For any given $\alpha$, let us put
$$D(\alpha)=\{(\alpha_1, \ldots, \alpha_t)\;|\;  \alpha=\sum \alpha_i,\; \mu(\alpha_1) > \cdots > \mu(\alpha_t) \geq 0\}.$$
The set $D(\alpha)$ is finite. For any given $\a$ the number of isomorphism classes
of coherent sheaves in $\mathcal{C}_{\geq 0}$ of class $\a$ is finite.

The full subcategory $\mathcal{C}_{\geq 0}$ is stable under quotients and extensions. In particular, an object of
$\mathcal{C}_{\geq 0}$ is isomorphic to a direct sum of indecomposable objects, all of which belong to $\mathcal{C}_{\geq 0}$.
The category $\mathcal{C}_{\geq 0}$ is also preserved in a natural sense  by extension of the base field (see e.g. \cite{HuybrechtsLehn}).

\vspace{.2in}

\paragraph{\textbf{2.2.}} For any coherent sheaf $\mathcal{F}$ let us denote by $\text{End}^{nil}(\mathcal{F})$ the subspace of $\text{End}(\mathcal{F})$
consisting of nilpotent endomorphisms. Let us consider the following groupoids
$$\textbf{Nil}_\a(X)=\langle\;(\mathcal{F}, \theta)\;|\; \mathcal{F} \in Coh_{\a}(X),\;\theta \in \text{End}^{nil}(\mathcal{F})\;\rangle$$ 
where an isomorphism $j : (\mathcal{F}, \theta) \stackrel{\sim}{\to} (\mathcal{G}, \phi)$
is an isomorphism $j: \mathcal{F} \stackrel{\sim}{\to} \mathcal{G}$ such that $j \theta = \phi j$.

\vspace{.1in}

The groupoid $\mathbf{Nil}_{\a}(X)$ is of infinite volume as soon as $rk(\a) >0$, i.e. the sum
$$\sum_{(\mathcal{F}, \theta) \in Obj(\mathbf{Nil}_{\a}(X))/\sim} \frac{1}{| \text{Aut}((\mathcal{F}, \theta))|}
=\sum_{\mathcal{F} \in Obj(Coh_{\a}(X))/\sim} \frac{|\text{End}^{nil}(\mathcal{F})|}{|\text{Aut}(\mathcal{F})|}$$
diverges. We also consider the full sub-groupoid 
$$\mathbf{Nil}_{\a}^{\geq 0}(X)=\langle\; (\mathcal{F},\theta) \in \mathbf{Nil}_{\a}(X)\;|\; \mathcal{F} \in \mathcal{C}_{\geq 0}\rangle$$
which has the advantage of being of finite volume
$$vol (\mathbf{Nil}^{\geq 0}_{\a}(X))=\sum_{\mathcal{F} \in
Obj(Coh^{\geq 0}_{\a}(X))/\sim} \frac{|\text{End}^{nil}(\mathcal{F})|}{|\text{Aut}(\mathcal{F})|}<\infty.$$
Observe also that $\mathbf{Nil}^{\geq 0}_{\a}(X)$ is empty if $deg(\a) <0$. 

\vspace{.1in}

The relation between the groupoid $\mathbf{Nil}_{\a}^{\geq 0}(X)$ and the problem of counting indecomposable
coherent sheaves is described by Propositions~\ref{P:1} and \ref{P:2} below. Let us denote by $\mathcal{A}_{\a}^{\geq 0}(X)$ the number of
geometrically indecomposable coherent sheaves in $\mathcal{C}_{\geq 0}$ of class $\a$.

\vspace{.1in}

\begin{prop}\label{P:1} The following relation holds in the ring $\mathbb{Q}[[z^{(1,0)}, z^{(0,1)}]]$~:
$$\sum_{\a} vol (\mathbf{Nil}^{\geq 0}_{\a}(X)) z^{\a} = \text{exp} \left(\sum_{l \geq 1} 
\frac{1}{l} \sum_{\a} \frac{\A_{\a}^{\geq 0}(X \otimes \mathbb{F}_{q^l})}{q^l-1}z^{l\a}\right).$$

\end{prop}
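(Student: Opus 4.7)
The plan is to rewrite the left-hand side as an infinite product indexed by iso classes of indecomposable sheaves via Krull--Schmidt, evaluate each factor by a $q$-hypergeometric identity, and pass to geometrically indecomposables by Galois descent.

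First I would use Krull--Schmidt in $\mathcal{C}_{\geq 0}$ to write $\mathcal{F}$ uniquely as $\bigoplus_{[\mathcal{I}]} \mathcal{I}^{m_\mathcal{I}}$. For each indecomposable $\mathcal{I}$ the ring $R_\mathcal{I} := \End(\mathcal{I})$ is a finite-dimensional local $\mathbb{F}_q$-algebra, with nilpotent Jacobson radical and residue field $k_\mathcal{I} = \mathbb{F}_{q^{d_\mathcal{I}}}$. Setting $A := \End(\mathcal{F})$, the Jacobson radical $J$ of $A$ is nilpotent and
$$A/J \;\cong\; \prod_i M_{m_i}(k_{\mathcal{I}_i}).$$
Since $J$ is nilpotent, $\theta \in A$ is invertible (resp.\ nilpotent) if and only if its image in $A/J$ is; the fiber of $A \twoheadrightarrow A/J$ has cardinality $|J|$ over every point, so the $|J|$ factors cancel and
$$\frac{|\End^{nil}(\mathcal{F})|}{|\text{Aut}(\mathcal{F})|} \;=\; \prod_i \frac{|M_{m_i}(k_{\mathcal{I}_i})^{nil}|}{|GL_{m_i}(k_{\mathcal{I}_i})|}.$$
The substance of this step is that the off-diagonal $\Hom(\mathcal{I}_i,\mathcal{I}_j)$'s with $i\neq j$ sit inside $J$ (they map to $0$ in the semisimple quotient) and therefore disappear from the ratio; this is the main technical point.

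With this multiplicativity in hand I would expand
$$\sum_{[\mathcal{F}]} \frac{|\End^{nil}(\mathcal{F})|}{|\text{Aut}(\mathcal{F})|}\, z^{[\mathcal{F}]} \;=\; \prod_{[\mathcal{I}]} \sum_{m\geq 0} \frac{|M_m(k_{\mathcal{I}})^{nil}|}{|GL_m(k_{\mathcal{I}})|}\, z^{m[\mathcal{I}]}.$$
Combining Fine--Herstein ($|M_m(\mathbb{F}_Q)^{nil}| = Q^{m(m-1)}$), the standard formula for $|GL_m(\mathbb{F}_Q)|$, and Euler's $q$-binomial theorem (applied with $\tilde Q = Q^{-1}$), each factor equals $\prod_{k\geq 1}\bigl(1 - z^{[\mathcal{I}]} q^{-k d_\mathcal{I}}\bigr)^{-1}$. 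Taking the logarithm then yields
$$\log(\text{LHS}) \;=\; \sum_{[\mathcal{I}]} \sum_{n\geq 1}\frac{z^{n[\mathcal{I}]}}{n\bigl(q^{n d_\mathcal{I}}-1\bigr)}.$$

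The final step is Galois descent (à la Kac). Iso classes $[\mathcal{I}]$ of indecomposable sheaves over $\mathbb{F}_q$ with $d_\mathcal{I}=d$ are in bijection with Frobenius orbits of size exactly $d$ of geometrically indecomposable sheaves $\overline{\mathcal{V}}$ over $\overline{\mathbb{F}_q}$, via $\mathcal{I} \otimes_{\mathbb{F}_q}\overline{\mathbb{F}_q} \cong \bigoplus_{i=0}^{d-1}\sigma^i(\overline{\mathcal{V}})$; in particular $[\mathcal{I}]=d[\overline{\mathcal{V}}]$. Grouping $\mathbb{F}_{q^l}$-rational geometrically indecomposables by the size $d\mid l$ of their Frobenius orbit gives
$$\A^{\geq 0}_\a(X\otimes \mathbb{F}_{q^l}) \;=\; \sum_{d\,|\,l} d\cdot N(d,\a),\qquad N(d,\a) := \#\{[\mathcal{I}] : d_\mathcal{I}=d,\ [\mathcal{I}]=d\a\}.$$
Substituting this and reindexing with $l=nd$ identifies the RHS exponent of the proposition with the $\log$ computed above; exponentiating finishes the proof. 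Once Step 1 is secured, Steps 2 and 3 are formal.
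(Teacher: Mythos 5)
Your proposal is correct and follows essentially the same route as the paper: Krull--Schmidt plus the computation of $|\End^{nil}(\mathcal{F})|/|\mathrm{Aut}(\mathcal{F})|$ through the semisimple quotient $A/J\cong\prod_i M_{m_i}(k_{\mathcal{I}_i})$, factorization of the generating series over indecomposables, the $q$-series identity (your Euler $q$-binomial theorem plus logarithm is exactly the Heine formula the paper invokes), and the same Galois-descent reindexing $\A^{\geq 0}_\a(X\otimes\mathbb{F}_{q^l})=\sum_{d\mid l}d\,N(d,\a)$. No gaps worth flagging beyond the routine remark that the infinite product converges in $\mathbb{Q}[[z^{(1,0)},z^{(0,1)}]]$ because each class $\a$ admits only finitely many decompositions $\sum_\iota n_\iota d(\iota)$.
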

\begin{proof}
We begin by collecting a few standard results on indecomposable coherent sheaves (see e.g. \cite{Atiyah}).

\begin{lem}\label{L:1} The following statements hold~:
\begin{enumerate}
\item[i)] $Coh(X)$ is a Krull-Schmidt category, i.e. every coherent sheaf $M$ decomposes as a direct sum
$$M = M_1^{\oplus n_1} \oplus \cdots \oplus M_s^{\oplus n_s}$$
where the $M_i$ are distinct indecomposables. The $(M_i,n_i)$ are uniquely determined up to permutation.
\item[ii)] Let $M$ be indecomposable. Then $k_M:=\text{End}(M)/rad (\text{End}(M))$ is a field.
\item[ii)] Let $M,M'$ be distinct indecomposables. Then any composed map $M \to M' \to M$ lies  in $rad(\text{End}(M))$.
\end{enumerate}
\end{lem}

\vspace{.1in}

\begin{lem}\label{L:2} Let $M_1, \ldots, M_s$ be distinct indecomposables, and $n_1, \ldots, n_s \in \mathbb{N}$. Put $M = \bigoplus_i M_i^{\oplus n_i}$. Then
\begin{equation}\label{E:L2}
rad(\text{End}(M))=\bigoplus_{i \neq j} Hom(M_i^{\oplus n_i}, M_j^{\oplus n_j}) \oplus \bigoplus_i rad(\text{End}(M_i))^{\oplus n_i}.
\end{equation}
\end{lem}
\begin{proof} Let $U$ denote the right hand side of (\ref{E:L2}). It follows from Lemma~\ref{L:1}, ii) that $U$ is a nilpotent ideal in $\text{End}(M)$
and that $\text{End}(M)/U \simeq \prod_i \mathfrak{gl}(n_i,k_{M_i})$ is a semisimple algebra.
\end{proof}

\vspace{.1in}

\begin{cor}\label{C:1} Let $M$ be as in Lemma~\ref{L:2}. Then we have
\begin{equation}\label{E:C1}
\frac{ |\text{End}^{\;nil}(M)|}{|\text{Aut}(M)|}=\prod_i \frac{|k_{M_i}|^{n_i(n_i-1)}}{|GL(n_i,k_{M_i})|}.
\end{equation}
\end{cor}
\begin{proof} Put $A=\text{End}(M)$ and denote by $p: A \to A/rad(A) \simeq \prod_i \mathfrak{gl}(n_i,k_{M_i})$ the natural projection. Then
$\text{End}^{nil}(M)=p^{-1}(\prod_i \mathcal{N}_{n_i, k_{M_i}})$ where $\mathcal{N}_{n,k} \subset \mathfrak{gl}(n,k)$ denotes the nilpotent cone.
Similarly, $\text{Aut}(M)=p^{-1}(\prod_i GL(n_i,k_{M_i}))$. The result now follows from Lemma~\ref{L:2} and the well-known formula $|\mathcal{N}_{n,k}|=|k|^{n(n-1)}$.
\end{proof}

We now start the proof of Proposition~\ref{P:1}. We first introduce a few useful notations.
Let $\chi$ stand for the set of isomorphism classes of indecomposables in $\mathcal{C}_{\geq 0}$. We
choose a representative $M_{\iota}$ in each class $\iota$ and set $l_{\iota}=[k_{M_\iota} : \mathbb{F}_q]$.
We have an obvious partition $\chi =\bigsqcup_{\alpha} \chi_{\alpha}$ according to the class $\alpha \in (\Z^2)^+$, and we write
$d(\iota) =\alpha$ if $\iota \in \chi_{\alpha}$. 
Note that $\chi_{\alpha}$ is empty if $\alpha \not\in \N^2$.
By Lemma~\ref{L:1}, i) the set of isoclasses of objects in $\mathcal{C}_{\geq 0}$ is
$$Obj(\mathcal{C}_{\geq 0})/\hspace{-.05in}\sim \;=\;\bigg\{ \bigoplus_{\iota \in \chi} M_{\iota}^{\oplus n_{\iota}}\;|\; n_{\iota}=0\;\text{for\;almost\;all\;} \iota\bigg\}.$$
Let $\Theta=\{(n_{\iota}) \in \mathbb{N}^{\chi}\;|\; n_{\iota}=0\;\text{for\;almost\;all\;} \iota\}$. Then, by Corollary~\ref{C:1}
\begin{equation*}
 \begin{split}
  \sum_{\alpha} vol(\textbf{Nil}^{\geq 0}_{\alpha}(X))z^{\alpha}&=\sum_{(n_\iota) \in \Theta}\bigg\{ \prod_\iota \frac{q^{-l_{\iota}n_{\iota}}}{(1-q^{-l_{\iota}n_{\iota}})\cdots (1-q^{-l_{\iota}})} 
z^{\sum_{\iota} n_{\iota}d(\iota)}\bigg\}\\
&=\prod_{\iota \in \chi} \bigg( \sum_{n \geq 0} \frac{q^{-l_{\iota}n}}{(1-q^{-l_{\iota}n})\cdots (1-q^{-l_{\iota}})} 
z^{nd(\iota)}\bigg).
 \end{split}
\end{equation*}
Note that the infinite product converges in the ring $\mathbb{Q}[[z^{(0,1)}, z^{(1,0)}]]$ because any element in $\N^2$ may be written in
only finitely many different ways as a sum $\sum_{\iota} n_{\iota} d(\iota)$ (recall that each $\chi_{\alpha}$ is of finite cardinality).

\vspace{.1in}

Applying Heine's formula
$$\sum_{n \geq 0} \frac{u^n}{(1-v^n) \cdots (1-v)}=exp \bigg( \sum_{l \geq 1} \frac{u^l}{l(1-v^l)}\bigg)$$
we get
\begin{equation*}
 \sum_{\alpha} vol(\textbf{Nil}^{\geq 0}_{\alpha})z^{\alpha}=exp \bigg( \sum_{l \geq 1} \sum_{\iota \in \chi} \frac{z^{ld(\iota)}}{l(q^{ll_{\iota}}-1)}\bigg).
\end{equation*}
To prove Proposition~\ref{P:1} it only remains to show that
\begin{equation}\label{E:P1}
\sum_{l \geq 1} \sum_{\iota \in \chi} \frac{z^{ld(\iota)}}{l(q^{ll_{\iota}}-1)}=\sum_{l \geq 1} \sum_{\alpha} \frac{\A_{\alpha}^{\geq 0}(X \otimes \mathbb{F}_{q^l})}{l(q^l-1)}z^{l\alpha}.
\end{equation}
To this aim, let us denote by $\chi_{\alpha,d}$ the set of elements $\iota \in \chi_{\alpha}$ satisfying $l_{\iota}=d$. Note that $\chi_{\alpha,d}$ is empty
if $\alpha \not\in d \N^2$. The group $Gal(\overline{\mathbb{F}_{q}}/ \mathbb{F}_q)$ acts naturally on the set of isoclasses of indecomposable coherent sheaves on
$X \otimes \overline{\mathbb{F}_{q}}$ of class $\alpha$, preserving the subset of sheaves in $\mathcal{C}_{\geq 0}$. Let $\xi_{\alpha,d}$ stand for the set of 
$Gal(\overline{\mathbb{F}_{q}}/ \mathbb{F}_q)$-orbits of size $d$. Thus
$$| \xi_{\alpha,d} |=| \chi_{d\alpha,d}|, \qquad \A^{\geq 0}_{\alpha}(X \otimes \mathbb{F}_{q^l})=\sum_{d | l} d | \xi_{\alpha,d} |.$$
Using this, we compute
\begin{equation*}
 \begin{split}
  \sum_{l \geq 1} \sum_{\iota \in \chi} \frac{z^{ld(\iota)}}{l(q^{ll_{\iota}}-1)}&=\sum_{l \geq 1}\sum_{\alpha} \sum_{d | \alpha} \frac{| \chi_{\alpha,d}|}{l(q^{dl}-1)}z^{l\alpha}
=\sum_{l \geq 1} \sum_{\nu} \sum_{d \geq 1} \frac{|\chi_{d\nu,d}|}{l(q^{dl}-1)}z^{dl\nu}\\
&=\sum_{l \geq 1} \sum_{\nu} \sum_{ d \geq 1} \frac{|\xi_{\nu,d}|}{l(q^{dl}-1)}z^{ld\nu} =\sum_{l' \geq 1} \sum_{\nu} \sum_{ d | l'} \frac{d|\xi_{\nu,d}|}{l'(q^{l'}-1)}z^{l'\nu}\\
&=\sum_{l' \geq 1} \sum_{\nu} \frac{\mathcal{A}_{\nu}^{\geq 0}(X \otimes \mathbb{F}_{q^{l'}})}{l(q^{l'}-1)}z^{l'\nu}
 \end{split}
\end{equation*}
as wanted. Proposition~\ref{P:1} is proved.
\end{proof}

\vspace{.1in}

\noindent
\textit{Remark.} Proposition~\ref{P:1} may loosely be rephrased as an equality
$$ \sum_{\a} vol (\mathbf{Nil}^{\geq 0}_{\a}(X)) z^{\a} = \text{Exp} \left(\sum_{\a} \frac{\A_{\a}^{\geq 0}(X)}{q-1}z^{\a}\right).$$

\vspace{.1in}

\begin{prop}\label{P:2} Assume that $d > (g-1) r(r-1)$. Then any indecomposable vector bundle of rank $r$ and degree $d$ lies in
$\mathcal{C}_{\geq 0}$, i.e.
$$\A_{r,d}^{\geq 0}(X)=\A_{r,d}(X).$$
\end{prop}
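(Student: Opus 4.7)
The plan is to analyze the Harder--Narasimhan filtration of an indecomposable vector bundle $\mathcal{V}$ of rank $r$ and degree $d$ and to prove that its minimal slope is strictly positive as soon as $d>(g-1)r(r-1)$. Write this filtration as $\mathcal{F}_1\subset\cdots\subset\mathcal{F}_t=\mathcal{V}$ with semistable subquotients $\mathcal{G}_i=\mathcal{F}_i/\mathcal{F}_{i-1}$ of rank $r_i$ and slope $\mu_i$, and set $\delta_i=\mu_i-\mu_{i+1}>0$ for $1\leq i\leq t-1$. The goal is to establish $\mu_t\geq 0$.

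The first step is the following splitting criterion: if $\delta_i>2g-2$ for some $1\leq i\leq t-1$, then the extension $0\to\mathcal{F}_i\to\mathcal{V}\to\mathcal{V}/\mathcal{F}_i\to 0$ splits, contradicting indecomposability. Indeed, for $j\leq i<k$ Serre duality gives $\Ext^1(\mathcal{G}_k,\mathcal{G}_j)\cong\Hom(\mathcal{G}_j,\mathcal{G}_k\otimes\omega_X)^*$, and this vanishes because $\mathcal{G}_j$ and $\mathcal{G}_k\otimes\omega_X$ are semistable with $\mu_j\geq\mu_i>\mu_{i+1}+(2g-2)\geq\mu_k+(2g-2)=\mu(\mathcal{G}_k\otimes\omega_X)$. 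Propagating this vanishing through the long exact $\Ext$ sequences associated to the HN filtrations of $\mathcal{V}/\mathcal{F}_i$ and of $\mathcal{F}_i$ yields $\Ext^1(\mathcal{V}/\mathcal{F}_i,\mathcal{F}_i)=0$, so the sequence splits into two nonzero summands.

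It remains to estimate $\mu(\mathcal{V})-\mu_t$ under the assumption $\delta_i\leq 2g-2$ for every $i$. Rewriting
\[
\mu(\mathcal{V})-\mu_t=\frac{1}{r}\sum_{i=1}^{t}r_i(\mu_i-\mu_t)=\frac{1}{r}\sum_{k=1}^{t-1}\delta_k\,r_{\leq k},\qquad r_{\leq k}=\sum_{i\leq k}r_i\leq r-(t-k),
\]
where the bound on $r_{\leq k}$ uses $r_i\geq 1$, and plugging in $\delta_k\leq 2g-2$ produces the estimate $\mu(\mathcal{V})-\mu_t\leq (g-1)(t-1)(2r-t)/r$. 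The quadratic $(t-1)(2r-t)$ attains its maximum over $\{1,\ldots,r\}$ at $t=r$, giving the sharp bound $\mu(\mathcal{V})-\mu_t\leq(g-1)(r-1)$. The hypothesis $d>(g-1)r(r-1)$ then rewrites as $\mu(\mathcal{V})>(g-1)(r-1)\geq\mu(\mathcal{V})-\mu_t$, forcing $\mu_t>0$ and $\mathcal{V}\in\mathcal{C}_{\geq 0}$. The main delicate point is the Serre-duality/Ext-vanishing step and its propagation through both HN filtrations; the subsequent optimization producing the sharp constant $(g-1)(r-1)$ is routine, and in the degenerate cases $g\in\{0,1\}$ the splitting criterion automatically forces $t=1$, reducing the statement to $\mu(\mathcal{V})=d/r>0$.
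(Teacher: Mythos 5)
Your proof is correct and follows essentially the same route as the paper: the Serre-duality splitting criterion forcing $\mu_i-\mu_{i+1}\leq 2g-2$ for every consecutive pair of Harder--Narasimhan slopes of an indecomposable bundle, followed by a slope estimate. The paper records only the splitting criterion and declares that ``the Proposition easily follows''; your optimization yielding $\mu(\mathcal{V})-\mu_t\leq (g-1)(r-1)$ is precisely the omitted computation, carried out correctly.
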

\begin{proof}
Let $\mathcal{F}$ be any coherent sheaf of rank $r$ and degree $d$, and let us denote by $\alpha_1=(r_1, d_1), \ldots, \alpha_t=(r_t,d_t)$
its Harder-Narasimhan type, that is $\mathcal{F} \in \mathcal{C}_{\alpha_1, \ldots, \alpha_t}$. By Serre duality,
any sequence
$$ \xymatrix{
0 \ar[r] &\mathcal{A} \ar[r] & \mathcal{B} \ar[r] & \mathcal{C} \ar[r]& 0}$$
with $\mathcal{A} \in \mathcal{C}_{\geq \nu}$, $\mathcal{C} \in \mathcal{C}_{\leq \nu'}$ and $\nu - \nu' > 2g-2$
splits since then $\text{Ext}^1(\mathcal{C}, \mathcal{A})=\text{Hom}(\mathcal{A}, \mathcal{C} \otimes \Omega_X)=0$. In particular, if
$\mu(\alpha_i) - \mu(\alpha_{i+1}) > 2g-2$ for some $i$ then $\mathcal{F}$ is decomposable. The Proposition easily follows.
\end{proof}

\vspace{.2in}

\section{Jordan stratification}

\vspace{.1in}

\paragraph{\textbf{3.1.}} By Propositions~\ref{P:1}, \ref{P:2}, computing $\A_{\alpha}(X)$ (for all $\alpha$ and for all base field extensions of $X$) 
amounts to computing the volumes of the groupoids $\textbf{Nil}^{\geq 0}_{\alpha}(X)$. We will achieve this by first stratifying $\textbf{Nil}_{\alpha}(X)$
according to Jordan types. We define the Jordan type of a pair $(\mathcal{F}, \theta) \in \textbf{Nil}_{\alpha}(X)$ with $\theta^s=0$ and $\theta^{s-1}\neq 0$ as follows :
$$J(\mathcal{F},\theta)=(\alpha_1, \ldots, \alpha_s)$$
where
\begin{equation}\label{E:defJ}
[\text{Ker}(\theta^i)]-[\text{Ker}(\theta^{i-1})]=\alpha_i + \alpha_{i+1} + \cdots + \alpha_s, \qquad (i =1, \ldots, s).
\end{equation}

Observe that 
$$\sum_i i \alpha_i =\alpha$$
and that some of the $\alpha_i$ may be zero (but $\alpha_s \neq 0$). 

The Jordan type of a pair $(\mathcal{F},\theta)$ contains more information than the Jordan type (in the usual sense) of $\theta$
over the generic point of $X$, as it also keeps track of the degrees of the kernels of powers of $\theta$. We put
$$J^{gen}(\mathcal{F},\theta)=(rk(\alpha_1), \ldots, rk(\alpha_t))$$
where $t$ is the largest index for which $rk(\alpha_t) \neq 0$.

 It may be helpful to visualize the pair $(\mathcal{F}, \theta)$
as a Young diagram. For instance, when $\theta^3=0$ we view $\mathcal{F}$ as the diagram

\vspace{.2in}

\centerline{
\begin{picture}(90,60)
\put(0,0){\line(0,1){60}}
\put(20,0){\line(0,1){60}}
\put(40,0){\line(0,1){40}}
\put(60,0){\line(0,1){20}}
\put(0,0){\line(1,0){60}}
\put(0,20){\line(1,0){60}}
\put(0,40){\line(1,0){40}}
\put(0,60){\line(1,0){20}}
\put(45,8){$\alpha_1$}
\put(25,28){$\alpha_2$}
\put(25,8){$\alpha_2$}
\put(5,8){$\alpha_3$}
\put(5,28){$\alpha_3$}
\put(5,48){$\alpha_3$}
\end{picture}}
\vspace{.05in}
\centerline{\textbf{Figure 2.} Jordan type of a nilpotent endomorphism.}

\vspace{.2in}

\noindent
in which every region $R$ which is saturated in the south and west directions corresponds to a canonical
$\theta$-stable subsheaf $\mathcal{F}_R$ of $\mathcal{F}$. For instance, in the picture above, the
subsheaf $(\text{Ker}(\theta)) + (\text{Ker}(\theta^2) \cap \text{Im}(\theta))$ corresponds to the region

\vspace{.2in}

\centerline{
\begin{picture}(60,40)
\put(0,0){\line(0,1){40}}
\put(20,0){\line(0,1){40}}
\put(40,0){\line(0,1){20}}
\put(60,0){\line(0,1){20}}
\put(0,0){\line(1,0){60}}
\put(0,20){\line(1,0){60}}
\put(0,40){\line(1,0){20}}
\put(45,8){$\alpha_1$}
\put(25,8){$\alpha_2$}
\put(5,8){$\alpha_3$}
\put(5,28){$\alpha_3$}
\end{picture}}
\vspace{.05in}
\centerline{\textbf{Figure 3.} A canonical subsheaf.}

\vspace{.2in}

For $\underline{\alpha}=(\alpha_1, \ldots, \alpha_r)$ we denote by $\textbf{Nil}_{\underline{\alpha}}(X)$ the groupoid
consisting of pairs $(\mathcal{F},\theta)$ with $J(\mathcal{F},\theta)=\underline{\alpha}$. Hence we have a stratification 
$$\textbf{Nil}_{\alpha}(X) = \bigsqcup_{|\underline{\alpha}|=\alpha} \textbf{Nil}_{\underline{\alpha}}(X)$$
where we have set $|\underline{\alpha}|=\sum_i i \alpha_i$.

\vspace{.1in}

We introduce several more groupoids~: $\textbf{Coh}_{\nu}(X)$ denotes the groupoid of coherent sheaves on $X$ of class $\nu$; $\textbf{Coh}^{\geq 0}_{\nu}(X)$ is the full sub-groupoid of $\textbf{Coh}_{\nu}(X)$ consisting of coherent sheaves which belong to $\mathcal{C}_{\geq 0}$; for $\underline{\nu}=(\nu_1, \ldots, \nu_s)$ we denote by $\widetilde{\textbf{Coh}}_{\underline{\nu}}(X)$ the groupoid whose objects
are pairs $(\mathcal{H}, \mathcal{H}_\bullet)$ where $\mathcal{H}$ is a coherent sheaf on $X$ of class $\nu_1 + \cdots + \nu_s$ and where $\mathcal{H}_{\bullet}$ is a filtration
$$\mathcal{H}_1 \subset \mathcal{H}_2 \subset \cdots \subset \mathcal{H}_s=\mathcal{H}$$
satisfying $[\mathcal{H}_i]=\nu_1 + \cdots + \nu_i$ for $i=1, \ldots, s$; finally, we let $\widetilde{\textbf{Coh}}^{\geq 0}_{\underline{\nu}}\hspace{-.05in}(X)$ stand for the full sub-groupoid of $\widetilde{\textbf{Coh}}_{\underline{\nu}}(X)$ consisting
of pairs $(\mathcal{H},\mathcal{H}_{\bullet})$ with $\mathcal{H} \in \mathcal{C}_{\geq 0}$. Unless there is a risk of confusion, we will drop the reference to the curve $X$ from now on.

\vspace{.2in}

\paragraph{\textbf{3.2.}} There is a natural functor
$$\pi_{\underline{\alpha}}~: \textbf{Nil}_{\underline{\alpha}} \to \textbf{Coh}_{\alpha_1} \times \cdots \times \textbf{Coh}_{\alpha_s}$$
sending a pair $(\mathcal{F},\theta)$ to the tuple $(\mathcal{F}_1, \ldots, \mathcal{F}_s)$ where
\begin{align*}
&\mathcal{F}_1=\text{Ker}(\theta) / ( \text{Ker}(\theta) \cap \text{Im}(\theta)),\\
&\mathcal{F}_2= \text{Ker}(\theta^2) / ( \text{Ker}(\theta^2) \cap (\text{Im}(\theta) + \text{Ker}(\theta)))\\
&\;\;\vdots \qquad \vdots\\
&\mathcal{F}_{s-1}=\text{Ker}(\theta^{s-1}) / (\text{Ker}(\theta^{s-1}) \cap ( \text{Im}(\theta) + \text{Ker}(\theta^{s-2})))\\
&\mathcal{F}_s=\mathcal{F}/\text{Ker}(\theta^{s-1}).
\end{align*} 

\vspace{.1in}

The functor $\pi_{\underline{\alpha}}$ factors as the composition $\pi_{\underline{\alpha}}=\pi''_{\underline{\alpha}} \circ \pi'_{\underline{\alpha}}$ of the two functors $\pi'_{\underline{\alpha}}~: \textbf{Nil}_{\underline{\alpha}} \to \widetilde{\textbf{Coh}}_{\underline{\alpha}}$ and
$\pi''_{\underline{\alpha}}~: \widetilde{\textbf{Coh}}_{\underline{\alpha}} \to  \textbf{Coh}_{\alpha_1} \times \cdots \times \textbf{Coh}_{\alpha_s}$
respectively defined by
$$\pi'_{\underline{\alpha}} (\mathcal{F}, \theta)=(\mathcal{H}, \mathcal{H}_{\bullet}),$$ 
$$\mathcal{H}=\mathcal{F}/\text{Im}(\theta), \qquad \mathcal{H}_i=\text{Ker}(\theta^i) / ( \text{Ker}(\theta^i) \cap \text{Im}(\theta))$$
and
$$\pi''_{\underline{\alpha}}(\mathcal{H}, \mathcal{H}_{\bullet})=(\mathcal{H}_1, \mathcal{H}_2/\mathcal{H}_1, \ldots, \mathcal{H}/\mathcal{H}_{s-1}).$$

\vspace{.2in}

 Recall that $\langle\;,\;\rangle$, resp. $(\;,\;)$ stands for the Euler form, resp. symetrized Euler form (see Section~\textbf{2.1}). If $\phi~: \mathcal{A} \to \mathcal{B}$ is a functor between groupoids
and $\mathcal{B}' \subset \mathcal{B}$ is a full sub-groupoid then $\phi^{-1}(\mathcal{B}')$ is by definition the full sub-groupoid of $\mathcal{A}$ whose objects satisfy the following condition~: $\phi(A) \simeq B$ for some $B \in \mathcal{B}'$.
This next Proposition is crucial for us.

\vspace{.1in}

\begin{prop}\label{P:3} The following hold~:
\begin{enumerate}
\item[i)] For any $(\mathcal{F}_1, \ldots, \mathcal{F}_s) \in \textbf{Coh}_{\alpha_1} \times \cdots \times \textbf{Coh}_{\alpha_s}$ we have 
$$vol\big(\pi_{\underline{\alpha}}^{-1}(\mathcal{F}_1, \ldots, \mathcal{F}_s)\big)=q^{d(\underline{\alpha})}$$ 
where
$$d(\underline{\alpha})=-\bigg\{\sum_{i}(i-1) \langle \alpha_i, \alpha_i \rangle + \sum_{i<j} i (\alpha_i, \alpha_j) \bigg\}.$$
\item[ii)] For any  $(\mathcal{F}_1, \ldots, \mathcal{F}_s) \in \textbf{Coh}_{\alpha_1} \times \cdots \times \textbf{Coh}_{\alpha_s}$ we have 
$$vol\big((\pi''_{\underline{\alpha}})^{-1}(\mathcal{F}_1, \ldots, \mathcal{F}_s)\big)=q^{d''(\underline{\alpha})}$$ 
where
$$d''(\underline{\alpha})=-\sum_{i<j} \langle \alpha_j, \alpha_i \rangle.$$
\item[iii)] For any $(\mathcal{H}, \mathcal{H}_{\bullet}) \in \widetilde{\textbf{Coh}}_{\underline{\alpha}}$ we have
$$vol\big((\pi'_{\underline{\alpha}})^{-1}(\mathcal{H}, \mathcal{H}^{\bullet})\big)=q^{d'(\underline{\alpha})}$$
where $d'(\underline{\alpha})=d(\underline{\alpha})-d''(\underline{\alpha})$.
\item[iv)] We have 
$$(\pi'_{\underline{\alpha}})^{-1}\big( \widetilde{\textbf{Coh}}_{\underline{\alpha}}^{\geq 0}\big)
= \textbf{Nil}_{\underline{\alpha}}^{\geq 0}.$$
\end{enumerate}
\end{prop}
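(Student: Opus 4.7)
The plan is to treat part (iv) first by a purely categorical argument, then part (ii) by a standard iterated-extension computation, and then deduce (iii) from (i) and (ii), leaving (i) as the main technical task.

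For (iv), the forward inclusion is immediate since $\mathcal{H} = \mathcal{F}/\Im(\theta)$ is a quotient of $\mathcal{F}$ and $\mathcal{C}_{\geq 0}$ is closed under quotients (see Section~\textbf{2.1}). For the reverse inclusion, the key observation is the filtration
$$0 = \Im(\theta^s) \subseteq \Im(\theta^{s-1}) \subseteq \cdots \subseteq \Im(\theta) \subseteq \mathcal{F}$$
of $\mathcal{F}$, combined with the isomorphism $\theta^{i-1}: \mathcal{F}/\Ker(\theta^{i-1}) \xrightarrow{\sim} \Im(\theta^{i-1})$. A short preimage computation yields $(\theta^{i-1})^{-1}(\Im(\theta^i)) = \Ker(\theta^{i-1}) + \Im(\theta)$, and together with the second isomorphism theorem this identifies the successive quotients as
$$\Im(\theta^{i-1})/\Im(\theta^{i}) \simeq \mathcal{F}/(\Ker(\theta^{i-1}) + \Im(\theta)) \simeq \mathcal{H}/\mathcal{H}_{i-1}.$$
Hence each such subquotient is a quotient of $\mathcal{H}$, so $\mathcal{H} \in \mathcal{C}_{\geq 0}$ implies all are in $\mathcal{C}_{\geq 0}$, and closure under extensions forces $\mathcal{F} \in \mathcal{C}_{\geq 0}$.

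For (ii), I would build the filtration $\mathcal{H}_\bullet$ one step at a time: each extension $0 \to \mathcal{H}_{i-1} \to \mathcal{H}_i \to \mathcal{F}_i \to 0$ contributes the groupoid factor $|\Ext^1(\mathcal{F}_i, \mathcal{H}_{i-1})|/|\Hom(\mathcal{F}_i, \mathcal{H}_{i-1})| = q^{-\langle \alpha_i, \alpha_1 + \cdots + \alpha_{i-1}\rangle}$ via the Euler form. Multiplying these contributions over $i = 2, \ldots, s$ yields $q^{-\sum_{i < j}\langle \alpha_j, \alpha_i\rangle}$ as claimed. Part (iii) then follows formally from (i) and (ii): the factorization $\pi_{\underline{\alpha}} = \pi''_{\underline{\alpha}} \circ \pi'_{\underline{\alpha}}$, together with the constancy of the fiber volumes of $\pi_{\underline{\alpha}}$ and $\pi''_{\underline{\alpha}}$ (base-point independent by (i) and (ii)), forces the fiber volume of $\pi'_{\underline{\alpha}}$ to equal $q^{d(\underline{\alpha}) - d''(\underline{\alpha})}$ uniformly.

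The main obstacle is part (i). My plan is induction on $s$. The case $s = 1$ is trivial since $\theta = 0$ and the fiber is a point. For $s \geq 2$, I would decompose a point $(\mathcal{F}, \theta)$ of the fiber into three pieces of data: first, the pair $(\Im(\theta), \theta|_{\Im(\theta)}) \in \textbf{Nil}_{(\alpha_2, \ldots, \alpha_s)}$ with Jordan subquotients $(\mathcal{F}_2, \ldots, \mathcal{F}_s)$, contributing $q^{d(\alpha_2, \ldots, \alpha_s)}$ by the inductive hypothesis; second, the extension $0 \to \Im(\theta) \to \mathcal{F} \to \mathcal{H} \to 0$ for some sheaf $\mathcal{H}$ of class $\alpha_1 + \cdots + \alpha_s$, contributing a $q$-power of the form $q^{-\langle \mathcal{H}, \Im(\theta)\rangle}$; and third, a lift of $\theta|_{\Im(\theta)}$ to a nilpotent endomorphism of $\mathcal{F}$ whose induced subquotient $\Ker(\theta)/(\Ker(\theta) \cap \Im(\theta))$ is identified with the prescribed $\mathcal{F}_1$, which is a torsor under an explicit $\Hom$ space. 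The heart of the argument is verifying that the three $q$-powers combine to the stated
$$d(\underline{\alpha}) = -\sum_i (i-1)\langle \alpha_i, \alpha_i\rangle - \sum_{i < j} i(\alpha_i, \alpha_j),$$
which is notationally heavy but routine by induction. The only genuine subtlety is ensuring that the lifting stage correctly cuts out those $\theta$ whose Jordan type is exactly $\underline{\alpha}$, rather than some degeneration.
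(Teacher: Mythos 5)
Your part (iv) is correct and is essentially the paper's own argument: the paper also uses the image filtration $\Im(\theta^{s-1})\subseteq\cdots\subseteq\Im(\theta)\subseteq\mathcal{F}$ and the fact that $\theta$ induces surjections $\mathcal{F}/\Im(\theta)\twoheadrightarrow\Im(\theta^i)/\Im(\theta^{i+1})$; your explicit identification $\Im(\theta^{i-1})/\Im(\theta^{i})\simeq\mathcal{H}/\mathcal{H}_{i-1}$ is a slightly sharper version of the same step. Your part (ii) is also fine: the iterated-extension count $\prod_i |\Ext^1(\mathcal{F}_i,\mathcal{H}_{i-1})|/|\Hom(\mathcal{F}_i,\mathcal{H}_{i-1})| = q^{-\sum_{i<j}\langle\alpha_j,\alpha_i\rangle}$ is the standard computation. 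For the record, the paper does not reprove (i)--(iii) at all; it cites \cite[Prop.\ 3.1, Cor.\ 3.2]{Heinloth}, so for those parts you are attempting more than the paper does.

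There are, however, two genuine gaps. First, your deduction of (iii) from (i) and (ii) does not work: from $vol(\pi_{\underline{\alpha}}^{-1}(x))=q^{d}$ and $vol((\pi''_{\underline{\alpha}})^{-1}(x))=q^{d''}$ you only get that the \emph{weighted average} of $vol((\pi'_{\underline{\alpha}})^{-1}(y))$ over $y\in(\pi''_{\underline{\alpha}})^{-1}(x)$ equals $q^{d-d''}$, not that each fiber has this volume. Pointwise constancy is exactly what is needed downstream: Corollary~\ref{C:2} applies (iii) only over the sub-groupoid $\widetilde{\textbf{Coh}}^{\geq 0}_{\underline{\alpha}}$, and an average over all of $\widetilde{\textbf{Coh}}_{\underline{\alpha}}$ gives no control there. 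So (iii) must be proved directly (it is in fact the statement for which the affine-fibration structure of the fibers of $\pi'_{\underline{\alpha}}$ is established in \emph{loc.\,cit.}). Second, your part (i) remains a sketch: the "lifting stage" --- reconstructing $\theta$ from $(\Im(\theta),\theta|_{\Im(\theta)})$, the extension class, and $\mathcal{F}_1$, as a torsor under an explicit $\Hom$-space while cutting out exactly the pairs of Jordan type $\underline{\alpha}$ rather than a degeneration --- is precisely the content of the cited result, and the exponent bookkeeping producing $d(\underline{\alpha})=-\sum_i(i-1)\langle\alpha_i,\alpha_i\rangle-\sum_{i<j}i(\alpha_i,\alpha_j)$ is not carried out. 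As written, the proposal establishes (ii) and (iv) but leaves (i) and (iii) unproved.
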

\begin{proof} The proofs of statements i)--iii) are completely analogous to \cite[Prop. 3.1, Cor. 3.2]{Heinloth}.
We turn to iv). Given $(\mathcal{F},\theta) \in \textbf{Nil}_{\underline{\alpha}}$ we have to show that
$\mathcal{F} \in \mathcal{C}_{\geq 0}$ if and only if $\mathcal{F} / \text{Im}(\theta) \in \mathcal{C}_{\geq 0}$. As $\mathcal{C}_{\geq 0}$ is closed under quotients, $\mathcal{F} \in \mathcal{C}_{\geq 0} \Rightarrow \mathcal{F}/ \text{Im}(\theta) \in \mathcal{C}_{\geq 0}$. To get the reverse implication, note that $\theta$ induces surjective morphisms $\mathcal{F} / \text{Im}(\theta) \twoheadrightarrow \text{Im}(\theta) / \text{Im}(\theta^2) \twoheadrightarrow \cdots \twoheadrightarrow \text{Im}(\theta^{s-1})$. Hence if $\mathcal{F}/\text{Im}(\theta) \in \mathcal{C}_{\geq 0}$ then so do $\text{Im}(\theta^i) / \text{Im}(\theta^{i+1})$ for $i=1, \ldots, s-1$. But as $\mathcal{C}_{\geq 0}$ is also stable under extensions, this implies that $\mathcal{F} \in \mathcal{C}_{\geq 0}$.
Proposition~\ref{P:3} is proved.
\end{proof}

\vspace{.1in}

\begin{cor}\label{C:2} We have
$$vol\big( \textbf{Nil}_{\underline{\alpha}}^{\geq 0}\big)=q^{d'(\underline{\alpha})} vol \big( \widetilde{\textbf{Coh}}_{\underline{\alpha}}^{\geq 0}\big).$$
\end{cor}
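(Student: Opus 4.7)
The plan is to combine parts (iii) and (iv) of Proposition~\ref{P:3} via the standard groupoid fibration identity. All of the geometric content has already been established in Proposition~\ref{P:3}; the corollary is essentially a bookkeeping consequence.

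First, by part (iv), the full subgroupoid $\mathbf{Nil}^{\geq 0}_{\underline{\alpha}}$ is exactly the preimage of $\widetilde{\mathbf{Coh}}^{\geq 0}_{\underline{\alpha}}$ under $\pi'_{\underline{\alpha}}$. Restricting $\pi'_{\underline{\alpha}}$ therefore yields a functor
$$\pi'_{\underline{\alpha}}~:~\mathbf{Nil}^{\geq 0}_{\underline{\alpha}} \longrightarrow \widetilde{\mathbf{Coh}}^{\geq 0}_{\underline{\alpha}}$$
whose strict fiber over any object $(\mathcal{H}, \mathcal{H}_\bullet) \in \widetilde{\mathbf{Coh}}^{\geq 0}_{\underline{\alpha}}$ is literally the same as the fiber of the unrestricted functor. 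By part (iii), this fiber has volume $q^{d'(\underline{\alpha})}$, and, crucially, this value is independent of the base point.

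I would then invoke the standard groupoid integration identity: if $\phi : \mathcal{A} \to \mathcal{B}$ is a functor of groupoids such that every strict fiber $\phi^{-1}(B)$ has volume $c$ (independent of $B$), then $vol(\mathcal{A}) = c \cdot vol(\mathcal{B})$. This follows by grouping the sum defining $vol(\mathcal{A})$ according to the isomorphism class of the image in $\mathcal{B}$ and applying orbit--stabilizer to the action of $\text{Aut}(B)$ on the set of lifts of~$B$. Applied to the restricted $\pi'_{\underline{\alpha}}$ with $c = q^{d'(\underline{\alpha})}$, this immediately gives
$$vol(\mathbf{Nil}^{\geq 0}_{\underline{\alpha}}) = q^{d'(\underline{\alpha})} \sum_{(\mathcal{H}, \mathcal{H}_\bullet) \in \widetilde{\mathbf{Coh}}^{\geq 0}_{\underline{\alpha}}/\sim} \frac{1}{|\text{Aut}(\mathcal{H}, \mathcal{H}_\bullet)|} = q^{d'(\underline{\alpha})}\, vol(\widetilde{\mathbf{Coh}}^{\geq 0}_{\underline{\alpha}}),$$
as claimed.

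There is no genuine obstacle here---the only point of vigilance is to ensure that the normalization of ``fiber volume'' used in part (iii) of Proposition~\ref{P:3} matches the convention used in the groupoid integration formula (that is, whether one has already quotiented by $\text{Aut}(\mathcal{H}, \mathcal{H}_\bullet)$ or not). Once the conventions are aligned consistently, the displayed identity follows with no further input.
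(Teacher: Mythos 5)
Your argument is correct and is exactly the one the paper intends: Corollary~\ref{C:2} is stated without proof as an immediate consequence of Proposition~\ref{P:3}, parts (iii) and (iv), combined with the groupoid fibration identity you describe. The normalization point you flag does resolve in your favour --- the fiber volumes in Proposition~\ref{P:3} are those of the $2$-fiber products (as in the cited computation of \cite[Prop. 3.1, Cor. 3.2]{Heinloth}), which is precisely the convention under which the orbit--stabilizer bookkeeping yields $vol(\mathbf{Nil}^{\geq 0}_{\underline{\alpha}})=q^{d'(\underline{\alpha})}vol(\widetilde{\textbf{Coh}}^{\geq 0}_{\underline{\alpha}})$.
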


\vspace{.2in}

\section{Hall algebras of curves}

\vspace{.1in}

\paragraph{\textbf{4.1.}} The volume of $ \widetilde{\textbf{Coh}}_{\underline{\alpha}}^{\geq 0}$ may be
computed using some standard techniques in the theory of automorphic functions over function fields for the groups $GL(n)$. 
We will use the language of Hall algebras which we briefly recall in this section. We refer to e.g. \cite{Kapranov}, \cite{KSV}, or \cite[Lect. 4]{SLectures} for details.

\vspace{.1in}

For any $\nu \in (\Z^2)^+$ we set $\mathcal{I}_{\nu}=Obj(\textbf{Coh}_{\nu})/\hspace{-.05in}\sim$ and we let  $\mathcal{H}_{\nu}=\text{Fun}(\mathcal{I}_{\nu}, \C)$ be the $\C$-vector space of all functions $\mathcal{I}_{\nu}\to \C$. There is a natural
convolution diagram
$$\xymatrix{\textbf{Coh}_{\nu_1} \times \textbf{Coh}_{\nu_2}& \widetilde{\textbf{Coh}}_{\nu_2,\nu_1} \ar[l]_-{p} \ar[r]^-{s} & \textbf{Coh}_{\nu_1+\nu_2}}$$
where $p(\mathcal{H}, \mathcal{H}_1 \subset \mathcal{H})=(\mathcal{H}/\mathcal{H}_1, \mathcal{H}_1)$ and $s(\mathcal{H},\mathcal{H}_1 \subset \mathcal{H})=\mathcal{H}$. This induces maps 
\begin{equation*}
\begin{split}
m_{\nu_1,\nu_2}~:\mathcal{H}_{\nu_1} \otimes \mathcal{H}_{\nu_2} &\to \mathcal{H}_{\nu_1+\nu_2}\\
f \otimes g &\mapsto q^{\frac{1}{2}\langle \nu_1, \nu_2\rangle}s_*p^*(f \boxtimes g),
\end{split}
\end{equation*}
and
\begin{equation*}
\begin{split}
\Delta'_{\nu_1,\nu_2}~:\mathcal{H}_{\nu_1+\nu_2} &\to \text{Fun}(\mathcal{I}_{\nu_1} \times \mathcal{I}_{\nu_2},\C)\\
 h &\mapsto q^{\frac{1}{2}\langle \nu_1, \nu_2\rangle} p_*s^*(h).
\end{split}
\end{equation*}
Note that $\text{Fun}(\mathcal{I}_{\nu_1} \times \mathcal{I}_{\nu_2},\C)$ is a natural completion of 
$\mathcal{H}_{\nu_1} \otimes \mathcal{H}_{\nu_2}$. We will denote this completion by
$\mathcal{H}_{\nu_1} \hat{\otimes} \mathcal{H}_{\nu_2}$. Taking the direct sum over all $\nu$ yields an algebra and (topological) coalgebra structure on $\mathcal{H}'=\bigoplus_{\nu} \mathcal{H}_{\nu}$. As defined, this is a bialgebra
only in a twisted sense. Let $\mathbf{K}=\bigoplus_{\nu \in \Z^2} \C \mathbf{k}_{\nu}$ be the group algebra of $\Z^2$.
The (extended) Hall algebra of $X$ is the semidirect tensor product $\mathcal{H}=\mathcal{H}' \otimes \mathbf{K}$ with respect to the action
$$\mathbf{k}_{\nu} f\mathbf{k}_{-\nu}=q^{\frac{1}{2}(\nu,\alpha)}f, \qquad \text{for}\;f \in \mathcal{H}_{\alpha}.$$
It is equipped with a comultiplication satisfying
$$\Delta(\mathbf{k}_{\nu})=\mathbf{k}_{\nu} \otimes \mathbf{k}_{\nu}, $$
$$\Delta(f)=\sum_{\nu_1+\nu_2=\nu} \Delta'_{\nu_1,\nu_2}(f) \cdot ( \mathbf{k}_{\nu_2} \otimes 1)\qquad \text{for}\; f \in \mathcal{H}_{\nu}.$$
By Green's theorem, $\mathcal{H}$ is a (topological) bialgebra. We will occasionally write $\Delta_{r,s}$
for the component of $\Delta$ of rank $(r,s)$ (hence letting the degrees vary). Observe that $\mathbf{k}_{(0,1)}$ is central. When it bears no consequence, it is sometimes convenient to omit the degree in the notation (for instance, writing simply $\mathbf{k}_1$ for $\mathbf{k}_{1,d}$). We hope that that the reader won't find this slight abuse of notation too confusing.

\vspace{.1in}

 Let $\mathcal{H}^{fin} \subset \mathcal{H}$ be the subalgebra of $\mathcal{H}$ consisting of functions with finite support. The algebra $\mathcal{H}^{fin}$ is equipped with a symmetric Hopf pairing satisfying
$$(\mathbf{k}_{\nu}\;|\; \mathbf{k}_{\mu})=q^{\frac{1}{2}(\nu,\mu)},\qquad (1_{\mathcal{F}}\;|\; 1_{\mathcal{G}})=\frac{\delta_{\mathcal{F},\mathcal{G}}}{|\text{Aut}(\mathcal{F})|}$$
and
$$( ab\;|\;c)=(a \otimes b\;|\; \Delta(c)),\qquad \forall\; a,b,c \in \mathcal{H}^{fin}.$$

\vspace{.2in}

\paragraph{\textbf{4.2.}} We will use the following notation~: for $\nu \in (\Z^2)^+$ we denote by $1_{\nu}, 1^{vec}_{\nu}, 1^{\geq 0}_{\nu}$
the characteristic functions of $\textbf{Coh}_{\nu}$, of the sub-orbifold $\textbf{Bun}_{\nu}$ of $\textbf{Coh}_{\nu}$ parametrizing vector bundles and of $\textbf{Coh}^{\geq 0}_{\nu}$ respectively. Thus
$$(1_{\nu}\;|\;1_{\nu})=vol(\textbf{Coh}_{\nu}), \quad (1_{\nu}^{vec}\;|\;1^{vec}_{\nu})=vol(\textbf{Bun}_{\nu}), \quad (1^{\geq 0}_{\nu}\;|\; 1^{\geq 0}_{\nu})=vol(\textbf{Coh}^{\geq 0}_{\nu}).$$
Moreover, it is easy to see from the definitions that if $\nu=(r,d)$ with $r \geq 1$ then
$$1_{\nu}=\sum_{l \geq 0}q^{-\frac{1}{2}l}1^{vec}_{\nu-(0,l)} 1_{0,l}.$$

\vspace{.1in}

Unraveling the definitions we have that for any $\underline{\alpha}=(\alpha_1, \ldots, \alpha_s)$,
\begin{equation}\label{E:cohtilde}
vol\left(\widetilde{\textbf{Coh}}_{\underline{\alpha}}^{\geq 0}\right)=q^{-\frac{1}{2}\sum_{i > j} \langle \alpha_i,\alpha_j\rangle}\left( 1_{\alpha_s} \cdots 1_{\alpha_1}\;|\; 1_{\sum \alpha_i}^{\geq 0}\right).
\end{equation}

\vspace{.1in}

\begin{theo}\label{T:volform} We have~:
\begin{enumerate}
\item[i)] If $\nu=(r,d)$ with $r >0$ then
$$(1^{vec}_{\nu}\;|\;1^{vec}_{\nu})=\frac{q^{(g-1)(r^2-1)}}{q-1}|\text{Pic}^0(X)| \zeta_X(q^{-2}) \cdots \zeta_X(q^{-r}).$$
\item[ii)] We have
$$\sum_{l \geq 0} (1_{0,l}\;|\;1_{0,l})s^l=\text{Exp}\left( \frac{|X(\mathbb{F}_q)|}{q-1}s\right)=\prod_{i=1}^{\infty} \zeta_X(q^{-i}s).$$
\item[iii)] If $\nu=(r,d)$ with $r>0$ then
$$(1_{\nu}\;|\; 1_{\nu})=\frac{q^{(g-1)(r^2-1)}}{q-1}|\text{Pic}^0(X)| \prod_{i=2}^{\infty}\zeta_X(q^{-i}).$$
\end{enumerate}
\end{theo}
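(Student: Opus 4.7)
The strategy is to establish (i) as a classical input, derive (ii) by decomposing torsion sheaves over the closed points and recognizing the Euler product of $\zeta_X$, and obtain (iii) by combining (i) and (ii) via the canonical torsion/locally free decomposition of a coherent sheaf of positive rank.

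Part (i) is, by definition, the mass of the moduli stack $\textbf{Bun}_{r,d}$, which is the Harder--Siegel volume formula for $\text{GL}_r$-bundles on a curve. It can be proved either via an adèlic computation combined with $\tau(\text{SL}_r)=1$, or via a Harder--Narasimhan recursion inverting the mass of the semistable locus (Harder--Narasimhan, Desale--Ramanan, Laumon--Rapoport, Behrend--Dhillon). I would simply quote this classical result, noting the familiar fact that the right-hand side depends only on $r$ (e.g.\ via twisting by any fixed line bundle), so that $(1^{vec}_{r,d'}\,|\,1^{vec}_{r,d'})$ is the same for all $d'$.

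For part (ii) I use that any torsion sheaf is a direct sum of its localizations, so $\textbf{Coh}_0 \simeq \prod_{x\in|X|} \textbf{Coh}_{0,x}$. At each closed point $x$ of degree $d_x$, objects of $\textbf{Coh}_{0,x}$ are parametrized by partitions $\lambda$ via $M_{x,\lambda}=\bigoplus_i \mathcal{O}_x/\mathfrak{m}_x^{\lambda_i}$, and the classical identity
$$\sum_{\lambda}\frac{s^{d_x|\lambda|}}{|\text{Aut}(M_{x,\lambda})|}=\prod_{i\geq 1}\frac{1}{1-q_x^{-i}s^{d_x}}$$
(which follows from the standard formula for $|\text{Aut}(M_{x,\lambda})|$ together with Heine's identity, as already invoked in the proof of Proposition~\ref{P:1}) yields, after taking the product over $x\in|X|$ and using the Euler product $\zeta_X(t)=\prod_x(1-t^{d_x})^{-1}$,
$$\sum_{l\geq 0}(1_{0,l}\,|\,1_{0,l})\,s^l=\prod_{i\geq 1}\zeta_X(q^{-i}s).$$
The plethystic form is then immediate from $\zeta_X(z)=\Exp(|X(\mathbb{F}_q)|z)$, recorded in Section~\textbf{1.3}, combined with $\sum_{i\geq 1}q^{-i}=(q-1)^{-1}$.

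For part (iii), the key point is that every $\mathcal{F}\in\textbf{Coh}_\nu$ with $r>0$ sits in a unique short exact sequence $0\to T\to\mathcal{F}\to\mathcal{V}\to 0$ with $T$ torsion of some length $l$ and $\mathcal{V}$ locally free of class $(r,d-l)$. Since $\mathcal{V}^\vee\otimes T$ is torsion, $\text{Ext}^1(\mathcal{V},T)=0$ and $|\text{Hom}(\mathcal{V},T)|=q^{rl}$ by Riemann--Roch. A direct groupoid count (equivalently, the Hall-algebra relation $1_\nu=\sum_l q^{-l/2}1^{vec}_{\nu-(0,l)}1_{0,l}$ together with the observation that the only subsheaf of $\mathcal{F}$ of torsion class $(0,l)$ with locally free quotient is $T$ itself, which forces the pairwise orthogonality of the summands) yields
$$(1_\nu\,|\,1_\nu)=\sum_{l\geq 0}q^{-rl}\,(1^{vec}_{r,d-l}\,|\,1^{vec}_{r,d-l})\,(1_{0,l}\,|\,1_{0,l}).$$
By (i) the first factor is independent of $l$ and comes out of the sum; by (ii) with $s=q^{-r}$ the remaining sum equals $\prod_{i\geq r+1}\zeta_X(q^{-i})$, and multiplying by $\zeta_X(q^{-2})\cdots\zeta_X(q^{-r})$ telescopes to $\prod_{i\geq 2}\zeta_X(q^{-i})$, proving (iii). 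The principal obstacle is (i), for which one must invoke the Harder--Siegel mass formula; (ii) and (iii) are then essentially formal, the only delicate step being the orthogonality used in the groupoid count for (iii).
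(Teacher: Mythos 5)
Your proposal is correct and follows essentially the same route as the paper: part (i) is quoted as the classical Siegel/Harder--Narasimhan mass formula, part (ii) is obtained by localizing torsion sheaves at closed points and applying the classical partition identity (the paper phrases this via Heine's formula and the point-count identity $\sum_{d\mid l}\sum_{\deg x=d}d=|X(\mathbb{F}_{q^l})|$ rather than directly via the Euler product, but this is the same computation), and part (iii) follows from the decomposition $vol(\textbf{Coh}_{\nu})=\sum_{l}q^{-rl}vol(\textbf{Bun}_{\nu-(0,l)})vol(\textbf{Coh}_{0,l})$ exactly as in the paper. The telescoping of $\prod_{i\geq r+1}\zeta_X(q^{-i})$ against $\zeta_X(q^{-2})\cdots\zeta_X(q^{-r})$ is carried out correctly.
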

\begin{proof}
The first statement is well known as the Siegel formula, see \cite[Prop. 2.3.4]{HN} and also \cite{Behrend} for a motivic analog. The second statement is also well known but we indicate a proof in the appendix as we have not been abe to locate a precise reference. The last statement is an easy consequence of i) and ii) together with the fact that
$$vol(\textbf{Coh}_{\nu})=\sum_{l \geq 0} q^{-rl} vol(\textbf{Bun}_{\nu-(0,l)}) vol(\textbf{Coh}_{0,l}).$$
We note that the cohomology of the moduli stacks $Coh_{\nu}$ have been determined for all $\nu$ by J. Heinloth (see \cite{Jochen}); the above formulas ii) and iii) may alternatively be deduced from \textit{loc.\,cit} together with the fact that the stacks $Coh_{\nu}$ are very pure.
\end{proof}

For any $r>0$ we set
$$vol_r=vol(\textbf{Bun}_{r,d}(X))$$
(this is independent of $d$, given explicitly in Theorem~\ref{T:volform} i) ).

\vspace{.2in}

\paragraph{\textbf{4.3.}} Our computation uses some well-known elementary properties of Eisenstein series, which we recall in this paragraph. For any $r \geq 0$ let us consider the series
$$E_r(z) =\sum_{d \in \Z} 1_{r,d} z^d, \qquad E_r^{vec}(z)=\sum_{d \in \Z} 1^{vec}_{r,d}z^d$$
which both belong to $\prod_{d \in \Z} \mathcal{H}_{r,d}$. 
We also put
$$E_{r_s, \ldots, r_1}(z_s, \ldots, z_1)=E_{r_s}(z_s) \cdots E_{r_1}(z_1) \in \prod_{d \in \Z} (\mathcal{H}_{r,d}[[z_s^{\pm 1}, \ldots, z_1^{\pm 1}]])$$
where $r=\sum_i r_i$, and define $E_{r_s, \ldots, r_1}^{vec}(z_s, \ldots, z_1)$ likewise. It was shown by Harder (see \cite{Harder}) that for any coherent sheaf $\mathcal{F}$ of rank $r$ the value of $E_{r_s, \ldots, r_1}(z_s, \ldots, z_1)$ on $\mathcal{F}$ is the expansion in the region $z_1 \gg z_2 \gg \cdots \gg z_s$ of a rational function.

\vspace{.1in}

\begin{lem}\label{L:Hecke} The following relations hold~:
\begin{enumerate}
\item[i)] $$E_0(z)E_0(w)=E_0(w)E_0(z),$$
\item[ii)] for any $r \geq 1$ we have
$$E_0(z) E_r^{vec}(w)=\left(\prod_{i=0}^{r-1} \zeta\left(q^{-\frac{r}{2}+i} \frac{z}{w} \right)\right)\; E_r^{vec}(w)E_0(z),$$
where the rational function $\prod_i \zeta_X\left(q^{-\frac{r}{2}+i} \frac{z}{w} \right)$ is expanded in the region $w \gg z$.
\end{enumerate}
\end{lem}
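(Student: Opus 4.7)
Both identities are local at closed points of $X$. Every torsion sheaf $\mathcal{T}$ on $X$ decomposes canonically as $\mathcal{T}=\bigoplus_{x} \mathcal{T}_x$, where $\mathcal{T}_x$ is the component supported at a closed point $x$, and $\Ext^i(\mathcal{T}_x,\mathcal{T}_y)=0$ for $x\neq y$. This induces a factorization $E_0(z)=\prod_{x} E_0^x(z)$, where $E_0^x(z)=\sum_{n\geq 0} 1_{n,x}\, z^{n\deg(x)}$ is the local generating series and the product is taken over closed points $x$ of $X$. Moreover the factors $E_0^x(z)$ and $E_0^y(w)$ at distinct closed points commute in the Hall algebra because there are no nontrivial extensions between their supports. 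So it suffices to prove analogues of (i) and (ii) for a single $E_0^x$ and then multiply over $x$, using the Euler product $\zeta_X(u)=\prod_{x}(1-u^{\deg(x)})^{-1}$ on the right-hand side of (ii).

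\textbf{Part (i).} Because $\langle(0,a),(0,b)\rangle=0$, the Hall product is untwisted on the torsion subalgebra and coincides with the classical Hall product. At each closed point $x$, the subalgebra generated by characteristic functions of torsion sheaves supported at $x$ is the classical Hall algebra of finite-length modules over the DVR $\widehat{\mathcal{O}}_{X,x}$. By Steinitz's theorem this algebra is commutative -- it is isomorphic (via the Hall--Macdonald correspondence) to the ring of symmetric functions in infinitely many variables. Hence $E_0^x(z)$ and $E_0^x(w)$ commute for every $x$, and taking $\prod_x$ yields (i).

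\textbf{Part (ii).} I would first prove the local Hecke identity, at each closed point $x$:
\[
E_0^x(z)\, E_r^{vec}(w)\;=\;\prod_{i=0}^{r-1}\frac{1}{1-q^{(-r/2+i)\deg(x)}(z/w)^{\deg(x)}}\cdot E_r^{vec}(w)\, E_0^x(z),
\]
interpreted as an expansion in the region $w\gg z$. Taking $\prod_x$ and invoking the Euler product of $\zeta_X$ replaces each factor $(1-q^{(-r/2+i)\deg(x)}(z/w)^{\deg(x)})^{-1}$ by $\zeta(q^{-r/2+i}z/w)$, which is the claimed global factor. To establish the local identity, one evaluates both sides on the characteristic function of a coherent sheaf $\mathcal{F}$. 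The product $E_r^{vec}(w)\, E_0^x(z)$ picks out the unique canonical decomposition $\mathcal{F}\cong\mathcal{V}\oplus\mathcal{T}$ with $\mathcal{V}$ locally free and $\mathcal{T}$ the $x$-torsion part (this uses $\Ext^1(\mathcal{V},\mathcal{T})=0$), while $E_0^x(z)\, E_r^{vec}(w)$ enumerates all locally free rank-$r$ subsheaves $\mathcal{V}'\subset\mathcal{F}$ whose cokernel is torsion at $x$, i.e.\ all upward Hecke modifications of $\mathcal{V}$ at $x$. Summing over lattice chains in the affine building of $GL_r(K_x)$ and comparing with the distinguished choice $\mathcal{V}'=\mathcal{V}$ produces the displayed local factor; this is essentially the Satake transform of the generating series of the spherical Hecke algebra at $x$.

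\textbf{Main obstacle.} The delicate point is the bookkeeping of twists. The Hall product carries a factor $q^{\frac{1}{2}\langle\nu_1,\nu_2\rangle}$, and since $\langle(0,a),(r,b)\rangle=-ra$ whereas $\langle(r,b),(0,a)\rangle=ra$, the two orderings of the convolution differ by $q^{ra}$; this twist has to be traced carefully and absorbed into the shifts $q^{-r/2+i}$ in the argument of $\zeta$. A second subtlety, which in our setting is resolved by Harder's rationality result recalled just above the lemma, is that the two sides are \emph{a priori} formal expansions of a common rational function in disjoint regions, so the equality must be read as an identity of rational functions in $z/w$ expanded in $w\gg z$.
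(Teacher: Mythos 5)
Your proposal is correct in outline, and it is worth noting that the paper itself does not prove this lemma: it defers entirely to \cite[Thm.~6.3]{SVCompositio}, adding only the gloss that (i) expresses the commutativity of Hecke operators and (ii) the fact that constant functions are Hecke eigenfunctions. What you have written is precisely the standard proof of that cited classical fact, reconstructed from scratch: the factorization $E_0(z)=\prod_x E_0^x(z)$ over closed points (which the paper itself uses in Appendix~A), the commutativity of the local Hall algebra of a DVR via Hall--Steinitz--Macdonald for (i), and the local Satake/lattice-counting computation glued by the Euler product $\zeta_X(u)=\prod_x(1-u^{\deg x})^{-1}$ for (ii). Your reduction steps are all sound: $\langle(0,a),(0,b)\rangle=0$ does kill the twist on the torsion subalgebra; the uniqueness of the torsion subsheaf with locally free quotient does make $E_r^{vec}(w)E_0(z)$ pick out the canonical splitting; and the identity is correctly read as an expansion in $w\gg z$. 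The one place your argument remains a sketch is the local eigenvalue computation itself, which you flag as the ``main obstacle'' but do not carry out: one must verify that $\sum_{n\geq 0}\,\bigl|\{L'\subseteq \mathcal{O}_x^{\oplus r}\;:\;\mathrm{len}(\mathcal{O}_x^{\oplus r}/L')=n\}\bigr|\,u^n=\prod_{i=0}^{r-1}(1-q_x^{\,i}u)^{-1}$ and that the asymmetry $\langle(0,a),(r,d)\rangle=-ra$ versus $\langle(r,d),(0,a)\rangle=ra$ in the Hall twist shifts these exponents to the symmetric range $-\frac{r}{2}+i$. That computation is exactly the content of the reference the paper invokes, so your route is not so much different from the paper's as it is an unpacking of it; completing it would make the lemma self-contained, which the paper does not attempt.
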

\begin{proof} This is classical, see e.g. \cite[Thm. 6.3]{SVCompositio} (the proof is given there when $g_X=1$ but the same proof works for an arbitrary curve). The first statement simply expresses the commutativity of Hecke operators, while the second expresses the fact that the constant functions are Hecke eigenfunctions.
\end{proof}

\vspace{.1in}

\begin{lem}\label{L:volcoprod} For any $r \geq 0$ we have
$$\Delta'(E_r(z))=\sum_{s+t=r}q^{\frac{1}{2}st(g-1)} E_{s}(q^{\frac{t}{2}}z) \otimes E_{t}(q^{-\frac{s}{2}}z)$$
$$\Delta'(E^{vec}_r(z))=\sum_{s+t=r}q^{\frac{1}{2}st(g-1)} E^{vec}_{s}(q^{\frac{t}{2}}z)E_0(q^{\frac{t-s}{2}}z)E_0^{-1}(q^{-\frac{t+s}{2}}z) \otimes E^{vec}_{t}(q^{-\frac{s}{2}}z).$$
\end{lem}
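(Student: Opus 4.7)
The plan is to establish Part (i) by direct evaluation of $\Delta'$ from its definition, and then deduce Part (ii) from Part (i) using the factorization $E_r(z) = E^{vec}_r(z)E_0(q^{-1/2}z)$ combined with the fact that $\Delta$ is a (topological) algebra map.

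For Part (i), I would fix classes $\nu_1=(s,d_1)$, $\nu_2=(t,d_2)$ with $\nu_1+\nu_2=(r,d)$ and compute $\Delta'_{\nu_1,\nu_2}(1_{r,d})$ pointwise at a pair $(\mathcal{F}_1,\mathcal{F}_2)$. Since $s^*(1_{r,d})$ is the constant function $1$ on $\widetilde{\textbf{Coh}}_{\nu_1,\nu_2}$, this value equals $q^{\frac{1}{2}\langle\nu_1,\nu_2\rangle}$ times the groupoid volume of the fiber $p^{-1}(\mathcal{F}_1,\mathcal{F}_2)$, which parametrizes extensions $0\to\mathcal{F}_2\to\mathcal{H}\to\mathcal{F}_1\to 0$. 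That volume is $|\Ext^1(\mathcal{F}_1,\mathcal{F}_2)|/|\Hom(\mathcal{F}_1,\mathcal{F}_2)|=q^{-\langle\nu_1,\nu_2\rangle}$ by Riemann--Roch on $X$. Hence $\Delta'_{\nu_1,\nu_2}(1_{r,d})=q^{-\frac{1}{2}\langle\nu_1,\nu_2\rangle}\,1_{\nu_1}\otimes 1_{\nu_2}$, a constant in $(\mathcal{F}_1,\mathcal{F}_2)$. Expanding $-\langle(s,d_1),(t,d_2)\rangle=(g-1)st+td_1-sd_2$ and resumming against $z^{d_1+d_2}$ packages this into $q^{\frac{1}{2}st(g-1)}E_s(q^{\frac{t}{2}}z)\otimes E_t(q^{-\frac{s}{2}}z)$, which is the first formula.

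For Part (ii), I would repackage the identity $1_\nu=\sum_l q^{-l/2}\,1^{vec}_{\nu-(0,l)}1_{0,l}$ from \textbf{4.2} as the generating series $E_r(z)=E^{vec}_r(z)E_0(q^{-1/2}z)$, then invert it via the multiplicativity of $\Delta$ (Green's theorem):
$$\Delta(E^{vec}_r(z))=\Delta(E_r(z))\cdot \Delta(E_0(q^{-1/2}z))^{-1}.$$
The factor $\Delta(E_0(w))$ is obtained from the $r=0$ specialization of Part (i) ($\Delta'(E_0(w))=E_0(w)\otimes E_0(w)$) after inserting the $\mathbf{k}_{(0,l)}\otimes 1$ twists. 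Inversion is legitimate in a suitable formal completion because the $E_0(w)$'s mutually commute (Lemma~\ref{L:Hecke}(i)) and the elements $\mathbf{k}_{(0,l)}$ are central. Multiplying the two factors, stripping the residual $\mathbf{k}_{\nu_2}\otimes 1$ twist to pass from $\Delta$ back to $\Delta'$, and grouping the $E^{vec}$ against the residual $E_0$ and $\mathbf{k}_{(0,\bullet)}$ factors should yield the claimed formula.

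The main obstacle will be the precise bookkeeping of $\mathbf{k}$-twists. On the one hand, passing between $\Delta$ and $\Delta'$ produces factors $\mathbf{k}_{(t,d_2)}\otimes 1$ whose commutation through $E^{vec}_s$ involves the scalar $q^{(1-g)st}$ (arising from $((t,0),(s,d_1))=2(1-g)st$). On the other hand, the inverse $\Delta(E_0(q^{-1/2}z))^{-1}$ contributes its own $E_0$ and $\mathbf{k}_{(0,\bullet)}$ factors. The delicate point is to verify that, after these scalars are reabsorbed as shifts in the $z$-arguments, the first tensor slot collapses exactly to $E^{vec}_s(q^{\frac{t}{2}}z)E_0(q^{\frac{t-s}{2}}z)E_0^{-1}(q^{-\frac{t+s}{2}}z)$ in the $(s,t)$-summand rather than the naive shifts $q^{(t-1)/2}z$ and $q^{-(s+1)/2}z$ that appear before the $\mathbf{k}$-twist correction. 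As an alternative, one could attempt a direct geometric count of extensions $0\to\mathcal{F}_2\to\mathcal{H}\to\mathcal{F}_1\to 0$ forcing $\mathcal{H}$ to be locally free, decomposing each $\mathcal{F}_i=\mathcal{V}_i\oplus\mathcal{T}_i$ and analysing the extension class at the torsion support; this approach reduces to the same algebraic identity by invoking Lemma~\ref{L:Hecke}(ii).
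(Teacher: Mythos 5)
Your Part (i) is correct and complete: the fiber of $p$ over $(\mathcal{F}_1,\mathcal{F}_2)$ has groupoid volume $|\Ext^1(\mathcal{F}_1,\mathcal{F}_2)|/|\Hom(\mathcal{F}_1,\mathcal{F}_2)|=q^{-\langle\nu_1,\nu_2\rangle}$, and combined with the twist $q^{\frac{1}{2}\langle\nu_1,\nu_2\rangle}$ and the expansion $-\langle(s,d_1),(t,d_2)\rangle=(g-1)st+td_1-sd_2$ this resums to exactly the first display. Note that the paper does not argue this way at all --- it simply cites \cite[Prop.~6.2]{SVCompositio} --- so your computation is a genuine, self-contained substitute for that reference.

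The problem is in Part (ii), precisely at the ``delicate point'' you flag, and your diagnosis of how it resolves is wrong in both directions. First, the factorization you should start from is $E_r(z)=E^{vec}_r(z)E_0(q^{-r/2}z)$, with a \emph{rank-dependent} exponent: the identity $1_{\nu}=\sum_l q^{-\frac{1}{2}l}1^{vec}_{\nu-(0,l)}1_{0,l}$ displayed in \textbf{4.2} has a typo (the exponent should be $-\frac{1}{2}rl$, consistent with $\langle (r,d-l),(0,l)\rangle=rl$ and with the way the identity is actually used in Section 5, e.g.\ $E_{r_i}(z_i)=E^{vec}_{r_i}(z_i)E_0(q^{-\frac{r_i}{2}}z_i)$). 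Taking $E_0(q^{-1/2}z)$ at face value, your ``naive shifts'' $q^{(t-1)/2}$ and $q^{-(s+1)/2}$ really do appear and nothing repairs them. Second, no $\mathbf{k}$-twist scalar is available to repair them anyway: in $\Delta(f)=\sum\Delta'_{\nu_1,\nu_2}(f)\cdot(\mathbf{k}_{\nu_2}\otimes 1)$ the factor $\mathbf{k}_{(t,d_2)}$ sits to the \emph{right} of the rank-$s$ part, so when you multiply by $\Delta(E_0)^{-1}$ it only has to be moved past torsion-class elements, and $\big((t,d),(0,l)\big)=tl-tl=0$, so it commutes with them; the $\mathbf{k}_{(0,\bullet)}$'s are central. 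Once the correct factorization is used the computation closes with no correction at all: in the first slot $E_s(q^{\frac{t}{2}}z)\,E_0(q^{-\frac{s+t}{2}}z)^{-1}=E^{vec}_s(q^{\frac{t}{2}}z)E_0(q^{\frac{t-s}{2}}z)E_0^{-1}(q^{-\frac{t+s}{2}}z)$, and in the second slot $E_t(q^{-\frac{s}{2}}z)\,E_0(q^{-\frac{s+t}{2}}z)^{-1}=E^{vec}_t(q^{-\frac{s}{2}}z)$ on the nose.
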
 
\begin{proof} This is the formula for the constant term of the constant function; see e.g. \cite[Prop. 6.2]{SVCompositio}.
\end{proof}

\vspace{.1in}

\begin{lem}\label{L:residue} For $r\geq 1$ we have
\begin{equation*}
E_r^{vec}(q^{\frac{1}{2}(1-r)}z_1)
=C \cdot \text{Res}_{\frac{z_r}{z_{r-1}}=\frac{z_{r-1}}{z_{r-2}}=\cdots =\frac{z_2}{z_1}=q^{-1}}\left( E^{vec}_1(z_r) \cdots E^{vec}_1(z_1)\frac{dz_1}{z_1} \cdots \frac{dz_{r-1}}{z_{r-1}}\right)
\end{equation*}
where
$$C=q^{-\frac{1}{4}(g-1)r(r-1)}vol_1^{-r}vol_r. $$
%q^{(g-1)(r-1)(\frac{3}{4}r+1)} \frac{\zeta(q^{-2}) \cdots \zeta(q^{-r})}{vol_1^{r-1}}.$$
\end{lem}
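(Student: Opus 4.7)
The plan is to proceed by induction on $r$, reducing at each step to a single ``fusion identity'' that extracts the rank-$r$ Eisenstein series from a product of a rank-$(r-1)$ and a rank-$1$ Eisenstein series via a residue. The base case $r=1$ is immediate: $C_1 = vol_1^{-1} vol_1 = 1$ and both sides equal $E_1^{vec}(z_1)$, with no residue to take.

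For the inductive step, I would exploit the associativity of the Hall product and perform the iterated residue in stages: first the $r-2$ inner residues at $z_r/z_{r-1}=\cdots=z_3/z_2=q^{-1}$, then the final residue at $z_2/z_1=q^{-1}$. Applying the induction hypothesis to the variables $(z_2,z_3,\ldots,z_r)$ in place of $(z_1,\ldots,z_{r-1})$, the inner residue evaluates to $C_{r-1}\,E_{r-1}^{vec}(q^{(2-r)/2}z_2)$. The lemma is thereby reduced to the fusion identity
\begin{equation*}
E_r^{vec}(q^{(1-r)/2}z_1) \;=\; \frac{C_r}{C_{r-1}}\,\mathrm{Res}_{z_2/z_1=q^{-1}}\Bigl(E_{r-1}^{vec}(q^{(2-r)/2}z_2)\,E_1^{vec}(z_1)\,\frac{dz_1}{z_1}\Bigr).
\end{equation*}

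To establish this reduced identity, I would analyze the Hall product $E_{r-1}^{vec}(w)\,E_1^{vec}(z)$ as a rational function in $z/w$ (using Harder's rationality theorem invoked in Section~\textbf{4.3}). Evaluated on a rank-$r$ vector bundle $\mathcal{V}$, this product enumerates short exact sequences $0\to\mathcal{W}\to\mathcal{V}\to\mathcal{L}\to 0$ with $\mathcal{W}$ of rank $r-1$ and $\mathcal{L}$ a line bundle, weighted by $w^{\deg\mathcal{W}}z^{\deg\mathcal{L}}$. By a standard Eisenstein-series argument for $GL_r$, the resulting rational function carries a distinguished simple pole in $z/w$ (the intertwining pole, parallel to the $\zeta$-factors of Lemma~\ref{L:Hecke}~ii)); the residue there ``forgets'' the sub-bundle structure and returns $E_r^{vec}(\mathcal{V})$ at the geometric mean $\sqrt{wz}$ of the two specialized parameters, up to an explicit scalar. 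That scalar, combined with the ratio $C_r/C_{r-1}$, is matched using $C_r/C_{r-1}=q^{-(g-1)(r-1)/2}\,vol_r/(vol_1\,vol_{r-1})$ together with the Siegel volume formula of Theorem~\ref{T:volform}~i).

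The main obstacle is the fusion identity itself, namely pinning down the pole location of $E_{r-1}^{vec}(w)\,E_1^{vec}(z)$ and the value of its residue with the correct normalization. A clean route is via the shuffle-algebra presentation of the upper-triangular part of the Hall algebra: the product $E_1^{vec}(z_r)\cdots E_1^{vec}(z_1)$ identifies with an explicit symmetric rational function of $(z_1,\ldots,z_r)$, and the iterated residue at $z_{i+1}/z_i=q^{-1}$ is exactly the specialization that extracts the single ``top'' term $E_r^{vec}$ from this shuffle product---the function-field analogue of the Feigin--Odesskii residue/fusion construction. Once this picture is in place, keeping track of the Hall-algebra twist $q^{\langle\nu_1,\nu_2\rangle/2}$ together with the volume data of Theorem~\ref{T:volform} yields the explicit constant $C=q^{-(g-1)r(r-1)/4}\,vol_1^{-r}\,vol_r$.
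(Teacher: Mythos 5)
Your inductive skeleton --- peel off one rank-one Eisenstein series at a time and fuse it onto the rank-$(r-1)$ piece via a residue at the intertwining pole --- is exactly the structure of the classical proof, and the bookkeeping you set up is consistent: the base case is trivial, and the ratio $C_r/C_{r-1}=q^{-\frac{1}{2}(g-1)(r-1)}vol_r/(vol_1\,vol_{r-1})$ is the right quantity to match against a single residue of a maximal-parabolic Eisenstein series. (There is a small slip: by the induction hypothesis the inner residue evaluates to $C_{r-1}^{-1}E_{r-1}^{vec}(q^{(2-r)/2}z_2)$, not $C_{r-1}E_{r-1}^{vec}(\cdots)$; your displayed reduced identity with prefactor $C_r/C_{r-1}$ is nevertheless the correct one.) For comparison, the paper does not prove this lemma at all: it is quoted as Harder's theorem expressing the constant function on $Bun_{GL(r)}$ as a residue of the Eisenstein series induced from $Bun_{GL(1)}^r$, with a reference to \cite{Harder}. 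So you are not taking a different route --- you are sketching the proof of the result the paper cites.

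The genuine issue is that the one step carrying all the mathematical content, the fusion identity, is asserted rather than proved. ``By a standard Eisenstein-series argument the product has a distinguished simple pole whose residue forgets the sub-bundle structure'' is precisely the theorem of Harder--Langlands being invoked; nothing in your write-up establishes that the pole of $E_{r-1}^{vec}(w)E_1^{vec}(z)$ in $z/w$ sits where you need it, that it is simple, or that its residue is proportional to $E_r^{vec}$ at the geometric mean with the scalar you claim. To close this you would have to compute the constant term of $E_{r-1}^{vec}(w)E_1^{vec}(z)$ along all parabolics (the analogue of Lemma~\ref{L:volcoprod}, or equivalently pair against $E_{r}(u)$ and argue as in Lemma~\ref{L:52}), read off the pole from the $\zeta$-factors produced by the functional equation (as in Lemma~\ref{L:Hecke}~ii)), and only then use Theorem~\ref{T:volform}~i) to check that the residue constant telescopes to $q^{-\frac{1}{4}(g-1)r(r-1)}vol_1^{-r}vol_r$. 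The appeal to a shuffle presentation does not bypass this: identifying which specialization of the shuffle product extracts the ``top'' term $E_r^{vec}$, with which normalization, is the same computation in different clothing. As written, the proposal is a correct plan with its central lemma left open; either carry out that residue computation or, as the paper does, cite Harder for it.
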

\begin{proof} This is the formula expressing the constant function on $Bun_{GL(r)}(X)$ as a residue of an Eisenstein series, see e.g. \cite{Harder}.
\end{proof}

\vspace{.2in}

\paragraph{\textbf{4.4.}} We will need some appropriate truncations of the series $E_r(z)$ and $E^{vec}_r(z)$. Put
$$1^{\geq 0}_{r,d}=1_{\textbf{Coh}^{\geq 0}_{r,d}}, \qquad 1^{vec,\geq 0}_{r,d}=1_{\textbf{Bun}^{\geq 0}_{r,d}}, \qquad 1^{<0}_{r,d}=1_{\textbf{Bun}^{ <0}_{r,d}}$$
where $\textbf{Bun}^{<0}_{r,d}$ is the full subgroupoid of $\textbf{Bun}_{r,d}$ whose objects are vector bundles belonging to $\mathcal{C}_{<0}$. We also set
$$E^{\geq 0}_r(z)=\sum_{d \in \Z} 1^{\geq 0}_{r,d}z^d, \qquad E^{vec,\geq 0}_{r}(z)=\sum_{d \in \Z} 1^{vec,\geq 0}_{r,d}z^d, \qquad E^{<0}_{r}(z)=\sum_{d \in \Z} 1^{<0}_{r,d}z^d.$$
The unicity of the Harder-Narasimhan filtration yields the following relations~:
\begin{equation}\label{E:><}
\begin{split}
E_r(z)&=\sum_{\substack{s+t=r\\ s,t \geq 0}} q^{\frac{1}{2}(g-1)st}E_s^{<0}(q^{\frac{t}{2}}z)E^{\geq 0}_t(q^{-\frac{1}{2}s}z), \\
E^{vec}_r(z)&=\sum_{\substack{s+t=r\\s,t \geq 0}} q^{\frac{1}{2}(g-1)st}E_s^{<0}(q^{\frac{t}{2}}z)E^{\geq 0}_t(q^{-\frac{1}{2}s}z).
\end{split}
\end{equation}

\vspace{.2in}

\paragraph{\textbf{4.5.}}  Let $\mathcal{H}^{sph} \subset \mathcal{H}^{fin}$ be the subalgebra generated by $\mathbf{K}$ and the characteristic functions $\mathbf{1}^{vec}_{1,d}$ 
and $\mathbf{1}_{0,d}$ of the connected components of $\textbf{Pic}(X)$ and $\textbf{Coh}_0(X)$, the orbifold of torsion sheaves on $X$. 
This subalgebra is studied in \cite{SVMathAnn} and \cite{SKor}. In particular it is shown in \cite[Thm~3.1]{SKor} that the characteristic function $1_{\mathcal{C}_{\a_1, \ldots, \a_t}}$ of any HN strata $\mathcal{C}_{\a_1, \ldots, \a_t}$ belongs to $\mathcal{H}^{sph}$. 
One nice feature of $\mathcal{H}^{sph}$ is that it possesses an \textit{integral (or generic) form} in the following sense. Let us fix a genus $g \geq 0$,
put $R_g=\mathbb{Q}[T_g]^{W_g}$ and recall that $K_g$ is the localization of $R_g$ at the set $\{q^l-1\;|\; l \geq 1\}$ where by definition $q(\sigma_1, \ldots, \sigma_{2g})=\sigma_{2i-1}\sigma_{2i}$
for any $1 \leq i \leq g$. (see Section~\textbf{1.1}). For any choice of smooth projective curve $X$ of genus $g$ there is a natural map $K_g \to \mathbb{C}, f \mapsto f(\sigma_X)$. 

\vspace{.1in}

\begin{theo}[\cite{SVMathAnn}, \cite{SKor}]\label{T:sv} There exists an $R_g$-Hopf algebra ${}_R\mathcal{H}^{sph}$ equipped with
 a Hopf pairing $$(\;\;)~: {}_R\mathcal{H}^{sph} \otimes {}_R\mathcal{H}^{sph} \to K_g,$$
generated by elements ${}_R1_{0,l}, {}_R1^{vec}_{1,d}, l \geq 1, d \in \mathbb{Z}$, containing elements
${}_R1_{\mathcal{C}_{\a_1, \ldots, \a_t}}$ for any HN strata $\mathcal{C}_{\a_1, \ldots, \a_t}$ and having the following property~: for any smooth connected projective curve
$X$ of genus $g$ defined over a finite field $\mathbb{F}_q$ there exists a specialisation morphism of Hopf algebras
$$\Psi_X~: {}_R\mathcal{H}^{sph}\otimes_{R_g} \mathbb{C} \twoheadrightarrow \mathcal{H}_X^{sph}$$
such that
$$\Psi_X({}_R1_{\mathcal{C}_{\a_1, \ldots, \a_t}})=1_{\mathcal{C}_{\a_1, \ldots, \a_t}}$$
for any HN strata $\mathcal{C}_{\a_1, \ldots, \a_t}$.
\end{theo}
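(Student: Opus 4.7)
The plan is to build ${}_R\mathcal{H}^{sph}$ abstractly over $R_g$ by generators and relations, and then to produce the specialization morphisms by universality. First, I would take as generators the symbols ${}_R\mathbf{k}_{\nu}$ ($\nu \in \Z^2$), ${}_R1_{0,l}$ ($l \geq 1$) and ${}_R1^{vec}_{1,d}$ ($d \in \Z$), and consider the formal "universal" zeta element $\zeta(z)=\Exp\big((1-\sum_i \a_i + q)z\big)\in R_g[[z]]$ playing the role of $\zeta_X$. The defining relations are the ones from Lemma~\ref{L:Hecke}, namely the commutation of the generating series ${}_RE_0(z)=\sum_l {}_R1_{0,l}z^l$ and ${}_RE_1^{vec}(w)=\sum_d {}_R1^{vec}_{1,d}w^d$ with right-hand side expressed in terms of $\zeta(z/w)$ (expanded in the region $w\gg z$), the commutativity of the ${}_RE_0(z)$, and the twisted commutation with the ${}_R\mathbf{k}_{\nu}$. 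All these relations have coefficients in $R_g$.

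Second, I would exhibit universal lifts ${}_R 1_{\mathcal{C}_{\a_1,\ldots,\a_t}}\in {}_R\mathcal{H}^{sph}$ of the HN strata functions. For the semistable stratum $\mathcal{C}_{(r,d)}=\mathbf{Bun}^{ss}_{r,d}$ one already has the residue expression of Lemma~\ref{L:residue}, whose prefactor $C=q^{-(g-1)r(r-1)/4}vol_1^{-r}vol_r$ lies in $K_g$ thanks to Theorem~\ref{T:volform}. For a general HN type one uses the identity
\[
1_{\mathcal{C}_{\a_1,\ldots,\a_t}}= q^{-\frac{1}{2}\sum_{i>j}\langle\a_i,\a_j\rangle}\,1_{\mathcal{C}_{\a_t}}\cdots 1_{\mathcal{C}_{\a_1}} - (\text{coarser HN strata})
\]
and inverts by Möbius inversion over the poset of HN types. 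The coefficients that appear are rational functions of the Weil parameters, regular away from $\{q^l=1\}$, hence elements of $K_g$. This is precisely the content of \cite[Thm.~3.1]{SKor} and provides the sought-after universal elements.

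Third, I would define the Hopf pairing on ${}_R\mathcal{H}^{sph}$ by prescribing it on generators through the formulas of Theorem~\ref{T:volform} (where $\zeta_X$ is replaced by the universal $\zeta$) and extending it by Green's identity $((ab)|c)=(a\otimes b|\Delta(c))$; one checks inductively that all pairings lie in $K_g$. The specialization $\Psi_X$ is then defined on generators by ${}_R1_{0,l}\mapsto 1_{0,l}$, ${}_R1^{vec}_{1,d}\mapsto 1^{vec}_{1,d}$, ${}_R\mathbf{k}_{\nu}\mapsto\mathbf{k}_{\nu}$, combined with the evaluation $\sigma_i\mapsto \sigma_{X,i}$ on scalars. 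Well-definedness is immediate because every defining relation of ${}_R\mathcal{H}^{sph}$ specializes to an actual relation in $\mathcal{H}^{sph}_X$ (by Lemma~\ref{L:Hecke} and the standard Green formulas). The identity $\Psi_X({}_R1_{\mathcal{C}_{\a_1,\ldots,\a_t}})=1_{\mathcal{C}_{\a_1,\ldots,\a_t}}$ then follows from the universal expression produced in the second step.

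The main obstacle is the second step: establishing that the HN strata characteristic functions genuinely belong to the spherical subalgebra and admit expressions whose structure constants depend on $X$ only through the class $\sigma_X\in T_g(\C)/W_g$. This requires a careful analysis of the interplay between Hall multiplication, HN filtration, and the Eisenstein/shuffle realization of $\mathcal{H}^{sph}$, and makes essential use of the rationality of the Eisenstein series proved by Harder (\cite{Harder}) and of the purity of the stacks $\mathbf{Coh}_\nu$ (\cite{Jochen}). The appearance of $K_g$ rather than $R_g$ is forced by the denominators $q^l-1$ coming from the zeta factors in the pairing, and cannot be avoided at this level of generality.
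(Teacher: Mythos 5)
Your outline is essentially the route the paper takes: the paper's own proof of Theorem~\ref{T:sv} is a two-line citation (existence of ${}_R\mathcal{H}^{sph}$ from \cite[1.11]{SVMathAnn}, existence of the elements ${}_R1_{\mathcal{C}_{\a_1,\ldots,\a_t}}$ as in \cite[Thm.~3.1]{SKor}), and what you write is a faithful reconstruction of what those references do: a universal $\zeta(z)$ over $R_g$, the Hecke/functional-equation relations of Lemma~\ref{L:Hecke}, the residue formula of Lemma~\ref{L:residue} for the semistable pieces, and Green's identity for the pairing. One genuine divergence is worth flagging. In \cite{SVMathAnn} the generic form is \emph{not} presented by generators and relations; it is realized as a subalgebra of a universal shuffle algebra whose structure constants are built from $\zeta$. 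Your presentation-by-relations is adequate for the statement at hand (only a surjection $\Psi_X$ is claimed, so an a priori incomplete list of relations does no harm), but it obliges you to verify separately that the topological coproduct of Lemma~\ref{L:volcoprod} and the Hopf pairing descend through the ideal you quotient by; the shuffle realization gives both structures for free, which is why the references proceed that way. A second, smaller point: once the semistable elements ${}_R1_{\mathcal{C}_{\beta}}$ are in hand, no M\"obius inversion is needed to get a general HN stratum --- for strictly decreasing slopes the uniqueness of the Harder--Narasimhan filtration makes $1_{\mathcal{C}_{\a_t}}\cdots 1_{\mathcal{C}_{\a_1}}$ equal to $q^{\frac{1}{2}\sum_{i>j}\langle \a_i,\a_j\rangle}1_{\mathcal{C}_{\a_1,\ldots,\a_t}}$ on the nose. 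The inversion over the poset of HN types is needed one step earlier, to extract $1_{\mathcal{C}_{\beta}}=1^{ss}_{\beta}$ from the elements $1_{\beta}$ (themselves obtained from Lemma~\ref{L:residue} and Theorem~\ref{T:volform}); that is exactly where \cite[Thm.~3.1]{SKor} does its work, and your "main obstacle" paragraph correctly identifies it. Finally, note that the half-integral powers $q^{\pm 1/2}$ in the normalizations require adjoining a square root of $q$ to $R_g$ (or renormalizing); this is cosmetic but should be said.
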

\begin{proof} The existence of ${}_R\mathcal{H}^{sph}$ is shown in \cite[1.11]{SVMathAnn}. The existence of the elements ${}_R1_{\mathcal{C}_{\a_1, \ldots, \a_t}}$ is proved in exactly
the same fashion as in \cite[Thm.~3.1]{SKor}.
\end{proof}

\vspace{.1in}

\begin{cor}\label{C:cor23} For any tuple $\underline{\alpha}=(\alpha_1,\ldots, \alpha_s)$ there exists an element $B^{\geq 0}_{g,\underline{\alpha}} \in K_g$ such that $$vol\bigg(\widetilde{\textbf{Coh}}^{\geq 0}_{\underline{\alpha}}(X)\bigg)=B^{\geq 0}_{g,\underline{\alpha}}(\sigma_X)$$ for any $X$.
\end{cor}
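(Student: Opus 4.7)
The plan is to reduce the volume computation to a Hopf pairing inside the spherical Hall algebra and then invoke the integral form provided by Theorem~\ref{T:sv} to conclude that the answer lies in $K_g$. By equation~(\ref{E:cohtilde}), the prefactor $q^{-\frac{1}{2}\sum_{i>j}\langle \alpha_i,\alpha_j\rangle}$ is already in $K_g$, so the real task is to show that the Hopf pairing $(1_{\alpha_s}\cdots 1_{\alpha_1}\;|\; 1^{\geq 0}_{\sum\alpha_i})$ is the specialization at $\sigma_X$ of a canonical $K_g$-valued expression.

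First, I would decompose $1^{\geq 0}_{\sum\alpha_i} = \sum_{\underline{\beta}\in D(\sum\alpha_i)} 1_{\mathcal{C}_{\underline{\beta}}}$, which is a finite sum because $D(\sum\alpha_i)$ is finite (see Section~\textbf{2.1}). By Theorem~\ref{T:sv} each summand $1_{\mathcal{C}_{\underline{\beta}}}$ is the specialization of an integral element ${}_R 1_{\mathcal{C}_{\underline{\beta}}} \in {}_R\mathcal{H}^{sph}$, so by linearity it suffices to handle one HN-stratum characteristic function at a time.

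Next, I would use the defining Hopf pairing identity $(fg\;|\;h) = (f\otimes g\;|\;\Delta h)$, iterated $s-1$ times, to convert the product into a coproduct:
$$(1_{\alpha_s}\cdots 1_{\alpha_1}\;|\; 1_{\mathcal{C}_{\underline{\beta}}}) = \bigl(1_{\alpha_s}\otimes\cdots\otimes 1_{\alpha_1}\;\big|\; \Delta^{s-1}_{\alpha_s,\ldots,\alpha_1}(1_{\mathcal{C}_{\underline{\beta}}})\bigr).$$
Since $1_{\mathcal{C}_{\underline{\beta}}}$ has finite support, so does its iterated coproduct; moreover, grouping pure tensors by the HN type of each factor, the element $\Delta^{s-1}_{\alpha_s,\ldots,\alpha_1}(1_{\mathcal{C}_{\underline{\beta}}})$ is a finite $K_g$-linear combination of tensors $1_{\mathcal{C}_{\underline{\gamma}_s}}\otimes\cdots\otimes 1_{\mathcal{C}_{\underline{\gamma}_1}}$ arising as sub-quotient HN types of sheaves in $\mathcal{C}_{\underline{\beta}}$. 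Applying $\Delta^{s-1}$ to the integral lift ${}_R 1_{\mathcal{C}_{\underline{\beta}}}$ in ${}_R\mathcal{H}^{sph}$ promotes this to a finite expansion in $({}_R\mathcal{H}^{sph})^{\otimes s}$. Finally, expanding each factor $1_{\alpha_i} = \sum_{\underline{\gamma}} 1_{\mathcal{C}_{\underline{\gamma}}}$ over all HN types of class $\alpha_i$ --- \emph{a priori} an infinite sum, but one in which only the finitely many $\underline{\gamma}$'s appearing in the support of the coproduct contribute --- reduces the pairing to a finite sum of Hopf pairings of integral elements of ${}_R\mathcal{H}^{sph}$. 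Each such pairing is in $K_g$ by Theorem~\ref{T:sv}, and a finite sum of elements of $K_g$ defines the desired element $B^{\geq 0}_{g,\underline{\alpha}}$.

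The main technical point to verify carefully is the claim that $\Delta^{s-1}({}_R 1_{\mathcal{C}_{\underline{\beta}}})$ admits, in the integral form, a finite expansion in terms of tensors of integral HN-stratum characteristic functions with coefficients in $K_g$. This amounts to proving a uniform (curve-independent) bound on the HN types of the sub-quotients that can appear inside a fixed HN stratum $\mathcal{C}_{\underline{\beta}}$, together with compatibility of the coefficients with the integral structure. This uniformity is the essential content to be extracted from the construction of ${}_R\mathcal{H}^{sph}$ in \cite{SVMathAnn, SKor}, and verifying it carefully is the bulk of the work in the proof.
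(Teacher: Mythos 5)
Your overall strategy is the same as the paper's: write $1^{\geq 0}_{\sum\alpha_i}$ as a finite sum of HN-stratum characteristic functions, reduce the pairing to pairings of elements that exist in the integral form ${}_R\mathcal{H}^{sph}$ of Theorem~\ref{T:sv}, and supply a finiteness argument so that only finitely many integral terms contribute. Where you diverge is in the mechanism for that finiteness. The paper stays on the \emph{product} side: it expands $1_{\alpha_s}\cdots 1_{\alpha_1}=\sum 1_{\mathcal{C}_{\underline{\beta}_s}}\cdots 1_{\mathcal{C}_{\underline{\beta}_1}}$ (an a priori infinite sum of products of integral elements) and shows via the slope inequality (\ref{E:cor1}) that a term can pair nontrivially with $1^{\geq 0}_{\sum\alpha_i}$ only if certain partial sums of classes have nonnegative slope --- a purely numerical, curve-independent condition that leaves finitely many terms, each of which is manifestly a product of elements ${}_R1_{\mathcal{C}_{\underline{\beta}_i}}$ pairing into $K_g$. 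You instead dualize through the Hopf property and work on the \emph{coproduct} side, which forces you to make sense of $\Delta^{s-1}({}_R1_{\mathcal{C}_{\underline{\beta}}})$ inside a completed tensor product of the integral form and to assert that its multidegree-$(\alpha_s,\ldots,\alpha_1)$ component is a finite $K_g$-combination of tensors of integral HN-stratum elements. That assertion is not supplied by Theorem~\ref{T:sv} as stated, and you rightly flag it as the outstanding technical point; note also that, as you observe in passing, the coproduct of an HN-stratum function need not itself be a combination of products of HN-stratum functions, so some care is needed in formulating what ``finite expansion'' means. The gap is closable: the HN slopes of any subquotient of a sheaf in $\mathcal{C}_{\underline{\beta}}$ lie in $[\mu(\beta_t),\mu(\beta_1)]$, and for fixed class and bounded slopes there are only finitely many HN types, which gives the uniform bound you ask for without appealing to the internals of \cite{SVMathAnn,SKor}. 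But this is exactly the slope argument the paper runs on the product side, where it lands directly on integral elements and no statement about the integral coproduct is needed; in that sense the paper's route is the shorter one, and yours trades an elementary truncation for an unproved structural claim about ${}_R\mathcal{H}^{sph}$.
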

\begin{proof} By (\ref{E:cohtilde}) and Theorem~\ref{T:sv} it is enough to show that the pairing 
\begin{equation}\label{E:cor3}
( 1_{\a_s}\cdots 1_{\a_1}\;|\; 1_{\sum \a_i}^{\geq 0})
\end{equation}
may be expressed as a pairing between explicit polynomials in elements $1_{\mathcal{C}_{\beta_1, \ldots, \beta_t}}$. On the one hand, we have
$$1_{\sum \a_i}^{\geq 0}=\sum_{\underline{\beta}} 1_{\mathcal{C}_{\underline{\beta}}}$$
where $\underline{\beta}$ ranges among the (finite) set of all HN types $(\beta_1, \ldots, \beta_t)$ such that $\sum \beta_i=\sum \a_i$ and $\mu(\beta_1) \geq 0$. On the other hand, we have
\begin{equation}\label{E:cor2}
1_{\a_s} \cdots 1_{\a_1}=\sum_{\underline{\beta}_s, \ldots, \underline{\beta}_1} 1_{\mathcal{C}_{\underline{\beta}_s}} \cdots 1_{\mathcal{C}_{\underline{\beta}_1}}
\end{equation}
where the sum ranges over all tuples $(\underline{\beta}_s, \ldots, \underline{\beta}_1)$ of HN types of respective class
$\a_s, \ldots, \a_1$. Write
$$\underline{\beta}_i=(\beta^{(i)}_1, \ldots, \beta^{(i)}_{t_i}), \qquad (1 \leq i \leq s).$$
We claim that the pairing
$(1_{\mathcal{C}_{\underline{\beta}_s}} \cdots 1_{\mathcal{C}_{\underline{\beta}_1}}\;|\; 1^{\geq 0}_{\sum \a_i})$
may be nonzero only when
\begin{equation}\label{E:cor1}
\mu(\a_s + \cdots + \a_{i+1} + \beta^{(i)}_{1} + \cdots + \beta^{(i)}_{l}) \geq 0
\end{equation}
for all possible choices of $i$ and $l$. 
Indeed, if (\ref{E:cor1}) does not hold then there exists a coherent sheaf $\mathcal{F} \in \mathcal{C}_{\geq 0}$ satisfying $(1_{\mathcal{C}_{\underline{\beta}_s}} \cdots 1_{\mathcal{C}_{\underline{\beta}_1}}\;|\;\mathcal{F}) \neq 0$
having some quotient of negative slope. Observe that condition (\ref{E:cor1}) reduces the number of summands in
(\ref{E:cor2}) contributing to (\ref{E:cor3}) to a finite set. We are done.
\end{proof}

From the above corollary one deduces that for any $\a \in (\mathbb{Z}^2)^+$ there exist an element $C_{g,\a}^{\geq 0} \in K_g$ such that
$$ vol\bigg( \textbf{Nil}^{\geq 0}_{\a}(X)\bigg)=C_{g,\a}^{\geq 0}(\sigma_X)$$
for any $X$. Therefore using Proposition~\ref{P:1} we obtain the relation
\begin{equation}\label{E:thm1}
\sum_{l \geq 1} 
\frac{1}{l} \sum_{\a} \frac{\A_{\a}^{\geq 0}(X \otimes \mathbb{F}_{q^l})}{q^l-1}z^{l\a}=log \left( \sum_{\a} C_{g,\a}^{\geq 0}z^{\a}\right).
\end{equation}
This implies that for any $\a$ there exists an element $A_{g,\a}^{\geq 0} \in K_g$ such that $\A_{\a}^{\geq 0}(X)=A_{g,\a}^{\geq 0}(\sigma_X)$ for any $X$. Indeed, this follows immediately from (\ref{E:thm1}) for $\a=(r,d)$
with $r$ and $d$ relatively prime, and from there by an easy induction on $gcd(r,d)$ for an arbitrary pair $\a$. Using Proposition~\ref{P:2} we therefore have~:

\vspace{.1in}

\begin{cor}\label{C:25} For any $g$ and any $\a$ there exists an element $A_{g,\a} \in K_g$ such that
$\A_{\a}(X)=A_{g,\a}(\sigma_X)$ for any smooth projective curve $X$ of genus $g$.
\end{cor}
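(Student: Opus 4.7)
\smallskip
\noindent\emph{Proof plan.} The preceding discussion produced, for each $\alpha \in (\mathbb{Z}^2)^+$, an element $A_{g,\alpha}^{\geq 0} \in K_g$ such that $A_{g,\alpha}^{\geq 0}(\sigma_X) = \mathcal{A}_\alpha^{\geq 0}(X)$ for every $X/\mathbb{F}_q$ of genus $g$. My strategy is to reduce the general counting problem to this semistable-above-zero one: for positive rank the reduction is by a line-bundle twist combined with Proposition~\ref{P:2}, and for rank $0$ by direct inspection.

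For $\alpha = (r,d)$ with $r \geq 1$, the functor $\mathcal{V} \mapsto \mathcal{V}\otimes\mathcal{L}$ attached to any line bundle $\mathcal{L}$ of degree $n$ on $X$ is an auto-equivalence of $\text{Coh}(X)$ that commutes with base change to $\overline{\mathbb{F}_q}$ and hence preserves geometric indecomposability, while sending class $(r,d)$ to $(r, d+rn)$. Since $X$ is a smooth geometrically connected projective curve over a finite field, F.~K.~Schmidt's theorem provides a line bundle on $X$ of every integer degree, so
$$\mathcal{A}_{r,d}(X) = \mathcal{A}_{r,d+rn}(X) \qquad \forall\,n \in \mathbb{Z}.$$
Picking any $n$ with $d + rn > (g-1)r(r-1)$, Proposition~\ref{P:2} collapses this to
$$\mathcal{A}_{r,d}(X) = \mathcal{A}_{r,d+rn}^{\geq 0}(X) = A_{g,r,d+rn}^{\geq 0}(\sigma_X)$$
uniformly in $X$, and one defines $A_{g,r,d} := A_{g,r,d+rn}^{\geq 0} \in K_g$.

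For $\alpha = (0,d)$ with $d > 0$, I would argue by direct inspection. Every indecomposable torsion sheaf on $X\otimes\overline{\mathbb{F}_q}$ is of the form $\mathcal{O}_{\bar{x}}/\mathfrak{m}_{\bar{x}}^m$, and a torsion sheaf at an $\mathbb{F}_q$-closed point of degree $e$ splits geometrically into $e$ summands, so only those concentrated at $\mathbb{F}_q$-rational points remain geometrically indecomposable. One finds
$$\mathcal{A}_{0,d}(X) = |X(\mathbb{F}_q)| = 1 + q - \sum_{i=1}^{2g}\sigma_i,$$
which is patently the value at $\sigma_X$ of an element of $K_g$.

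The corollary is essentially formal once Corollary~\ref{C:cor23} is available, so there is no serious obstacle. The only point requiring care is the invocation of Schmidt's theorem, ensuring the degree-shift trick can reach an arbitrary residue of $d$ modulo $r$ \emph{over the given field $\mathbb{F}_q$}; all the genuine difficulty has already been absorbed into the integral form of the spherical Hall algebra and the Eisenstein-series manipulations of the preceding sections.
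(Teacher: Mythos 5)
Your proposal is correct and follows essentially the same route as the paper: the existence of $A_{g,\a}^{\geq 0}\in K_g$ is extracted from the Hall-algebra computation via Proposition~\ref{P:1} and Corollary~\ref{C:cor23}, and the general case is reduced to it by twisting with a line bundle of degree one (whose existence over $\mathbb{F}_q$ the paper uses implicitly here and states explicitly in Section~5.8) so that Proposition~\ref{P:2} applies, with the rank-zero case handled by the direct count $\A_{0,d}(X)=|X(\mathbb{F}_q)|$. Your write-up merely makes explicit the degree-shifting step and the appeal to F.~K.~Schmidt's theorem, which the paper leaves tacit.
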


\vspace{.2in}

\paragraph{\textbf{4.6.}} To finish the proof of Theorem~\ref{T:1} it remains to prove the unicity of $A^{\geq 0}_{g,\a}$.
For this, let us fix a prime number $l$, an embedding $\iota: \qlb \to \C$ and consider the collection $\mathcal{X}_g$ of all smooth projective geometrically connected curves $X$ of genus $g$ defined over some finite field $\mathbb{F}_q$ with $l$ not dividing $q$. Setting
$$\mathcal{W} =\{ \sigma_X\;|\; X \in \mathcal{X}_g\} \subset T_g/W_g$$
we see that the unicity statement of Threorem~\ref{T:1} boils down to the following fact, whose proof is given in the appendix~:

\begin{prop}\label{P:Zariski} The set $\mathcal{W}$ is Zariski dense in
$T_g/W_g$.
\end{prop}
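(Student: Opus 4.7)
The plan is to reduce the problem to Zariski density inside each fiber of a natural projection, and then invoke Deligne's equidistribution theorem for Frobenius conjugacy classes.

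Consider the morphism $\pi \colon T_g/W_g \to \mathbb{G}_m$ defined by $\pi(\a_1,\ldots,\a_{2g}) = \a_{2i-1}\a_{2i}$; this is well-defined thanks to the relations defining $T_g$ and is manifestly $W_g$-invariant. The variety $T_g/W_g$ is irreducible of dimension $g+1$, and each fiber $\pi^{-1}(q)$, the quotient by the finite group $W_g$ of a product of $g$ affine conics $\a_{2i-1}\a_{2i}=q$, is irreducible of dimension $g$. The image $\pi(\mathcal{W})$ consists of all prime powers $q$ with $l \nmid q$, so it is infinite, hence Zariski dense in $\mathbb{G}_m$. A standard dimension argument then reduces the problem to showing that $\mathcal{W} \cap \pi^{-1}(q)$ is Zariski dense in $\pi^{-1}(q)$ for infinitely many $q$: if $V \subsetneq T_g/W_g$ were a proper closed subvariety containing $\mathcal{W}$, every irreducible component of $V$ would have dimension $\leq g$, so by the theorem on the dimension of fibers, $V\cap \pi^{-1}(q)$ would be a proper subvariety of $\pi^{-1}(q)$ for all $q$ in a dense open subset of $\mathbb{G}_m$---and this dense open subset still contains infinitely many elements of $\pi(\mathcal{W})$, contradicting fiberwise density.

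For fixed $q$, the complex fiber $\pi^{-1}(q)(\mathbb{C})$ is naturally identified, via the substitution $\a_i = q^{1/2}\b_i$, with the complexification of the space $USp(2g)^{\natural}$ of conjugacy classes in the unitary symplectic group. The normalized Weil numbers $q^{-1/2}\sigma_X$ of a smooth projective geometrically connected curve $X$ of genus $g$ over $\mathbb{F}_q$ are precisely the eigenvalues of the unitarized Frobenius on $H^1(X \otimes_{\mathbb{F}_q}\overline{\mathbb{F}_q},\qlb)$, and so determine a point of $USp(2g)^{\natural}$. By Deligne's equidistribution theorem applied to the universal smooth projective family of curves of genus $g$---whose geometric monodromy on the relative $H^1$ is the full symplectic group $Sp(2g,\qlb)$, a classical result proved by degeneration to irreducible maximally degenerate curves---these normalized Frobenius conjugacy classes equidistribute with respect to the image of Haar measure on $USp(2g)^{\natural}$ as $q \to \infty$. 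Since equidistribution in the analytic topology immediately implies Zariski density in the complexified space of conjugacy classes, we conclude that $\mathcal{W} \cap \pi^{-1}(q)$ is Zariski dense in $\pi^{-1}(q)(\mathbb{C})$ for all sufficiently large $q$, completing the argument.

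The main technical input is the identification of the geometric monodromy of the universal curve with the full symplectic group: without it, the $\sigma_X$ could a priori be trapped in some proper algebraic subvariety cut out by extra monodromy constraints. A secondary but more formal point is the passage from analytic equidistribution to Zariski density, which reduces to the observation that a proper Zariski closed subvariety of the complexified conjugacy class space has zero pushed-forward Haar measure.
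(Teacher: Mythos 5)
Your reduction to fiberwise density breaks down at its key step. For a fixed prime power $q$, the moduli space of genus $g$ curves over $\mathbb{F}_q$ has only finitely many points, so $\mathcal{W}\cap\pi^{-1}(q)$ is a \emph{finite} subset of the fiber $\pi^{-1}(q)$, which has dimension $g\geq 1$. A finite set is never Zariski dense in a positive-dimensional variety, so the claim that $\mathcal{W}\cap\pi^{-1}(q)$ is Zariski dense in $\pi^{-1}(q)(\mathbb{C})$ for large $q$ is false, and no choice of $q$ can repair it. The misstep is in how you invoke Deligne's equidistribution theorem: it asserts that the finite collections of normalized Frobenius conjugacy classes become equidistributed \emph{in the limit} $q^n\to\infty$ (i.e., averages over these growing finite sets converge to the Haar integral); it says nothing about any individual finite set being dense. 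Your reduction via the theorem on dimensions of fibers is logically sound, but it reduces the proposition to a statement that is simply untrue, so the argument collapses. (Your identification of the geometric monodromy with $Sp(2g)$ and the passage from analytic density to Zariski density are fine as far as they go, but they are not where the difficulty lies.)

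The paper's proof has to combine information across infinitely many fibers, and does so as follows: fix $q$ and consider curves over all extensions $\mathbb{F}_{q^n}$; the normalized tuples $q^{-n/2}\sigma_X$ all live in the single compact torus $T\simeq (S^1)^g$ inside $USp(2g)$, and Deligne's equidistribution theorem shows their union over all $n$ is dense in $T/W_g$ for the analytic topology. The real work is then to transfer this to Zariski density of the \emph{un-normalized} set in $T_g/W_g$: one uses the scaling map $r\colon T\times\mathbb{R}^*\to T_g$, $((z_i),t)\mapsto (tz_i)$, expands a putative vanishing polynomial $f$ as $r^*f(\underline{z},t)=\sum_i h_i(\underline{z})t^i$ with top coefficient $h_0\neq 0$, and evaluates along a sequence $\omega_i\in\mathcal{W}'_{q,n_i}$ converging to a point where $h_0$ does not vanish; boundedness of the lower-order terms on the compact torus forces $r^*f(\omega_i,q^{n_i/2})\to h_0(\underline{z})\neq 0$, a contradiction. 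Some argument of this transversal kind is unavoidable, since within each fiber $\mathcal{W}$ is finite.
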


\vspace{.2in}

\section{Volume of the orbifold of pairs}

\vspace{.1in}

\paragraph{\textbf{5.1.}} Let us introduce the following generating series~:
$$G^{\geq 0}_{r_s, \ldots, r_1}(z_s, \ldots, z_1;w):=\left( E_{r_s, \ldots, r_1}(z_s, \ldots, z_1)\;\bigg|\; E_{n}^{\geq 0}(w) \right)$$
where $n =\sum_i r_i$. Note that we allow some of the $r_i$ to be zero. From the proof of Corollary~\ref{C:cor23} we see that $G^{\geq 0}_{r_s, \ldots, r_1}(z_s, \ldots, z_1; w)$ belongs to the vector space $K_g\left[\left[\frac{z_s}{z_{s-1}}, \frac{z_{s-1}}{z_{s-2}}, \ldots, \frac{z_2}{z_1}, z_1, w\right]\right]$, and is the expansion of some rational function.

\vspace{.1in}

\begin{prop}\label{P:4} For any $r_s, \ldots, r_1$ we have 
$$G^{\geq 0}_{r_s, \ldots, r_1}(z_s, \ldots, z_1;w)=X_{r_1, \ldots, r_s}(z_s, \ldots, z_1;w) \cdot A^{\geq 0}_{r_s, \ldots, r_1}(z_r, \ldots, z_1;w)$$
where
$$A^{\geq 0}_{r_s, \ldots, r_1}(z_s, \ldots, z_1;w)=
 \left( E^{vec}_{r_s, \ldots, r_1}(z_s, \ldots, z_1)\;\big|\; E_{n}^{\geq 0}(w) \right)$$
and
$$X_{r_s, \ldots, r_1}(z_s, \ldots, z_1;w)=\text{Exp}\left(\frac{|X(\mathbb{F}_q)|}{q-1}\left[\sum_{i} q^{-\frac{1}{2}(n+r_i)}z_iw \,+ \,\sum_{i>j} \frac{z_i}{z_j}\big(q^{\frac{r_j}{2}}-q^{-\frac{r_j}{2}}\big)q^{-\frac{r_i}{2}}\right]\right).$$
\end{prop}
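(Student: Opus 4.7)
My plan is to exploit the canonical splitting $\mathcal{F}\cong V\oplus T$ of any coherent sheaf on a smooth projective curve into its vector bundle and torsion parts, using that $\text{Ext}^1(V,T)=H^1(X,V^\vee\otimes T)=0$. At the Hall algebra level this yields the identity $E_r(z)=E^{vec}_r(z)E_0(q^{-1/2}z)$ already noted in Section~\textbf{4.2}, and the proof of Proposition~\ref{P:4} amounts to pushing this factorization through the pairing with $E^{\geq 0}_n(w)$.

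First, I would substitute this decomposition into every factor of the product $E_{r_s}(z_s)\cdots E_{r_1}(z_1)$ and iteratively apply Lemma~\ref{L:Hecke}(ii) to commute each torsion series $E_0(q^{-1/2}z_i)$ past $E^{vec}_{r_j}(z_j)$ for $j<i$ (using also that the $E_0$'s commute amongst themselves). This produces
$$E_{r_s}(z_s)\cdots E_{r_1}(z_1)\;=\;F_{\mathrm{comm}}(z_s,\ldots,z_1)\,\cdot\,E^{vec}_{r_s,\ldots,r_1}(z_s,\ldots,z_1)\,\cdot\,\prod_{i=1}^s E_0(q^{-1/2}z_i),$$
where $F_{\mathrm{comm}}(z)=\prod_{i>j}\prod_{k=0}^{r_j-1}\zeta\bigl(q^{-(r_j+1)/2+k}z_i/z_j\bigr)$. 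The plethystic identity $\zeta(u)=\Exp(|X(\mathbb{F}_q)|u)$ together with the geometric sum $\sum_{k=0}^{r_j-1}q^k=(q^{r_j}-1)/(q-1)$ immediately rewrite $F_{\mathrm{comm}}$ as a plethystic exponential, contributing a preliminary version of the cross-term $\sum_{i>j}\frac{z_i}{z_j}(q^{r_j/2}-q^{-r_j/2})q^{-r_i/2}$ appearing in $X_{r_s,\ldots,r_1}$ (to be sharpened below).

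Next, I would apply the Hopf-pairing axiom $(AT\mid C)=(A\otimes T\mid\Delta C)$ with $A=E^{vec}_{r_s,\ldots,r_1}(z)$, $T=\prod_i E_0(q^{-1/2}z_i)$, and $C=E^{\geq 0}_n(w)$, and extract the rank-$(n,0)$ component of $\Delta E^{\geq 0}_n(w)$. Using the extension-closure of $\mathcal{C}_{\geq 0}$ together with $\text{Ext}^1(V',T')=0$ for a vector bundle $V'$ and torsion sheaf $T'$, this component admits an explicit description involving $E^{\geq 0}_n(w)\mathbf{k}_{(0,l)}$ in the first slot and $1_{0,l}$ in the second, with Euler-form prefactor $q^{nl/2}$. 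Pairing the first slot against $A$ yields $A^{\geq 0}_{r_s,\ldots,r_1}(z;w)$ (since the torsion summands of $E^{\geq 0}_n(w)$ pair trivially against vector bundle test functions), while pairing the second slot against $T$ is evaluated via Theorem~\ref{T:volform}(ii) in the form $\sum_l\mathrm{vol}(\mathbf{Coh}_{0,l})s^l=\Exp\bigl(\tfrac{|X(\mathbb{F}_q)|}{q-1}s\bigr)$. Combined with the shifts coming from (i) the Euler-form factor $q^{nl/2}$, (ii) the automorphism decomposition $|\text{Aut}(V\oplus T)|=|\text{Aut}(V)||\text{Aut}(T)|q^{n\cdot\mathrm{len}(T)}$ arising from $|\text{Hom}(V,T)|=q^{n\cdot\mathrm{len}(T)}$, and (iii) the $\mathbf{k}_{(0,l)}$-interactions with the vector-bundle coproduct of Lemma~\ref{L:volcoprod}, this produces the first term $\sum_i q^{-(n+r_i)/2}z_iw$ in the exponent of $X_{r_s,\ldots,r_1}$ and sharpens the preliminary $q^{-1/2}$ from Step~1 into the advertised $q^{-r_i/2}$ in the cross term.

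The main obstacle is the careful $q$-bookkeeping in the coproduct step: showing that the $\mathbf{k}_{(0,l)}$-twists decorating $\Delta E^{\geq 0}_n(w)$ interact with the non-trivial vector-bundle coproduct of Lemma~\ref{L:volcoprod} to reproduce precisely the asymmetric exponents $-(n+r_i)/2$ in the first term and $-r_i/2$ in the second term of $X_{r_s,\ldots,r_1}$. This is essentially a Heisenberg-type identity within the torsion-sheaf sector of the Hall algebra, parallel in spirit to (but more elaborate than) the proof of Lemma~\ref{L:Hecke}(ii); while the ingredients are standard (Theorem~\ref{T:volform}(ii) and Heine's formula), correctly matching the exponents across all the contributing sources is delicate.
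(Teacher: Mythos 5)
Your strategy is essentially the paper's, run in the opposite order: the paper first strips the torsion factor off $E^{\geq 0}_n(w)$ by writing $E^{\geq 0}_n(w)=E^{vec,\geq 0}_n(w)E_0(q^{-n/2}w)$ and applying the Hopf property of the pairing (this is what produces the term $\sum_i q^{-\frac12(n+r_i)}z_iw$, via Theorem~\ref{T:volform}(ii)), and only afterwards decomposes each $E_{r_i}(z_i)$ and commutes the torsion series to the right using Lemma~\ref{L:Hecke}(ii) (which produces the $\sum_{i>j}$ cross term). You perform these two steps in reverse, which is equally viable, and every ingredient you invoke is the correct one.

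There is, however, one concrete error that your plan as written cannot repair. The correct factorization is $E_r(z)=E^{vec}_r(z)E_0(q^{-r/2}z)$, not $E^{vec}_r(z)E_0(q^{-1/2}z)$: the twist in the Hall product $1^{vec}_{(r,d-l)}1_{(0,l)}$ is $q^{\frac12\langle(r,d-l),(0,l)\rangle}=q^{rl/2}$, so $1_{(r,d)}=\sum_l q^{-rl/2}\,1^{vec}_{(r,d-l)}1_{(0,l)}$; the displayed formula in Section~\textbf{4.2} carries a typo here (compare the factor $q^{-rl}$ in the proof of Theorem~\ref{T:volform}(iii)). With the correct argument $q^{-r_i/2}z_i$ in each torsion factor, the Hecke commutation yields $\prod_{k=0}^{r_j-1}\zeta\bigl(q^{-r_j/2+k}q^{-r_i/2}z_i/z_j\bigr)=\Exp\bigl(\tfrac{|X(\mathbb{F}_q)|}{q-1}(q^{r_j/2}-q^{-r_j/2})q^{-r_i/2}\tfrac{z_i}{z_j}\bigr)$ and the pairing $\bigl(E_0(q^{-r_i/2}z_i)\,|\,E_0(q^{-n/2}w)\bigr)$ yields $\Exp\bigl(\tfrac{|X(\mathbb{F}_q)|}{q-1}q^{-(n+r_i)/2}z_iw\bigr)$ directly; no subsequent sharpening is needed, nor is any available. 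In particular your proposed mechanism for upgrading $q^{-1/2}$ to $q^{-r_i/2}$ in the cross term during the coproduct step cannot work: the $z_i/z_j$ factors are created entirely by the commutation in your first step and involve no $w$, so nothing in the later pairing against $E^{\geq 0}_n(w)$ can modify them; and the $\mathbf{k}_{(0,l)}$ twists are central with $\bigl((0,l),\nu\bigr)=0$ for every $\nu$, hence contribute no compensating powers of $q$.
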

\begin{proof} Let us abbreviate $\underline{r}=(r_s, \ldots r_1)$ and $\underline{z}=(z_s, \ldots , z_1)$. From the relation $E_n^{\geq 0}(w)=E^{vec,\geq 0}_n(w) E_0(q^{-\frac{n}{2}}w)$ and using (twice) the Hopf property of the pairing, we get
\begin{equation}\label{E:prop41}
\begin{split}
G^{\geq 0}_{\underline{r}}(\underline{z};w)&=\left( \Delta_{r_s,0}(E_{r_s}(z_s)) \cdots \Delta_{r_1,0}(E_{r_1}(z_1)) \;|\; E^{vec,\geq 0}_n(w) \otimes E_0(q^{-\frac{n}{2}}w)\right)\\
&=\left(\big(E_{r_s}(z_s)\textbf{k}_0 \otimes E_0(q^{-\frac{r_s}{2}}z_s)\big) \cdots \big(E_{r_1}(z_1)\textbf{k}_0 \otimes E_0(q^{-\frac{r_1}{2}}z_1)\big)\;|\;E^{vec,\geq 0}_n(w) \otimes E_0(q^{-\frac{n}{2}}w)\right)\\
&=\left( E_{r_s}(z_s) \cdots E_{r_1}(z_1)\mathbf{k}_0^s\otimes E_0(q^{-\frac{r_s}{2}}z_s) \cdots E_0(q^{-\frac{r_1}{2}}z_1)\;|\;E^{vec,\geq 0}_n(w) \otimes E_0(q^{-\frac{n}{2}}w)\right)\\
&=\left(E_{\underline{r}}(\underline{z})\;|\; E_n^{vec,\geq 0}(w)\right) \cdot \left( \prod_i E_0(q^{-\frac{r_i}{2}}z_i)\;|\; E_0(q^{-\frac{n}{2}}w)\right)\\
&=\left(E_{\underline{r}}(\underline{z}))\;|\; E_n^{vec,\geq 0}(w)\right) \cdot \prod_i \left(E_0(q^{-\frac{r_i}{2}}z_i)\;|\; E_0(q^{-\frac{n}{2}}w)\right)\\
&=\left(E_{\underline{r}}(\underline{z})\;|\; E_n^{vec,\geq 0}(w)\right) \cdot \text{Exp}\left(\frac{|X(\mathbb{F}_q)|}{q-1} \sum_i  q^{-\frac{1}{2}(n+r_i)}z_iw \right)
\end{split}
\end{equation}
The last step of the above calculation uses Theorem~\ref{T:volform}, ii). Using the relation $E_{r_i}(z_i)=E^{vec}_{r_i}(z_i) E_0(q^{-\frac{r_i}{2}}z_i)$ and the Hecke relations (see Lemma~\ref{L:Hecke}) we get
\begin{equation}\label{E:prop42}
\begin{split}
\left(E_{\underline{r}}(\underline{z})\;|\; E_n^{vec,\geq 0}(w)\right)&= \left( E_{r_s}^{vec}(z_s)E_0(q^{-\frac{r_s}{2}}z_s) E_{r_{s-1}}^{vec}(z_{r_{s-1}}) \cdots E_{r_1}^{vec}(z_1)E_0(q^{-\frac{r_1}{2}}z_1)\;|\; E_n^{vec,\geq 0}(w)\right)\\
&=\text{Exp}\left(\frac{|X(\mathbb{F}_q)|}{q-1} \left[ \sum_{i>j} \frac{z_i}{z_j}\big( q^{\frac{r_j}{2}}-q^{-\frac{r_j}{2}}\big)q^{-\frac{r_i}{2}} \right]\right) \cdot \left(E_{\underline{r}}(\underline{z};w)\;|\; E_n^{vec,\geq 0}(w)\right)
\end{split}
\end{equation}
Above we have made use of the obvious relation
$$\prod_{i=0}^{n-1} \zeta\left( q^{-\frac{n}{2}+i}\frac{z}{w}\right)=\text{Exp}\left( \frac{|X(\mathbb{F}_q)|}{q-1}\frac{z}{w} \big(q^{\frac{n}{2}}-q^{-\frac{n}{2}}\big)\right).$$
Combining (\ref{E:prop41}) and (\ref{E:prop42}) yields the Proposition.
\end{proof}

\vspace{.2in}

\paragraph{\textbf{5.2.}} Motivated by Proposition~\ref{P:4}, we introduce some generating series
$$A^{*}_{r_s, \ldots, r_1}(z_s, \ldots, z_1;w):=\left( E^{vec}_{r_s, \ldots, r_1}(z_s, \ldots, z_1)\;\bigg|\; E^{*}_{r}(w) \right)$$
where $r=\sum r_i$ and where the symbol $*$ is either empty or corresponds to one of the modified series considered in Section~4.4, i.e. $\star$ belongs to the set $\{\geq 0,<0\}.$
 As before, these series belong to the
vector space of formal sums $K_g[[z_s^{\pm 1}, \ldots, z_1^{\pm 1}, w^{\pm 1}]]$. By construction, the coefficient
$(1_{r_s,d_s} \cdots 1_{r_1,d_1}\;|\; 1_{r,d}^*)$ of $z_s^{d_s}\cdots z_1^{d_1}w^d$ in $A^*_{r_s, \ldots, r_1}(z_s, \ldots, z_1;w)$ is nonzero only if $d=\sum_i d_i$. Observe that
$$A^{\geq 0}_{r_s, \ldots, r_1}(z_s, \ldots, z_1;w) \in K_g\left[\left[\frac{z_s}{z_{s-1}}, \frac{z_{s-1}}{z_{s-2}}, \ldots, \frac{z_2}{z_1}, z_1, w\right]\right]$$
while
$$A^{< 0}_{r_s, \ldots, r_1}(z_s, \ldots, z_1;w) \in z_1^{-1}K_g\left[\left[z_s^{-1},\frac{z_s}{z_{s-1}}, \frac{z_{s-1}}{z_{s-2}}, \ldots, \frac{z_2}{z_1}, w^{-1}\right]\right]$$
and that the Fourier coefficients in $w$ of either $A^{\geq 0}_{r_s, \ldots, r_1}(z_s, \ldots, z_1;w)$ or $A^{<0}_{r_s, \ldots, r_1}(z_s, \ldots, z_1;w)$ are expansions of some rational functions. 

\vspace{.1in}

To unburden the notation we will simply write $A^*_{\underline{r}}(\underline{z},w)$ when the values of the $r_i$ are 
understood and there is no risk of confusion. We will also write, as in Section~1.3,
$$r_{<i}=\sum_{k<i} r_k, \qquad r_{>i}=\sum_{k>i}r_k, \qquad r_{[i,j]}=\sum_{k=i}^j r_k, \qquad etc.$$

\vspace{.2in}

\paragraph{\textbf{5.3.}} As (\ref{E:cohtilde}) and Proposition~\ref{P:4} show, the volume of the moduli spaces $\widetilde{\textbf{Coh}}_{\underline{\alpha}}^{\geq 0}(X)$
are essentially computed by the generating series $A^{\geq 0}_{\underline{r}}(\underline{z};w)$ for suitable choices of $r_s, \ldots. r_1$. In order to determine these series, we will actually calculate all three types of series and use some induction process. We begin with the series $A_{\underline{r}}(\underline{z};w)$, which is easy to compute. 

\vspace{.1in}

\begin{lem}\label{L:52} Assume that $r_i \geq 1$ for all $i$. Then
$$A_{\underline{r}}(\underline{z};w)=q^{\frac{1}{2}(g-1)\sum_{i>j} r_ir_j}\prod_i vol_{r_i} \prod_{i} \bigg\{ \sum_{l \in \Z} z^l_iw^lq^{\frac{1}{2}l(r_{<i}-r_{>i})}\bigg\}.$$
\end{lem}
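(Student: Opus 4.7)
The plan is to compute the pairing via the Hopf property, moving the coproduct onto the right factor $E_r(w)$, and to exploit the fact that the coproduct of $E_r$ (Lemma~\ref{L:volcoprod}) contains no $E_0$ corrections, unlike the coproduct of $E^{vec}_r$.

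Explicitly, iterating $(ab\mid c)=(a\otimes b\mid \Delta(c))$ yields
$$A_{\underline{r}}(\underline{z};w)=\bigl(E^{vec}_{r_s}(z_s)\otimes\cdots\otimes E^{vec}_{r_1}(z_1)\,\big|\,\Delta^{(s-1)}(E_r(w))\bigr).$$
Since each $E^{vec}_{r_i}(z_i)$ is of pure rank $r_i$, only the rank-homogeneous component of $\Delta^{(s-1)}(E_r(w))$ of multi-rank $(r_s,\dots,r_1)$ can contribute, and the group-like $\mathbf{k}$-parts of the full coproduct pair trivially against the $E^{vec}$ factors, exactly as in the computation~(\ref{E:prop41}) in the proof of Proposition~\ref{P:4}. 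A straightforward induction on $s$ using Lemma~\ref{L:volcoprod}, splitting off the rank-$r_s$ piece first and continuing with the remaining block of rank $r_{<s}$, identifies this component with
$$q^{\frac{g-1}{2}\sum_{i>j}r_ir_j}\,\bigotimes_{i=s}^{1}E_{r_i}\!\bigl(q^{(r_{<i}-r_{>i})/2}w\bigr).$$

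It then remains to evaluate each individual factor pairing $\bigl(E^{vec}_{r_i}(z_i)\,\big|\,E_{r_i}(q^{(r_{<i}-r_{>i})/2}w)\bigr)$. Writing $E_{r_i}(u)=E^{vec}_{r_i}(u)\,E_0(q^{-r_i/2}u)$ and applying the Hopf property once more, only the top piece $E^{vec}_{r_i}(z_i)\otimes 1$ of $\Delta(E^{vec}_{r_i}(z_i))$ survives by rank-matching, so this pairing reduces to $(E^{vec}_{r_i}(z_i)\mid E^{vec}_{r_i}(u))$. The orthogonality relation $(1^{vec}_{r_i,d}\mid 1^{vec}_{r_i,d'})=\delta_{d,d'}\,vol_{r_i}$ then gives $vol_{r_i}\sum_{l\in\Z}(z_iu)^l$, and substituting $u=q^{(r_{<i}-r_{>i})/2}w$ and multiplying the $s$ factors together produces the claimed formula.

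The main obstacle is the bookkeeping of $q$-prefactors in the iterated coproduct: one must recognize that the arguments of the individual $E_{r_i}$ combine into the clean symmetric form $q^{(r_{<i}-r_{>i})/2}w$ and that the accumulated $(g-1)$-twist is precisely $\tfrac{1}{2}\sum_{i>j}r_ir_j$, while also verifying that the $\mathbf{k}$-factors from the full coproduct $\Delta$ contribute trivially to the pairing with the unitless $E^{vec}$ elements.
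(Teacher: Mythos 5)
Your proof is correct and follows essentially the same route as the paper: move the coproduct onto $E_r(w)$ via the Hopf pairing, apply Lemma~\ref{L:volcoprod} iteratively to get the prefactor $q^{\frac{1}{2}(g-1)\sum_{i>j}r_ir_j}$ and the shifted arguments $q^{(r_{<i}-r_{>i})/2}w$, and evaluate the factorized pairings. The only (harmless) difference is in the last step, where the paper simply notes $(1^{vec}_{r,d}\,|\,1_{r,d})=(1^{vec}_{r,d}\,|\,1^{vec}_{r,d})=vol_r$ directly from the orthogonality $(1_{\mathcal{F}}\,|\,1_{\mathcal{G}})=\delta_{\mathcal{F},\mathcal{G}}/|\mathrm{Aut}(\mathcal{F})|$, whereas you rederive this identity by one more application of the Hopf property.
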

\begin{proof}
We have
$$A_{\underline{r}}(\underline{z};w)=\bigg( E^{vec}_{r_s}(z_s) \cdots E^{vec}_{r_1}(z_1)\;\big|\; E_{r}(w)\bigg)=\bigg( E^{vec}_{r_s}(z_s) \otimes \cdots \otimes E^{vec}_{r_1}(z_1)\;\big|\; \Delta'_{r_s, \ldots, r_1}(E_r(w)) \bigg).$$
By Lemma~\ref{L:volcoprod}, 
$$\Delta'_{r_s, \ldots, r_1}(E_{r}(w))=q^{\frac{1}{2}(g-1)\sum_{i>j} r_ir_j} E_{r_s}(q^{\frac{1}{2}r_{<s}}w) \otimes
\cdots \otimes E_{r_i}(q^{\frac{1}{2}(r_{<i}-r_{>i})}w) \otimes \cdots \otimes E_{r_1}(q^{-\frac{1}{2}r_{>1}}w).$$
The Lemma follows since by definition $(1^{vec}_{r,d}\;|\; 1_{r,d})=(1^{vec}_{r,d}\;|\; 1^{vec}_{r,d})=vol_r$ for any $r \geq 1$ and any $d$.
\end{proof}

\vspace{.2in}

\paragraph{\textbf{5.4.}} Our next task is to determine explicitly the series $A^{*}_{1, \ldots, 1}(z_s, \ldots, z_1;w)$, which we will simply abbreviate $A^*(\underline{z};w)$ when no confusion is likely. To begin, note that by Lemma~\ref{L:52},
\begin{equation}\label{E:inducarg}
A(\underline{z};w)=q^{\frac{1}{2}(g-1)\frac{s(s-1)}{2}} vol_1^s \sum_{l_1, \ldots, l_s \in \Z} \bigg(z_1^{l_1} \cdots z_s^{l_s}w^{\sum l_i}q^{\sum \frac{1}{2}l_i(2i-s-1)}\bigg).
\end{equation}

\vspace{.1in}

Set $\widetilde{\zeta}(z)=z^{1-g}\zeta(z)$.

\begin{prop}\label{P:31} For any $s \geq 1$ we have
\begin{equation}\label{E:P34}
A^{\geq 0}(\underline{z};w)=\frac{q^{\frac{1}{4}(g-1)s(s-1)}vol_1^s}{\prod_{i<j} \widetilde{\zeta}\big(\frac{z_i}{z_j}\big)} \sum_{\sigma \in \mathfrak{S}_s} \sigma \left[ \prod_{i<j}
\widetilde{\zeta}\big(\frac{z_i}{z_j}\big) \cdot \frac{1}{\prod_{i<s} \big( 1-q\frac{z_{i+1}}{z_i}\big)} \cdot \frac{1}{1-q^{\frac{1-s}{2}}z_1w}\right],
\end{equation}
\begin{equation}\label{E:P35}
A^{<0}(\underline{z};w)=(-1)^{s}\frac{q^{\frac{1}{4}(g-1)s(s-1)}vol_1^s}{\prod_{i<j} \widetilde{\zeta}\big(\frac{z_i}{z_j}\big)} \sum_{\sigma \in \mathfrak{S}_s} \sigma  \left[ \prod_{i<j}
\widetilde{\zeta}\big(\frac{z_i}{z_j}\big) \cdot \frac{1}{\prod_{i<s} \big( 1-q^{-1}\frac{z_{i}}{z_{i+1}}\big)} \cdot \frac{1}{1-q^{\frac{s-1}{2}}z_sw}\right]
\end{equation}
where the rational functions are expanded in the regions $z_1 \gg z_2 \gg \cdots \gg z_s, w \ll 1$ and
$z_1 \gg z_2 \gg \cdots \gg z_s, w \gg 1$ respectively, (i.e. in power series in the $\frac{z_{i+1}}{z_i}$ and $w$, resp. $w^{-1}$).
\end{prop}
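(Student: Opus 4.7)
The plan is to prove (\ref{E:P34}) and (\ref{E:P35}) simultaneously by induction on $s$, treating $A^{\geq 0}$ and $A^{<0}$ as a pair determined together by the already-known series $A(\underline{z};w)$ from Lemma~\ref{L:52} and by the Harder--Narasimhan decomposition (\ref{E:><}). For the base case $s=1$, line bundles are automatically semistable, so $E^{\geq 0}_1(w) = \sum_{d\geq 0} 1^{vec}_{1,d}w^d$ and the pairing against $E^{vec}_1(z_1)$ reduces to $vol_1 \sum_{d\geq 0}(z_1 w)^d = vol_1/(1-z_1 w)$, which is exactly the right-hand side of (\ref{E:P34}); the parallel computation expanded in $w\gg 1$ handles $A^{<0}$.

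For the inductive step, I would isolate $E^{\geq 0}_s(w)$ in (\ref{E:><}) to obtain
\begin{equation*}
E^{\geq 0}_s(w) = E_s(w) - E^{<0}_s(q^{s/2}w) - \sum_{a=1}^{s-1} q^{(g-1)a(s-a)/2}\, E^{<0}_a(q^{(s-a)/2}w)\, E^{\geq 0}_{s-a}(q^{-a/2}w),
\end{equation*}
and pair both sides with $X := E^{vec}_1(z_s)\cdots E^{vec}_1(z_1)$. Each convolution term becomes, via the Hopf property, a pairing $(\Delta_{a,s-a}(X)\mid E^{<0}_a(\cdot)\otimes E^{\geq 0}_{s-a}(\cdot))$. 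Since $\Delta$ is a ring morphism, $\Delta(X)$ splits as a sum over subsets $I\subset\{1,\ldots,s\}$ of size $s-a$ obtained by applying Lemma~\ref{L:volcoprod} factor by factor; reordering the resulting two groups of $E^{vec}_1(z_i)$'s through the Hecke relations of Lemma~\ref{L:Hecke} introduces the zeta factors $\widetilde{\zeta}(z_i/z_j)$ and produces a product of the form $A^{<0}(\underline{z}_{I^c};q^{(s-a)/2}w)\cdot A^{\geq 0}(\underline{z}_{I};q^{-a/2}w)$ up to explicit $q$-shifts. By the inductive hypothesis all of these subproducts are known, so we obtain a closed equation determining $A^{\geq 0}$ in terms of the already-computed $A$, and an entirely analogous recursion determines $A^{<0}$.

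The symmetric-group sum in (\ref{E:P34})--(\ref{E:P35}) emerges naturally from iterating this decomposition: each $\sigma\in\mathfrak{S}_s$ records the order in which the $s$ rank-one pieces are stripped off across all induction steps, with the single pole $(1-q^{(1-s)/2}z_1 w)^{-1}$ representing the bottom piece of the flag paired against the rank-one Eisenstein series. The cleanest way to carry out the inductive verification is to substitute the conjectured closed-form rational functions into the above recursion and check equality term by term; after clearing the common prefactor $\prod_{i<j}\widetilde{\zeta}(z_i/z_j)^{-1}$, this reduces to a partial-fraction identity localized at the hyperplanes $z_{i+1}/z_i = q^{-1}$, precisely where adjacent rank-one pieces collide into a rank-two HN stratum (compare Lemma~\ref{L:residue}).

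The main obstacle I anticipate is the combinatorial bookkeeping of the various $q^{\pm 1/2}$ shifts, which arise from three distinct sources: the coproduct formulas of Lemma~\ref{L:volcoprod}, the Hecke intertwiners of Lemma~\ref{L:Hecke}, and the grading twist built into the definition of $\Delta$. Another delicate point is that the left-hand sides of (\ref{E:P34})--(\ref{E:P35}) are formal power series in the stated regions while the right-hand sides are expansions of rational functions whose individual $\sigma$-summands carry spurious poles along $z_i=z_j$; one must verify that these spurious poles cancel exactly against the prefactor $\prod_{i<j}\widetilde{\zeta}(z_i/z_j)^{-1}$, which is the usual "Weyl antisymmetrization" argument familiar from Macdonald-type constant term identities.
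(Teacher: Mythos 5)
Your proposal follows essentially the same route as the paper: induction on $s$ with the same base case, the Harder--Narasimhan recursion (\ref{E:><}) paired against $E^{vec}_1(z_s)\cdots E^{vec}_1(z_1)$, the coproduct expanded over shuffles with Hecke intertwiners producing the $\widetilde{\zeta}$ factors and lower-rank products $A^{<0}\cdot A^{\geq 0}$, and a final substitute-and-verify step (the paper separates $A^{\geq 0}$ from $A^{<0}$ by the sign of the exponent of $w$, handles the different expansion regions $U_\sigma$ via a localization lemma, and closes the computation with a telescoping partial-fraction identity $\sum_u(-1)^uT_u=\prod_{l<s}(1-qz_{l+1}/z_l)^{-1}$). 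You have correctly identified both the structure of the argument and its delicate points, so the plan is sound.
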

\begin{proof} The proof proceeds by induction on $s$, using formulas (\ref{E:><}) and (\ref{E:inducarg}).
When $s=1$ we have $E_s^{vec,\geq 0}(w)=\sum_{d \geq 0}1^{vec}_{1,d}, E_s^{<0}(w)=\sum_{d<0} 1^{vec}_{1,d}w^d$ and hence
$$A^{\geq 0}(z_1;w)=\sum_{d\geq 0} (1^{vec}_{1,d}\;|\;1^{vec}_{1,d}) z_1w= \frac{vol_1}{1-z_1w}, \qquad  
A^{< 0}(z_1;w)=\sum_{d< 0} (1^{vec}_{1,d}\;|\;1^{vec}_{1,d}) z_1w= -\frac{vol_1}{1-z_1w}.$$
Next, fix $s >1$ and assume that the proposition is proved for all $s'<s$. Using (\ref{E:><}) we have
\begin{equation*}
\begin{split}
A(z_s, \ldots, z_1;w)
&=A^{\geq 0}(\underline{z};w) + A^{<0}(\underline{z};w) + \sum_{\substack{u+t=s\\u,t>0}}q^{\frac{1}{2}(g-1)ut} \left( \Delta_{u,t}(E^{vec}_{1^s}(\underline{z}))\;|\; E_u^{<0}(q^{\frac{t}{2}}w)\otimes E_t^{\geq 0}(q^{-\frac{u}{2}}w)\right)
\end{split}
\end{equation*}
Now, from Lemma~	\ref{L:volcoprod}
\begin{equation}\label{E:P31}
\begin{split}
\Delta_{}(E^{vec}_{1^s}(z_s,\ldots, z_1))&=\Delta_{}(E_1^{vec}(z_s)) \cdots \Delta(E_1(z_1))\\
&=\prod_{i}^{\rightarrow}\bigg(E_1^{vec}(z_i) \otimes 1 + E_0(q^{\frac{1}{2}}z_i)E_0(q^{-\frac{1}{2}}z_i)^{-1} \mathbf{k}_1 \otimes E_1^{vec}(z_i)\bigg) 
\end{split}
\end{equation}
Expanding (\ref{E:P31}) yields an expression of $\Delta(E^{vec}_{1^s}(\underline{z}))$ as a sum 
$$\Delta(E^{vec}_{1^s}(\underline{z}))=\sum_{\sigma} X_{\sigma}$$
parametrized by maps $\sigma~: \{1, \ldots, s\} \to \{1,2\}$, with
$$X_{\sigma}=\prod_{i}^{\rightarrow} C_{\sigma(i)}(z_i)$$
where 
$$C_{1}(z)=E_1^{vec}(z) \otimes 1, \qquad C_2(z)=E_0(q^{\frac{1}{2}}z)E_0(q^{-\frac{1}{2}}z)^{-1}\mathbf{k}_1 \otimes E_1^{vec}(z).$$
The component $\Delta_{u,t}(E^{vec}_{1^s}(\underline{z}))$ of $\Delta(E^{vec}_{1^s}(\underline{z}))$
is equal to the same sum, this time ranging over the set of maps $\sigma~:\{1, \ldots, s\} \to \{1,2\}$ such that
$|\sigma^{-1}(1)|=u, |\sigma^{-1}(2)|=t$. We will denote this set of maps $Sh_{u,t}$, for \textit{$(u,t)$-shuffles}.
From Lemma~\ref{L:Hecke} and the defining commutation relations involving $\mathbf{k}_1$ (see Section~\textbf{4.1.}) we derive
$$X_{\sigma}=H_{\sigma}(\underline{z})\left(\prod^{\rightarrow}_{i, \sigma(i)=1} E_1^{vec}(z_i) \prod^{\rightarrow}_{j, \sigma(j)=2} E_0(q^{\frac{1}{2}}z_j)E_0(q^{-\frac{1}{2}}z_j)^{-1} \mathbf{k}_1^{t}\right) \otimes \prod_{j, \sigma(j)=2}E_1^{vec}(z_j)$$
where
$$H_{\sigma}(\underline{z})=\prod_{\substack{(i,j), i >j,\\ \sigma(i)=2, \sigma(j)=1}}q^{(1-g)}\text{Exp}\left( |X(\mathbb{F}_q)|(1-q^{-1})\frac{z_i}{z_j}\right)=\prod_{\substack{(i,j), i >j,\\ \sigma(i)=2, \sigma(j)=1}}\frac{\widetilde{\zeta}\big(\frac{z_i}{z_j}\big)}{\widetilde{\zeta}\big(\frac{z_j}{z_i}\big)}.$$
Putting all the pieces together yields the following recursion formula~:
\begin{equation}\label{E:P33}
A(z_s, \ldots, z_1;w)=A^{\geq 0}(z_s, \ldots, z_1;w) + A^{<0}(z_s, \ldots, z_1;w) + \sum_{\substack{u+t=s\\u,t >0}}  \sum_{\sigma \in Sh_{u,t}} A_{\sigma}(z_s, \ldots, z_1;w)
\end{equation}
with
$$A_{\sigma}(z_s, \ldots, z_1;w)=q^{\frac{1}{2}(g-1)ut}H_{\sigma}(\underline{z}) A^{<0}(z_{i_u}, \ldots, z_{i_1};q^{\frac{t}{2}}w) A^{\geq 0}(z_{j_t}, \ldots, z_{j_1}; q^{-\frac{u}{2}}w)$$
where $(i_u, \ldots, i_1)$ (resp. $(j_t, \ldots, j_1)$ are the reordering (in decreasing order) of the sets $\sigma^{-1}(1)$ (resp. $\sigma^{-1}(2)$).

\vspace{.05in}

Equation (\ref{E:P33}) takes place in the vector space $K_g[[z_s^{\pm 1},\ldots z_1^{\pm 1}, w^{\pm 1}]]$.
From (\ref{E:P33}) we may inductively derive $A^{\geq 0}(z_s, \ldots, z_1;w)$ and $A^{<0}(z_s, \ldots, z_1;w)$ from
$A(z_s, \ldots, z_1;w)$ and the values of the series $A^{<0}(z_u, \ldots, z_1;w)$ and $A^{\geq 0}(z_t, \ldots, z_1;w)$ 
for $u,t < s$. Note that $A^{\geq 0}(z_s, \ldots, z_1;w)$ is a power series in $w$ while $A^{<0}(z_s, \ldots, z_1;w)$ is a power series in $w^{-1}$. In order to establish the statement of Proposition~\ref{P:3} for $s$, it therefore suffices to
show that (\ref{E:P33}) holds with $A^{\geq 0}(z_s, \ldots, z_1;w)$ and $A^{<0}(z_s, \ldots, z_1;w)$ respectively given by (\ref{E:P34}) and (\ref{E:P35}).

\vspace{.1in}

Using the induction hypothesis, and after a little arithmetic, we arrive at
\begin{equation}\label{E:inter}
\begin{split}
A_{\sigma}(z_s, \ldots, z_1;w)=&Z_{\sigma}(z_s, \ldots, z_1) \cdot \sum_{\sigma_1 \in \mathfrak{S}_u} \sigma_1  \left[ \prod_{l < h \leq u}\widetilde{\zeta}\big( \frac{z_{i_l}}{z_{i_h}}\big)\cdot \frac{1}{\prod_{l <u} (1-q^{-1} \frac{z_{i_l}}{z_{i_{l+1}}})} \sum_{n_- < 0} \big(q^{\frac{s-1}{2}} w z_{i_u}\big)^{n_-} \right] \\
& \qquad \cdot  \sum_{\sigma_2 \in \mathfrak{S}_t} \sigma_2  \left[ \prod_{k < m \leq t}\widetilde{\zeta}\big( \frac{z_{j_k}}{z_{j_m}}\big)\cdot \frac{1}{\prod_{k <t} (1-q \frac{z_{j_{k+1}}}{z_{j_{k}}})} \sum_{n_+ \geq 0} \big(q^{\frac{1-s}{2}} w z_{j_1}\big)^{n_+} \right]
\end{split}
\end{equation}

where

$$Z_{\sigma}(z_s, \ldots, z_1)= (-1)^{u-1}q^{\frac{1}{4}(g-1)s(s-1)}vol_1^s\prod_{\substack{(i,j), i >j,\\ \sigma(i)=2, \sigma(j)=1}}\frac{\widetilde{\zeta}\big(\frac{z_i}{z_j}\big)}{\widetilde{\zeta}\big(\frac{z_j}{z_i}\big)}
\cdot \prod_{l < h \leq u} \frac{1}{\widetilde{\zeta}\big( \frac{z_{i_l}}{z_{i_h}}\big)}\cdot \prod_{k < m \leq t} \frac{1}{\widetilde{\zeta}\big( \frac{z_{j_k}}{z_{j_m}}\big)}.$$

Consider the Fourier coefficients 
$$A^{\geq 0}(\underline{z};w)=\sum_{n \geq 0} a^{\geq 0}_n(\uz)w^n, \qquad A^{<0}(\uz;w)=\sum_{n<0 } a^{<0}_n(\uz)w^n, \qquad A_{\sigma}(\uz;w)=\sum_n a_{\sigma,n}(\uz) w^n.$$

Note that $a_n^{\geq 0}(\uz)$ is zero for $n<0$ while $a_n^{<0}(\uz)$ is zero when $n \geq 0$.
Observe also that $a_{\sigma,n}(\uz)$ belongs to the subspace of $K_g[[z_s^{\pm 1}, \ldots, z_1^{\pm 1}]]$ of formal series converging in the asymptotic region
$$U_{\sigma}:=\{(z_s, \ldots, z_1)\;|\; z_{i_1} \gg z_{i_2} \gg \cdots \gg z_{i_u} \gg z_{j_1} \gg \cdots \gg z_{j_t}\}$$
while $a_n^{<0}(\uz)$ and $a_n^{\geq 0}(\uz)$ both belong to the subspace of
$K_g[[z_s^{\pm 1}, \ldots, z_1^{\pm 1}]]$ of formal series converging in the asymptotic region
$$U_{1}:=\{(z_s, \ldots, z_1)\;|\; z_{1} \gg z_{2} \gg \cdots \gg z_{s} \}.$$

The part of (\ref{E:P33}) in which $w$ appears with the exponent $n$ reads
\begin{equation}\label{E:P345}
q^{\frac{1}{2}(g-1)\frac{s(s-1)}{2}} vol_1^s \sum_{\substack{l_1, \ldots, l_s \in \Z,\\ \sum_i l_i=n}} z_1^{l_1} \cdots z_s^{l_s}q^{\sum \frac{1}{2}l_i(2i-s-1)}=a_n^{\geq 0}(\underline{z}) + a_n^{<0}(\underline{z}) + \sum_{u, \sigma} a_{\sigma,n}(\underline{z}).
\end{equation}

The rhs of (\ref{E:P345}) is a sum of Laurent series, each of which is the expansion in a certain asymptotic region $U_{\sigma}$ of some rational function. The lhs of (\ref{E:P345}) may likewise be written as a sum of Laurent series,
each of which is the expansion in an asymptotic direction of some rational function. Moreover, the sum of all the rational functions occuring on the lhs is zero (indeed, the lhs of (\ref{E:P345}) is, up to a global factor, a product of delta functions). 

\vspace{.1in}

\begin{lem}\label{L:obvious} Let $K_g[z_s^{\pm 1}, \ldots, z_1^{\pm 1}]_{loc}$ be the localization of $K_g[z_s^{\pm 1}, \ldots, z_1^{\pm 1}]$ at the set of linear polynomials $z_i-cz_j$ for $c \in K_g$. For any $\gamma \in \mathfrak{S}_s$ let
$$\tau_{\gamma}~:K_g[z_s^{\pm 1}, \ldots, z_1^{\pm 1}]_{loc} \hookrightarrow K_g[[z_{s}^{\pm 1}, \ldots, z_1^{\pm 1}]]$$
be the expansion map in the region
$$U_{\gamma}=\{(z_s, \ldots, z_1)\;|\; z_{\gamma(1)} \gg z_{\gamma(2)} \gg \cdots \gg z_{\gamma(s)} \}.$$
Assume given a collection of elements $f_{\gamma} \in K_g[z_s^{\pm 1}, \ldots, z_1^{\pm 1}]_{loc}$ satisfying
$\sum_{\gamma} \tau_{\gamma}(f_{\gamma})=0$. Then $\sum_{\gamma} f_{\gamma}=0$.
\end{lem}
\begin{proof} Write $f_{\gamma}=R_{\gamma}/Q_{\gamma}$ with $R_{\gamma}, Q_{\gamma} \in K_g[z_s^{\pm 1}, \ldots, z_s^{\pm 1}]$. Let $Q=\prod_{\gamma}Q_{\gamma}$. Then $0=Q\sum_{\gamma} \tau_{\gamma}(f_{\gamma})=\sum_{\gamma} \tau_{\gamma}(Qf_{\gamma})=\sum_{\gamma}Q f_{\gamma}$, since $\tau_{\gamma}$ is a morphism of $K_g[z_s^{\pm 1}, \ldots, z_1^{\pm 1}]$-modules. Hence $\sum_{\gamma} f_{\gamma}=0$.
\end{proof}

\vspace{.1in}

Denote by $\textbf{a}_n^{\geq 0}(\underline{z}), \mathbf{a}_n^{<0}(\underline{z})$ and $\mathbf{a}_{\sigma,n}(\underline{z})$ the rational functions of which ${a}_n^{\geq 0}(\underline{z}), {a}_n^{<0}(\underline{z})$ and ${a}_{\sigma,n}(\underline{z})$ are the expansion (each in its respective region).
By lemma~\ref{L:obvious}, we have
$$\mathbf{a}_n^{\geq 0}(\underline{z})=-\sum_{\sigma} \mathbf{a}_{\sigma,n}(\uz), \qquad (n \geq 0),$$
 $$\mathbf{a}_n^{< 0}(\underline{z})=-\sum_{\sigma} \mathbf{a}_{\sigma,n}(\uz), \qquad (n < 0).$$

Assume $n \geq 0$. A short computation using (\ref{E:inter}) gives
$$\textbf{a}_{\sigma,n}(z_s, \ldots, z_1)= -Z_{\sigma}(z_s, \ldots, z_1)q^{\frac{1-s}{2}n}\sum_{\sigma_1, \sigma_2} \sigma_1 \boxtimes \sigma_2 \left[ \frac{\prod_{l<h \leq u}\widetilde{\zeta}\big( \frac{z_{i_l}}{z_{i_h}}\big) \cdot \prod_{k <m\leq t}\widetilde{\zeta}\big(\frac{z_{j_k}}{z_{j_m}}\big)}{\prod_{l < u} (1-q^{-1}\frac{z_{i_l}}{z_{i_{l+1}}})\cdot  \prod_{k<t} (1-q \frac{z_{j_{k+1}}}{z_{j_k}})} \cdot \frac{z_{j_1}^{n}}{1-q^{s-1}\frac{z_{i_u}}{z_{j_1}}}\right].$$

Summing over $u$ and $\sigma$, we get
\begin{equation*}
\textbf{a}^{\geq 0}_n(\underline{z}):=-\sum_{u, \sigma} \textbf{a}_{\sigma,n}(\underline{z})=vol_1^s \prod_{i>j} \frac{1}{\widetilde{\zeta}\big(\frac{z_j}{z_i}\big)} \cdot q^{\frac{1}{4}(g-1)s(s-1) + \frac{1-s}{2}n} \cdot \sum_{\sigma \in \mathfrak{S}_s} \sigma \left[ \prod_{i>j} \widetilde{\zeta}\big( \frac{z_j}{z_i}\big) \cdot z_1^n \cdot \sum_{u=1}^{s-1} T_{u}(\underline{z})\right]
\end{equation*}
where
$$T_u(\underline{z})=(-1)^{u-1}\frac{1}{\prod_{l < t} (1-q\frac{z_{l+1}}{z_l}) \cdot \prod_{t < k < s} (1-q^{-1} \frac{z_{k}}{z_{k+1}}) \cdot (1-q^{s-1}\frac{z_s}{z_1})}.$$
Now,
\begin{equation*}
\begin{split}
\sum_{u=1}^{s-1}(-1)^uT_u(\underline{z})&=\frac{1}{\prod_{l<s}(1-q\frac{z_{l+1}}{z_l}) (1-q^{s-1}\frac{z_s}{z_1})} \left\{ (1-q\frac{z_s}{z_{s-1}}) + q\frac{z_s}{z_{s-1}}(1-q \frac{z_{s-1}}{z_{s-2}}) + \cdots + q^{s-2}\frac{z_s}{z_2}(1-q\frac{z_2}{z_1})\right\}\\
&=\frac{1}{\prod_{l<s}(1-q\frac{z_{l+1}}{z_l}) (1-q^{s-1}\frac{z_s}{z_1})} \big(1-q^{s-1}\frac{z_s}{z_1}\big)\\
&=\frac{1}{\prod_{l<s}(1-q\frac{z_{l+1}}{z_l})}
\end{split}
\end{equation*}

Summing now over $n \geq 0$ we obtain

$$\sum_{n \geq 0} \textbf{a}^{\geq 0}_n(\uz)w^n=\frac{q^{\frac{1}{4}(g-1)s(s-1)}vol_1^s}{\prod_{i<j} \widetilde{\zeta}\big(\frac{z_i}{z_j}\big)} \sum_{\sigma \in \mathfrak{S}_s} \sigma \left[ \prod_{i<j}
\widetilde{\zeta}\big(\frac{z_i}{z_j}\big) \cdot \frac{1}{\prod_{i<s} \big( 1-q\frac{z_{i+1}}{z_i}\big)} \cdot \frac{1}{1-q^{\frac{1-s}{2}}z_1w}\right]$$
as wanted. This shows (\ref{E:P34}) for $s$. The computations of $\textbf{a}_{\sigma,n}(\uz)$ and $\textbf{a}^{<0}_n(\uz)$ for $n <0$ and hence the proof of (\ref{E:P35}) for $s$ are entirely similar. Proposition~\ref{P:31} is proved.
\end{proof}

\vspace{.2in}

\paragraph{\textbf{5.5.}} Proposition~\ref{P:31} allows us to compute the value of $A^{\geq 0}_{\underline{r}}(\underline{z};w)$ (and hence also $G^{\geq 0}_{\underline{r}}(\underline{z};w)$) for an arbitrary sequence of nonnegative integers $\underline{r}=(r_i)$ by considering appropriate residues. Namely, by Lemma~ \ref{L:residue}, we have

\begin{equation}\label{E:arzy0}
A^{\geq 0}_{\underline{r}}(q^{\frac{1}{2}(1-r_t)}z^{(t)}_1, \ldots,q^{\frac{1}{2}(1-r_1)} z^{(1)}_1;w)=q^{a(\underline{r})} vol_1^{-n} \prod_i vol_{r_i}\cdot \text{Res}_{\underline{r}}\left[A^{\geq 0}_{(1^n)}(z^{(t)}_{r_t}, z^{(t)}_{r_t-1}, \ldots, z^{(t)}_{1}, z^{(t-1)}_{r_{t-1}}, \ldots, z^{(1)}_{1};w)\right]
\end{equation}
where $\text{Res}_{\underline{r}}=\prod_{i=1}^t \text{Res}^{(i)}$ and $\text{Res}^{(i)}$ is the operator of taking the (iterated) residue along 
$$\frac{z^{(i)}_{r_i}}{z^{(i)}_{r_i-1}}= \frac{z^{(i)}_{r_i-1}}{z^{(i)}_{r_i-2}}= \cdots =\frac{z^{(i)}_{2}}{z^{(i)}_{1}}= q^{-1} $$
and where
$$a(\underline{r})=-\frac{1}{4} (g-1)\sum_i r_i(r_i-1).$$
In an effort to unburden the notation let us rename the variables $(z^{(t)}_{r_t}, \ldots, z^{(1)}_1)$ as $(z_n, z_{n-1}, \ldots, z_1)$. In particular,
$$z^{(i)}_1=z_{1+r_{<i}}, \qquad \forall\; i=1, \ldots, t.$$
 Using Proposition~\ref{P:31} we get
\begin{equation}\label{E:arzy}
\begin{split}
A^{\geq 0}_{\underline{r}}&(q^{-\frac{1}{2}r_t}z^{(t)}_1, \ldots, q^{-\frac{1}{2}r_1}z^{(1)}_1;w)\\
&=q^{b(\underline{r})}\prod_ivol_{r_i} \cdot \text{Res}_{\underline{r}}\left[ \frac{1}{\prod_{i<j} \widetilde{\zeta}\big(\frac{z_i}{z_j}\big)} \sum_{\sigma \in \mathfrak{S}_n} \sigma \left\{ \prod_{i<j}
\widetilde{\zeta}\big(\frac{z_i}{z_j}\big) \cdot \frac{1}{\prod_{i<n} \big( 1-q\frac{z_{i+1}}{z_i}\big)} \cdot \frac{1}{1-q^{-\frac{n}{2}}z_1w}\right\}\right],
\end{split}
\end{equation}
where
$$b(\underline{r})=\frac{1}{2}(g-1)\sum_{i<j} r_ir_j.$$

Of course taking appropriate residues in (\ref{E:P35}) yields similar formulas for $A^{<0}_{\underline{r}}(q^{-\frac{1}{2}r_t}z^{(t)}_1, \ldots, q^{-\frac{1}{2}r_1}z^{(1)}_1;w)$. 

%\vspace{.1in}
%
%\noindent
%\textit{Example.} Unfortunately, we do not have an explicit evaluation of the above residues in the general case. However, in the particular case $\underline{r}=r$, one may check that for any $r \geq 1$
%$$A^{\geq 0}_r(z;w)=$$
%$$A^{<0}_r(z;w)=$$
%These explicit formulas are not used in the remainder of this paper.

\vspace{.2in}

\paragraph{\textbf{5.6.}} Fix some $r \geq 0$. By a \textit{generic Jordan type} of weight $r$ we will mean a finite (possibly empty) sequence $\underline{r}=(r_1, \ldots, r_t)$ of nonnegative integers such that $\sum_i i r_i =r$ and $r_t \neq 0$. Observe that the assignement
$$(r_1, \ldots, r_t) \mapsto (1^{r_1} 2^{r_2} \ldots t^{r_t})$$
sets up a bijection between the set $J_{gen}(r)$ of generic Jordan types of weight $r$ and the set of partitions of $r$. 
A \textit{Jordan type} of weight $(r,d)$ is a sequence $\underline{\alpha}=(\alpha_1, \ldots, \alpha_s)$ such that
$\sum_i i \alpha_i =(r,d)$ and $\alpha_s \neq 0$. We denote by $J(r,d)$ the set of Jordan types of weight $(r,d)$.
There is a natural forgetful map 
$$\pi:~\bigsqcup_d J(r,d) \to J_{gen}(r).$$
Let us fix a generic Jordan type $\underline{r}=(r_1, \ldots, r_t)$ of weight $r \geq 0$.
We will now compute the sum
$$\Xi_{\underline{r}}(z)=\sum_{\underline{\alpha} \in \pi^{-1}(\underline{r})} vol(\textbf{Nil}^{\geq 0}_{\underline{\alpha}}) z^{\sum i \alpha_i}=\sum_{\underline{\alpha} \in \pi^{-1}(\underline{r})}q^{d'(\underline{\alpha})-\frac{1}{2}\sum_{i > j} \langle \alpha_i,\alpha_j\rangle}\left( 1_{\alpha_s} \cdots 1_{\alpha_1}\;|\; 1_{\sum \alpha_i}^{\geq 0}\right) z^{\sum i \alpha_i}.$$
Let us fix some $s \geq t$ and let $\Xi_{\underline{r}}^{s}(z)$ the restriction of the sum to the subset of Jordan types $\underline{\alpha}=(\alpha_1, \ldots, \alpha_{s'})$ in
$\pi^{-1}(\underline{r})$ for which $s' \leq s$. To unburden the notation, we set 
$$T=z^{(1,0)}, \qquad z=z^{(0,1)}, \qquad n=\sum r_i.$$
We obtain
\begin{equation*}
\begin{split}
\Xi^{s}_{\underline{r}}(z)&=q^{e(\underline{r})} \sum_{d_1, \ldots, d_{s} \in \mathbb{Z}} q^{\frac{1}{2} \sum_i d_i(r_{>i}-r_{<i})} \left(1_{r_s,d_s}z^{sd_s} \cdots 1_{r_1,d_1}z^{d_1}\;|\; 1^{\geq 0}_{n,\sum d_i}\right) T^{r}\\
&=q^{e(\underline{r})} G^{\geq 0}_{0^{s-t}, r_t, \ldots, r_1}(y_s, \ldots, y_1;1)T^{r}
\end{split}
\end{equation*}
where 
$$e(\underline{r})=(g-1) \left[\sum_i (i-1) r_i^2 + \sum_{i < j} (2i - \frac{1}{2}) r_ir_j\right]$$
and $y_l=z^l q^{\frac{1}{2} ( r_{>l}-r_{<l})}$ for $l=1, \ldots, s$.
Using Proposition~\ref{P:4}, we have
\begin{equation}\label{E:equas}
\begin{split}
\Xi_{\underline{r}}^s(z)&=q^{e(\underline{r})}\text{Exp}\left( \frac{|X(\mathbb{F}_q)|}{q-1} \left[ \sum_{i=1}^s q^{-\frac{1}{2}(n+r_i)}y_i + \sum_{i>j} \frac{y_i}{y_j} (q^{\frac{r_j}{2}}-q^{-\frac{r_j}{2}})q^{-\frac{r_i}{2}}\right]\right)A^{\geq 0}_{r_t,\ldots, r_1}(y_t, \ldots, y_1;1)\\
\end{split}
\end{equation}
since, clearly, $A^{\geq 0}_{0^{s-t},r_t,\ldots, r_1}(y_s, \ldots, y_1;1)=A^{\geq 0}_{r_t,\ldots, r_1}(y_t, \ldots, y_1;1)$.
The number of isomorphism classes of pairs $(\mathcal{F},\theta) \in \textbf{Nil}^{\geq 0}_{\alpha}$ being finite for any fixed $\alpha=(r,d)$, the orbifold $\textbf{Nil}_{\underline{\alpha}}^{\geq 0}$ is empty for almost all $\underline{\alpha}$.
In particular, the coefficient of $T^rz^d$ in $\Xi^s_{\underline{r}}$ stabilizes for any fixed $d$ as $s$ tends to infinity. Taking the limit $s \to \infty$ in (\ref{E:equas}) we get after some straightforward computation
\begin{equation}\label{E:513}\Xi_{\underline{r}}(z)=q^{e(\underline{r})}\text{Exp}\left(  \frac{|X(\mathbb{F}_q)|}{q-1} \left[ \sum_{i,l \geq 1}q^{-r_{[i,i+l]}}(q^{r_{i+l}}-1)z^l  + \frac{z}{1-z}\right]\right)A^{\geq 0}_{r_t,\ldots, r_1}(y_t, \ldots, y_1;1)
\end{equation}
Let $\lambda=(1^{r_1} 2^{r_2} \cdots t^{r_t})$ be the partition associated to $\underline{r}$. Let $\lambda^{\circ}$ denote the the set of boxes $s \in \lambda$ satisfying $a(s) >0$.
\begin{lem} We have
$$\text{Exp}\left(  \frac{|X(\mathbb{F}_q)|}{q-1} \left[ \sum_{i,l \geq 1}q^{-r_{[i,i+l]}}(q^{r_{i+l}}-1)z^l  \right]\right)=\prod_{s \in \lambda^{\circ}} \zeta_X(q^{-1-l(s)}z^{a(s)}).$$
\end{lem}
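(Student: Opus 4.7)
The plan is to use the multiplicativity $\Exp(f+g)=\Exp(f)\Exp(g)$ to collapse the product on the right-hand side into a single plethystic exponential, then reduce the lemma to a purely combinatorial identity summing over the boxes of $\lambda^\circ$, and finally verify this combinatorial identity by counting boxes stratified by arm length.

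More precisely, using the formula $\zeta_X(w)=\Exp(|X(\mathbb{F}_q)|w)$ (the remark following the definition of $\zeta(z)$, applied with $w=q^{-1-l(s)}z^{a(s)}$), the right-hand side becomes
$$\prod_{s\in\lambda^\circ}\zeta_X(q^{-1-l(s)}z^{a(s)})=\Exp\!\left(|X(\mathbb{F}_q)|\sum_{s\in\lambda^\circ} q^{-1-l(s)}z^{a(s)}\right).$$
Comparing with the left-hand side, the lemma is thus equivalent to the identity
$$\frac{1}{q-1}\sum_{i,l\geq 1} q^{-r_{[i,i+l]}}\bigl(q^{r_{i+l}}-1\bigr)z^l \;=\; \sum_{s\in\lambda^\circ} q^{-1-l(s)}z^{a(s)},$$
which involves no more plethystic operations and may be checked term by term in $z$.

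To prove this identity, I would stratify $\lambda^\circ$ by arm length: a box $s$ with $a(s)=a\geq 1$ sits in some row of $\lambda$ of size $m\geq a+1$, at column $m-a$. Writing $\lambda=(1^{r_1}2^{r_2}\cdots t^{r_t})$ with rows listed in decreasing order of size, the $r_m$ rows of size $m$ occupy positions $j\in\{r_{>m}+1,\ldots,r_{\geq m}\}$, and the box at column $m-a$ in row $j$ has leg length $l(s)=r_{\geq m-a}-j$. Summing $q^{-l(s)}$ over the $r_m$ relevant values of $j$ is a finite geometric series:
$$\sum_{j=r_{>m}+1}^{r_{\geq m}} q^{\,j-r_{\geq m-a}} \;=\; \frac{q^{1-r_{[m-a,m]}}(q^{r_m}-1)}{q-1}.$$
Substituting $i=m-a$, summing over $m\geq a+1$ (equivalently $i\geq 1$), multiplying by $q^{-1}z^a$ and finally summing over $a\geq 1$ produces the right-hand side of the desired combinatorial identity.

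The proof involves no real obstacle beyond careful bookkeeping in the multiplicity notation; the only nontrivial ingredient is the organization of boxes of a given arm length by the part-sizes of $\lambda$, after which the leg-length sum becomes an elementary geometric series. The plethystic reduction in the first step is formal.
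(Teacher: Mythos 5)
Your proof is correct and follows exactly the route the paper intends: the paper's own proof is the one-line remark ``a direct verification using the formula $\Exp(|X(\mathbb{F}_q)|q^{-u}z^v)=\zeta_X(q^{-u}z^v)$,'' and your argument simply carries out that verification, reducing to the box-counting identity and checking it via the geometric series over rows of a fixed part-size. The bookkeeping (positions $j\in\{r_{>m}+1,\ldots,r_{\geq m}\}$, leg length $r_{\geq m-a}-j$, substitution $i=m-a$) is accurate.
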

\begin{proof} A direct verification using the formula $Exp(|X(\mathbb{F}_q)|q^{-u}z^v)=\zeta_X(q^{-u}z^v)$.\end{proof}
Using (\ref{E:arzy}), (\ref{E:513}), the above Lemma and Theorem~\ref{T:volform} i) we arrive at the following expression
\begin{equation}
\Xi_{\underline{r}}(z)=q^{(g-1)\langle \lambda, \lambda \rangle }\cdot \prod_{s \in \lambda} \zeta^*_X(q^{-1-l(s)}z^{a(s)})\cdot H_{\underline{r}}(z)\cdot\text{Exp}\left(  \frac{|X(\mathbb{F}_q)|}{q-1} \cdot \frac{z}{1-z}\right)
\end{equation}
where
$$\langle \lambda, \lambda \rangle=\sum_k (\lambda')_k^2=\sum_i i r_i^2 + \sum_{i<j} 2i r_ir_j,$$
$$\zeta^*_X(q^{-u}z^v)=\begin{cases}\zeta_X(q^{-u}z^v)\qquad & \text{if}\; (u,v) \neq (1,0) \\ q^{-1}vol_1 \qquad & \text{if}\; (u,v)=(1,0) \end{cases}$$ 
and
$$H_{\underline{r}}(z)=\widetilde{H}_{\underline{r}}(z^tq^{-r_{<t}}, \ldots, z^iq^{-r_{<i}}, \ldots, z),$$
$$\widetilde{H}_{\underline{r}}(z_{1+ r_{<t}}, \ldots, z_{1+r_{<i}}, \ldots, z_1)=\text{Res}_{\underline{r}}\left[ \frac{1}{\prod_{i<j} \widetilde{\zeta}\big(\frac{z_i}{z_j}\big)} \sum_{\sigma \in \mathfrak{S}_n} \sigma \left\{ \prod_{i<j}
\widetilde{\zeta}\big(\frac{z_i}{z_j}\big) \cdot \frac{1}{\prod_{i<n} \big( 1-q\frac{z_{i+1}}{z_i}\big)} \cdot 
\frac{1}{1-z_1}\right\}\right].$$

\vspace{.2in}

\paragraph{\textbf{5.7}}  Taking the sum over all generic Jordan types $\underline{r}$, using Proposition~\ref{P:1}
and Corollary~\ref{C:25} and setting in accordance with Section~1.3.
$$ J_{\lambda}(z)=\prod_{s \in \lambda} \zeta^*_X(q^{-1-l(s)}z^{a(s)}), \qquad H_{\lambda}(z)=H_{\underline{r}}(z)$$
when $\lambda=(1^{r_1} 2^{r_2} \cdots)$,
we get the following complicated but nevertheless explicit generating formula for the numbers $A^{\geq 0}_{r,d}$~:
\begin{equation}\label{E:571}
\text{exp}\left(\sum_{l \geq 1} \frac{1}{l} \sum_{r, d} \frac{A^{\geq 0}_{r,d}(X \otimes_{\mathbb{F}_q} \mathbb{F}_{q^l})}{q^l-1}z^{ld}T^{lr} \right)=\sum_{\lambda}\left\{q^{(g-1)\langle \lambda, \lambda \rangle }J_{\lambda}(z) H_{\lambda}(z)T^{|\lambda|}\right\}\cdot\text{Exp}\left(  \frac{|X(\mathbb{F}_{q})|}{q-1} \cdot \frac{z}{1-z}\right).
\end{equation}
In the above, all the rational functions in $z$ are expanded in the region $z \ll 1$, i.e. in $\mathbb{C}[[z]]$.  Observe that
$A^{\geq 0}_{0,d}(X\otimes_{\mathbb{F}_q} \mathbb{F}_{q^l})=|X(\mathbb{F}_{q^l})|$ since a geometrically indecomposable torsion sheaf on $X\otimes_{\mathbb{F}_q} \mathbb{F}_{q^l}$ is the indecomposable $d$fold self extension of the structure sheaf of a rational point in $X(\mathbb{F}_{q^l})$. It follows
that
$$\text{exp}\left(\sum_{l \geq 1} \frac{1}{l} \sum_{d} \frac{A^{\geq 0}_{0,d}(X \otimes_{\mathbb{F}_q} \mathbb{F}_{q^l})}{q^l-1}z^{ld} \right)=\text{Exp}\left(  \frac{|X(\mathbb{F}_q)|}{q-1} \cdot \frac{z}{1-z}\right)$$
and (\ref{E:571}) simplifies to
\begin{equation}\label{E:572}
\text{exp}\left(\sum_{l \geq 1} \frac{1}{l} \sum_{r>0, d} \frac{A^{\geq 0}_{r,d}(X \otimes_{\mathbb{F}_q} \mathbb{F}_{q^l})}{q^l-1}z^{ld}T^{lr} \right)=\sum_{\lambda}\left\{q^{(g-1)\langle \lambda, \lambda \rangle }J_{\lambda}(z) H_{\lambda}(z)T^{|\lambda|}\right\}.
\end{equation}

\vspace{.1in}

Recall from Section~\textbf{4.6.} that the elements $A_{g,r,d}^{\geq 0} \in K_g$ defined by (\ref{E:thm1}) are uniquely characterized by the property that $A_{g,r,d}^{\geq 0}(\sigma_X)=A_{r,d}(X)$ for all smooth projective curves $X$. As a consequence we have the following equality in $K_g[[T,z]]$~:
\begin{equation}\label{E:573}
\text{Exp}\left( \sum_{r>0, d} \frac{A^{\geq 0}_{g,r,d}}{q-1}z^{d}T^{r} \right)=\sum_{\lambda}\left\{q^{(g-1)\langle \lambda, \lambda \rangle }J_{\lambda}(z) H_{\lambda}(z)T^{|\lambda|}\right\}.
\end{equation}

\vspace{.2in}

\paragraph{\textbf{5.8.}} Tensoring by a line bundle of degree one induces a bijection between the set of geometrically indecomposable vector bundles on a curve $X$ of rank $r$ and degrees $d$ and $d+r$ respectively. Therefore $A_{r,d}(X)$ and hence $A_{g,r,d}$ only depend on the class of $d$ in $\Z/r\Z$.
By Proposition~\ref{P:2} the integers $A_{g,r,d}^{\geq 0}(\sigma_X)$ are
eventually periodic in $d$ as $d\to \infty$, with period $r$. Thus so are the
$A^{\geq 0}_{g,r,d}$. This means that if we consider the generating function
$$A^{\geq 0}_{g,r}(z)=\sum_{d \geq 0} A_{g,r,d}^{\geq 0} z^d$$
then we have
\begin{equation}\label{E:581}
A^{\geq 0}_{g,r}(z)=P_{g,r}(z) + \sum_{d=0}^{r-1} \frac{A_{g,r,d} z^d}{1-z^r}
\end{equation}
for some polynomial $P_{g,r}(z) \in K_g[z]$. As a consequence of (\ref{E:581}), the polynomials $A_{g,r,d}$ are expressed as
$$A_{g,r,d}=-\sum_{l \in \Z/r\Z} \text{Res}_{z=\xi^l} \left( A_{g,r}(z)\frac{dz}{z}\right) \xi^{-ld}$$
for $\xi$ a primitive $r$th root of unity. This concludes the proof of Theorem~\ref{T:2}.

\vspace{.2in}

\section{Relation to the number of points of Hitchin moduli spaces}

\vspace{.1in}

\paragraph{\textbf{6.1.}} In this section, we relate the number of indecomposable vector bundles of a given class $\alpha$ to the number of stable Higgs bundles of the same class, under the assumption that the characteristic $p$ of the field is large enough (with an explicit bound, depending on the genus $g$ of $X$ and the class $\a$). Our method is directly inspired by that of Crawley-Boevey, Van den Bergh \cite{CBV} and Nakajima (appendix to \textit{loc.\,cit.}) in the context of moduli spaces of representations of quivers, and hinges on the construction of a smooth
deformation $\mathcal{Y} \to \mathbb{A}^1$ of the moduli space of stable Higgs bundles
$$
\xymatrix{
Higgs^{st}_{r,d} \ar[r] \ar[d] & \mathcal{Y} \ar[d] & \mathcal{Y}' \ar[l] \ar[d] \\
\{0\} \ar[r] & \mathbb{A}^1 & \mathbb{A}^1 \backslash \{0\}\ar[l] }
$$
preserving the number of $\mathbb{F}_q$-rational points and
equipped with a projection map $p: \mathcal{Y} \to Bun_{r,d}$ whose restriction to any fiber $\mathcal{Y}_t$ with $t \neq 0$ is a fibration over the constructible substack $Indec_{r,d} \subset Bun_{r,d}$ of indecomposable vector bundles.
We note that the construction of $\mathcal{Y}$ itself appears slightly non-canonical as it involves an explicit local presentation of the stack $Higgs_{r,d}^{st}$ in terms of quot schemes. In doing so, we borrow some techniques developped in \cite{ACK}.

\vspace{.2in}

\paragraph{\textbf{6.2.}} Let us fix a smooth projective, geometrically connected curve $X$ of genus $g$ defined over $k=\mathbb{F}_q$, and let $\Omega_X$ be the canonical line bundle of $X$.
A Higgs sheaf of rank $r$ and degree $d$ is a pair $(\mathcal{V},\theta)$ with $\mathcal{V}$ a coherent sheaf of rank $r$ and degree $d$ and $\theta \in \Hom(\mathcal{V}, \mathcal{V} \otimes \Omega_X)$. A Higgs subsheaf
of $(\mathcal{V},\theta)$ is by definition a subsheaf $\mathcal{W} \subseteq \mathcal{V}$ such that $\theta(\mathcal{W}) \subseteq \mathcal{W} \otimes \Omega_X$. A Higgs sheaf $(\mathcal{V},\theta)$ is called semistable (resp. stable) if for any proper Higgs subsheaf $\mathcal{W} \subset \mathcal{V}$ we have $\mu(\mathcal{W}) \leq \mu(\mathcal{V})$ (resp. $\mu(\mathcal{W}) < \mu(\mathcal{V})$). A Higg subsheaf $\mathcal{W} \subset \mathcal{V}$ satisfying $\mu(\mathcal{W}) > \mu(\mathcal{V})$ is called destabilizing. It is clear that as soon as $r>0$, a semistable Higgs sheaf $(\mathcal{V},\theta)$ is necessarily a Higgs bundle, ie. $\mathcal{V}$ is a vector bundle.

\vspace{.1in}

Let ${Higgs}_{r,d}(X)$ and ${Coh}_{r,d}(X)$ respectively stand for the moduli stacks of Higgs sheaves and coherent sheaves over $X$ of rank $r$ and degree $d$. These are algebraic stacks defined over $k$, locally of finite type, of respective dimensions $2(g-1)r^2$ and $(g-1)r^2$. If $(r,d)$ are coprime, we let $Higgs^{st}_{r,d}(X)$ be the open substack of $Higgs_{r,d}(X)$ parametrizing stable Higgs bundles. The stack $Higgs^{st}_{r,d}(X)$ is represented by a smooth, connected scheme over $k$.

\vspace{.1in}

Serre duality provides a canonical isomorphism $\Ext^{1}(\mathcal{V},\mathcal{W})^* \simeq \Hom(\mathcal{W}, \mathcal{V} \otimes \Omega_X)$ for any pair of coherent sheaves $(\mathcal{V},\mathcal{W})$. Hence, the moduli
stack $Higgs_{r,d}(X)$ may alternatively be defined as the stack parametrizing pairs $(\mathcal{V}, \nu)$ with $\mathcal{V}$ a coherent sheaf over $X$ of rank $r$ and degree $d$ and
$\nu \in \Ext^1(\mathcal{V},\mathcal{V})^*$. A Higgs subsheaf of such a pair $(\mathcal{V},\nu)$ is a subsheaf $\mathcal{W} \subseteq \mathcal{V}$ satisfying the following condition~:
\begin{equation}\label{E:higgsext}
a(\nu) \in b(\Ext^1(\mathcal{W},\mathcal{W})^*)
\end{equation}
where $a,b$ are the canonical maps in the sequence
$$\xymatrix{ \Ext^1(\mathcal{V},\mathcal{V})^* \ar[r]^-{a} & \Ext^1(\mathcal{V},\mathcal{W})^* & \Ext^1(\mathcal{W},\mathcal{W})^* \ar[l]_-{b}}.$$
The stack $Higgs^{st}_{r,d}(X)$ thus parametrizes pairs $(\mathcal{V},\nu)$ as above such that any proper Higgs subsheaf $\mathcal{W} \subset \mathcal{V}$ verifies $\mu(\mathcal{W}) < \mu(\mathcal{V})=\frac{d}{r}$.

\vspace{.2in}

\paragraph{\textbf{6.3.}} In this section we recall the definition and basic properties of quot schemes. These will be used in the next section to make explicit the construction of the stacks $Coh_{r,d}(X)$ and $Higgs_{r,d}(X)$. 

\vspace{.1in}

We say that a vector bundle $\mathcal{F}$ is strongly generated by another vector bundle $\mathcal{G}$ if $\Ext^1(\mathcal{G},\mathcal{F})=0$ and the canonical map $\mathcal{G} \otimes \Hom(\mathcal{G},\mathcal{F}) \to \mathcal{F}$ is surjective. By definition, if $\mathcal{F}$ is strongly generated by $\mathcal{G}$ then $\dim(\Hom(\mathcal{G},\mathcal{F}))=\langle \mathcal{G},\mathcal{F}\rangle$.

\vspace{.1in}

Given a vector bundle $\mathcal{V}$ over $X$ and a pair $\alpha=(r,d)$, the quot scheme $Quot(\mathcal{V},\alpha)$ is the $k$-scheme representing the functor ${quot}_{\mathcal{V},\alpha} : (\text{Aff}/k) \to \text{Sets}$
which assigns to an affine $k$-scheme $S$ the set of equivalence classes of epimorphisms
$$\phi_S : \mathcal{V} \boxtimes \mathcal{O}_S \twoheadrightarrow \mathcal{F}$$
where $\mathcal{F}$ is an $S$-flat coherent sheaf over $X \times S$ such that for any closed point $s \in S$ the sheaf
$\mathcal{F}_{|s}$ over $X$ is of rank $r$ and degree $d$. Here, two epimorphisms $\phi_S, \phi'_S$ are equivalent if
$\text{Ker}(\phi_S)=\text{Ker}(\phi'_S)$. The quot scheme $Quot(\mathcal{V},\alpha)$ is a (generally singular) projective scheme. 
The tangent space to $Quot(\mathcal{V},\a)$ at a point $\phi : \mathcal{V} \twoheadrightarrow \mathcal{F}$ is equal to
$\Hom(\text{Ker}(\phi), \mathcal{F})$.

\vspace{.1in}

One constructs an explicit closed embedding in a projective variety as follows. There exists a line bundle $\mathcal{L}$ of sufficiently negative degree so that for any $\phi : \mathcal{V} \twoheadrightarrow \mathcal{F}$ with $\mathcal{F}$ of rank $r$ and degree $d$ the sheaf $\text{Ker}(\phi)$ is strongly generated by $\mathcal{L}$. Put 
$$a=\dim(\Hom(\mathcal{L},\mathcal{V}))=\langle \mathcal{L}, \mathcal{V}\rangle, \qquad b= \langle \mathcal{L}, \mathcal{V}-\a\rangle$$
and let $Gr(a,b)$ stand for the Grassmanian of $b$-dimensional subspaces of $k^a$.
Fixing an identification $\Hom(\mathcal{L},\mathcal{V}) \simeq k^a$ we obtain  a map $j~ :Quot(\mathcal{V},\a) \to Gr(a,b)$ by assigning to a point $\phi : \mathcal{V} \twoheadrightarrow \mathcal{F}$ the subspace $\Hom(\mathcal{L}, \Ker(\phi)) \subset \Hom(\mathcal{L},\mathcal{V})$. This is a closed embedding (see e.g. \cite[Thm. 4.4.5.]{Lepotier}). 

\vspace{.2in}

\paragraph{\textbf{6.4.}} Let us fix a class $\alpha=(r,d)$ with $r >0$, and $r,d$ coprime. We will now give a construction of the stacks $Coh_{r,d}$ and $Higgs_{r,d}$, or at least of suitable open subset of these stacks. For reasons that will become clear later, we will use a variant of the standard construction, based on the choice of \textit{two} line bundles instead of one, which we borrow from \cite{ACK}.

\vspace{.1in}

\begin{lem} There exists a pair of line bundles $(\Lc_1,\Lc_2) \in Pic^{-n}(X) \times Pic^{-m}(X)$ such that the following hold~:
\begin{enumerate}
\item[a)] Any semistable Higgs bundle $(\mathcal{V},\theta)$ of class $\a$ is strongly generated by $\Lc_1$ and $\Lc_2$,
\item[b)] Any indecomposable vector bundle $\mathcal{V}$ of class $\a$ is strongly generated by $\Lc_1$ and $\Lc_2$,
\item[c)] For any coherent sheaf $\mathcal{V}$ of class $\a$ and any epimorphism $\phi: \mathcal{L}_1\otimes V \twoheadrightarrow \mathcal{V}$, $Ker(\phi)$ is strongly generated by $\mathcal{L}_2$,
\item[d)] For any unstable Higgs sheaf $(\mathcal{F},\theta)$ of class $\a$  there exists a destabilizing Higgs subsheaf $\mathcal{G} \subset \mathcal{F}$ which is strongly generated by $\Lc_1$ and $\Lc_2$.
\end{enumerate}
\end{lem}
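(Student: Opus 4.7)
The plan is to choose $\Lc_1$ and $\Lc_2$ of sufficiently negative degree, relying on the standard principle that for any bounded family $\mathcal{B}$ of coherent sheaves on $X$ there exists an integer $N = N(\mathcal{B})$ such that every line bundle $\mathcal{L}$ with $\deg \mathcal{L} \leq -N$ strongly generates every member of $\mathcal{B}$ (a consequence of Castelnuovo--Mumford regularity: for $\deg \mathcal{L}$ sufficiently small, $\mathcal{L}^{-1}\otimes \mathcal{F}$ has vanishing $H^1$ and is globally generated, uniformly in $\mathcal{F}\in\mathcal{B}$). I would then check that each of (a)--(d) reduces to the strong generation of some bounded family.

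For (b), the family of indecomposable vector bundles of class $\a$ is bounded because, by the Serre-duality argument already used in Proposition~\ref{P:2}, its Harder--Narasimhan slopes $\mu_1 > \cdots > \mu_t$ satisfy $\mu_i - \mu_{i+1} \leq 2g-2$, and since the average slope equals $d/r$ they lie in a bounded interval depending only on $g$ and $\a$. For (a), a parallel argument works for Higgs-semistables: if $\mathcal{V}_1 \subset \mathcal{V}$ is the maximal HN subbundle of $(\mathcal{V},\theta)$, then $\theta(\mathcal{V}_1) \not\subset \mathcal{V}_1 \otimes \Omega_X$ (otherwise $\mathcal{V}_1$ would Higgs-destabilize $\mathcal{V}$), giving a nonzero map $\mathcal{V}_1 \to (\mathcal{V}/\mathcal{V}_1) \otimes \Omega_X$ and hence $\mu(\mathcal{V}_1) - \mu_{\max}(\mathcal{V}/\mathcal{V}_1) \leq 2g-2$; iterating through the HN filtration again bounds the polygon. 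I would therefore first pick $\Lc_1$ of large enough negative degree to strongly generate both these families.

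For (c), once $\Lc_1$ is fixed, any $\mathcal{V}$ of class $\a$ admitting a surjection from $\Lc_1 \otimes V$ is in particular a quotient of $\Lc_1^{\oplus N}$ for $N = \langle \Lc_1, \a\rangle$, so $\mathcal{V}$ lies in a Quot scheme of finite type, and the kernel $\Ker(\phi_0)$ of the canonical evaluation $\phi_0: \Lc_1 \otimes \Hom(\Lc_1, \mathcal{V}) \twoheadrightarrow \mathcal{V}$ varies in a bounded family. For a general $\phi$, the map factors through $\phi_0$ via a surjection $V \twoheadrightarrow \Hom(\Lc_1,\mathcal{V})$, and a linear splitting of the latter yields $\Ker(\phi) \simeq \Lc_1^{\oplus (\dim V - N)} \oplus \Ker(\phi_0)$; thus strong generation of $\Ker(\phi)$ by $\Lc_2$ reduces to strong generation of $\Lc_1$ and of $\Ker(\phi_0)$ by $\Lc_2$, which is achieved by taking $\deg \Lc_2 \ll \deg \Lc_1$.

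The hardest part is (d), because the family of all unstable Higgs sheaves of class $\a$ is not bounded (destabilizing subsheaves can a priori have arbitrarily large slope). The approach is a case split on the slope of the maximal Higgs-destabilizing subsheaf $\mathcal{G} \subset \mathcal{F}$, which is itself Higgs-semistable of some rank $s < r$ and slope $\mu(\mathcal{G}) > d/r$. Fix a threshold $M$: if $\mu(\mathcal{G}) \leq M$, then $\mathcal{G}$ lies in the bounded family of Higgs-semistable sheaves of rank $< r$ with slope in $(d/r, M]$ (bounded by the same HN-control argument as in (a)), and I would enlarge $\Lc_1, \Lc_2$ if necessary to strongly generate this family; if $\mu(\mathcal{G}) > M$, the same HN control gives $\mu_{\min}(\mathcal{G}) \geq \mu(\mathcal{G}) - (r-1)(2g-2)$, and choosing $M$ so that $M - (r-1)(2g-2) > \deg(\Lc_i) + 2g-1$ for $i = 1, 2$ forces $\mathcal{G}$ to be automatically strongly generated by both $\Lc_i$. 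The main technical obstacle is coordinating all these bounds consistently: one first takes $n = -\deg \Lc_1$ large enough for (a), (b) and the bounded-slope case of (d); then $m = -\deg \Lc_2 \gg n$ large enough for (c) and for $\Lc_2$ to strongly generate $\Lc_1$; finally, the threshold $M$ is chosen so that the large-slope case of (d) is handled automatically by the line bundles already selected.
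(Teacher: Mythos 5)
Your proposal is correct and follows essentially the same route as the paper: every condition is reduced to a uniform lower bound on the minimal Harder--Narasimhan slope of the relevant family (via the Serre-duality gap estimate of Proposition~2.2 for indecomposables, its Higgs analogue for semistable Higgs bundles and for the destabilizing Higgs subsheaves, and Quot-scheme boundedness for the kernels in (c)), after which $\Lc_1$ and then $\Lc_2$ are taken of sufficiently negative degree. Your case split in (d) is a harmless but unnecessary refinement: the paper simply observes that $\mu_{\min}(\mathcal{G}) \geq \mu(\a) - (r-1)(2g-2)$ uniformly over all Higgs-semistable $\mathcal{G}$ of slope at least $\mu(\a)$ and rank at most $r$, which already covers destabilizing subsheaves of arbitrarily large slope.
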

\begin{proof} We first show the existence of a line bundle $\Lc_1$ satisfying a), b) and d). The minimal slope $\mu_{min}(\mathcal{F})$ of a Harder-Narasimhan (HN) factor of an indecomposable vector bundle $\mathcal{F}$ of class $\a$ is bounded below by some constant $\nu$ which only depends on $\a$ (see Proposition~\ref{P:2}). An argument in all points similar shows that the minimal slope $\mu_{min}(\mathcal{F})$ of an HN factor of a semistable Higgs bundle of class $\a$ is likewise bounded below by a constant
$\nu'$ which again only depends on $\a$. Let $(\mathcal{F},\theta)$ be an unstable Higgs sheaf of class $\a$. By definition there exists a semistable Higgs subsheaf $\mathcal{G} \subset \mathcal{F}$ of slope $\mu(\mathcal{G}) > \mu(\alpha)$ and rank $rk(\mathcal{G}) \leq rk(\a)$. Arguing as in Proposition~\ref{P:2} we see that the set of minimal
slopes $\mu_{min}(\mathcal{G})$ of all semistable Higgs sheaves of slope at least $mu(\a)$ and rank at most $rk(\a)$ is bounded below by some constant $\nu'''$ which only depends on $\a$.
For any $\nu \in \mathbb{Q}$ there exists $n \in \mathbb{Z}$ such that any semistable sheaf of slope $\sigma \geq \nu$ is strongly generated by any line bundle of degree $m \leq n$. It suffices to take $n$ as above for $\nu=min\{ \nu',\nu'',\nu'''\}$. This proves the existence of a line bundle $\Lc_1$ satisfying a), b) and d). Let us now fix such a line bundle. 
The set of HN types of sheaves $\mathcal{F}$ of class $\a$ which are generated by $\Lc_1$ is finite, as is the set of HN types of kernels of epimorphisms $\Lc_1 \otimes V \twoheadrightarrow \mathcal{F}$. Therefore there exists $\Lc_2$ such that any such kernel is strongly generated by $\Lc_2$. In particular, $\Lc_1$ is strongly generated by $\Lc_2$ and so is any sheaf strongly generated by $\Lc_1$. We are done.
\end{proof}

\vspace{.1in}

Set
$$l_1=\langle \mathcal{L}_1, \alpha \rangle=(1-g+n)r+d, \qquad  l_2=\langle \mathcal{L}_2, \alpha\rangle=(1-g+m)r+d, \qquad V_i=k^{l_i}, \; i=1,2.$$
Consider the quot schemes
$$Q_{\mathcal{L}_1, \Lc_2}=Quot((\Lc_1 \otimes V_1 )\oplus (\Lc_2 \otimes V_2), \a), \qquad Q_{\Lc_1}=Quot(\Lc_1 \otimes V_1, \a).$$
Points of $Q_{\mathcal{L}_1, \Lc_2}$ correspond to epimorphisms $\phi ~: (\Lc_1 \otimes V_1) \oplus (\Lc_2 \otimes V_2) \twoheadrightarrow \mathcal{F}$; we will usually write $\phi_i=\phi_{|\Lc_i \otimes V_i}$ for $i=1,2$.
 Denote by $Q^{\circ, \circ}_{\Lc_1,\Lc_2}$ the open subscheme of $Q_{\Lc_1,\Lc_2}$ parametrizing epimorphisms
$\phi : (\mathcal{L}_1\otimes V_1)\oplus (\mathcal{L}_2\otimes V_2) \twoheadrightarrow \mathcal{F}$ for which the canonical maps
$$\phi_{i*} : V_i \to \Hom(\mathcal{L}_i, \mathcal{F}), \qquad i=1,2$$ 
are isomorphisms (this implies in particular that $\mathcal{F}$ is strongly generated by $\Lc_1$ and hence by $\Lc_2$). We define $Q_{\Lc_1}^{\circ} \subset Q_{\Lc_1}$ in the same fashion. The schemes $Q^{\circ,\circ}_{\Lc_1,\Lc_2}$ and $Q_{\Lc_1}^{\circ}$ are smooth.
The group 
$G:=GL(V_1) \times GL(V_2)$ naturally acts on $Q_{\Lc_1,\Lc_2}$ and preserves $Q^{\circ,\circ}_{\Lc_1,\Lc_2}$. Similarly, the group $GL(V_1)$ acts on $Q_{\Lc_1}$ and preserves $Q^{\circ}_{\Lc_1}$.
The natural restriction map
$$\left[\phi~: (\mathcal{L}_1\otimes V_1)\oplus (\mathcal{L}_2\otimes V_2) \twoheadrightarrow \mathcal{F} \right]\mapsto \left[ \phi_1 :(\mathcal{L}_1\otimes V_1)\twoheadrightarrow \mathcal{F}\right] $$
is a principal $GL(V_2)$-bundle $Q^{\circ,\circ}_{\Lc_1,\Lc_2} \to Q^{\circ}_{\Lc_1}$. 
 By condition c), the stack quotient
$[Q^{\circ,\circ}_{\Lc_1,\Lc_2}/G]$ (and hence $[Q^{\circ}_{\Lc_1}/GL(V_1)]$) is isomorphic to the open substack $Coh^{>\Lc_1}_{r,d}(X)$ of $Coh_{r,d}(X)$ parametrizing coherent sheaves $\mathcal{V}$ of class $\a$ which are strongly generated by $\Lc_1$ (see e.g. \cite{Lepotier}).

\vspace{.1in}

For later purposes, we introduce the locally closed subscheme $Q^{\circ}_{\Lc_1,\Lc_2}$ of $Q_{\Lc_1,\Lc_2}$ which parametrizes epimorphisms $\phi : (\mathcal{L}_1\otimes V_1)\oplus (\mathcal{L}_2\otimes V_2) \twoheadrightarrow \mathcal{F}$ for which $\phi_{2*}~: V_2 \to \Hom(\Lc_2,\mathcal{F})$ is an isomorphism and for which the restriction of
$\phi$ to $\mathcal{L}_1 \otimes V_1$ is still an epimorphism. There is a natural map $Q^{\circ}_{\Lc_1,\Lc_2} \to Q_{\Lc_1}$ which is a principal $GL(V_2)$-bundle.

\vspace{.1in}

The cotangent space $T^*_{\phi}Q_{\Lc_1,\Lc_2}$ to $Q_{\Lc_1,\Lc_2}$ at a point $\phi : (\mathcal{L}_1 \otimes V_1) \oplus (\mathcal{L}_2 \otimes V_2) \twoheadrightarrow \mathcal{F}$ is identified with $\Hom(\Ker(\phi), \mathcal{F})^*$.
If $\phi \in Q^{}_{\Lc_1,\Lc_2}$ then the restriction of the moment map 
$$\mu ~: T^*Q^{\circ,\circ}_{\Lc_1,\Lc_2} \to \mathfrak{g}^*=\mathfrak{gl}(V_1)^* \times \mathfrak{gl}(V_2)^*$$ to $T^*_{\phi}Q^{}_{\Lc_1,\Lc_2}$ 
is the composition $\mu_{\phi}=\nu_{\phi} \circ \kappa_{\phi}$ of the canonical restriction map
$$\Hom(\Ker(\phi), \mathcal{F})^* \to \Hom( (\mathcal{L}_1 \otimes V_1) \oplus (\mathcal{L}_2 \otimes V_2) ,\mathcal{F})^*$$
arising from the long exact sequence
\begin{equation}\label{E:2}
\xymatrix{0 \ar[r] &
\Ext^1(\mathcal{F},\mathcal{F})^* \ar[r] &  \Hom(\Ker(\phi), \mathcal{F})^* \ar[r]^-{\kappa_{\phi}}& \Hom( (\mathcal{L}_1 \otimes V_1) \oplus (\mathcal{L}_2\otimes V_2) ,\mathcal{F})^* \ar[r]^-{j} & \text{End}(\mathcal{F})^*}
\end{equation}
with the map
$$\nu_{\phi}~:\Hom( (\mathcal{L}_1\otimes V_1) \oplus (\mathcal{L}_2\otimes V_2) ,\mathcal{F})^* =\bigoplus_i \Hom( V_i , \Hom(\Lc_i,\mathcal{F})) \to \bigoplus_i \text{End}(V_i)^*$$
induced by composition with $\phi_i^* \in \Hom(V_i, \Hom(\Lc_i,\mathcal{F}))$.

\vspace{.1in}

The quotient stack $[\mu^{-1}(0)/G]$ is isomorphic to the open substack $Higgs^{>\Lc_1}_{r,d}(X)$ of $Higgs_{r,d}(X)$ parametrizing Higgs bundles $(\mathcal{F},\theta)$ with $\mathcal{F}$ of class $\a$ strongly generated by $\Lc_1$. In particular, by condition a) the stack $[\mu^{-1}(0)/G]$ contains  $Higgs^{st}_{r,d}(X)$ as an open substack.

\vspace{.2in}

\paragraph{\textbf{6.5.}} Following \cite{ACK} we now embed $Q^{\circ,\circ}_{\Lc_1,\Lc_2}$ as a locally closed subvariety of the representation variety of an appropriate Kronecker quiver. Namely, put $h = \text{dim}(\Hom(\mathcal{L}_2,\mathcal{L}_1))=(1-g)+m-n$ and let $Kr$ stand for the quiver with 
vertex set $\{1,2\}$ and $h$ arrows from $1$ to $2$~:
$$\xymatrix{ \underset{\bullet}{1} \ar[r]^-{h} & \underset{\bullet}{2}}.$$
Set $$\mathbb{V}=\Hom(\mathcal{L}_2,\mathcal{L}_1), \qquad E=\Hom(V_1 \otimes \mathbb{V}, V_2).$$
The group $G$ acts on $E$ by conjugation and the quotient stack $[E/G]$ is the moduli stack of representations of $Kr$ of dimension $(l_1,l_2)$. There is a natural map $j: Q^{\circ,\circ}_{\Lc_1,\Lc_2} \to E$ sending the point $\phi : (\mathcal{L}_1\otimes V_1) \oplus (\mathcal{L}_2 \oplus V_2) \twoheadrightarrow \mathcal{F}$ to the induced map
$$\xymatrix{V_1 \otimes \mathbb{V} \ar@{=}[r] & \Hom(\Lc_2,\Lc_1 \otimes V_1) \ar[r]^-{\phi_1} & \Hom(\mathcal{L}_2,\mathcal{F}) \ar[r]^-{\phi_{2*}^{-1} } & V_2}.$$
Condition d) guarantees that this defines an embedding of $Q_{\Lc_1,\Lc_2}^{\circ,\circ}$ in $E$ as a smooth locally closed subvariety. Observe that the embedding $j$ extends to an embedding $Q^{\circ}_{\Lc_1,\Lc_2} \to E$. In fact, 
set $$E^{\circ}=\{u \in E\;|\; \text{Im}(u)=V_2\}.$$
This is a principal $GL(V_2)$-bundle over the Grassmanian $Gr(hl_1,l_2)$. We have the following diagram
\begin{equation}\label{E:651}
\xymatrix{ Q^{\circ, \circ}_{\Lc_1,\Lc_2} \ar[r] &Q^{\circ}_{\Lc_1,\Lc_2} \ar[r]^-{j} \ar[d] & E^{\circ} \ar[d] \\ & Q_{\Lc_1} \ar[r]^-{{j'}} & Gr(hl_1,l_2)}
\end{equation}
in which the two vertical maps are $GL(V_2)$-bundles and the horizontal maps are embeddings, with ${j'}$ being the closed embedding described in Section~\textbf{6.3}.

\vspace{.1in}

Using the trace pairing, we may identify the cotangent space $T^*E=E \times E^*$ with the representation space of the double $\overline{Kr}$ of $Kr$ (that is, the quiver with vertex set $\{1,2\}$, $h$ arrows from $1$ to $2$ and $h$ arrows from $2$ to $1$) of dimension $(l_1,l_2)$~:
$$\xymatrix{ \underset{\bullet}{1} \ar@<1ex>[r]^-{h} & \underset{\bullet}{2} \ar@<1ex>[l]^-{h}}$$
so that
$$T^*E \simeq \Hom(V_1 \otimes \mathbb{V}, V_2) \times \Hom(V_2 \otimes \mathbb{V}^*, V_1).$$
Fixing dual bases $\{v_1, \ldots, v_h\}$ and $\{v_1^*, \ldots, v_h^*\}$ of $\mathbb{V}, \mathbb{V}^*$, we may write an element of $T^*E$ as a pair $(\underline{x}, \underline{y})$ with $\underline{x}=(x_1, \ldots, x_h)$, $\underline{y}=(y_1, \ldots, y_h)$ and $x_i \in \Hom(V_1, V_2), y_i \in \Hom(V_2, V_1)$.
Using this identification, and identifying $\mathfrak{g}$ with $\mathfrak{g}^*$ via the usual trace pairing, the moment map for the action of $G$ on $T^*E$ reads
$$\mu: T^*E \to \mathfrak{g}^*, \qquad \mu(\underline{x}, \underline{y})= \left( \sum_{i=1}^h y_ix_i, -\sum_{i=1}^h x_iy_i\right).$$

The Zariski closure $P=\overline{Q_{\Lc_1,\Lc_2}^{\circ,\circ}}$ of $Q_{\Lc_1,\Lc_2}^{\circ,\circ}$ in $E$ is a (possibly singular) affine variety. Of course, since $Q^{\circ,\circ}_{\Lc_1,\Lc_2}$ is dense in $Q^{\circ}_{\Lc_1,\Lc_2}$, we have $P=\overline{Q_{\Lc_1,\Lc_2}^{\circ}}$.
We will denote by $j : Q_{\Lc_1,\Lc_2}^{\circ,\circ} \to P$ and $i : P \to E$ the open, resp. closed embeddings.
There is a canonical projection $\pi:P \times E^* \to T^*P$ whose fibers are affine spaces. Namely, over a point $\underline{x} \in P$, the map $\pi$ is the natural projection
$$E^*=T^*_x E \to T^*_xE / (T_xP)^{\perp}=T^*_xP.$$
The map $\pi$ restricts to an affine fibration $\pi^{\circ} :Q_{\Lc_1,\Lc_2}^{\circ,\circ} \times E^* \to T^*Q_{\Lc_1,\Lc_2}^{\circ,\circ}$. 
%of rank $codim_E(Q^{\circ}_{\a})= hl_1l_2-(l_1^2+l_2^2) + (1-g)r^2$.
The moment maps on $T^*Q^{\circ,\circ}_{\Lc_1,\Lc_2}, T^*P$ and $T^*E$ fit in a commutative diagram
$$\xymatrix{ Q_{\Lc_1,\Lc_2}^{\circ,\circ} \times E^* \ar[r]^-{j\times Id} \ar[d]^-{\pi^{\circ}}& P\times E^* \ar[r]^-{i \times Id} \ar[d]^{\pi} & T^*E \ar[dd]^-{\mu} \\ T^*Q^{\circ,\circ}_{\Lc_1,\Lc_2}\ar[drr]^-{\mu}  \ar[r]^-{d^*j}& T^*P \ar[rd]^-{\mu}  &\\
& & \mathfrak{g}^* }$$
where $d^*j$ is the open embedding induced by $j$.

\vspace{.1in}

For the reader's convenience we make explicit the map $d^*j$.  We begin with the differential $dj_{\phi} : T_{\phi} Q^{\circ,\circ}_{\Lc_1,\Lc_2} \to T_{j(\phi)}P$ at a point $\phi: \bigoplus_i (\Lc_i \otimes V_i) \twoheadrightarrow \mathcal{F}$.
Recall that we have canonical identifications
$$T_{\phi}Q^{\circ,\circ}_{\Lc_1,\Lc_2}=\Hom(\Ker(\phi), \mathcal{F}),$$
$$T_{j(\phi)}P \subset T_{j(\phi)}E =E=\Hom(V_1 \otimes \mathbb{V},V_2)\simeq \Hom(\Hom(\Lc_2,\Lc_1 \otimes V_1),\Hom(\Lc_2,\mathcal{F})).$$
Consider the exact sequences
\begin{equation}\label{E:seq1}
\xymatrix{ 0 \ar[r] & \Ker(\phi_1) \ar[r] & \Ker(\phi) \ar[r]^-{\rho_2} & \Lc_2 \otimes V_2 \ar[r] & 0},
\end{equation}
\begin{equation}\label{E:seq2}
\xymatrix{ 0 \ar[r] & \Ker(\phi_2) \ar[r] & \Ker(\phi) \ar[r]^-{\rho_1} & \Lc_1 \otimes V_1 \ar[r] & 0}.
\end{equation}
The first exact sequence (\ref{E:seq1}) is split as $\Ext^1(\Lc_2, \Ker(\phi_1))=0$ by condition d).
It follows that
\begin{equation}\label{E:dimeq}
\begin{split}
\text{dim}(\Hom(\Lc_2, \Ker(\phi)))&=\text{dim}(V_2) + \text{dim}(\Hom(\Lc_2, \Ker(\phi_2)))\\
&= \langle \Lc_2, \alpha \rangle + \langle \Lc_2, \Lc_1 \otimes V_1 -\alpha\rangle\\
&=\langle \Lc_2, \Lc_1 \otimes V_1 \rangle\\
&= \text{dim}(\Hom(\Lc_2, \Lc_1 \otimes V_1)).
\end{split}
\end{equation}
On the other hand, the exact sequence (\ref{E:seq2}) gives rise to a sequence
$$\xymatrix{ 0 \ar[r]& \Hom(\Lc_2, \Ker(\phi_2)) \ar[r] & \Hom(\Lc_2, \Ker(\phi)) \ar[r]^-{\rho_{1*}} & \Hom(\Lc_2, \Lc_1 \otimes V_1)}$$ 
and since $\Hom(\Lc_2, \Ker(\phi_2))=0$ this yields by (\ref{E:dimeq}) a canonical isomorphism $\rho_{1*}~:\Hom(\Lc_2, \Ker(\phi)) \to \Hom(\Lc_2, \Lc_1 \otimes V_1)$. The map $dj_{\phi}$ is equal to the composition
$$\Hom(\Ker(\phi), \mathcal{F}) \stackrel{q}{\to} \Hom(\Hom(\Lc_2, \Ker(\phi)), \Hom(\Lc_2, \mathcal{F}) \stackrel{\rho_{1*}}{\to} \Hom(\Hom(\Lc_2, \Lc_1 \otimes V_1), \Hom(\Lc_2, \mathcal{F}))$$
with
$$q : \Hom(\Ker(\phi), \mathcal{F}) {\to} \Hom(\Hom(\Lc_2, \Ker(\phi)), \Hom(\Lc_2, \mathcal{F}), \qquad u \mapsto ( a \mapsto u \circ a).$$
Because $j : Q^{\circ,\circ}_{\Lc_1,\Lc_2} \to P$ is an open embedding, $dj_{\phi}~: T_{\phi} Q^{\circ,\circ}_{\Lc_1,\Lc_2} \to T_{j(\phi)}P$ is an isomorphism. The map
$d^*j_{\phi}$ is the transpose isomorphism $T^*_{\phi} Q^{\circ,\circ}_{\Lc_1,\Lc_2} \to E^* / (T_{j(\phi)}P)^{\perp}$.

\vspace{.2in}

\paragraph{\textbf{6.6.}} We next consider GIT quotients of the various above spaces, following the method in \cite{King}. We will consider the stability condition associated to the character 
$$\gamma : G \to k^*, (g_1,g_2) \mapsto det(g_1)^{l_2}det(g_2)^{-l_1}.$$
Let $(T^*E)^{ss}, (T^*P)^{ss}, (P \times E^*)^{ss}$ denote the open sets of $\gamma$-semistable points, and let $T^*E \quot G, (T^*P)\quot G$ and $(P \times E^*) \quot G$ denote the affine quotients. Because $T^*E, T^*P$ and $P \times E^* $ are all affine varieties there are proper maps
$$p: (T^*E)^{ss} \quot G \to (T^*E) \quot G, \qquad  p': (P \times E^*)^{ss} \quot G \to (P \times E^*) \quot G, \qquad p'': (T^*P)^{ss}\quot G \to (T^*P)\quot G.$$
We have
$$(T^*E)^{ss} \quot G= \text{Proj} \left( \bigoplus_{l \geq 0} k[T^*E]^{\gamma,l}\right), \qquad (T^*E) \quot G= \text{Spec} (k[T^*E]^G),$$
where 
$$k[T^*E]^{\gamma,l}=\{ f \in k[T^*E] \;|\; g \cdot f= \gamma(g)^l f\quad\forall\; g \in G\}$$
and there are similar descriptions in the cases of $P \times E^*$ and $T^*P$. Finally, the closed embedding $i \times Id:~P \times E^* \hookrightarrow T^*E$ and the surjective map
$\pi:P \times E^* \to T^*P$ give rise to maps
$$\xymatrix{(T^*P) \quot G & (P \times E^*) \quot G \ar[r]^-{\overline{i \times Id}} \ar[l]_-{\overline{\pi}} & (T^*E)\quot G}.$$
Note that $\overline{\pi}$ is surjective while $\overline{i \times Id}$ is a closed embedding.

\vspace{.1in}

Recall that a subrepresentation of a representation $(\underline{x},\underline{y}) \in T^*E$ is a pair of subspaces $(W_1 \subseteq V_1, W_2 \subseteq W_2)$ such that $x_i (W_1) \subseteq W_2, {y}_i(W_2) \subseteq W_1$ for all $i$. Similarly, we will call subrepresentation of some $(\underline{x},\underline{y}) \in T^*P$ a pair of subspaces $(W_1 \subseteq V_1, W_2 \subseteq W_2)$ such that ${x}_i (W_1) \subseteq W_2$ and $y_i \in \mathfrak{p}_W / (T_{\underline{x}}P)^\perp$, where
$\mathfrak{p}_W=\{u \in Hom(V_2,V_1)\;|\; u(W_2) \subseteq W_1\}$. 

\vspace{.1in}

\begin{lem}\label{L:semistable} The following hold~:
\begin{enumerate}
\item[i)] A point $(\underline{x},\underline{y}) \in T^*E$ is $\gamma$-semistable (resp. $\gamma$-stable) if and only if
for any subrepresentation $W=(W_1,W_2)$ of $(\underline{x},\underline{y})$ we have $l_1\dim(W_2)-l_2\dim(W_1) \geq 0$ (resp. $>0$),
\item[ii)] A point $(\underline{x},\underline{y}) \in T^*P$ is $\gamma$-semistable (resp. $\gamma$-stable) if and only if
for any subrepresentation $W=(W_1,W_2)$ of $(\underline{x},\underline{y})$ we have $l_1\dim(W_2)-l_2\dim(W_1) \geq 0$ (resp. $>0$).
\end{enumerate}
\end{lem}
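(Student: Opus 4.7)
The plan is to handle both parts via the Hilbert--Mumford numerical criterion, part (i) being essentially a direct invocation of King's theorem \cite{King}.

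For (i), observe that $T^*E$ is the representation variety of the doubled Kronecker quiver $\overline{Kr}$ at the dimension vector $(l_1,l_2)$, with $G=GL(V_1)\times GL(V_2)$ acting as the usual base-change group. The character $\gamma(g_1,g_2)=\det(g_1)^{l_2}\det(g_2)^{-l_1}$ is trivial on the diagonal $\mathbb{G}_m\subset G$ (since $l_1l_2-l_2l_1=0$) and corresponds in King's formalism to the stability function $\theta=(l_2,-l_1)$. The statement of (i) is then a direct application of \cite{King}: $(\underline{x},\underline{y})$ is $\gamma$-(semi)stable iff for every proper subrepresentation $W=(W_1,W_2)$, $\theta(\dim W)=l_2\dim W_1-l_1\dim W_2\leq 0$ (resp.\ $<0$), which is exactly the stated inequality.

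For (ii), I would apply the Hilbert--Mumford criterion directly to the linearized $G$-action on $T^*P$. A one-parameter subgroup $\lambda:\mathbb{G}_m\to G$ produces weight decompositions $V_i=\bigoplus_n V_i^{(n)}$ and associated descending filtrations $F_i^{\geq n}=\bigoplus_{m\geq n}V_i^{(m)}$. Unwinding the definitions, the limit $\lim_{t\to 0}\lambda(t)\cdot(\underline{x},\underline{y})$ exists in $T^*P$ iff for every $n$ the pair $(F_1^{\geq n},F_2^{\geq n})$ is a subrepresentation of $(\underline{x},\underline{y})$ in the sense of the lemma, i.e.\ $x_i(F_1^{\geq n})\subseteq F_2^{\geq n}$ and $y_i\in\mathfrak{p}_{F^{\geq n}}/(T_{\underline{x}}P)^\perp$. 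A standard summation by parts then expresses the Hilbert--Mumford weight on the fiber of the linearization as
\[
\mu^\gamma(\lambda)=\sum_{n\in\mathbb{Z}}\bigl(l_1\dim F_2^{\geq n}-l_2\dim F_1^{\geq n}\bigr).
\]
Since every subrepresentation $W=(W_1,W_2)$ arises this way (take $\lambda$ acting with weight $1$ on $W_i$ and weight $0$ on a chosen complement), the numerical criterion yields exactly the stated semistability inequality, and strict inequality for all proper $W$ gives stability.

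The main obstacle is the unpacking of the limit-existence condition on the cotangent coordinates: because $\underline{y}$ is only defined modulo $(T_{\underline{x}}P)^\perp$, one must verify that having all $\lambda$-weights nonnegative on some lift of $\underline{y}$ is equivalent to the weaker subrepresentation condition $y_i\in\mathfrak{p}_W/(T_{\underline{x}}P)^\perp$. This uses that $(T_{\underline{x}}P)^\perp$ is stable under the $\lambda$-action induced on the limit point (since $\lambda\subset G$ preserves $P\subseteq E$). It is precisely this step that prevents (ii) from reducing to (i) by restriction along the closed embedding $P\times E^*\hookrightarrow T^*E$, since a $T^*P$-subrepresentation need not lift to a genuine $T^*E$-subrepresentation of any fixed lift $(\underline{x},\underline{y})\in P\times E^*$.
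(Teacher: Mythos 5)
Your proposal follows the paper's proof exactly: part (i) is King's numerical criterion applied to the doubled Kronecker quiver at dimension vector $(l_1,l_2)$ with the character $\gamma$, and part (ii) is the same Hilbert--Mumford argument run directly on the affine $G$-variety $T^*P$, with the limit-existence analysis modulo $(T_{\underline{x}}P)^\perp$ that you sketch being precisely the ``same lines'' the paper alludes to. The only point the paper adds that you omit is that King states the criterion over an algebraically closed field, whereas here $k=\mathbb{F}_q$; one invokes Kempf's version of the Hilbert--Mumford criterion over an arbitrary field, together with the fact that (semi)stability of quiver representations is insensitive to field extension.
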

\begin{proof} The first statement is well-known and follows from the Hilbert-Mumford numerical criterion (see \cite{King}). The second one can be proved along the same lines. Note that the Hilbert-Mumford criterion is stated in \cite{King} for algebraically closed fields, but it holds over an arbitrary field , see \cite{Kempf} (moreover, the notion of semistability of representations of quivers (or of coherent sheaves ) is stable under field extension).
\end{proof}

\vspace{.1in}

Set $(T^*Q^{\circ,\circ}_{\Lc_1,\Lc_2})^{ss}= T^*Q^{\circ,\circ}_{\Lc_1,\Lc_2} \cap (d^*j)^{-1}\left((T^* P)^{ss}\right)$.

\begin{lem}\label{L:Psemis} We have $(T^* P)^{ss} = d^*j((T^*Q^{\circ,\circ}_{\Lc_1,\Lc_2})^{ss})$, i.e.
$(T^*P)^{ss} \subset d^*j(T^*Q^{\circ,\circ}_{\Lc_1,\Lc_2})$.\end{lem}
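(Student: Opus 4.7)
The strategy is to invoke the numerical criterion from Lemma \ref{L:semistable}(ii) twice, each time producing a canonical subrepresentation $W = (W_1, W_2)$ of a given $(x,y) \in (T^*P)^{ss}$ with either $W_1 = V_1$ or $W_2 = 0$. In both of these cases one has $\mathfrak{p}_W = E^*$, so the subrepresentation condition $y \in \mathfrak{p}_W/(T_xP)^{\perp}$ is automatic, and only the condition on the $x$-part needs to be checked.

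First, I would take $W = (V_1, \mathrm{Im}(x))$. The condition $x_i(V_1) \subseteq \mathrm{Im}(x)$ holds by definition of image, so this is a subrepresentation of $(x,y)$. The numerical criterion yields $l_1 \dim \mathrm{Im}(x) - l_2 l_1 \geq 0$, forcing $\mathrm{Im}(x) = V_2$, i.e.\ $x \in E^{\circ}$. I would then use the diagram (\ref{E:651}) to write $x$ as a quot point. Since $j': Q_{\Lc_1} \hookrightarrow Gr(hl_1, l_2)$ is a closed embedding and the vertical arrows of (\ref{E:651}) are principal $GL(V_2)$-bundles, the horizontal arrow $j: Q^{\circ}_{\Lc_1, \Lc_2} \hookrightarrow E^{\circ}$ is also a closed embedding. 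Since $P = \overline{Q^{\circ,\circ}_{\Lc_1, \Lc_2}}^{E}$, one has $P \cap E^{\circ} = \overline{Q^{\circ,\circ}_{\Lc_1,\Lc_2}}^{E^{\circ}} \subseteq j(Q^{\circ}_{\Lc_1, \Lc_2})$, the right-hand side being closed in $E^\circ$ and containing $Q^{\circ,\circ}_{\Lc_1,\Lc_2}$. So we may write $x = j(\phi)$ for some $\phi : (\Lc_1 \otimes V_1) \oplus (\Lc_2 \otimes V_2) \twoheadrightarrow \mathcal{F}$ with $\phi_{2*}$ an isomorphism and $\phi_1$ surjective.

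It then remains to show that $\phi_{1*}: V_1 \to \Hom(\Lc_1, \mathcal{F})$ is an isomorphism. Let $K = \Ker(\phi_{1*})$. Unwinding the explicit definition of $j$ in Section \textbf{6.5}, for $v \in K$ and any $l \in \mathbb{V} = \Hom(\Lc_2, \Lc_1)$ the element $v \otimes l$ corresponds to the morphism $\Lc_2 \to \Lc_1 \otimes V_1,\; s \mapsto l(s) \otimes v$, whose composition with $\phi_1$ vanishes because $\phi_{1*}(v) = 0$ forces $\phi_1(\Lc_1 \otimes v) = 0$. Hence $x(K \otimes \mathbb{V}) = 0$, and $W = (K, 0)$ is a subrepresentation of $(x,y)$. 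The numerical criterion now yields $-l_2 \dim K \geq 0$, so $K = 0$. Because $\mathcal{F}$ is strongly generated by $\Lc_1$, one has $\Ext^1(\Lc_1, \mathcal{F}) = 0$ and $\dim \Hom(\Lc_1, \mathcal{F}) = \langle \Lc_1, \alpha \rangle = l_1 = \dim V_1$, so the injective map $\phi_{1*}$ is automatically an isomorphism. Therefore $x \in j(Q^{\circ,\circ}_{\Lc_1, \Lc_2})$, which is what we wanted.

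The main obstacle is purely bookkeeping: one must keep track of the subrepresentation condition in the $T^*P$-setting, where $y$ lives in the quotient $E^*/(T_xP)^\perp$ rather than $E^*$ itself. The choices $W_1 = V_1$ and $W_2 = 0$ are precisely those for which $\mathfrak{p}_W = E^*$, bypassing any need to describe $(T_xP)^\perp$ explicitly, and this is what makes the two-step destabilization argument work cleanly.
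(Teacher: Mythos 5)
Your proof is correct and follows essentially the same route as the paper's: both arguments use the destabilizing subrepresentations $(V_1,\mathrm{Im}(\underline{x}))$ and (a subspace of $\bigcap_i\Ker(x_i),\,0)$ to force $\underline{x}\in E^{\circ}$ and $\phi_{1*}$ injective, identify $P\cap E^{\circ}$ with $Q^{\circ}_{\Lc_1,\Lc_2}$ via diagram (\ref{E:651}), and conclude by the dimension count $\dim V_1=\dim\Hom(\Lc_1,\mathcal{F})$. Your explicit remark that the choices $W_1=V_1$ and $W_2=0$ make the condition on $\underline{y}$ vacuous is a useful clarification the paper leaves implicit, but it does not change the argument.
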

\begin{proof} Let $\rho:  T^*P \to P$ be the natural projection. Observe that $(T^*P)^{ss} \subseteq \rho^{-1}(P \cap E^{\circ})$. Indeed, if $(\underline{x},\underline{y}) \in T^*P$ with $\text{Im}(\underline{x}) \subsetneq V_2$ then the subrepresentation $W=(V_1, \text{Im}(\underline{x}))$ violates the semistability condition of Lemma~\ref{L:semistable}~ii).
Similarly, if $\underline{x} \in E$ satisfies $\bigcap_i \Ker(x_i) \neq \{0\}$ then $(\underline{x}, \underline{y})$ is not semistable for any $\underline{y}$ since the subspace $W=(\bigcap_i \Ker(x_i),0)$ violates the semistability condition.
By diagram (\ref{E:651}), $P \cap E^{\circ}=Q^{\circ}_{\Lc_1,\Lc_2}$. Moreover, by construction if $P \cap E^{\circ} \ni \underline{x}=j(\phi : (\Lc_1 \otimes V_1) \oplus (\Lc_2 \otimes V_2) \twoheadrightarrow \mathcal{F})$ satisfies
$\bigcap_i \Ker(x_i)=\{0\}$ then the map $\phi_{1*} : V_1 \to \Hom(\Lc_1,\mathcal{F})$ is injective, hence bijective as
$\dim(V_1)=\dim (\Hom(\Lc_1, \mathcal{F}))$. This implies that $(T^*P)^{ss} \subset \rho^{-1}(Q^{\circ,\circ}_{\Lc_1,\Lc_2})$.
The lemma is proved.
\end{proof}

\vspace{.2in}

\paragraph{\textbf{6.7.}} We will now relate some appropriate fibers of the moment map $\mu: T^*Q^{\circ,\circ}_{\Lc_1,\Lc_2} \to \mathfrak{g}^*$ to indecomposable vector bundles. This explains why we considered the quot scheme construction with two line bundles instead of one. Recall that we have assumed $r$ and $d$ to be coprime. It easily follows that we may pick $n,m$ verifying the hypothesis a)--d) in such a way that $l_1$ and $l_2$ are also coprime. Consider the element
$\lambda \in \mathfrak{g}^*=\mathfrak{gl}(V_1)^* \times \mathfrak{gl}(V_2)^*$ defined by
$$\lambda(u_1,u_2)=l_2Tr(u_1)-l_1Tr(u_2).$$ 
From now on, \textit{we will assume that $p > l_1l_2$.}
By construction we have
\begin{enumerate}
\item[i)]$\lambda(Id,Id)=0$,
\item[ii)] $\lambda(e_1,e_2) \neq 0$ for any nontrivial pair of projectors $(e_1,e_2) \in \mathfrak{gl}(V_1) \times \mathfrak{gl}(V_2)$.
\end{enumerate}

\vspace{.1in}

\begin{lem}\label{L:indecomp} Let $\phi : (\mathcal{L}_1\otimes V_1)\oplus (\mathcal{L}_2\otimes V_2) \twoheadrightarrow \mathcal{F}$
be a $k$-point of $Q^{\circ,\circ}_{\Lc_1,\Lc_2}$. We have $k\lambda \subset \text{Im}(\mu_{\phi})$ if and only if $\mathcal{F}$ is indecomposable.
\end{lem}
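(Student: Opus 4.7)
The plan is to identify $\text{Im}(\mu_\phi) \subset \bigoplus_i \End(V_i)^*$ explicitly from the exact sequence (\ref{E:2}) and then exploit the local structure of $\End(\mathcal{F})$ when $\mathcal{F}$ is indecomposable. Because $\phi \in Q^{\circ,\circ}_{\Lc_1,\Lc_2}$, each $\phi_{i*}$ is an isomorphism, so $\nu_\phi$ is an isomorphism of vector spaces which identifies $\bigoplus_i \Hom(V_i,\Hom(\Lc_i,\mathcal{F}))^*$ with $\bigoplus_i \End(V_i)^*$. Tracing through the adjunctions, one checks that under this identification $j$ in (\ref{E:2}) becomes the transpose of
\begin{equation*}
\beta~:\End(\mathcal{F}) \longrightarrow \bigoplus_i \End(V_i),\qquad f \longmapsto \bigl(\phi_{1*}^{-1} f_* \phi_{1*},\; \phi_{2*}^{-1} f_* \phi_{2*}\bigr).
\end{equation*}
The exactness of (\ref{E:2}) then yields $\text{Im}(\mu_\phi) = (\text{Im}\,\beta)^\perp$ with respect to the trace pairing, so the condition $k\lambda \subset \text{Im}(\mu_\phi)$ amounts to the single trace identity
\begin{equation*}
l_2\,\mathrm{Tr}\bigl(f_*|_{\Hom(\Lc_1,\mathcal{F})}\bigr) \;=\; l_1\,\mathrm{Tr}\bigl(f_*|_{\Hom(\Lc_2,\mathcal{F})}\bigr),\qquad \forall f \in \End(\mathcal{F}) \qquad (\star).
\end{equation*}

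For the forward direction, assume $\mathcal{F}$ is indecomposable. By Lemma \ref{L:1}, $R := \text{rad}(\End(\mathcal{F}))$ is nilpotent with residue field $k_\mathcal{F}$. Any $n \in R$ acts nilpotently on $\Hom(\Lc_i,\mathcal{F})$, so the map $f \mapsto \mathrm{Tr}(f_*|_{\Hom(\Lc_i,\mathcal{F})})$ is $k$-linear, vanishes on $R$, and factors through $k_\mathcal{F}$. Filtering $\Hom(\Lc_i,\mathcal{F})$ by the submodules $R^k\cdot\Hom(\Lc_i,\mathcal{F})$, each associated graded piece is a $k_\mathcal{F}$-vector space, and summing traces yields
\begin{equation*}
\mathrm{Tr}(c_*|_{\Hom(\Lc_i,\mathcal{F})}) = \rho_i\,\mathrm{Tr}_{k_\mathcal{F}/k}(c),\qquad c \in k_\mathcal{F},
\end{equation*}
with $\rho_i := l_i/[k_\mathcal{F}:k]$. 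Since $l_2\rho_1 = l_1\rho_2 = l_1 l_2/[k_\mathcal{F}:k]$, identity $(\star)$ holds.

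For the converse, suppose $\mathcal{F} = \mathcal{F}_1 \oplus \mathcal{F}_2$ is a nontrivial decomposition and let $e \in \End(\mathcal{F})$ be the projector onto $\mathcal{F}_1$. Each summand $\mathcal{F}_j$ is strongly generated by both $\Lc_1$ and $\Lc_2$ (being a direct summand of such a sheaf), so $\mathrm{Tr}(e_*|_{\Hom(\Lc_i,\mathcal{F})}) = \dim_k \Hom(\Lc_i,\mathcal{F}_1) = \langle \Lc_i,[\mathcal{F}_1]\rangle$, a positive integer strictly less than $l_i$. Hence $\beta(e)$ is a nontrivial pair of projectors in $\mathfrak{gl}(V_1)\times\mathfrak{gl}(V_2)$, and property (ii) satisfied by $\lambda$ (guaranteed by the characteristic hypothesis $p > l_1 l_2$) gives $\lambda(\beta(e)) \neq 0$, so $\lambda \notin (\text{Im}\,\beta)^\perp$. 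The main conceptual step is the identification $\text{Im}(\mu_\phi) = (\text{Im}\,\beta)^\perp$; once it is in place the analysis reduces to the elementary property of projectors in condition (ii) together with the local structure of $\End(\mathcal{F})$ for indecomposable $\mathcal{F}$.
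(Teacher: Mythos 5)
Your proof is correct and follows the paper's strategy at the key conceptual step: both arguments use the exact sequence (\ref{E:2}) and the fact that $\nu_\phi$ is an isomorphism on $Q^{\circ,\circ}_{\Lc_1,\Lc_2}$ to reduce the condition $k\lambda\subset\Im(\mu_{\phi})$ to the vanishing of $\lambda$ on the image of $\End(\mathcal{F})$ in $\mathfrak{gl}(V_1)\times\mathfrak{gl}(V_2)$, and both handle the decomposable case by evaluating $\lambda$ on the nontrivial pair of projectors induced by a splitting $\mathcal{F}=\mathcal{F}_1\oplus\mathcal{F}_2$, using property (ii) of $\lambda$ and the hypothesis $p>l_1l_2$. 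Where you diverge is the indecomposable direction: the paper invokes Fitting's lemma together with the coprimality of $(r,d)$ (so $\mathcal{F}$ is geometrically indecomposable and $\End(\mathcal{F})/\mathrm{rad}(\End(\mathcal{F}))=k$), writes $f=c\,\mathrm{Id}+n$ with $n$ nilpotent, and concludes directly from $\lambda(\mathrm{Id},\mathrm{Id})=0$ and the vanishing of traces of nilpotents; you instead filter $\Hom(\Lc_i,\mathcal{F})$ by powers of the radical and compute $\mathrm{Tr}(f_*|_{\Hom(\Lc_i,\mathcal{F})})$ through the residue field $k_{\mathcal{F}}$, arriving at the identity $l_2\rho_1=l_1\rho_2$ with $\rho_i=l_i/[k_{\mathcal{F}}:k]$. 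Your version is marginally more general, since it never uses $k_{\mathcal{F}}=k$ and so would survive without the coprimality of $(r,d)$ at this particular step, at the cost of a slightly longer computation; in the setting of the lemma the two arguments coincide because $[k_{\mathcal{F}}:k]=1$.
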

\begin{proof} By (\ref{E:2}) we have $k \lambda \in \text{Im}(\mu_{\phi})$ if and only if $j(\lambda)=0$ if and only if $\lambda(f \circ \phi)=0$ for all $f \in \text{End}(\mathcal{F})$. 

\vspace{.1in}

Let us assume that $\mathcal{F}$ is decomposable and let us fix a nontrivial decomposition $\mathcal{F}=\mathcal{G} \oplus \mathcal{H}$. As $\mathcal{F} \in \mathcal{C}_{>0}$ we have $\mathcal{G},\mathcal{H} \in \mathcal{C}_{>0}$. In particular, $\Ext^1(\mathcal{L}_i,\mathcal{G})=\Ext^1(\mathcal{L}_i,\mathcal{H})=\{0\}$ and we have decompositions 
$$\Hom(\mathcal{L}_i,\mathcal{F}) =\Hom(\mathcal{L}_i,\mathcal{G}) \oplus \Hom(\mathcal{L}_i,\mathcal{H})$$
Let $f$ be the projector onto  $\mathcal{G}$ along $\mathcal{H}$. Thus $f \circ \phi$ is the projector
onto $\Hom(\mathcal{L}_1,\mathcal{G}) \oplus \Hom(\mathcal{L}_2,\mathcal{G})$ along $\Hom(\mathcal{L}_1,\mathcal{H}) \oplus \Hom(\mathcal{L}_2,\mathcal{H})$. By ii) above $\lambda (f \circ \phi) \neq 0$ hence $\lambda \not\in \text{Im}(\mu_{\phi})$.

\vspace{.1in}

Next let us assume that $\mathcal{F}$ is indecomposable (and thus also geometrically indecomposable as $(r,d)$ are coprime). By Fitting's lemma $\text{End}(\mathcal{F})$ is a local 
$k$-algebra with $\text{End}(\mathcal{F})/\text{rad}(\text{End}(\mathcal{F}))=k$, and therefore every endomorphism is of the form $f=c Id + n$ for some nilpotent $n$. But then
$f \circ \phi=c(Id,Id) + (n_1,n_2)$ for some nilpotent $n_1$, $n_2$. We deduce using i) that
$$\lambda(f \circ \phi)=c \lambda(Id,Id)+ \lambda(n_1,n_2)=0.$$
 It follows that $\lambda \in \text{Im}(\mu_{\phi})$.
\end{proof}

\vspace{.2in}

\paragraph{\textbf{6.8.}} Put $A=k \lambda \subset \mathfrak{g}^*$ and
$$\X=\mu^{-1}(A) \subset T^*P, \qquad \X_t=\mu^{-1}(\{t\lambda\}), \qquad \X'=\X \backslash \X_0.$$
The idea is now to consider a GIT quotient $\Y$ of $\X$, and view the family of smooth varieties $\Y \to A$
as a deformation of the moduli space of stable Higgs bundles of rank $r$ and degree $d$. 
Because the moment map $\mu: T^*P \to \mathfrak{g}^*$ is $G$-equivariant, we still have a map
$\overline{\mu}: (T^*P)^{ss}\quot G \to A$. We set
$$\Y=\overline{\mu}^{-1}(A), \qquad \Y_t=\overline{\mu}^{-1}(\{t\lambda\}), \qquad \Y'=\Y \backslash \Y_0.$$
By construction, $\Y=\X^{ss} \quot G$ where $ \X^{ss}=\X \cap (T^*P)^{ss}.$
Observe that by Lemma~\ref{L:Psemis} we have $\X^{ss} \subset T^*Q^{\circ,\circ}_{\Lc_1,\Lc_2}$.

\vspace{.1in}

\begin{lem}\label{L:smooth} The $k$-schemes $\Y, \Y'$ and $\Y_t$ for $t \in k$ are smooth. In addition, $\X' \subset \X^{ss}$.
\end{lem}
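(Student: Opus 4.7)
The plan is to prove the two claims in sequence, starting with the inclusion $\X' \subset \X^{ss}$ via a moment-map trace calculation, and then deducing smoothness of $\Y$, $\Y'$, and $\Y_t$ from a transversality argument on $\X^{ss}$ combined with a free-quotient argument.

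First I would argue $\X' \subset \X^{ss}$ as follows. Take $(\underline{x}, \underline{y}) \in \X_t$ with $t \neq 0$, viewed as a representation of $\overline{Kr}$ via the embedding $P \hookrightarrow E$, and let $W = (W_1, W_2) \subseteq V$ be a subrepresentation. The trace identity $\text{tr}(\sum y_i x_i) = \text{tr}(\sum x_i y_i)$ shows that the sum of the traces of the two components of $\mu$ always vanishes. Restricting everything to $W$ and using $\mu(\underline{x},\underline{y}) = t\lambda = (tl_2 I_{V_1}, -tl_1 I_{V_2})$ gives
\[
t \bigl( l_2 \dim W_1 - l_1 \dim W_2 \bigr) = 0.
\]
Working in characteristic $p > l_1 l_2$ (so that any nonzero integer of magnitude at most $l_1 l_2$ remains nonzero mod $p$) and choosing $n,m$ so that $\gcd(l_1, l_2) = 1$ (possible since $\gcd(r,d)=1$), this forces $(\dim W_1, \dim W_2) \in \{(0,0), (l_1, l_2)\}$, i.e.\ $W = 0$ or $W = V$. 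The semistability (and even stability) criterion of Lemma~\ref{L:semistable}~ii) is then vacuous, and $\X' \subset \X^{ss}$.

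Next I would establish that at every $x \in \X^{ss}$ the $G$-stabilizer equals $Z := \{(cI_{V_1}, cI_{V_2}) : c \in k^*\}$. For $x \in \X' \cap \X^{ss}$ the absence of proper subrepresentations just shown, combined with a Jordan--Chevalley argument (generalized eigenspaces of a stabilizing element, and images of its nilpotent part, would be subrepresentations), reduces any stabilizer to scalars $(c_1 I, c_2 I)$; equivariance of $\underline{x} \in \Hom(V_1 \otimes \mathbb{V}, V_2)$ then forces $c_1 = c_2$. For $x \in \X_0 \cap \X^{ss}$, the quot-scheme dictionary identifies $\X_0^{ss}$ with the moduli of slope-semistable Higgs bundles of class $(r,d)$, all of which are stable by coprimality, and a stable Higgs bundle has automorphism group $k^*$ acting as $Z$. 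Consequently $\mathfrak{g}_x = k(I_{V_1}, I_{V_2})$ throughout $\X^{ss}$, and the symplectic identity gives $\text{Im}(d\mu_x) = \mathfrak{g}_x^\perp = W$, where $W := \{f \in \mathfrak{g}^* : f(I_{V_1}, I_{V_2}) = 0\}$. Because $Z$ acts trivially on $T^*P$, the map $\mu$ globally factors through $W$ as $\mu_W : T^*P \to W$, and $\lambda(I_{V_1}, I_{V_2}) = l_1 l_2 - l_1 l_2 = 0$ places $A = k\lambda$ inside $W$. The equality $\text{Im}(d\mu_{W,x}) = W$ shows $\mu_W$ is a submersion at each $x \in \X^{ss}$, so both $\X^{ss} = \mu_W^{-1}(A)$ and each $\X_t^{ss} = \mu_W^{-1}(t\lambda)$ are smooth at $x$.

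The effective group $G' := G/Z$ then acts freely on $\X^{ss}$, so the GIT quotient $\Y = \X^{ss} \quot G$ is a geometric quotient realized as a principal $G'$-bundle over a smooth base; smoothness of $\X^{ss}$ descends to $\Y$, and applying the same argument fiber-by-fiber yields smoothness of each $\Y_t = \X_t^{ss}/G'$. Finally $\Y' = \Y \setminus \Y_0$ is open in $\Y$ and hence smooth. The main obstacle to anticipate is the apparent failure of literal transversality: the relation $A \subset W = \text{Im}(d\mu_x)$ prevents $A + \text{Im}(d\mu_x) = \mathfrak{g}^*$, and the resolution---factoring $\mu$ through $W$ and checking transversality there---is exactly what makes the coprimality assumptions $\gcd(l_1, l_2) = 1$, $\gcd(r, d) = 1$ and the bound $p > l_1 l_2$ indispensable, since these are what force every semistable point of $\X$ to have minimal stabilizer $Z$.
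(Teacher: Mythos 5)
Your proof is correct and follows essentially the same route as the paper: the trace-of-the-moment-map computation (with $p>l_1l_2$ and $\gcd(l_1,l_2)=1$) gives $\X'\subset\X^{ss}$, and smoothness comes from triviality of stabilizers at semistable ($=$ stable) points together with surjectivity of $d\mu$ onto the annihilator $W$ of $(I_{V_1},I_{V_2})$ --- the paper outsources this last step to \cite[Lemma 2.1.5]{CBV} where you spell out the symplectic identity $\mathrm{Im}(d\mu_x)=\mathfrak{g}_x^{\perp}$. The one point worth noting is that your stabilizer computation on $\X_0\cap\X^{ss}$ leans on the identification of $\gamma$-stable points of $\mu^{-1}(0)$ with stable Higgs bundles, i.e.\ on Proposition~\ref{P:Higgs}, which the paper proves only afterwards (no circularity, but an ordering dependence); the paper avoids the case split entirely by observing that coprimality of $l_1,l_2$ forces semistability to coincide with stability on all of $T^*P$, so that $PG=G/\mathbb{G}_m$ acts freely on $(T^*P)^{st}$ uniformly.
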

\begin{proof} Because $l_1,l_2$ are relatively prime, we have $l_2 \dim(W_1) -l_1\dim(W_2) \neq 0$ for any proper pair of subspaces $W_1 \subset V_1, W_2 \subset V_2$. This implies that the notions of semistability and stability coincide in $T^*P$. The action of $PG:=G/\mathbb{G}_m$ on $(T^*P)^{st}$ is free, hence $(T^*P)^{st}\quot G=\big((T^*P)^{st} \cap T^*Q^{\circ,\circ}_{\Lc_1,\Lc_2}\big) \quot G$ is smooth. The first statement will be proved once we show that the map $\overline{\mu} : (T^*P)^{st} \quot G \to \mathfrak{g}^*$ is submersive. This follows from \cite[Lemma~2.1.5.]{CBV} (note that the hypothesis that the field be algebraically closed is not used in the proof there).
We turn to the second statement. Let $u=(\underline{x}, \underline{y}) \in \X'$ and let us assume that $u$ is not semistable. Thus, by Lemma~\ref{L:semistable} there exists a subrepresentation $(W_1,W_2)$ of $u$ such that $l_2 \dim(W_1) - l_1 \dim(W_2) >0$. There exists a lift $u'=(\underline{x}, \underline{y'}) \in P \times E^*$ of $u$ for which $(W_1,W_2)$ is also a subrepresentation. Moreover, we have $\mu(u')=\mu(u)=t \lambda$ with $t \neq 0$. But then $0=Tr(\mu(u')_{|W_1 \oplus W_2})=t(l_2\dim(W_1)-l_1\dim(W_2))$ in contradiction with property ii) of $\lambda$ (see Section~\textbf{6.7}.).
\end{proof}

\vspace{.1in}

\begin{prop}\label{P:Higgs}
There is a canonical isomorphism of schemes $\Y_0 \simeq Higgs^{st}_{r,d}(X)$.
\end{prop}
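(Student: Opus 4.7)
The idea is to identify $\Y_0$ as the GIT quotient of the $\gamma$-semistable locus of $\X_0 = \mu^{-1}(0) \cap T^*P$ by $G$, transport the question (via Lemma~\ref{L:Psemis}) to the open subvariety $T^*Q^{\circ,\circ}_{\Lc_1,\Lc_2}$, and then match this GIT quotient with the standard King/Simpson construction of $Higgs^{st}_{r,d}(X)$ as a scheme. I will carry this out in three steps.

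First, by paragraph \textbf{6.4.} we already have an isomorphism of stacks $[\mu^{-1}(0)/G] \simeq Higgs^{> \Lc_1}_{r,d}(X)$, where $\mu$ on the left now denotes the moment map on $T^*Q^{\circ,\circ}_{\Lc_1,\Lc_2}$. Condition a) on $(\Lc_1,\Lc_2)$ ensures that $Higgs^{st}_{r,d}(X)$ is an open substack of $Higgs^{>\Lc_1}_{r,d}(X)$, and the coprimality of $(r,d)$ guarantees that every $k$-point of this open substack has stabilizer equal to the diagonal scalar $\mathbb{G}_m \subset G$. In particular the restriction of the principal $G$-bundle $\mu^{-1}(0) \to [\mu^{-1}(0)/G]$ over the stable locus is a principal $PG$-bundle over a fine moduli scheme.

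Second, I match the two stability conditions. Fix a point $(\phi,\theta) \in \mu^{-1}(0) \cap T^*Q^{\circ,\circ}_{\Lc_1,\Lc_2}$ representing a Higgs bundle $(\mathcal{F},\theta)$, and write $(\underline{x},\underline{y}) := (d^*j)(\phi,\theta) \in T^*P$. Given a Higgs subsheaf $\mathcal{G} \subseteq \mathcal{F}$, set $W_i = \phi_{i,*}^{-1}\bigl(\Hom(\Lc_i,\mathcal{G})\bigr) \subseteq V_i$. Using the explicit description of $d^*j$ at the end of paragraph \textbf{6.5} together with the four-term sequence (\ref{E:2}) and the Serre-duality reformulation (\ref{E:higgsext}), the $\theta$-invariance of $\mathcal{G}$ translates precisely into $(W_1,W_2)$ being a subrepresentation of $(\underline{x},\underline{y})$. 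By condition d) every destabilizing Higgs subsheaf is strongly generated by both $\Lc_1$ and $\Lc_2$, so that $\dim W_i = \langle \Lc_i, [\mathcal{G}]\rangle$, and a direct calculation (using $l_i = \langle\Lc_i,(r,d)\rangle$) yields
$$l_1 \dim W_2 - l_2 \dim W_1 = (m-n)\cdot r\cdot r' \cdot \bigl(\mu(\mathcal{F}) - \mu(\mathcal{G})\bigr),$$
where $r' = rk(\mathcal{G})$. By arranging $m > n$ in the initial choice of the $\Lc_i$ (which is compatible with all earlier conditions), this has the same sign as $\mu(\mathcal{F}) - \mu(\mathcal{G})$. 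Conversely, starting from a subrepresentation $(W_1,W_2)$ of $(\underline{x},\underline{y})$, one sets $\mathcal{G} = \text{Im}\bigl(\phi|_{(\Lc_1\otimes W_1)\oplus(\Lc_2\otimes W_2)}\bigr) \subseteq \mathcal{F}$ and checks, using $\mu(\underline{x},\underline{y})=0$ together with the $\underline{y}$-part of the subrepresentation condition, that $\theta(\mathcal{G}) \subseteq \mathcal{G}\otimes \Omega_X$, and that the same numerical identity again forces $\mu(\mathcal{F})-\mu(\mathcal{G})$ to track the sign of $l_1\dim W_2 - l_2 \dim W_1$. Together these establish that $\gamma$-(semi)stability of $(\underline{x},\underline{y}) \in T^*P$ is equivalent to (semi)stability of $(\mathcal{F},\theta)$.

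Third, by our freedom in choosing $m,n$ (and using $(r,d)=1$) we may arrange $l_1$ and $l_2$ to be coprime, so that $\gamma$-semistable and $\gamma$-stable coincide on $T^*P$, as in Lemma~\ref{L:smooth}. Consequently $\X_0^{ss}$ is exactly the open subset of $\mu^{-1}(0) \cap T^*Q^{\circ,\circ}_{\Lc_1,\Lc_2}$ lying over the stable locus of $Higgs^{>\Lc_1}_{r,d}(X)$, the action of $PG$ on $\X_0^{ss}$ is free, and the GIT quotient $\Y_0 = \X_0^{ss} \quot G$ is a geometric quotient representing the same functor as $Higgs^{st}_{r,d}(X)$. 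This yields the desired canonical isomorphism $\Y_0 \simeq Higgs^{st}_{r,d}(X)$.

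The real obstacle is the middle step. The subtlety is that the $\underline{y}$-coordinate of a point of $T^*P$ lives not in the full space $E^* = \Hom(V_2\otimes \mathbb{V}^*, V_1)$ but in the quotient $E^*/(T_{\underline{x}}P)^\perp$, identified via (\ref{E:2}) with the quotient of $\Hom(\Ker\phi,\mathcal{F})^*$ by $\Ext^1(\mathcal{F},\mathcal{F})^*$. Converting the quiver invariance condition on $\underline{y}$ into the sheaf-theoretic statement $\theta(\mathcal{G}) \subseteq \mathcal{G}\otimes\Omega_X$ — in both directions and compatibly in families — requires carefully unwinding the identifications of paragraph \textbf{6.5}, in particular the map $d^*j$. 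Once this dictionary is established, the rest of the proof is largely formal.
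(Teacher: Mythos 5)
Your overall strategy is the paper's: reduce to showing that, for a point $(\phi,\theta)\in\mu^{-1}(0)\subset T^*Q^{\circ,\circ}_{\Lc_1,\Lc_2}$, $\gamma$-(semi)stability of $d^*j(\phi,\theta)$ in $T^*P$ is equivalent to stability of the Higgs bundle $(\mathcal{F},\theta)$, with coprimality of $l_1,l_2$ collapsing semistable to stable. Your forward direction (Higgs-unstable $\Rightarrow$ $\gamma$-unstable) is exactly the paper's: condition d) supplies a destabilizing Higgs subsheaf strongly generated by both $\Lc_i$, the map $\mathcal{W}\mapsto(\Hom(\Lc_1,\mathcal{W}),\Hom(\Lc_2,\mathcal{W}))$ produces a subrepresentation, and your numerical identity $l_1\dim W_2-l_2\dim W_1=(m-n)rr'(\mu(\mathcal{F})-\mu(\mathcal{G}))$ is precisely the paper's Lemma 6.12 (and the diagram chase you defer to ``carefully unwinding $d^*j$'' is the content of its Lemma 6.11).

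The gap is in your converse direction. Given an arbitrary destabilizing subrepresentation $(W_1,W_2)$ of $d^*j(\phi,\theta)$, you set $\mathcal{G}=\mathrm{Im}\bigl(\phi|_{(\Lc_1\otimes W_1)\oplus(\Lc_2\otimes W_2)}\bigr)$ and claim the same numerical identity applies. But for such a $\mathcal{G}$ one only has $W_i\subseteq\Hom(\Lc_i,\mathcal{G})$, not equality; writing $\dim\Hom(\Lc_i,\mathcal{G})=\dim W_i+\epsilon_i$ with $\epsilon_i\geq 0$, the quantity $l_1\dim\Hom(\Lc_2,\mathcal{G})-l_2\dim\Hom(\Lc_1,\mathcal{G})$ differs from $l_1\dim W_2-l_2\dim W_1$ by $l_1\epsilon_2-l_2\epsilon_1$, which can be positive, so destabilization need not transfer to $\mathcal{G}$; moreover it is not clear that the $\underline{y}$-invariance of $(W_1,W_2)$ forces $\theta$-invariance of the larger pair $(\Hom(\Lc_1,\mathcal{G}),\Hom(\Lc_2,\mathcal{G}))$, which is what ``$\mathcal{G}$ is a Higgs subsheaf'' means. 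The paper closes this gap by choosing a \emph{tight} (maximally destabilizing) subrepresentation --- one that cannot be enlarged in $W_1$ or shrunk in $W_2$ --- and invoking \cite[Lemma~5.5]{ACK} to conclude that a tight submodule of the Kronecker module $j(\phi)$ is already of the form $\psi(\mathcal{W})$ for an honest subsheaf $\mathcal{W}\in\mathcal{S}_X$; only then do Lemmas~\ref{L:H1} and \ref{L:H2} apply to conclude that $\mathcal{W}$ is a destabilizing Higgs subsheaf. You need this (or an equivalent saturation argument) to make the converse work.
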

\begin{proof}
Let us fix a pair $(\phi,\theta) \in \mu^{-1}(0) \subset T^*Q^{\circ,\circ}_{\Lc_1,\Lc_2}$ with
$$\phi : (\Lc_1 \otimes V_1) \oplus (\Lc_2 \otimes V_2) \twoheadrightarrow \mathcal{V}.$$
We will say that $(\phi, \theta)$ is $\mu$-stable if $(\mathcal{V},\theta)$ is a stable Higgs bundle (as in Section~\textbf{6.2.}). We will say that 
$(\phi,\theta)$ is $\gamma$-stable if $d^*j((\phi,\theta)) \in (T^*P)^{st}$ (i.e. is $\gamma$-semistable). Recall (see Section~\textbf{6.4.}) that $Higgs^{st}_{r,d} \subset [\mu^{-1}(0)/G]$. The proof of Proposition~\ref{P:Higgs} boils down to showing that $(\phi,\theta) \in \mu^{-1}(0)$ is $\mu$-stable if and only if it is $\gamma$-stable. 

\vspace{.1in}

Let us denote by $\mathcal{S}_X$ the (finite) set of subsheaves $\mathcal{W} \subset \mathcal{V}$ which are strongly generated by $\Lc_1$. We will also denote by $\mathcal{S}'_X$ the subset of $\mathcal{S}_X$ consisting of Higgs subsheaves. Likewise, let us denote by $\mathcal{S}_{Kr}$ and $\mathcal{S}'_{Kr}$ the (finite) sets of submodules
of $j(\phi)$ and $d^*j(\phi, \theta)$ respectively. There is a natural injective map
$$\psi~: \mathcal{S}_X \to \mathcal{S}_{Kr}, \qquad \mathcal{W} \mapsto (\Hom(\Lc_1,\mathcal{W}), \Hom(\Lc_2, \mathcal{W})) \subseteq (\Hom(\Lc_1,\mathcal{V}), \Hom(\Lc_2, \mathcal{V})) \simeq (V_1,V_2).$$ 

\vspace{.1in}

\begin{lem}\label{L:H1} We have $\mathcal{W} \in \mathcal{S}'_X$ if and only if $\psi(\mathcal{W}) \in \mathcal{S}'_{Kr}$, i.e. $\psi^{-1}(\mathcal{S}'_{Kr})=\mathcal{S}'_X$.
\end{lem}
\begin{proof}
By definition a Higgs subsheaf of $(\mathcal{V},\theta)$ is a subsheaf $\mathcal{W} \subset \mathcal{V}$ satisfying (\ref{E:higgsext}). For a subsheaf $\mathcal{W} \subset \mathcal{V}$ which is strongly generated by $\Lc_1$, we have a commutative diagram
$$\xymatrix{0 \ar[r] & \Ker(\phi) \ar[r] & \bigoplus_i \Lc_i \otimes V_i \ar[r]^-{\sim} & \bigoplus_i \Lc_i \otimes \Hom(\Lc_i, \mathcal{V}) \ar[r] & \mathcal{V} \ar[r] & 0\\
0 \ar[r] & \Ker(\phi_{\mathcal{W}}) \ar[u] \ar[r] &  \bigoplus_i \Lc_i \otimes V_i \ar[u] \ar[r]^-{\sim} & \bigoplus_i \Lc_i \otimes \Hom(\Lc_i, \mathcal{W}) \ar[r]\ar[u]  & \mathcal{W} \ar[r]\ar[u]  & 0}$$
where the upward arrows are canonical embeddings.
This gives rise to a commutative diagram
$$\xymatrix{ \Ext^1(\mathcal{V},\mathcal{V})^* \ar[d]^a \ar[r]^-{i} & \Hom(\Ker(\phi_\mathcal{V}), \mathcal{V})^* \ar[d]^-{a'}  & \Hom(\Hom(\Lc_2, \Ker(\phi_{\mathcal{V}})), \Hom(\Lc_2, \mathcal{V}))^* \ar^-{\sim}[r] \ar[d] \ar[l]_-{\pi}& \Hom(V_1, V_2\otimes \mathbb{V}^*)^* \ar[d]^{a''} \\
\Ext^1(\mathcal{V},\mathcal{W})^*  \ar[r]^-{i'} & \Hom(\Ker(\phi_\mathcal{V}), \mathcal{W})^*   & \Hom(\Hom(\Lc_2, \Ker(\phi_{\mathcal{V}})), \Hom(\Lc_2, \mathcal{W}))^* \ar^-{\sim}[r] \ar[l]_-{\pi'} & \Hom(V_1, W_2\otimes \mathbb{V}^*)^*  \\
\Ext^1(\mathcal{W},\mathcal{W})^* \ar[u]_-{b} \ar[r]^-{i''} & \Hom(\Ker(\phi_\mathcal{W}), \mathcal{W})^* \ar[u]_-{b'}  & \Hom(\Hom(\Lc_2, \Ker(\phi_{\mathcal{W}})), \Hom(\Lc_2, \mathcal{W}))^* \ar^-{\sim}[r] \ar[u] \ar[l]_-{\pi''}& \Hom(W_1, W_2\otimes \mathbb{V}^*)^* \ar[u]_-{b''} }.$$
The maps $i,i',i''$ are injective while the maps $\pi,\pi',\pi''$ are surjective. The rightmost horizontal isomorphisms
are induced by the isomorphisms $\Hom(\Lc_2,\Ker(\phi_{\mathcal{V}})) \simeq \Hom(\Lc_2, \Lc_1 \otimes V_1)$ and
$\Hom(\Lc_2,\mathcal{V}) \simeq V_2$ (see Section~\textbf{6.5.}) and the similar isomorphisms with $\mathcal{W}$
instead of $\mathcal{V}$. Observe that $\pi$ is identified with the projection 
$$\xymatrix{T^*_{j(\phi)}E \ar[r] & T^*_{j(\phi)}P \ar[r]^-{\sim}_-{d^*j_{|\phi}^{-1}} & T^*_{\phi}Q^{\circ,\circ}_{\Lc_1,\Lc_2}}.$$

  The subsheaf $\mathcal{W}$ is a Higgs subsheaf if and only if $a(\theta) \in b(\Ext^1(\mathcal{W}, \mathcal{W})^*)$. Now consider the morphism of exact sequences
$$\xymatrix{0 \ar[r] & \Ext^1(\mathcal{V},\mathcal{W})^* \ar[r]^-{i'} & \Hom(\Ker(\phi_{\mathcal{V}}),\mathcal{W})^* \ar[r]^-{s'} & \Hom(\Lc_{\mathcal{V}}, \mathcal{W})^*\\
0 \ar[r] & \Ext^1(\mathcal{W},\mathcal{W})^* \ar[r]^-{i''}\ar[u]_-{b} & \Hom(\Ker(\phi_{\mathcal{W}}),\mathcal{W})^* \ar[r]^-{s''}\ar[u]_-{b'} & \Hom(\Lc_{\mathcal{W}}, \mathcal{W})^* \ar[u]_-{c}}$$
in which we have set for simplicity $\Lc_{\mathcal{V}}=\bigoplus_i \Lc_i \otimes V_i$ and $\Lc_{\mathcal{W}}=\bigoplus_i \Lc_i \otimes W_i$. Note that the map $c$ is injective since $\Lc_{\mathcal{W}}$ is a direct summand of $\Lc_{\mathcal{V}}$.
It follows that $a(\theta) \in b(\Ext^1(\mathcal{W}, \mathcal{W})^*)$ if and only if $i'(a(\theta)) \in b'(\Hom(\Ker(\phi_{\mathcal{W}}),\mathcal{W})^*)$, and this holds if and only $i(\theta)$ can be lifted to an element
$\underline{y} \in \Hom(V_1, V_2 \otimes \mathbb{W})^*$ satisfying $a''(\underline{y}) \in b''(\Hom(W_1, W_2 \otimes \mathbb{V})^*)$. But this last condition is equivalent to the fact that $(W_1,W_2)$ is a subrepresentation of 
$d^*j(\phi,\theta)$. The Lemma is proved.
\end{proof}

\vspace{.1in}

\begin{lem}\label{L:H2} Let $\mathcal{W} \in \mathcal{S}_X$ and $(W_1,W_2)=\psi(\mathcal{W})$. Then $\mu(\mathcal{W}) > \mu(\a)$ if and only if $l_1 \dim(W_2) < l_2 \dim(W_1)$.
\end{lem}
\begin{proof}
This is a straightforward computation (see e.g. \cite[Lemma 3.2]{ACKsurvey}).
\end{proof}

\vspace{.1in}

Let $(\phi, \theta) \in \mu^{-1}(0)$ be $\mu$-unstable. Then by the defining property d) of $(\Lc_1,\Lc_2)$ there exists
a destabilizing subsheaf $\mathcal{W} \in \mathcal{S}'_X$ (see Section~\textbf{6.4}). Therefore $\psi(\mathcal{W})$ is a destabilizing subrepresentation of $d^*j(\phi,\theta)$. Conversely, assume that $d^*j(\phi,\theta)$ is $\gamma$-unstable.
Following the authors of \cite{ACK} we will call tight a subrepresentation $(W_1,W_2)$ of $d^*j(\phi,\theta)$ satisfying the following condition~: if $(W'_1,W'_2)$ is a subrepresentation of $d^*j(\phi,\theta)$ such that $W_1 \subseteq W'_1$
and $W_2 \supseteq W'_2$ then $W'_1=W_1$ and $W'_2=W_2$. Clearly, there exists a tight destabilizing subrepresentation $(W_1,W_2)$ of $d^*j(\phi,\theta)${ }\footnote{Such a representation may be thought of as a maximally destabilizing subrepresentation of $d^*j(\phi,\theta)$.}. Observe that $(W_1,W_2)$ is also tight as a submodule of the (non-doubled) Kronecker representation $j(\phi)$. Using \cite[Lemma~5.5.]{ACK} we conclude that the submodule $(W_1,W_2)$ is equal to $\psi(\mathcal{W})$ for some $\mathcal{W} \in \mathcal{S}_X$. By Lemmas~\ref{L:H1} and \ref{L:H2} it follows that $\mathcal{W}$ is a destabilizing Higgs subsheaf of $(\phi,\theta)$ and thus that $(\phi,\theta)$ is $\mu$-unstable. The Proposition is proved.
\end{proof}

\vspace{.2in}

\paragraph{\textbf{6.9.}} Let us consider the action of $\mathbb{G}_m$ on $T^*E$ given by
$$z \cdot (\underline{x}, \underline{y})=(\underline{x}, z\underline{y}).$$
This action preserves $P \times E^*$ and descends to an action of $\mathbb{G}_m$ on $T^*P$, which in turn preserves $T^*Q^{\circ,\circ}_{\Lc_1,\Lc_2}$. Since the map $\mu: T^*Q^{\circ,\circ}_{\Lc_1,\Lc_2} \to \mathfrak{g}^*$ is equivariant (for the standard weight one action of $\mathbb{G}_m$ on $\mathfrak{g}^*$), this action preserves $\X$ and thus induces a $\mathbb{G}_m$-action on $\Y$. Observe that the schemes $\Y_t$ with $t \neq 0$ are transformed into each other by the $\mathbb{G}_m$-action and in particular are all isomorphic. 

\vspace{.1in}

\begin{prop}\label{P:contract} The $\mathbb{G}_m$-action on $\Y$ is contracting, i.e. for any $y \in \Y$ the action map $\mathbb{G}_m\to \Y, z \mapsto z \cdot y$ extends to a map $\mathbb{A}^1 \to \Y$.
\end{prop}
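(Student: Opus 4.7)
The plan is to reduce the statement to a contraction on the affine GIT quotient, and then lift via the properness of the projective GIT quotient map. First I would observe that the formula $z\cdot(\underline{x},\underline{y})=(\underline{x},z\underline{y})$ manifestly extends from a $\mathbb{G}_m$-action to a monoid action of $\mathbb{A}^1$ on $T^*E$, preserving $P\times E^*$ and descending to $T^*P$. Since the moment map $\mu$ is equivariant for the weight-one scaling action on $\mathfrak{g}^*$, the subvariety $\X=\mu^{-1}(A)$ with $A=k\lambda$ is preserved, and the limit of any point as $z\to 0$ lands in $\X_0$.

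The core of the proof is to promote this to a contracting action on the affine quotient $\overline{\X}:=\X/\hspace{-.05in}/G$. The coordinate ring $k[T^*P]$ carries a natural non-negative $\Z$-grading in which $k[P]$ sits in degree $0$ and the coordinate functions on $E^*$ (i.e.~the entries of $\underline{y}$) sit in degree $1$; this grading is precisely the one that induces our $\mathbb{G}_m$-action. The ideal of $\X$ in $T^*P$ is generated by the entries of $\mu-t\lambda$ (as $t$ varies in the parameter along $A$), which are homogeneous of degree $1$. Hence $k[\X]$ inherits a non-negative grading. The $G$-action commutes with $\mathbb{G}_m$ (it acts with weight $0$ on $\underline{y}$ up to the standard conjugation), so taking invariants preserves the grading, and $k[\X]^G=\bigoplus_{n\geq 0}(k[\X]^G)_n$ is non-negatively graded. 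It follows that the $\mathbb{G}_m$-action on $\overline{\X}=\mathrm{Spec}(k[\X]^G)$ extends canonically to a monoid action of $\mathbb{A}^1$: for every $w\in\overline{\X}$, the orbit map $\mathbb{G}_m\to\overline{\X}$ extends to $\bar{\tau}_w\colon \mathbb{A}^1\to\overline{\X}$, with limit at $0$ lying in the fixed locus $\mathrm{Spec}((k[\X]^G)_0)$.

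Next I would invoke the fact that the GIT quotient map $p\colon \mathcal{Y}=\X^{ss}/\hspace{-.05in}/G\to\overline{\X}$ is projective, being the Proj of $\bigoplus_{l\geq 0}k[\X]^{\gamma,l}$ over $\mathrm{Spec}(k[\X]^G)$; in particular $p$ is proper. Given $y\in\mathcal{Y}$, form the composition $p\circ\sigma_y\colon \mathbb{G}_m\to\overline{\X}$ and extend it via the previous paragraph to $\bar{\sigma}\colon\mathbb{A}^1\to\overline{\X}$. Applying the valuative criterion of properness to $p$ at the DVR $\mathcal{O}_{\mathbb{A}^1,0}$ with fraction field $k(t)$, and to the diagram
\begin{equation*}
\xymatrix{\mathrm{Spec}(k(t))\ar[r]^-{\sigma_y}\ar[d] & \mathcal{Y}\ar[d]^-{p}\\ \mathrm{Spec}(\mathcal{O}_{\mathbb{A}^1,0})\ar[r]^-{\bar{\sigma}}\ar@{.>}[ur] & \overline{\X}}
\end{equation*}
yields a unique lift $\mathrm{Spec}(\mathcal{O}_{\mathbb{A}^1,0})\to\mathcal{Y}$. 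Since $\mathcal{Y}$ is locally of finite type over $k$, this morphism spreads out to a Zariski open neighbourhood of $0\in\mathbb{A}^1$, and the separatedness of $\mathcal{Y}$ allows one to glue it with $\sigma_y$ on $\mathbb{G}_m$ into a morphism $\mathbb{A}^1\to\mathcal{Y}$ extending $\sigma_y$.

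The main subtlety is the verification in the first paragraph that the defining ideal of $\X$ in $T^*P$ is homogeneous for the chosen grading, i.e.~that the moment map constraint $\mu=t\lambda$ is compatible with the scaling; once this is set up properly the rest is a formal application of standard facts about non-negatively graded affine schemes and about valuative criteria for proper morphisms. No further geometric input (e.g.~about stability) is required, because we have delegated the subtle stability considerations to the GIT construction of $\mathcal{Y}$ itself; it is precisely by working on $\overline{\X}$ rather than on a fiber of the moment map that we avoid having to check that the naive limit $(\underline{x},0)$ remains $\gamma$-semistable, which in general it need not.
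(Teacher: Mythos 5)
Your proof is correct and follows essentially the same strategy as the paper's: reduce, via properness of the projective-over-affine GIT quotient map and the valuative criterion, to the contraction of the $\mathbb{G}_m$-action on an affine quotient, where it follows from the non-negativity of the induced grading on the ring of invariants. The only (immaterial) difference is that you run this argument directly on $\X \quot G$, whereas the paper establishes the contraction on $(T^*P)\quot G$ by transporting the evident grading on $k[T^*E]$ through the closed embedding $(P\times E^*)\quot G \hookrightarrow (T^*E)\quot G$ and the equivariant surjection $(P\times E^*)\quot G \twoheadrightarrow (T^*P)\quot G$.
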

\begin{proof} It is enough to prove that the $\mathbb{G}_m$-action on $(T^*P)^{ss}\quot G=(T^*Q^{\circ,\circ}_{\Lc_1,\Lc_2})^{ss}\quot G$ is contracting since $\Y$ is closed in $(T^*{Q^{\circ,\circ}_{\Lc_1,\Lc_2}})^{ss}\quot G$. Because the map $p'': (T^*P)^{ss}\quot G \to (T^*P)\quot G$ is proper, it is in turn enough to prove that the $\mathbb{G}_m$-action on $(T^*P)\quot G$ is contracting. It is clear that the $\mathbb{G}_m$-action on $(T^*E)\quot G$ is contracting. Since $(P\times E^*)\quot G$
is a $\mathbb{G}_m$-invariant closed subvariety of $ (T^*E)\quot G$, the $\mathbb{G}_m$-action on $(P\times E^*)\quot G$ is contracting as well. But there is a surjective $\mathbb{G}_m$-equivariant morphism $(P\times E^*)\quot G \to (T^*P)\quot G$ and hence the $\mathbb{G}_m$-action on $(T^*P)\quot G$ is also contracting. The Proposition is proved.
\end{proof}

\vspace{.1in}

Denote by $\cZ$ the scheme of $\mathbb{G}_m$-fixed points in  $\Y$, a smooth subscheme of $\Y_0$. An explicit description of $\cZ$ is given and studied in \cite{Heinloth} (the so-called moduli of chains on $X$). Let $\cZ=\bigsqcup_{i} \cZ_i$ denote the decomposition of $\cZ$ into connected components. The tangent space to $\Y$ at a point $z \in \cZ$ splits as a direct sum 
$$T_z\Y=T_z\Y^+ \oplus T_z\cZ \oplus T_z\Y^-$$
where $T_z\Y^+$, resp. $T_z\Y^-$  stands for the subspace over which the $\mathbb{G}_m$-action is of strictly positive (resp. strictly negative) weight. Let $n_i$ be the dimension of $T_z\Y^+$ for $z \in \cZ_i$. Replacing $\Y$ by $\Y_0$ one similarly defines integers $n'_i$. Observe that $n'_i=n_i-1$ as $\nu$ is $\mathbb{G}_m$-equivariant and $\mathbb{G}_m$ acts on $L\simeq
\mathbb{A}^1$ with weight one.

By Lemma~\ref{L:smooth} i) and Proposition~\ref{P:contract}, the Hesselink-Byaliniki-Birula decomposition for $\Y$ and $\Y_0$ provide locally closed partitions
\begin{equation}\label{E:1}
\Y=\bigsqcup_i \W_i, \qquad \Y_0=\bigsqcup_i \W'_i
\end{equation}
where $\W_i$ (resp. $\W'_i$) is an $\mathbb{A}^{n_i}$-fibration (resp. an $\mathbb{A}^{n'_i}$-fibration) over $\cZ_i$,
see \cite[Thm. 5.7]{Hesselink}.

The decompositions (\ref{E:1}) and the fibrations $\W_i \to \cZ_i$, $\W'_i \to \cZ_i$ are all defined over $k$. It follows on the one hand that
$$|\Y(k)|=|\Y_0(k)| + (q-1) |\Y_1(k)|$$
and on the other hand that
$$|\Y(k)|=\sum_i q^{n_i} |\cZ_i(k)|, \qquad |\Y_0(k)|=\sum_i q^{n_i-1} |\cZ_i(k)|$$
We deduce that $|\Y_0(k)|=|\Y_1(k)|$. 
By Lemma~\ref{L:indecomp}
$$|\Y_1(k)|=\sum_{\mathcal{F}\in I_{r,d}} q^{\dim(\Ext^1(\mathcal{F},\mathcal{F}))} |\{\phi \in Q \;|\; \phi: \mathcal{L}_1^{\oplus l_1} \oplus \mathcal{L}_2^{\oplus l_2} \twoheadrightarrow \mathcal{F}\}| / |PG(k)|$$
where $I_{r,d}$ stands for the set of indecomposable (and hence geometrically indecomposable) vector bundles of rank $r$ and degree $d$ over $X$. For such a bundle, we have
$$|\{\phi \in Q \;|\; \phi: \mathcal{L}_1^{\oplus l_1} \oplus \mathcal{L}^{\oplus l_2} \twoheadrightarrow \mathcal{F}\}| / |PG(k)|=(q-1) /|\text{Aut}(\mathcal{F})|=q / |\text{End}(\mathcal{F})|,$$
from which we deduce that
$$|\Y_1(k)|=\sum_{\mathcal{F}\in I_{r,d}} q^{\dim(\Ext^1(\mathcal{F},\mathcal{F}))-\dim(\End(\mathcal{F}))+1}=q^{1-\langle \alpha,\alpha \rangle}|I_{r,d}|=q^{1+(g-1)r^2} |I_{r,d}|$$
as wanted. This finishes the proof of Theorem~\ref{T:3}.

\vspace{.2in}

\paragraph{\textbf{6.10.}} In this section we provide the (standard) proofs of Corollaries~\ref{Cor:1} and \ref{Cor:2}.

\noindent
\begin{proof}[{Proof of Corollary~\ref{Cor:1}}] Let $C_i=\{c_{i,j}\;|\; j \in K_i\}$ be the collection of Frobenius eigenvalues in $H^i_c(Higgs_{r,d}(X \otimes \overline{\mathbb{F}_q}), \qlb)$, counted with multiplicity. It is known that $Higgs_{r,d}(X \otimes \overline{\mathbb{F}_q})$ is cohomologically pure (see e.g. \cite[Cor. 1.2.3]{HRV2} for the similar case of the mixed Hodge structure on the moduli space of stable Higgs bundles over a complex curve, or see Section 6.11 below). Therefore $|c_{i,j}|=q^{i/2}$ for all $j \in K_i$. By Theorems~\ref{T:1} and \ref{T:3} there exists a polynomial $B_{r,d} \in \mathbb{Q}[T_g]^{W_g}$ and a unitary polynomial $R(q) \in \mathbb{Z}[q]$ such that for any $l \geq 1$
\begin{equation}\label{E:cor21}
B_{r,d}(\sigma_1^l, \ldots, \sigma_{2g}^l)=\left( \sum_{i,j} (-1)^i c_{i,j}^l\right) R(q^l),
\end{equation}
where $\sigma_X=(\sigma_1, \ldots, \sigma_{2g})$. Multiplying $B_{r,d}$ by some positive integer $N$ if necessary and repeating each $c_{i,j}$ $N$ times accordingly, we may assume that $B_{r,d} \in \mathbb{Z}[T_g]^{W_g}$. Expanding the product $\left( \sum_{i,j} (-1)^i c_{i,j}^l\right) R(q^l)$ and gathering together terms with the same sign, we may write
(\ref{E:cor21}) as an equality
\begin{equation}\label{E:cor22}
\sum_{a \in A} u_a^l=\sum_{b \in B}v_b^l,
\end{equation}
where $u_a, v_b$ are either some monomials of the form $\sigma_1^{i_1} \cdots \sigma_{2g}^{i_{2g}}$ or of the form
$q^nc_{i,j}$ for some $i$ and $j \in K_i$. Because (\ref{E:cor22}) holds for all $l$, we deduce that $\{u_a\;|\; a \in A\}=\{v_b\;|\; b \in B\}$. We may decompose the sets $\{u_a\}, \{v_b\}$ according to the complex norm, yielding for each $n$ an equality
$$\{u_a\;|\; a \in A, |u_a|=n\}=\{v_b\;|\; b \in B, |v_b|=n\}.$$
Let $d$ be the degree of $R(q)$, so that $R(q)=q^d + l.o.t.$. Set $l=\text{max}\;\{l\;|\; K_l \neq \emptyset\}$. Depending on the parity of $l$, the monomials of the form $q^dc_{l,j}$ either all belong to $\{u_a\;|\; a \in A, |u_a| =q^{d + l/2}\}$ or all belong to $\{v_b\;|\; b \in B, |v_b| =q^{d + l/2}\}$. This implies that the $c_{l,j}, j \in K_l$ are all equal to monomials 
$\sigma_1^{i_1} \cdots \sigma_{2g}^{i_{2g}}$ with $\sum_k i_k=l$. Cancelling from (\ref{E:cor22}) all the terms arising in the products $c_{l,j}R(q)$ for $j \in K_l$ and arguing by induction we deduce that the same holds for the $c_{i,j}$ with $j \in K_i$ and $i$ arbitrary. This proves the first point of Corollary~\ref{Cor:1}. The second point is proved by a careful bookkeeping of the exact same argument. We leave the details to the reader.
\end{proof}

\vspace{.1in}

\noindent
\begin{proof}[Proof of Corollary~\ref{Cor:2}] Let $X_{\mathbb{Q}}$ be a smooth projective curve of genus $g$ defined over $\mathbb{Q}$ and let $X_R$ be a spreading out of $X_{\mathbb{Q}}$ defined over some ring $R=\mathbb{Z}[\frac{1}{N}]$. Consider the $R$-scheme $\pi: Higgs_{r,d}(X_R) \to \text{Spec}(R)$. The complex $R\pi_!(\qlb)$ is locally constant over an open subset $U \subseteq \text{Spec}(R)$. For any field $k$ and any point $j_k:~\text{Spec}(k) \to \text{Spec}(R)$, the proper base change theorem provides an isomorphism
 $j_k^*R\pi_!(\qlb) \simeq R\pi_{k,!}(\qlb)$ where $\pi_k : Higgs_{r,d}(X_R \otimes k) \to \text{Spec}(k)$. 
If $j_{\mathbb{F}_q} \in U$ then 
$j_{\mathbb{F}_q}^*R\pi_!(\qlb) \simeq j_{\mathbb{Q}}^*R\pi_{!}(\qlb) \simeq $, where $j_{\mathbb{Q}} : \text{Spec}(\mathbb{Q}) \to \text{Spec}(R)$ is the generic point. As $j_{\mathbb{F}_q} \in U$ for $q \gg 0$, this yields an equality
$$\sum_n \text{dim}(H^n_c(Higgs_{r,d}(X_R\otimes \mathbb{F}_q), \qlb))t^n = \sum_n \text{dim}(H^n_c(Higgs_{r,d}(X_R\otimes \mathbb{Q}), \qlb))t^n.$$
Finally, by the Artin-Grothendieck comparison theorem, 
$$  \sum_n \text{dim}(H^n_c(Higgs_{r,d}(X_R\otimes \mathbb{Q}), \qlb))t^n= \sum_n \text{dim}(H^{n,sing}_c(Higgs_{r,d}(X_R\otimes \mathbb{C}), \mathbb{C}))t^n.$$
We conclude using the fact that the all the complex varieties $Higgs_{r,d}(X)$ as $X$ runs through the set of Riemann surface of genus $g$ are diffeomorphic (and all diffeomorphic to the genus $g$ character variety for the group $GL(r)$, see \cite{HRVInvent}).
\end{proof}

\vspace{.2in}

\paragraph{\textbf{6.11.}} Finally, let us prove Corollaries~\ref{Cor:25} and \ref{Cor:26}.

 \vspace{.1in}
 
 \begin{proof}[Proof of Corollary~\ref{Cor:25}] Assume first that $k=\mathbb{F}_q$.  Consider the $\mathbb{G}_m$-action on $Higgs^{st}_{r,d}$ defined by $\rho(z) (\mathcal{V},\theta)=(\mathcal{V}, z\theta)$. Observe that the Hitchin map $\mu$ is naturally $\mathbb{G}_m$-equivariant for the weight one action of $\mathbb{G}_m$ on the Hitchin base. Since $\mu$ is proper, it follows (as in Section~6.9) that this action is contracting. Let $Z=(Higgs_{r,d}^{st})^{ \mathbb{G}_m}$ be the be the fixed point subvariety and $Z= \bigsqcup_i Z_i$ its decomposition into connected components. Each $Z_i$ is a smooth subvariety
of $Higgs_{r,d}^{st}$ which is included in $\Lambda^{st}_{r,d}$ and hence is projective. The tangent space
of $Higgs_{r,d}^{st}$ at a point $z_i$ decomposes according to the $\mathbb{G}_m$-character as
$$T_{z_i} Higgs_{r,d}^{st}= T^{>0}_{z_i} \oplus T_{z_i} Z_i \oplus T^{<0}_{z_i}.$$
We have Byalinicki-Birula-Hesselink decompositions (for $\rho$ and $\rho^{-1}$ respectively)
$$Higgs_{r,d}^{st}= \bigsqcup_i Y^+_i, \qquad \Lambda^{st}_{r,d}=\bigsqcup_i Y_i^-$$
where $Y_i^+$ is a locally trivial $\mathbb{A}^{n^+_i}$-fibration over $Z_i$ and $Y_i^-$ is a locally trivial $\mathbb{A}^{n^-_i}$-fibration over $Z_i$, where
$$n_i^+=dim\; T^{>0}_{z_i}, \qquad n_i^-=dim\; T^{<0}_{z_i}$$
(this is independent of the choice of $z_i$ ). Because $\Lambda^{st}_{r,d}$ is lagrangian and $Z_i$ is included in the smooth locus of $\Lambda^{st}_{r,d}$ we have $n_i^+= \frac{1}{2} dim\; Higgs_{r,d}^{st}= 1 + (g-1)r^2$.
The varieties $Z_i$ being smooth and projective they are pure, and hence so are the $Y_i^+, Y_i^-$ (for the compactly supported cohomology). This implies that $\Lambda^{st}_{r,d}$ and $Higgs^{st}_{r,d}$ are pure as well,
and that there is a (non canonical) isomorphism in the Grothendieck group of $Gal(\overline{k} / k)$-modules
\begin{equation}\label{E:cor25:1}
H^n_c(Higgs^{st}_{r,d}, \qlb) \simeq \bigoplus_i H_c^{n-1-(g-1)r^2} (Z_i,\qlb) \{1 + (g-1)r^2\}
\end{equation}
where $\{\;\}$ denotes a Tate twist.
Similarly, there is an isomorphism
\begin{equation}\label{E:cor25:2}
H^n_c(\Lambda^{st}_{r,d}, \qlb) \simeq \bigoplus_i H_c^{n-n_i^{<0}} (Z_i,\qlb) \{n_i^{<0}\}.
\end{equation}
By Poincarr\'e duality, 
\begin{equation}\label{E:cor25:3}
H^{2 dim\;Z_i-l}_c(Z_i, \qlb)^*\{dim\;Z_i\} \simeq H^{l}_c(Z_i, \qlb). 
\end{equation}
Observe that $dim\; Z_i=1+(g-1)r^2-n_i^{<0}$. Combining (\ref{E:cor25:1}), (\ref{E:cor25:2}) and (\ref{E:cor25:3})
and taking the trace with respect to the Frobenius element yields statement i). Statement ii) for $k=\mathbb{F}_q$
follows by considering the appropriate cohomological degrees, and for $k=\mathbb{C}$ by the same type of arguments as in the proof of Corollary~\ref{Cor:2}. \end{proof}

 \vspace{.1in}
 
 \begin{proof}[Proof of Corollary~\ref{Cor:26}] We need to specialize (\ref{E:mainT1}) to $\a_1=\cdots =\a_{2g}=0$. To this end we rewrite the terms entering (\ref{E:mainT1}) as follows~:
 $$q^{(g-1)\langle \lambda, \lambda\rangle} J_{\lambda}(z)=\prod_{s \in \lambda^{\diamond}} \frac{\prod_{i} (\a_i q^{1+l(s)} -z^{a(s)})}{(q^{1+l(s)}-z^{a(s)})(q^{l(s)}-z^{a(s)})} \cdot \prod_{s \in \lambda \backslash \lambda^{\diamond}} \frac{\prod_i (\a_i-1)}{q-1}$$
 where $\lambda^{\diamond}$ denotes the set of $s \in \lambda$ satisfying $a(s) >0$ or $l(s) >0$.
 This expression is regular at the point $\a_1= \cdots = \a_{2g}=0$ and evaluates to 
 \begin{equation*}
 \begin{split}
q^{(g-1)\langle \lambda, \lambda\rangle} J_{\lambda}(z)_{| \alpha_i=0}&=\prod_{s \in \lambda^{\diamond}} z^{2(g-1)a(s)} \cdot \prod_{\substack{s \in \lambda^{\diamond}\\ l(s)=0}} \frac{1}{1-z^{-a(s)}} \cdot (-1)^{| \lambda \backslash \lambda^{\diamond}|}\\
& =(-1)^{| \lambda \backslash \lambda^{\diamond}|}z^{(g-1)(\sum_i \lambda_i^2 -\sum \lambda_i)}\prod_{\substack{s \in \lambda^{\diamond}\\ l(s)=0}} \frac{1}{1-z^{-a(s)}}.
\end{split}
\end{equation*}
 Next, we have
 $$L(z_n, \ldots, z_1)=\sum_{\sigma \in \mathfrak{S}_n} \epsilon(\sigma) \prod_{\substack{k <l \\ \sigma^{-1}(k) > \sigma^{-1}(l)}} \left( \frac{z_l}{z_k}\right)^{-g} \frac{\prod_i (1-\a_i \frac{z_l}{z_k})\cdot (\frac{z_l}{z_k}-q)}{\prod_i ( \frac{z_l}{z_k}-\a_i)\cdot (1-q\frac{z_l}{z_k})} \cdot \frac{1}{\prod_{j<n} (1-q\frac{z_{\sigma(j+1)}}{z_{\sigma(j)}})\cdot (1-z_{\sigma(1)})}.$$
 We see that for $\sigma \neq Id$, the evaluation of the quantity
$$\text{Res}_{\lambda} \left[\prod_{\substack{k <l \\ \sigma^{-1}(k) > \sigma^{-1}(l)}} \left( \frac{z_l}{z_k}\right)^{-g} \frac{\prod_i (1-\a_i \frac{z_l}{z_k})\cdot (\frac{z_l}{z_k}-q)}{\prod_i ( \frac{z_l}{z_k}-\a_i)\cdot (1-q\frac{z_l}{z_k})} \cdot \frac{1}{\prod_{j<n} (1-q\frac{z_{\sigma(j+1)}}{z_{\sigma(j)}})\cdot (1-z_{\sigma(1)})} \prod_{\substack{j =1 \\ j \not\in \{r_{\leq i}\}}}^{n}\frac{dz_j}{z_j}\right]$$
at $z_{1 + r_{<i}}=z^iq^{-r_{<i}}$ for $i=1, \ldots, t$ is a rational function of $\a_1, \ldots, \a_{2g}$ with coefficients in $\mathbb{Q}(z)$ which is regular and vanishes at the point $\a_1= \cdots = \a_{2g}=0$. As a consequence, if we
write $\lambda=(1^{r_1}, 2^{r_2}, \ldots)$ and denote by $i_1 < i_2 < \cdots < i_s$ the integers satisfying $r_{i_j}\neq 0$ then
\begin{equation*}
H_{\lambda}(z)_{|\a_i=0}=\frac{1}{(1-z^{i_1})(1-z^{i_2-i_1}) \cdots (1-z^{i_s-i_{s-1}})}=(-1)^{s}\frac{z^{-i_s}}{(1-z^{-i_1})
(1-z^{i_1-i_2}) \cdots (1-z^{i_{s-1}-i_{s}})}.
\end{equation*}
Observing that $s=|\lambda \backslash \lambda^{\diamond}|$, we get
\begin{equation}\label{E:proofcor110}
q^{(g-1)\langle \lambda, \lambda\rangle}J_{\lambda}(z)H_{\lambda(z)}=z^{(g-1)(\langle \lambda', \lambda' \rangle-|\lambda'|)-l(\lambda')} K_{\lambda'}(z).
\end{equation}
Finally, observe that since $A_{g,r}(z)$ has at most simple poles at $r$th roots of unity, the same holds for $A_{g,r}(z)_{|\a_i=0}$, and hence the residue at $r$th roots of unity is unchanged upon rescaling by a factor of $z^{-r}$. This allows us to remove the term $z^{-\sum_i \lambda_i}=z^{-r}$ in (\ref{E:proofcor110}).
We are done.\end{proof}
 
\vspace{.2in}

\section{Extension to the parabolic case}

\vspace{.1in}

\paragraph{\textbf{7.1.}} There are analogous result for vector bundles with (quasi)-parabolic structure. Let $X$ be as before a smooth projective curve defined over a finite field $\mathbb{F}_q$. Fix an effective divisor $D=\sum_{i=1}^N p_i x_i$ where for simplicity we assume that the $x_i$ are $\mathbb{F}_q$-rational points of $X$. By definition a \textit{quasi-parabolic vector bundle} $(\mathcal{V}, F^\bullet)$ \textit{on} $(X,D)$ is a vector bundle $\mathcal{V}$ on $X$ equipped with a collection of filtrations
$$F^{(i)}_1 \subseteq F^{(i)}_2 \subseteq \cdots \subseteq F^{(i)}_{p_i}=\mathcal{V}_{|x_i}$$
for $i=1, \ldots, N$. The sequence $(\text{dim}(F^{(i)}_1), \text{dim}(F^{(i)}_2), \ldots, \text{dim}(F^{(i)}_{p_i}))$ is called the \textit{dimension type} of $(\mathcal{V}, F^{\bullet})$ at $x_i$. 

Given $r >0, d \in \mathbb{Z}$ and fixed dimension types $\mathbf{d}^{(i)}=d^{(i)}_1 \leq \cdots \leq d^{(i)}_{p_i}=r$
for $i=1, \ldots, N$ we let $\A_{r,d, \mathbf{d}^{(1)}, \ldots, \mathbf{d}^{(N)}}(X)$ stand for the number of geometrically
indecomposable quasi-parabolic bundles on $(X,D)$ of rank $r$, degree $d$ and dimension type $\mathbf{d}^{(i)}$ at $x_i$ for all $i$. Again the finiteness of such number is a consequence of the existence of Harder-Narasimhan filtrations.

\vspace{.1in}

\begin{theo}\label{T:4} For any fixed genus $g$, any positive integer $N \geq 0$, any collection of positive integers
$\mathbf{p}=(p_1, \ldots, p_N)$ and any tuple $\boldsymbol{\alpha}=(r,d, \mathbf{d}^{(1)}, \ldots, \mathbf{d}^{(N)})$
satisfying 
$$(r,d)\in \mathbb{N} \times \mathbb{Z},$$ 
$$\mathbf{d}^{(i)}=(d^{(i)}_1 \leq \cdots \leq d^{(i)}_{p_i}=r)\quad \forall i$$
there exists a unique polynomial $A_{g,\mathbf{p},\boldsymbol{\alpha}} \in \mathbb{Q}[T_g]^{W_g}$
such that for any smooth projective curve $X$ of genus $g$ defined over a finite field, for any divisor $D=\sum_i p_i x_i$ with $x_i \in X(\mathbb{F}_q)$ we have
$$\A_{\boldsymbol{\alpha}}(X)=A_{g, \mathbf{p},\boldsymbol{\alpha}}(\sigma_X).$$
\end{theo}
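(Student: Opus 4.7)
The plan follows the blueprint of the proof of Theorem~\ref{T:1} (Sections~2--4), extended to accommodate flag data at the marked points $x_1,\ldots,x_N$. First I would establish the categorical framework: the category $\mathcal{C}^{par}(X,D)$ of quasi-parabolic coherent sheaves on $(X,D)$ of fixed dimension type is Krull-Schmidt, morphisms preserve the flags, and Harder-Narasimhan filtrations are defined via the slope of the underlying coherent sheaf (the flags on a subsheaf being induced by intersecting the $F^{(i)}_\bullet$ with the fiber of the subsheaf at $x_i$). Parabolic analogues of Propositions~\ref{P:1} and~\ref{P:2} then follow by verbatim arguments: one introduces groupoids $\mathbf{Nil}^{par,\geq 0}_{\boldsymbol{\alpha}}$ of triples $(\mathcal{V},F^{\bullet},\theta)$ with $\theta$ a flag-preserving nilpotent endomorphism, obtains a plethystic identity between the generating series of $\mathcal{A}_{\boldsymbol{\alpha}}^{\geq 0}$ and the volumes $\mathrm{vol}(\mathbf{Nil}^{par,\geq 0}_{\boldsymbol{\alpha}})$, and shows $\mathcal{A}_{\boldsymbol{\alpha}}^{\geq 0}=\mathcal{A}_{\boldsymbol{\alpha}}$ for $d$ sufficiently positive, using that if two HN factors of the underlying bundle have slopes differing by more than $2g-2$ then Serre duality splits the extension compatibly with the (unique) induced flags.

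Next, the Jordan stratification of Section~3 extends directly: the canonical $\theta$-stable subsheaves $\mathrm{Ker}(\theta^i)\subset\mathcal{V}$ inherit parabolic structures by intersecting the flags with their fibers at each $x_i$, so the arguments of Proposition~\ref{P:3} and Corollary~\ref{C:2} reduce the volume computation to that of a groupoid $\widetilde{\mathbf{Coh}}^{par,\geq 0}_{\underline{\boldsymbol{\alpha}}}$ of HN-filtered parabolic sheaves, up to an explicit power of $q$ depending on $\underline{\boldsymbol{\alpha}}$ and on the flag type but not on $X$.

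The main technical step is to upgrade Theorem~\ref{T:sv} to the parabolic setting. The key observation is that a quasi-parabolic structure at $x_i$ of type $\mathbf{d}^{(i)}$ is equivalent to a chain of elementary Hecke modifications at $x_i$ with prescribed skyscraper quotients. Consequently the characteristic function $1_{\boldsymbol{\alpha}}^{par,\geq 0}$ can be written as a convolution product in the usual Hall algebra $\mathcal{H}_X$ of the characteristic function $1^{\geq 0}$ of the underlying HN stratum with explicit Hecke operators at $x_1,\ldots,x_N$, each of which is a product of characteristic functions of torsion sheaves supported at the $x_i$. Since such Hecke operators belong to the spherical subalgebra $\mathcal{H}^{sph}$, Theorem~\ref{T:sv} applies to the full parabolic pairing $(1_{\boldsymbol{\alpha}_s}\cdots 1_{\boldsymbol{\alpha}_1}\,|\,1^{par,\geq 0}_{\boldsymbol{\alpha}})$, producing an element of $K_g\otimes_{R_g}\mathbb{C}$ with the correct specialisation at every $X$. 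The Zariski density statement Proposition~\ref{P:Zariski} then yields existence and uniqueness of $A_{g,\mathbf{p},\boldsymbol{\alpha}}\in K_g$.

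The principal obstacle is the final integrality claim $A_{g,\mathbf{p},\boldsymbol{\alpha}}\in\mathbb{Q}[T_g]^{W_g}$, which is strictly stronger than what the direct adaptation produces. I would attack it by refining Section~\textbf{5.8}: tensoring by a line bundle of degree one induces a bijection on underlying bundles but interacts nontrivially with the flags when the parabolic type is nondegenerate, so the periodicity argument that forces poles at nontrivial roots of unity in (\ref{E:581}) should either be absent or tractable, allowing explicit control of the residues of the parabolic analogue of $A_{g,r}(z)$. Establishing that these residues cancel, and thereby that the resulting element of $K_g$ is in fact polynomial, is where I expect the real work of the proof to lie.
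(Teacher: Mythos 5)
Your overall architecture --- parabolic Krull--Schmidt theory, parabolic analogues of Propositions~\ref{P:1} and~\ref{P:2}, the Jordan stratification, reduction to a Hall-algebra pairing, and Zariski density via Proposition~\ref{P:Zariski} --- is the right one and matches the strategy the paper indicates. But your main technical step has a genuine gap. You assert that the Hecke operators encoding the flags, i.e.\ convolution with characteristic functions of torsion sheaves supported at the \emph{prescribed} rational points $x_i$, belong to the spherical subalgebra $\mathcal{H}^{sph}$, so that Theorem~\ref{T:sv} applies. This is false: $\mathcal{H}^{sph}$ is by definition generated by $\mathbf{K}$, the $1^{vec}_{1,d}$ and the $1_{0,d}$, where $1_{0,d}$ is the characteristic function of the \emph{whole} stack $\textbf{Coh}_{0,d}$, i.e.\ the sum over all closed points of $X$ of the local contributions. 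The function $1^{(x_i)}_{0,d}$ cut out by a single chosen point cannot be extracted from these generators (the subalgebra generated by the $1_{0,d}$ is the unramified, Macdonald-type symmetric-function algebra and only sees the global torsion stack). Consequently the pairing you write down is not covered by the integral form ${}_R\mathcal{H}^{sph}$ of Theorem~\ref{T:sv}, and the existence of an element of $K_g$ specializing correctly on every $(X,D)$ does not follow from your argument. The missing ingredient --- and the route the paper actually takes --- is the generic form of the spherical Hall algebra of the category of $D$-parabolic coherent sheaves (equivalently, of the weighted projective curve attached to $(X,D)$), which has extra generators for the simple torsion sheaves at the marked points; constructing this algebra, its Hopf pairing and its specialisation maps is precisely the content the paper delegates to the reference [L2] on shuffle presentations of such Hall algebras. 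Your Hecke-modification picture is the correct way to compare the two categories, but it does not let you bypass this construction. Note also that one cannot simply work with functions on $\textbf{Coh}_{\nu}(X)$ weighted by flag counts: indecomposability and automorphism groups must be taken in the parabolic category, so the groupoid volumes genuinely live in the Hall algebra of that category.

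You are right to single out the claim $A_{g,\mathbf{p},\boldsymbol{\alpha}}\in\mathbb{Q}[T_g]^{W_g}$ (rather than merely $K_g$) as the delicate point; the paper offers no argument for it beyond asserting that the proof is parallel to Theorem~\ref{T:1}, so your honest flagging of this as unresolved is appropriate, but your sketch of a residue cancellation is not yet a proof.
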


\vspace{.1in}

When $g=0$, the above theorem settles Conjecture~9.2, ii) in \cite{SDuke}.

\vspace{.1in}

\paragraph{\textbf{7.2.}} The proof of Theorem~\ref{T:4} is completely parallel to that of Theorem~\ref{T:1}, using the spherical Hall algebra of the category of $D$-parabolic coherent sheaves over $X$ in place of the spherical Hall algebra of $X$. Shuffle presentations for such Hall algebras are studied in \cite{Jyun-Ao}.  There is also an effective version of Theorem~\ref{T:4}, whose proof is again similar to that in the non-parabolic case. It is natural to expect that the results and methods of Section~6 extend to the parabolic setting as well. This would then provide an answer to a question raised by Deligne in the context of the counting of the number of irreducible $l$-adic local systems on a curve defined over a finite field (see \cite{DF}). These extensions to the parabolic setting will be the subject of a companion paper.

\vspace{.2in}

\section{Refinements and  conjectures}

\vspace{.1in}

\paragraph{\textbf{8.1.}} Let $\nu \in \mathbb{Q}$. Denote by $\A^{\geq \nu}_{r,d}(X)$, resp. $\A^{\leq \nu}_{r,d}(X)$  the number of absolutely indecomposable vector bundles over $X$ of class $(r,d)$ lying in $\mathcal{C}_{\geq \nu}$, resp. $\mathcal{C}_{\leq \nu}$. From the proof of Theorem~1 it follows that

\vspace{.1in}

\begin{cor}\label{Cor:81} Fix $g \geq 0$ and $\nu \in \mathbb{Q}$. For any $(r,d) \in (\mathbb{Z}^2)^+$ there exists polynomials
$A_{g,r,d}^{\geq \nu}, A_{g,r,d}^{\leq \nu} \in \mathbb{Q}[T_g]^{W_g}$ such that for any smooth projective curve $X$ of genus $g$ defined over a finite field we have
$$\A^{\geq \nu}_{r,d}(X)=A^{\geq \nu}_{g,r,d}(\sigma_X), \qquad \A^{\leq \nu}_{r,d}(X)=A^{\leq \nu}_{g,r,d}(\sigma_X).$$
\end{cor}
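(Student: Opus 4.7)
\noindent\textbf{Proof proposal for Corollary~\ref{Cor:81}.}

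The plan is to adapt the proof of Theorem~\ref{T:1} verbatim, substituting $\mathcal{C}_{\geq \nu}$ for $\mathcal{C}_{\geq 0}$ throughout (and symmetrically for $\mathcal{C}_{\leq \nu}$). First I would introduce the groupoid $\mathbf{Nil}^{\geq \nu}_{\alpha}(X)$ of pairs $(\mathcal{F},\theta)$ with $\mathcal{F} \in \mathcal{C}_{\geq \nu}$ and $\theta \in \mathrm{End}^{nil}(\mathcal{F})$. The finiteness statements used in Section~2 are all local to the HN polygon: $\mathcal{C}_{\geq \nu}$ is stable under quotients and extensions (immediate from the definition), hence also stable under direct summands via uniqueness of the Harder--Narasimhan filtration, so the set $D(\alpha)$ is replaced by the (still finite) set of HN polygons terminating at slope $\geq \nu$. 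The proof of Proposition~\ref{P:1} then goes through word for word, yielding
$$ \sum_{\alpha} \mathrm{vol}(\mathbf{Nil}^{\geq \nu}_{\alpha}(X))\, z^{\alpha} = \mathrm{Exp}\left(\sum_{\alpha} \frac{\mathcal{A}^{\geq \nu}_{\alpha}(X)}{q-1}z^{\alpha}\right). $$

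Next I would apply the Jordan stratification of Section~3, which only refers to the ambient category $\mathrm{Coh}(X)$: Proposition~\ref{P:3}(iv) is replaced by the analogous assertion that $(\pi'_{\underline{\alpha}})^{-1}(\widetilde{\mathbf{Coh}}^{\geq \nu}_{\underline{\alpha}}) = \mathbf{Nil}^{\geq \nu}_{\underline{\alpha}}$, with the same short proof (stability of $\mathcal{C}_{\geq \nu}$ under quotients and extensions). The key step is then the analog of Corollary~\ref{C:cor23}: for every tuple $\underline{\alpha}$ one exhibits $B^{\geq \nu}_{g,\underline{\alpha}} \in K_g$ with $\mathrm{vol}(\widetilde{\mathbf{Coh}}^{\geq \nu}_{\underline{\alpha}}(X)) = B^{\geq \nu}_{g,\underline{\alpha}}(\sigma_X)$. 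This uses Theorem~\ref{T:sv} together with the observation that
$$ 1^{\geq \nu}_{\sum \alpha_i} = \sum_{\underline{\beta}} 1_{\mathcal{C}_{\underline{\beta}}}, $$
where $\underline{\beta}=(\beta_1,\dots,\beta_t)$ ranges over the (finite) set of HN types with $\sum \beta_i = \sum \alpha_i$ and $\mu(\beta_t) \geq \nu$; one then repeats the truncation argument given in the proof of Corollary~\ref{C:cor23} (the slope-$\nu$ truncation condition is used in place of the slope-$0$ one, without any change in the combinatorics).

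Combining the above with Proposition~\ref{P:1} (adapted to slope $\nu$) and taking plethystic logarithms exactly as in (\ref{E:thm1}), one obtains, by induction on $\gcd(r,d)$, an element $A^{\geq \nu}_{g,r,d} \in K_g$ such that $\mathcal{A}^{\geq \nu}_{r,d}(X)=A^{\geq \nu}_{g,r,d}(\sigma_X)$ for every $X$ of genus $g$. Uniqueness follows from Proposition~\ref{P:Zariski}, since the Zariski density of $\{\sigma_X\}$ in $T_g/W_g$ is a property of the curves themselves and is independent of $\nu$. The case of $\mathcal{C}_{\leq \nu}$ can be obtained by the symmetric argument using the ``reversed'' Jordan stratification, or more elegantly by noting that the functor $\mathcal{F} \mapsto \mathcal{F}^{\vee}\otimes \Omega_X$ is an anti-equivalence sending $\mathcal{C}_{\leq \nu}$ of class $(r,d)$ bijectively onto $\mathcal{C}_{\geq -\nu + 2g-2}$ of class $(r,(2g-2)r - d)$ and preserving indecomposability, so that $A^{\leq \nu}_{g,r,d}$ is obtained from $A^{\geq -\nu+2g-2}_{g,r,(2g-2)r-d}$ after reindexing the Weil numbers (via $\alpha_i \mapsto q\alpha_i^{-1}$).

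The only point requiring care is that the statement asserts polynomials in $\mathbb{Q}[T_g]^{W_g}$ rather than elements of the localization $K_g$; the proof as sketched gives $K_g$-valued polynomials in parallel to Theorem~\ref{T:1}, and the regularity at the multiplicative set $\{q^l-1\}$ must either be read as conjectural (in line with Conjecture~\ref{C:conj1}) or obtained by a separate coprimality-type argument as in Theorem~\ref{T:3}. This is the only real obstacle: every other step is a purely mechanical transport of the $\nu = 0$ arguments along the slope truncation.
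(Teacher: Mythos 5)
Your proposal is correct and follows exactly the route the paper intends: the paper gives no separate argument for Corollary~\ref{Cor:81} beyond the remark that it ``follows from the proof of Theorem~1,'' and your systematic substitution of $\mathcal{C}_{\geq \nu}$ for $\mathcal{C}_{\geq 0}$ (checking stability under quotients and extensions, finiteness of the relevant HN types, and the truncation argument in the analog of Corollary~\ref{C:cor23}) is precisely that proof spelled out, with the $\leq \nu$ case handled by duality. Your closing caveat is also well taken: since Theorem~\ref{T:1} only produces elements of the localization $K_g$, the corollary as printed (with $\mathbb{Q}[T_g]^{W_g}$) is stated slightly too strongly, and your reading of it as $K_g$-valued is the one actually supported by the argument.
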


\vspace{.1in}

\noindent
\textbf{Remarks.}

\vspace{.03in}

\noindent
$\mathbf{i)}$ When $\mu((r,d))=\nu$ we have $Coh_{r,d}(X) \cap \mathcal{C}_{\geq \nu}=Coh_{r,d}(X) \cap \mathcal{C}_{\nu}$. The above result thus implies that there exists polynomials counting the number of geometrically indecomposable semistable sheaves of any given slope $\nu$. 

\vspace{.03in}

\noindent
$\mathbf{ii)}$ We have $\mathcal{C}_{\geq 0} \supset \mathcal{C}_{\geq 1} \supset \cdots$. Thus for a given $(r,d)$ with $r>0, d \geq 0$ there is (for each curve $X$) a decreasing sequence of positive integers 
$$\mathcal{A}_{r,d}^{\geq 0}(X) \geq \mathcal{A}_{r,d}^{\geq 1}(X) \geq \cdots \geq \mathcal{A}_{r,d}^{\geq \frac{d}{r}}(X).$$
Of course, taking $d \gg 0$ we have $\mathcal{A}_{r,d}^{\geq 0}(X) =\mathcal{A}_{r,d}(X)$. It seems likely that
$$A_{g,r,d}^{\geq i}-A_{g,r,d}^{\geq i+1} \in \mathbb{N}[-z_i]_i^{W_g}, \qquad \text{for}\; i \geq 0.$$
When $(r,d)$ are coprime it would be interesting to interpret the ensuing termwise decreasing sequence  
$$A_{g,r,d}(t, \ldots, t) \geq \cdots \geq A_{g,r,d}^{\geq \frac{d}{r}}(t, \ldots, t)$$
of single variable polynomials as corresponding to some natural filtration in the cohomology of moduli spaces of Higgs bundles over complex curves (or character varieties).

\vspace{.03in}

\noindent
$\mathbf{iii)}$ The above remarks i), ii) can be made also in the case of vector bundles equipped with quasi-parabolic structures (for any choice of slope function). 

\vspace{.2in}

\paragraph{\textbf{8.2.}} From Corollary~\ref{Cor:81}, it seems natural to make the following conjecture. For $(\alpha_1, \ldots, \alpha_t)$ a Harder-Narasimhan type, let us denote by $\A_{\alpha_1, \ldots, \alpha_t}(X)$ the number of
absolutely indecomposable vector bundles over $X$ which belong to $\mathcal{C}_{\alpha_1, \ldots, \alpha_t}$. 
 
\begin{conj} Fix $g \geq 0$. For any Harder-Narasimhan type $(\alpha_1, \ldots, \alpha_t)$ there exists a polynomial $A_{g,\alpha_1, \ldots, \alpha_t} \in \mathbb{Q}[T_g]^{W_g}$ such that for any smooth projective curve $X$ of genus $g$ defined over a finite field we have
$$\A_{\alpha_1, \ldots, \alpha_t}(X)=A_{g,\alpha_1, \ldots, \alpha_t}(\sigma_X).$$
\end{conj}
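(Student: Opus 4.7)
The plan is to follow closely the strategy used for Theorem~\ref{T:1}. First I would fix the decreasing sequence of slopes $\nu_1 > \cdots > \nu_t$ determined by the given HN type, and consider the full subcategory $\mathcal{C}_{\vec\nu} \subset Coh(X)$ of sheaves whose HN slopes all lie in $\{\nu_1, \ldots, \nu_t\}$. A short verification (based on the fact that the multiset of HN slopes of $\mathcal{G}\oplus\mathcal{H}$ is the union of those of $\mathcal{G}$ and of $\mathcal{H}$) shows that $\mathcal{C}_{\vec\nu}$ is closed under both direct sums and direct summands, hence inherits the Krull--Schmidt property from $Coh(X)$ (Lemma~\ref{L:1}). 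It is moreover graded by multi-classes $\underline{\a}=(\a_1, \ldots, \a_t)$, where $\a_i \in (\Z^2)_{\nu_i}$ is the class of the $\nu_i$-semistable piece of the HN filtration (possibly $0$). To this setting I would associate the groupoid $\mathbf{Nil}_{\underline{\a}}^{\vec\nu}$ of pairs $(\mathcal{F}, \theta)$ with $\mathcal{F} \in \mathcal{C}_{\vec\nu}$ of multi-class $\underline{\a}$ and $\theta$ nilpotent, which has finite volume because each $\mathcal{C}_{\underline{\a}}$ consists of finitely many isomorphism classes.

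A verbatim repetition of the proof of Proposition~\ref{P:1} in this multi-graded setting (using Krull--Schmidt, Corollary~\ref{C:1} and Heine's formula applied simultaneously in variables $z_1, \ldots, z_t$) will then yield
\[
\sum_{\underline{\a}} vol\bigl(\mathbf{Nil}_{\underline{\a}}^{\vec\nu}(X)\bigr)\, z^{\underline{\a}} \;=\; \Exp\!\left(\sum_{\underline{\a}} \frac{\A_{\underline{\a}}^{\vec\nu}(X)}{q-1}\, z^{\underline{\a}}\right),
\]
where $\A_{\underline{\a}}^{\vec\nu}(X)$ counts geometrically indecomposable objects of $\mathcal{C}_{\vec\nu}$ of multi-class $\underline{\a}$, and coincides with $\A_{\alpha_1, \ldots, \alpha_t}(X)$ as soon as all entries of $\underline{\a}$ are nonzero. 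Next I would stratify each $\mathbf{Nil}_{\underline{\a}}^{\vec\nu}$ by Jordan types and apply the analogue of Proposition~\ref{P:3} (together with Corollary~\ref{C:2}) to reduce the computation of volumes to Hall-algebra pairings of the form $(1_{\beta_s}\cdots 1_{\beta_1} \mid 1_{\mathcal{C}_{\underline{\a}}})$. The decisive input is then Theorem~\ref{T:sv}: the characteristic function $1_{\mathcal{C}_{\underline{\a}}}$ of any HN stratum lifts to an element of the integral spherical Hall algebra ${}_R\mathcal{H}^{sph}$, whose Hopf pairing is $K_g$-valued. Each such pairing is therefore the specialization at $\sigma_X$ of a fixed element of $K_g$, and summing over Jordan types gives $vol(\mathbf{Nil}_{\underline{\a}}^{\vec\nu}(X)) = V_{g,\underline{\a}}(\sigma_X)$ for some $V_{g,\underline{\a}} \in K_g$. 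Applying $\text{Log}$ to the identity above and extracting coefficients by induction (exactly as in Section~\textbf{4.5}) will then produce elements $A_{g, \underline{\a}} \in K_g$ with $\A_{\underline{\a}}(X) = A_{g, \underline{\a}}(\sigma_X)$, uniqueness following from Proposition~\ref{P:Zariski}.

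The hard part will be the upgrade from $A_{g, \underline{\a}} \in K_g$ to $A_{g, \underline{\a}} \in \Q[T_g]^{W_g}$ actually asserted in the conjecture. Already the case of a single HN piece, $\underline{\a}=(\a)$, is the content of Conjecture~\ref{C:conj1}, which at present is only verified for prime rank (Appendix~C). Mimicking the strategy of Section~\textbf{6}, one would hope to realize $\A_{\underline{\a}}(X)$ --- up to an explicit power of $q$ --- as the number of $\mathbb{F}_q$-points of a suitable smooth moduli space (for instance a Higgs-bundle moduli space with prescribed Harder--Narasimhan polygon) whose mixed Hodge structure is of pure Tate type. Constructing such a space and controlling its deformations in the spirit of Crawley-Boevey--Van den Bergh and Theorem~\ref{T:3} is where the bulk of the difficulty will reside.
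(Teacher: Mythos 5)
The statement you are addressing is left as an open conjecture in the paper (Section~\textbf{8.2}); the paper offers no proof of it, so your proposal has to stand entirely on its own. Your first step is sound: $\mathcal{C}_{\vec\nu}$ is closed under direct sums and direct summands, the multi-class grading is additive, and the Krull--Schmidt/Heine argument of Proposition~\ref{P:1} does carry over verbatim to give the multigraded $\Exp$ identity relating $vol(\mathbf{Nil}^{\vec\nu}_{\underline{\alpha}})$ to the counts $\A^{\vec\nu}_{\underline{\alpha}}$. You are also right that passing from $K_g$ to $\mathbb{Q}[T_g]^{W_g}$ is a separate unsolved problem (it already contains Conjecture~\ref{C:conj1} as the case $t=1$).

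The genuine gap is in the middle step, and it occurs before you ever reach the Hall algebra: the ``analogue of Proposition~\ref{P:3}'' you invoke does not exist. Part iv) of that proposition (and hence Corollary~\ref{C:2}) rests on the fact that $\mathcal{C}_{\geq 0}$ is closed under quotients and extensions, so that the condition $\mathcal{F}\in\mathcal{C}_{\geq 0}$ is equivalent to $\mathcal{F}/\text{Im}(\theta)\in\mathcal{C}_{\geq 0}$ and therefore descends along the affine fibration $\pi'_{\underline{\beta}}$. The condition ``$\mathcal{F}$ has HN type exactly $(\alpha_1,\ldots,\alpha_t)$'' is closed under neither quotients nor extensions: over a fixed point $(\mathcal{H},\mathcal{H}_{\bullet})$ of $\widetilde{\textbf{Coh}}_{\underline{\beta}}$ the fiber of $\pi'_{\underline{\beta}}$ is an affine space of extension data, and the HN type of the resulting $\mathcal{F}$ varies over that fiber (already $\Ext^1(\mathcal{O}(1),\mathcal{O}(-1))\neq 0$ produces both the split, unstable middle term and semistable ones). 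So the locus $\{\mathcal{F}\in\mathcal{C}_{\alpha_1,\ldots,\alpha_t}\}$ inside $\mathbf{Nil}_{\underline{\beta}}$ is not a union of fibers of $\pi'_{\underline{\beta}}$, its fiberwise cardinality is not constant, and its volume is not $q^{d'(\underline{\beta})}$ times the volume of any stratum of $\widetilde{\textbf{Coh}}_{\underline{\beta}}$, nor any pairing of the form $(1_{\beta_s}\cdots 1_{\beta_1}\mid 1_{\mathcal{C}_{\underline{\alpha}}})$. What you would actually need is a count of iterated extensions with prescribed subquotients \emph{and} prescribed HN type of the total sheaf --- genuine Hall numbers of the full, non-spherical Hall algebra --- about which Theorem~\ref{T:sv} says nothing. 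This is exactly why the paper can prove the $\mathcal{C}_{\geq\nu}$ and $\mathcal{C}_{\leq\nu}$ refinements (Corollary~\ref{Cor:81}), where the relevant subcategories are stable under the operations used in Proposition~\ref{P:3}, but must leave the fixed-HN-type count as a conjecture.
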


\vspace{.1in}

Again, one may formulate an entirely similar conjecture in the case of vector bundles equipped with quasi-parabolic structures (for any choice of slope function). One may also formulate exactly the same conjecture in the context of representations of quivers.

\vspace{.2in}

\paragraph{\textbf{8.3.}} In the context of quivers Kac conjectured (see \cite{Kac2}), and Hausel proved in general, that the constant term
$A_{\mathbf{d}}(0)$ of the Kac polynomial attached to a quiver $Q$ and a dimension vector $\mathbf{d}$ is equal to the multiplicity of the root $\sum_i d_i \alpha_i$ in the Kac-Moody Lie algebra $\mathfrak{g}_Q$ canonically associated to $Q$ (see \cite{HauselKac} for details). 

\vspace{.1in}

In the context of a smooth projective curve one is therefore led to seek an analog of the Kac-Moody Lie algebra $\mathfrak{g}_Q$. Motivated by Ringel's theorem relating Hall algebras and quantum groups, we suggest the following construction. Let $X$ be a smooth projective curve of genus $g$ defined over an algebraically closed field, and let $\mathcal{H}^{\chi}_{\nu}$ be the space of all $\mathbb{C}$-valued constructible functions on the moduli stack $\textbf{Coh}_{\nu}$. The space $\mathcal{H}^{\chi}:= \bigoplus_{\nu} \mathcal{H}^{\chi}_{\nu}$ has the structure of a cocommutative Hopf algebra (see e.g. \cite[Sec. 10.20]{Lusztig} or \cite{BTL} ) and is sometimes called the $\chi$-Hall algebra of $X$. Let $\mathcal{H}^{\chi, sph}$ stand for the sub Hopf algebra generated by
the constant functions on $\textbf{Coh}_{0,d}$ and $\textbf{Bun}_{1,l}$ for $d \geq 0$ and $l \in \mathbb{Z}$. This Hopf algebra may be thought of as an $\alpha_i =1$ limit of the spherical Hall algebra $\mathcal{H}^{sph}$ of a curve of genus $g$ defined over a finite field. We define the spherical Hall Lie algebra $\mathfrak{h}^{sph}_X$ of $X$ as the Lie algebra of primitive elements in $\mathcal{H}^{\chi,sph}$.  We conjecture that this Lie algebra is independent of the choice of $X$ and has finite dimensional $\mathbb{Z}^2$ graded components. The analog of Kac's conjecture may now be formulated as follows~:

\vspace{.1in}

\begin{conj}\label{Conj: Kac} For any $(r,d) \in (\mathbb{Z}^2)^+$ we have $A_{g,r,d}(0)=dim\; \mathfrak{h}^{sph}_{(r,d)}$.
\end{conj}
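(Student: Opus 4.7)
The plan is to adapt the strategy of Hausel's proof (\cite{HauselKac}) of Kac's positivity conjecture for quivers to the global setting of coherent sheaves on a smooth projective curve. The first step is a Milnor--Moore type decomposition for the graded cocommutative Hopf algebra $\mathcal{H}^{\chi,sph}$, yielding $\mathcal{H}^{\chi,sph} \simeq U(\mathfrak{h}^{sph}_X)$ and the PBW identity
$$\sum_{\a} \dim \mathcal{H}^{\chi,sph}_{\a} \cdot z^{\a} = \Exp\left( \sum_{\a} \dim \mathfrak{h}^{sph}_{X,\a} \cdot z^{\a}\right),$$
where $\Exp$ here denotes the classical ($q=1$) plethystic exponential. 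This step presupposes finite-dimensionality of the pieces $\mathfrak{h}^{sph}_{X,\a}$, which must be argued independently -- most naturally by exhibiting a spanning set of primitives indexed (after Krull--Schmidt, cf. Section~\textbf{2.1}) by absolutely indecomposable coherent sheaves on $X$.

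Second, I would develop a $q=1$ analog of Proposition~\ref{P:1}: Heine's formula used there degenerates as $q \to 1$ into an ordinary exponential, and the counting ratios $|\End^{nil}(\mathcal{F})|/|\text{Aut}(\mathcal{F})|$ get replaced by constructible Euler characteristics. This should lead to an identity
$$\sum_{\a} \dim \mathcal{H}^{\chi,sph, \geq 0}_{\a} \cdot z^{\a} = \Exp\left( \sum_{\a} m_{\a} \cdot z^{\a}\right),$$
with $m_{\a}$ an Euler-characteristic count of the locus of absolutely indecomposable sheaves in $\mathcal{C}_{\geq 0}$. Combined with the PBW identity this yields $\dim \mathfrak{h}^{sph}_{X,\a} = m_{\a}$, and it remains to match $m_{\a}$ with $A_{g,\a}(0)$. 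The generating series of Corollary~\ref{Cor:26} is already of precisely the right Log shape, with the factors $K_{\lambda}(z)$ appearing naturally as the $q\to 1$ degeneration of the $q$-series $\prod_i \prod_{j=1}^{r_i}(1-q^{-j})^{-1}$ that governs $vol(\mathbf{Nil}^{\geq 0}_{\underline{\a}})$ in (\ref{E:thm1}), while the prefactor $z^{(g-1)\langle \lambda, \lambda \rangle - l(\lambda)}$ should track the dimensions of the corresponding Jordan strata.

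The main obstacle, and the step that is genuinely open for $g \geq 2$, is providing a geometric model for $\mathfrak{h}^{sph}_X$. For quivers, Hausel identifies primitives with cohomology classes of Nakajima varieties (via the BPS / Davison--Meinhardt theory), so that finite-dimensionality of weight spaces is manifest. The natural analog here would realize $\mathfrak{h}^{sph}_X$ inside $\bigoplus_{(r,d)} H^{1+(g-1)r^2}(\Lambda^{st}_{r,d}, \mathbb{C})$, whose graded dimensions are precisely the numbers $A_{g,r,d}(0)$ by Corollary~\ref{Cor:25}(ii). Constructing such an action (in the spirit of Nakajima's Heisenberg action on Hilbert schemes, or of the more recent cohomological Hall algebras of curves) is the crux of the proposed proof; in its absence, even the finite-dimensionality of $\mathfrak{h}^{sph}_{X,\a}$ appears delicate.
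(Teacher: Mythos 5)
This statement is one of the paper's open conjectures (Section~8.3): the paper gives no proof of it, only the remark that it can be checked for $g=0,1$ using the explicit descriptions of the spherical Hall algebras in those genera. So there is no argument of the author's to compare yours against, and what you have written is -- as you concede in your last paragraph -- a program whose decisive step is missing rather than a proof. Concretely: the finite-dimensionality of the weight spaces $\mathfrak{h}^{sph}_{X,(r,d)}$ is itself only conjectured in the paper, and your proposed remedy (a spanning set of primitives indexed by absolutely indecomposable sheaves) is asserted, not constructed; there is no a priori reason that indecomposables yield primitive elements of $\mathcal{H}^{\chi,sph}$, nor that such elements span the primitives. Without this, the Milnor--Moore/PBW identity of your first step cannot even be written down as an identity of power series with finite coefficients: for $r\geq 2$ there are infinitely many monomials in the generators of any fixed class $(r,d)$, which is exactly why the paper passes to the truncation $\mathcal{H}^{sph,\geq 0}$ in the companion Conjecture~\ref{Conj:DT}. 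But $\mathcal{C}_{\geq 0}$ is not closed under subobjects, so that truncation is an algebra and not a coalgebra, and Milnor--Moore does not apply to it directly; reconciling the primitives of the full algebra with the truncated dimension count is an additional unaddressed step, as is the independence of $\mathfrak{h}^{sph}_X$ from $X$ (again only conjectured).

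Even granting the crux, one identification in your sketch does not typecheck. You describe $K_{\lambda}(z)$ in Corollary~\ref{Cor:26} as ``the $q\to 1$ degeneration of $\prod_i\prod_{j=1}^{r_i}(1-q^{-j})^{-1}$,'' but in that corollary the variable of $K_{\lambda}$ is $z$, the degree-counting variable, and the formula is obtained in Section~6.11 by setting all Weil numbers $\a_i=0$, which forces $q=\a_{2i-1}\a_{2i}=0$. The constant term $A_{g,r,d}(0)$ is therefore governed by a $q\to 0$ specialization, not a $q\to 1$ one, so an Euler-characteristic ($q=1$) degeneration of Proposition~\ref{P:1} would not land on the generating series of Corollary~\ref{Cor:26} without a genuinely new comparison -- this missing comparison is essentially the (also open) equality $A_{g,r,d}(0)=A_{\Sigma_g,r}(1)$ mentioned in the remarks after Corollary~\ref{Cor:26}. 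In short, the proposal is a reasonable reading of how Hausel's strategy might globalize, but every one of its three stages rests on an unproved input.
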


\vspace{.1in}

One may also formulate a version of Kac's conjecture diretcly in terms of the spherical Hall algebra $\mathcal{H}^{sph}$ of the curve $X$ itself (and its integral form ${}_{R}\mathcal{H}^{sph}$), which are natural analogs in the contect of curves of the quantum envelopping algebra $U_q(\mathfrak{n}_Q)$. These algebras are $(\mathbb{Z}^2)^+$-graded but with graded components of infinite dimension in general. In order to circumvent this difficulty, let us denote by $\mathcal{H}^{sph, \geq 0}_{\nu}$ the subspace of $\mathcal{H}^{sph}_\nu$ consisting of those functions on $\textbf{Coh}_{\nu}$ which are supported on the sub-orbifold $\textbf{Coh}_{\nu}^{\geq 0}$. It is easy to check that $\mathcal{H}^{sph,\geq 0}= \bigoplus_{\nu} \mathcal{H}^{sph,\geq 0}_{\nu}$ is a $(\mathbb{N}^2)$-graded algebra with finite dimensional graded components.

\vspace{.1in}

\begin{conj}\label{Conj:DT} The following equality holds in the ring of power series $\mathbb{N}[[z^{(0,1)}, z^{(1,0)}]]$~:
$$\sum_{\nu} dim( \mathcal{H}^{sph, \geq 0}_{\nu}) z^{\nu}=\text{Exp}\left( \sum_{\nu} A^{\geq 0}_{g,r,d}(0)z^{\nu}\right).$$
\end{conj}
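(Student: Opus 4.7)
The plan is to interpret Conjecture~\ref{Conj:DT} as a PBW identity for a suitable cocommutative Hopf algebra, and then match graded dimensions via the integral form ${}_R\mathcal{H}^{sph}$. First, since the $\chi$-Hall algebra $\mathcal{H}^{\chi,sph}$ is cocommutative, Milnor--Moore gives $\mathcal{H}^{\chi,sph} \simeq U(\mathfrak{h}^{sph}_X)$ where $\mathfrak{h}^{sph}_X$ is the Lie algebra of primitive elements. The subspace supported on $\mathcal{C}_{\geq 0}$ is obviously a subalgebra since $\mathcal{C}_{\geq 0}$ is closed under extensions, but it is not a subcoalgebra because the kernel of a quotient need not lie in $\mathcal{C}_{\geq 0}$. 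I would remedy this by filtering $\mathcal{H}^{\chi,sph}$ by Harder--Narasimhan polygons and passing to the associated graded, on which the $\geq 0$ truncation becomes a genuine Hopf subalgebra; applying Milnor--Moore once more yields $U(\mathfrak{h}^{sph,\geq 0}_X)$ for a graded Lie subalgebra $\mathfrak{h}^{sph,\geq 0}_X$. The desired identity then becomes the standard PBW formula
\begin{equation*}
\sum_\nu \dim \mathcal{H}^{sph,\geq 0}_\nu\, z^\nu = \Exp\Big(\sum_\nu \dim \mathfrak{h}^{sph,\geq 0}_{X,\nu}\, z^\nu\Big).
\end{equation*}

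Next, one must identify $\dim \mathfrak{h}^{sph,\geq 0}_{X,\nu}$ with $A^{\geq 0}_{g,r,d}(0)$. The key observation is that Proposition~\ref{P:1} already has exactly the plethystic shape of the conjecture:
\begin{equation*}
\sum_\a vol(\mathbf{Nil}^{\geq 0}_\a(X))\, z^\a = \Exp\Big(\sum_\a \tfrac{\A^{\geq 0}_\a(X)}{q-1}\, z^\a\Big).
\end{equation*}
The strategy is to relate $\dim \mathcal{H}^{sph,\geq 0}_\nu$ to $vol(\mathbf{Nil}^{\geq 0}_\nu(X))$ through the integral form ${}_R\mathcal{H}^{sph}$ of Theorem~\ref{T:sv} and the Jordan stratification of Corollary~\ref{C:2}, while simultaneously relating $\A^{\geq 0}_\a(X)/(q-1)$ to $A^{\geq 0}_{g,r,d}(0)$ after a careful specialization. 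The basis of HN-characteristic functions $1_{\mathcal{C}_{\underline\beta}}$ exhibited in the proof of Corollary~\ref{C:cor23}, together with the shuffle/Eisenstein presentation of Section~\textbf{4.3}, should provide a $K_g$-rational expression for $\dim \mathcal{H}^{sph,\geq 0}_\nu$ which one can then specialize.

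The hardest part will be controlling the specialization $\a_i \to 0$, under which $q = \a_{2i-1}\a_{2i}$ formally vanishes and thus lies outside the localization $K_g$. One must show that the relevant generating series extend regularly to this locus and that their constant terms match the formula of Corollary~\ref{Cor:26}:
\begin{equation*}
\sum_r A^0_{g,r}(z)T^r = -\text{Log}\Big(\sum_\lambda z^{(g-1)\langle \lambda, \lambda\rangle - l(\lambda)} K_\lambda(z) T^{|\lambda|}\Big).
\end{equation*}
By analogy with Hausel's proof of Kac's conjecture for quivers, the positivity and integrality of the right-hand side suggest that $\mathfrak{h}^{sph,\geq 0}_{X,\nu}$ should ultimately be realized via the compactly supported cohomology of a natural lagrangian (e.g.\,$\Lambda^{st}_{r,d}$ of Corollary~\ref{Cor:25}, or a nilpotent global analog acting on Hitchin fibers), and turning this cohomological description into the required dimension identity is where the bulk of the argument will lie.
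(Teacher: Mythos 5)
The statement you have been asked to prove is Conjecture~\ref{Conj:DT}; the paper offers no proof of it. It is stated as an open conjecture, the only supporting evidence being the closing remark that it "may be directly checked for $g=0,1$" using the known structure of the spherical Hall algebra in those genera. So there is no argument in the paper to compare yours against, and your text is not a proof either: it is a research programme in which every step that would carry actual mathematical content is deferred ("one must identify\ldots", "should provide\ldots", "the bulk of the argument will lie" there).

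Concretely, the gaps are these. First, Conjecture~\ref{Conj:DT} concerns $\mathcal{H}^{sph,\geq 0}\subset\mathcal{H}^{sph}$, the finite-field spherical Hall algebra --- a $q$-deformed object analogous to $U_q(\mathfrak{n}_Q)$ --- whereas Milnor--Moore and the Lie algebra of primitives $\mathfrak{h}^{sph}$ live in the degenerate $\chi$-Hall algebra $\mathcal{H}^{\chi,sph}$ of constructible functions over an algebraically closed field; that is the setting of Conjecture~\ref{Conj: Kac}, which is a different statement. A PBW-type theorem transporting graded dimensions between the deformed and undeformed objects is exactly the kind of structural result that is not available here, so your first reduction already assumes something of the same depth as the conjecture. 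Second, your plan to pass through $vol(\mathbf{Nil}^{\geq 0}_{\nu}(X))$ via Proposition~\ref{P:1} does explain where the plethystic exponential shape comes from on the \emph{volume} side, but no mechanism is offered for equating $\dim \mathcal{H}^{sph,\geq 0}_{\nu}$ (the dimension of the span of convolution products of the generators $1^{vec}_{1,d}$, $1_{0,d}$ restricted to $\textbf{Coh}^{\geq 0}_{\nu}$) with any specialization of that volume; these are a priori unrelated quantities, and identifying them is essentially equivalent to the conjecture itself. Third, the specialization $\a_i\to 0$ forces $q\to 0$, which lies outside the localization $K_g$, as you note; Corollary~\ref{Cor:26} shows that the constant terms $A_{g,r,d}(0)$ are well defined and computable by a generating series, but nothing in your argument connects a $q\to 0$ degeneration of the Hall algebra to those constant terms. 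None of these steps is routine, and the statement remains open in the paper.
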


These conjectures may be directly checked for $g=0,1$ using the results in \cite{Kapranov}, \cite{BS} respectively.

\vspace{.1in}

\noindent
\textit{Remark.} By the main result in \cite{SVMathAnn}, the spherical Hall algebra $\mathcal{H}^{sph}$ of $X$ is isomorphic
to the spherical part of the K-theoretic Hall algebra 
$$\mathbf{K}_g=\bigoplus_{r \geq 0} K^{GL_r \times T_g}(\mathcal{C}_{g,r})$$ of the commuting variety $\mathcal{C}_g=\bigsqcup_r \mathcal{C}_{g,r}$ where
$$\mathcal{C}_{g,r}=\left\{ (x_1,y_1 \ldots, x_g,y_g) \in \mathfrak{gl}_r(\mathbb{C})^{2g}\;|\; \sum_i [x_i, y_i]=0\right\}.$$

We don't know how to describe geometrically the subalgebra $\mathbf{K}^{\geq 0}_g$ of $\mathbf{K}_g$ corresponding to $\mathcal{H}^{sph,\geq 0}$. However, one may expect the existence of a degeneration from $\mathbf{K}^{\geq 0}_g$ to the \textit{cohomological} Hall algebra 
$$\mathbf{C}_g=\bigoplus_{r \geq 0} H^{\bullet}_{GL_r \times T_g}(\mathcal{C}_{g,r})$$
(see \cite{SVIHES} where such a degeneration is performed (algebraically) in the case of $g=1$). In particular,
one may formulate an analog of Conjecture~\ref{Conj:DT} in which $\mathcal{H}^{sph,\geq 0}$ is replaced by the
spherical part of $\mathbf{C}_g$ (this suggests a relation between $A_{g,r,d}(0)$ and the Donaldson-Thomas invariants of the $2g$-loop quiver, with preprojective relations).

\vspace{.2in}

\centerline{\textbf{Acknowledgements}}

\vspace{.05in}

I am grateful to M. Brion, P.-H. Chaudouard, G. Chenevier, T. Hausel, F. Orgogozo, M. Reineke, V. Toledano-Laredo and E. Vasserot for helpful discussions and correspondence. Special thanks are due to G. Laumon and E. Letellier for many fruitful exchanges and for sharing their insight with me during the preparation of this work.

\vspace{.2in}

\appendix

\section{Volume of moduli stacks of torsion sheaves}

\vspace{.1in}

\paragraph{}\textit{Proof of Theorem~\ref{T:volform}, ii)}. Let $\textbf{Coh}^{(x)}_{0,d}$ be the sub-orbifold of $\textbf{Coh}_{0,d}$ parametrizing torsion sheaves of degree $d$ supported at a closed point $x$ of $X$, and let $1_{0,d}^{(x)}$ stand for the characteristic function of $\textbf{Coh}^{(x)}_{0,d}$. Observe that $1_{0,d}^{(x)}=0$ unless $deg(x)\, |\, d$ and
$$vol(\textbf{Coh}^{(x)}_{0,ndeg(x)})=(1^{(x)}_{0,ndeg(x)} \;|\; 1^{(x)}_{0,ndeg(x)})=\frac{|\mathcal{N}_{n}(k_x)|}{|GL_{n}(k_x)|}=\frac{q^{n(n-1)deg(x)}}{(q^{ndeg(x)}-1) \cdots (q^{ndeg(x)}-q^{(n-1)deg(x)})}$$
where $k_x \simeq \mathbb{F}_{q^{deg(x)}}$ is the residue field at $x$. Moreover,
$$\sum_{d \geq 0} 1_{0,d}s^d=\prod_{x \in X} \left( \sum_{n \geq 0} 1^{(x)}_{(0,ndeg(x))}s^{ndeg(x)}\right)$$
and
$$\sum_{l \geq 0} (1_{0,l}\;|\; 1_{0,l}) s^l=\prod_{x \in X} \sum_{n \geq 0}(1^{(x)}_{0,ndeg(x)}\;|\; 1^{(x)}_{0,ndeg(x)})s^{ndeg(x)}.$$
Using Heine's formula, we obtain
$$\sum_{n \geq 0}(1^{(x)}_{0,ndeg(x)}\;|\; 1^{(x)}_{0,ndeg(x)})s^{ndeg(x)}=\text{exp} \left( \sum_{l \geq 1} \frac{(sq^{-1})^{deg(x)}}{l(1-q^{-ldeg(x)})}\right).$$
Using the relation
$$\sum_{d\,|\, l} \sum_{\substack{x\\ deg(x)=d}} d=|X(\mathbb{F}_{q^l})|,$$
we finally obtain
$$\sum_{d \geq 0} (1_{0,d}\;|\; 1_{0,d}) s^d=\text{exp}\left( \sum_{l \geq 1} \frac{|X(\mathbb{F}_{q^l})|}{l(q^l-1)}s^l\right)=\text{Exp}\left(\frac{|X(\mathbb{F}_q)|}{q-1}s\right)$$
as wanted.

\vspace{.2in}

\section{Density of Weil numbers of smooth projective curves}

\vspace{.1in}

\begin{proof}[Proof of Proposition~\ref{P:Zariski}] The set $\mathcal{W}$ is constructed as the collection of Weil numbers (in the $l$-adic cohomology) of smooth projective curves defined over finite fields, allowing both the curve and the finite field to vary (as long as the characteristic is different from $l$). Let us first fix a finite field $\mathbb{F}_q$ with $l$ not dividing $q$ and denote by
$\mathcal{W}_q$ the set of all (collections of) Weil numbers of smooth projective curves defined over $\mathbb{F}_q$. Let $M_g^{\circ}$ be a smooth open subset of the moduli space of smooth projective curves over $\mathbb{F}_q$ of genus $g$.

\vspace{.1in}

 Let $\rho:\mathfrak{X} \to M_g^{\circ}$ denote the universal curve and set $\mathcal{F}=R^1\rho_{!}(\overline{\mathbb{Q}_l})(-1)$, a pure lisse sheaf of weight zero whose stalk at a point $\text{Spec}(\mathbb{F}_{q^n}) \to M_g^{\circ}$ corresponding to a curve $X$ defined over $\mathbb{F}_{q^n}$ is equal to $H^1(X \otimes \overline{\mathbb{F}_q},\qlb)(-1)$. Let $\pi_1(M_g^{\circ})$, resp. $\pi_1^{geom}(M_g^{\circ})$ stand for the fundamental group (resp. geometric fundamental group) of $M^{\circ}_g$, and let $\rho : \pi_1(M_g^{\circ}) \to GL(2g,\qlb)$ stand for the representation associated to $\mathcal{F}$ (well-defined up to conjugation). By \cite[Thm 10.1.16, Thm. 10.2.2]{KS}, the Zariski closure of 
$\rho(\pi_1^{geom}(M^{\circ}_{g}))$ is equal to $Sp(2g,\qlb)$. Moreover, $\rho(\pi_1(M^\circ_{g})) \subset Sp(2g, \qlb)$.

\vspace{.1in}

To every point $x~:\text{Spec}(\mathbb{F}_{q^n}) \to M_g^{\circ}$ there corresponds a map $\pi_1(\text{Spec}(\mathbb{F}_{q^n})) \to \pi_1(M_g^{\circ})$, and hence a Frobenius element $\rho(Fr_{x,n}) \in Sp(2g,\qlb)$ (well-defined up to conjugation).
Let $\rho(Fr_{x,n})^{ss}$ stand for the semi-simple part of $\rho(Fr_{x,n})$. Using the embedding $\iota~: \qlb \to \mathbb{C}$, we may view $\rho(Fr_{x,n})^{ss}$ as a semisimple conjugacy class in $Sp(2g,\mathbb{C})$. Because $\mathcal{F}$ is pure of weight zero, the eigenvalues of $\rho(Fr_{x,n})^{ss}$ are all unitary, i.e. the conjugacy class of $\rho(Fr_{x,n})^{ss}$ intersects the maximal compact subgroup $K \subset Sp(2g,\mathbb{C})$ in a $K$-conjugacy class
which we denote by $C_{x,n}$. If $x : \text{Spec}(\mathbb{F}_{q^n}) \to M^\circ_g$ corresponds to a curve $X$ defined over $\mathbb{F}_{q^n}$ then $C_{x,n}$ is the conjugacy class whose eigenvalues are $(q^{-n/2}\sigma_1, \ldots, q^{-n/2}\sigma_{2g})$ where $(\sigma_1, \ldots, \sigma_{2g})$ are the Weil numbers of $X$. By Deligne's equidistribution theorem (see \cite[3.5.3]{DeligneWeil2} and \cite[Thm. 9.2.6]{KS}), the set of conjugacy classes $\mathcal{C}_{\leq n}:=\{C_{x,m}\;|\; m \leq n, x \in M^{\circ}_g(\mathbb{F}_{q^m})\} $ becomes equidistributed for the Haar measure as $n$ tends to infinity. 

\vspace{.1in}

The maximal torus $T$ of $K$ is equal to
$$T=\{(z_1, \ldots, z_{2g}) \in (\mathbb{C}^*)^{2g}\;|\; |z_i|=1, z_{2i-1}z_{2i}=1 \; \forall\; i\} \simeq (S^1)^{g}.$$
Set ${\mathcal{W}'_q}=\bigcup_{n \geq 1} \mathcal{W}'_{q,n}$ where
$$\mathcal{W}'_{q,n}=\{q^{-n/2}\sigma_X=(q^{-n/2}\sigma_1, \ldots, q^{-n/2}\sigma_{2g})\;|\;X \in M_g^{\circ}(\mathbb{F}_{q^n})\}.$$
Deligne's equidistribution theorem implies that $\mathcal{W}'_{q,n}$ is equidistributed in $T/{W_g}$ as $n$ tends to 
infinity. In particular $\mathcal{W}'_q$ is dense in $T$ (for the analytic topology). We claim\footnote{We thank Ga\"etan Chenevier for providing us the argument.}, that this implies that $\mathcal{W}_q=\bigcup_{n \geq 1} \mathcal{W}_{q,n}$ is Zariski dense in $T_g/W_g$. Indeed let $f \in \mathbb{C}[T_g]^{W_g}$ be a polynomial function vanishing on $\mathcal{W}_qW_g$. Consider the (real) algebraic map $r: T \times \mathbb{R}^* \to T_g, ((z_1, \ldots, z_{2g}), t) \mapsto (tz_1, \ldots, tz_{2g})$. The image of $r$ contains $\mathcal{W}_qW_g$ and is Zariski dense in $T_g$. Assume that $f \neq 0$ so that $r^*f \neq 0$ and let us write $r^*f(\underline{z},t)=\sum_i h_i(\underline{z})t^i$.
Rescaling by a power of $t$ if necessary, we may assume that $h_0 \neq 0$ and $h_i=0$ for $i >0$. Let $\underline{z} \in T$ such that $h_0(\underline{z}) \neq 0$. Because each $\mathcal{W}'_{q,n}$ is finite and $\mathcal{W}'_q$ is dense in $T$ there exists a sequence $(\omega_i, n_i)_i$ with $\omega_i \in \mathcal{W}'_{q,n_i}$ and $n_i \mapsto \infty$ such that $\omega_i \mapsto \underline{z}$. The functions $h_i, i <0$ being bounded on the compact set $T$, it follows that $r^*f(\omega_i, q^{{n_i}/2}) \mapsto h_0(\underline{z}) \neq 0$, in contradiction with our hypothesis on $f$. This proves that $\mathcal{W}_qW_g$ is dense in $T_g$ and thus that $\mathcal{W}_q$ (and a fortiori $\mathcal{W}$) is dense in $T_g/W_g$. We are done. 
\end{proof}

\vspace{.2in}

\section{Proof of Conjecture~\ref{C:conj1} when $r$ is prime}

\vspace{.1in}

This is a straightforward computation. By the proof of Theorem~\ref{T:1}, $A_{g,r}(z)$ may have
poles only at $r$th roots of unity, and these poles are of order at most one. If $r$ is assumed to be prime then all the nontrivial $r$th roots of unity are primitive, and hence only occur as poles of terms in $A_{g,r}(z)$ containing a factor $(1-z^r)^{-1}$. Upon inspection, on easily sees that this factor arises (as a coefficient of $T^r$) on the r.h.s. of (\ref{E:mainT1}) in only two terms, namely
 $$q^{\langle (r), (r) \rangle} J_{(r)}(z) H_{(r)}(z)=\frac{\prod_i (\a_i-1)(\a_i-z) \cdots (\a_i-z^{r-1})}{(q-1) (q-z) \cdots (q-z^{r-1})} \cdot \frac{1}{(1-z) (1-z^2) \cdots (1-z^r)},$$
 and
 $$\frac{\mu(r)}{r} \psi_r \left( q^{\langle (1), (1) \rangle} J_{(1)}(z)H_{(1)}(z) \right)=-\frac{1}{r}\cdot \frac{\prod_i (\a_i^r-1)}{(q^r-1)(1-z^r)}.$$
 The result follows by a simple residue computation.

\newpage

\vspace{.3in}

\noindent
D\'epartement de Math\'ematiques,
B\^at. 425, 
Universit\'e de Paris-Sud Orsay,\\
Facult\'e des sciences d'Orsay,\;
F-91405 Orsay Cedex \\
email~: \texttt{olivier.schiffmann@math.u-psud.fr}

\end{document}